\DeclareMathOperator*{\esssup}{ess\,sup}
\let\hide\iffalse
\newtheorem{theorem}{Theorem}
\newtheorem{corollary}[theorem]{Corollary}
\newtheorem{definition}[theorem]{Definition}
\newtheorem{lemma}[theorem]{Lemma}
\newtheorem{proposition}[theorem]{Proposition}
\newtheorem{remark}[theorem]{Remark}
\let\p=\partial
\let\O=\Omega
\let\b=\beta
\newcommand{\R}{\mathbb{R}}
\renewcommand{\S}{\mathbb{S}}
\newcommand{\be}{\begin{equation}}
\newcommand{\bm}{\begin{multline}}
\newcommand{\ee}{\end{equation}}
\newcommand{\xb}{x_{\mathbf{b}}}
\newcommand{\tb}{t_{\mathbf{b}}}
\newcommand{\X}{\mathbf{x}}
\newcommand{\V}{\mathbf{v}}
\newcommand{\T}{\Theta}
\newcommand{\half}{\frac{1}{2}}
\newcommand{\bv}{\bar{v}}
\newcommand{\bx}{\bar{x}}
\newcommand{\tv}{\tilde{v}}
\newcommand{\tx}{\tilde{x}}
\newcommand{\bO}{\overline{\Omega}}
\newcommand{\bX}{\overline{X}}
\newcommand{\bV}{\overline{V}}
\newcommand{\z}{\zeta}
\newcommand{\tX}{\widetilde{X}}
\newcommand{\Bes}{\begin{eqnarray*}}
\newcommand{\Ees}{\end{eqnarray*}}
\newcommand{\Be}{\begin{equation}}
\newcommand{\Ee}{\end{equation}}
 \numberwithin{equation}{section}
 \numberwithin{theorem}{section}
\def\p{\partial}
\def\O{\Omega}
\def\R{\mathbb{R}}
\def\B{\begin{equation}}
\def\E{\end{equation}}
\def\BN{\begin{eqnarray*}}
\def\EN{\end{eqnarray*}}
\begin{document}
\title[Optimal Regularity]{Optimal $C^{\half}$ regularity of the Boltzmann equation in non-convex domains} 

\author[G. An]{Gayoung An}
\address{Department of Mathematics, Yonsei University, South Korea}
\email{gayoungan@yonsei.ac.kr}
\author[D. Lee]{Donghyun Lee*}
\address{Department of Mathematics, Pohang University of Science and Technology, South Korea }
\email{donglee@postech.ac.kr}

\thanks{*Corresponding author}

\subjclass[2010]{35Q20,82C40,35A01,35A02,35F30}


\keywords{Boltzmann equation, Non-convex domain, Regularity}

\begin{abstract}	
	Regularity of the Boltzmann equation, particularly in the presence of physical boundary conditions, heavily relies on the geometry of the boundaries. In the case of non-convex domains with specular reflection boundary conditions, the problem remained outstanding until recently due to the severe singularity of billiard trajectories near the grazing set, where the trajectory map is not differentiable. This challenge was addressed in \cite{CD2023}, where $C^{\frac{1}{2}-}_{x,v}$ H\"{o}lder regularity was proven. In this paper, we introduce a novel \textit{dynamical singular regime integration} methodology to establish the optimal $C^{\frac{1}{2}}_{x,v}$ regularity for the Boltzmann equation past a convex obstacle. 
\end{abstract}

\maketitle
\tableofcontents
 
\section{Introduction}

In this paper, we study the Boltzmann equation
\begin{align*}
    \partial_t F + v \cdot \nabla_x F = Q(F,F), \quad F(0,x,v)=F_0(x,v),\quad t \geq 0, \,\, x\in \O, \,\, v\in\R^{3},
\end{align*} 
where $F(t,x,v) \geq 0$ is probability density function which describes behavior of rarefied gas particles. Here, $t$ is time, $x$ is position variable in domain $\O$, and $v$ is velocity variable in $\R^{3}$. The Boltzmann equation describes evolution of the density function $F$ taking elastic binary collision $Q(F,F)$ into account. Explicitly, quadratic nonlinear colllisional operator $Q(F,F)$ is defined by 
\begin{align*} 
    Q(F,F)(t,x,v)&:= \int_{\mathbb{R}^{3}} \int_{\mathbb{S}^{2}_{+}} B(v-u,\omega)  \left(F(t,x,u')F(t,x,v')-F(t,x,u)F(t,x,v)\right)   d\omega du\\
    &:=Q_{gain}(F,F)(t,x,v)-Q_{loss}(F,F)(t,x,v),
\end{align*} 
where $B(v-u,\omega)$ is collision kernel, and post collisional velocities $u'$ and $v'$ are written as
\begin{align*} 
	u'=u+\left((v-u)\cdot\omega\right) \omega, \text{\quad and \quad} v'=v-\left((v-u)\cdot\omega\right) \omega,\quad \omega\in \S^{2}_{+}.
\end{align*} 

	For cutoff type Boltzmann equations, the collision kernel is commonly written as
\[
B(v-u,\omega)= |v-u|^{\gamma}b(\cos\theta),\quad -3 < \gamma \leq 1,
\]
where $\cos\theta := \frac{(v-u)\cdot \omega}{|v-u|}$ and $0 \leq b(\cos\theta) \leq C|\cos\theta|$.
In this paper, we only consider the hard sphere model
\[
B(v-u,\omega)=|(v-u)\cdot \omega|,
\]
which corresponds to $\gamma=1$. \\

When it comes to boundary, we are interested in exterior domain of uniform convex object.
\begin{definition}\label{def:domain}
	Let $\Omega = \mathbb{R}^3 \setminus \overline{\mathcal{O}}$ be the exterior domain of a uniformly convex, bounded open set $\mathcal{O} \subset \mathbb{R}^3$, where $\mathcal{O}= \{x \in \R^3: \xi(x)>0\}$ for a sufficiently smooth function $\xi(x)$. Additionally, there exists a constant $\theta_{\O} >0$ such that 
	\begin{align}\label{convex_xi}
\zeta \cdot(- \nabla^2 \xi(x))   \zeta	:=  \sum_{i,j=1}^3  (-\p_{ij} \xi(x)) \zeta_i\zeta_j \geq \theta_{\O} |\zeta|^2 \text{\quad for \quad} \zeta \in \R^3,
	\end{align}
	which means that $\mathcal{O}$ is a uniformly convex object. In other words, $\O = \{x \in \R^3: \xi(x)<0 \}$ with $\xi$ of \eqref{convex_xi}. We define the boundary of the domain
    \begin{align*}
        \partial\O  :=\overline{\mathcal{O}}  \backslash \mathcal{O}= \{x \in \R^3: \xi(x)=0 \}, 
        \quad \bO := \O \cup \partial\O,
    \end{align*} 
   a unit normal vector 
	\begin{align*}
	    n(x) :=  \frac{\nabla \xi (x)}{|\nabla \xi(x)|} \text{\quad for \quad}  x \in \partial \O,
	\end{align*}
    and the distance to the boundary
    \[
    	d(x,\partial\O) := \min_{y \in \partial \O}|x-y| \text{\quad for \quad}  x \in  \bO.
    \]
   
	\end{definition} 
    
    We impose specular reflection boundary condition on the boundary $\p\O$,
	\begin{equation*}
		F(t,x,v) = F(t,x,R_{x}v) \  \textit{ for } x\in\p\O, \  \textit{ where } \ R_x v := \left(I - 2n (x)\otimes n(x)\right)v,
	\end{equation*} 
	where $R_{x}$ is reflection operator on $x\in \p\O$.  \\

	If $F$ has sufficiently fast decay in large $|v|$, it is well-known that $Q(F,F)=0$ is obtained when $F$ is a local Maxwellian
	\[
		F(t,x,v) = \frac{\rho(t,x)}{\sqrt{2\pi T(t,x)}^3} e^{-\frac{|v-U(t,x)|^2}{2T(t,x)}}
	\]
	where macroscopic quantities $\rho$, $U$, and $T$ are defined by 
	\[
		\rho = \int_{v} F dv,\quad \rho U = \int_{v}vF dv,\quad 3\rho T = \int_{v}|v-U|^2 F dv.
	\]
	When the macroscopic quantities are independent to time and position, the local Maxwellian is a uniform Gaussian function of $v$ and called global Maxwellian. In this paper, we use $\mu$ to denote  $\mu(v) := e^{- \frac{ |v|^2}{2} }$. We rewrite the Boltzmann equation with the specular reflection boundary condition in terms of scaled density function $f(t,x,v) := F(t,x,v)/\sqrt{\mu}(v)$. Then $f$ solves
\begin{align}
& \p_{t}f + v\cdot \nabla_{x} f := \Gamma_{\text{gain}}(f,f) - \nu(f) f ,  
\ \ 
f|_{t=0} =f_0,  \label{f_eqtn} \\
& f(t,x,v)  = f(t,x,R_{x}v), \quad x\in \p\O, 
\label{specular}
\end{align}
where
\begin{align*}
&\Gamma_{\text{gain}}(f,f) = \frac{1}{\sqrt{\mu}(v)}Q_{\text{gain}}(\sqrt{\mu}f,\sqrt{\mu}f),\\
&\nu(f) = \int_{\R^3}\int_{\mathbb{S}^{2}_{+}} |(v-u)\cdot\omega| \sqrt{\mu}(u) f(t,x,u) d\omega du. 
\end{align*}

To define mild formulation, let us define characteristic. Backward exit time and position is defined by 
 	\begin{equation} \notag 
 	\begin{split}
 	\tb(x,v)   &:=  \sup \big\{  s \geq 0 :  x-\tau v  \in \O  \ \ \text{for all}  \ \tau \in ( 0, s )  \big\}, \quad t_{1}(t,x,v) := t-\tb(x,v),  \\
 	\xb(x,v)   &:=  x- \tb( x,v) v \in \partial\O.  
 	\end{split}
 	\end{equation}
 	If $x-\tau v \in \O$ for all $\tau\in \R$, i.e., backward in time trajectory from $(x,v)$ does not intersect with $\p\O$, we define $\tb(x,v)=\infty$ and $t_1(t,x,v)=-\infty$. \\
 	
 	In the case of $\O$, the domain is uniformly concave and billiard trajectory undergoes only one bounce on $\p\O$ at most. Therefore, we define billiard trajectory $(X(s;t,x,v), V(s;t,x,v))$ as 
 	\begin{align*}
 		\begin{split}
 			&(X(s;t,x,v), V(s;t,x,v)) \mathbf{1}_{\{\tb(x,v) < \infty\}} \\
 			&= \begin{cases} (x-(t-s) v, v)  
 				\ \ \text{for} \ \ s \in (t-\tb(x,v),t]
 				\\
 				(\xb(x,v) - (t-\tb(x,v) - s) R_{\xb(x,v)} v, R_{\xb(x,v)} v
 				)
 				\ \ \text{for} \ \ s \in [0,t-\tb(x,v) ].
 			\end{cases}
 		\end{split}
 	\end{align*}
 	and
 		\begin{align*}
 		\begin{split}
 			(X(s;t,x,v), V(s;t,x,v)) \mathbf{1}_{\{\tb(x,v) = \infty\}} = (x-(t-s) v, v),  
 		\end{split}
 	\end{align*}
 	with Hamiltonian structure
 	\begin{equation} \notag
 		\frac{d}{ds} (X(s;t,x,v), V(s;t,x,v)  )   =    (V(s;t,x,v), 0),  \ \  \ 
 		(X(s;t,x,v)  , V (s;t,x,v)  )|_{s=t}=(x,v).
 	\end{equation} 

Now we read the the scaled Boltzmann equation \eqref{f_eqtn} in terms of $(X(s;t,x,v), V(s;t,x,v))$ and take integration in $s\in (0,t)$ to obtain the mild formulation
\Be \label{f_expan}
\begin{split} 
	f(t,x,v)
	&=  
	e^{- \int^t_ 0 \nu(f) (\tau, X(\tau;t,x,v), V(\tau;t,x,v)) d \tau}
	f(0,X(0;t,x,v), V(0;t,x,v))\\
	& \ \ + \int^t_0
	e^{- \int^t_ s \nu(f) (\tau, X(\tau;t,x,v), V(\tau;t,x,v)) d \tau}
	\Gamma_{\text{gain}}(f,f)(s,X(s;t,x,v), V(s;t,x,v)) ds
\end{split}
\Ee 
for \eqref{f_eqtn} and \eqref{specular}. The local in time existence of the mild solution \eqref{f_expan} is known as following.

\begin{lemma}[Local existence, \cite{GuoKim_boundary}] \label{lem:loc}
	Assume that the initial data $f_{0}$ satisfies $\|w_{0}(v)f_{0}\|_{\infty} := \|e^{\vartheta_{0}|v|^{2}}f_{0}\|_{\infty} < \infty$ for $0 < \vartheta_{0} < \frac{1}{4}$ and the initial compatibility condition \eqref{specular}. Then (\ref{f_expan}) has a unique local-in-time solution $f(t,x,v)$ on the interval $[0,T^*]$ for some $T^*>0$. Moreover, the solution satisfies
	\begin{equation*}
		\sup_{0\leq s \leq T^{*}} \|w(v) f(s)\|_{\infty} :=
		\sup_{0\leq s \leq T^{*}} \|e^{\vartheta |v|^{2}}f(s)\|_{\infty} \lesssim \|e^{\vartheta_{0} |v|^{2}}f_{0}\|_{\infty},
	\end{equation*}
	for some $0 < \vartheta < \vartheta_{0}$.  \\
\end{lemma}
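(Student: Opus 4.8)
The plan is to solve the mild equation \eqref{f_expan} by a Picard iteration in the weighted space $L^\infty\big([0,T^*];L^\infty_{x,v}\big)$ with weight $w(v)=e^{\vartheta|v|^2}$, for some $\vartheta<\vartheta_0$ to be fixed and $T^*>0$ depending only on $\|w_0f_0\|_\infty$. Since $\sqrt\mu$ is a boundary-compatible steady solution of \eqref{f_eqtn}--\eqref{specular}, it is convenient to iterate on the perturbation $g:=f-\sqrt\mu$, which solves $\p_t g+v\cdot\nabla_x g+\nu_0(v)g=Kg+\Gamma(g,g)$ with the fixed hard-sphere collision frequency $\nu_0(v):=\nu(\sqrt\mu)(v)\sim 1+|v|$, linear gain operator $K$, and quadratic remainder $\Gamma$; the specular condition \eqref{specular} and the compatibility of $f_0$ are inherited by $g$, and the billiard characteristics $(X(s;t,x,v),V(s;t,x,v))$ of the excerpt furnish the Duhamel representation that defines $g^{n+1}$ from $g^n$. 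First I would record the geometric input used throughout: because $\O$ is the exterior of a uniformly convex body, every backward trajectory meets $\p\O$ at most once and is given by the explicit piecewise-linear formula of the excerpt, and $R_{\xb}$ is an isometry, so $|V(\tau;t,x,v)|\equiv|v|$; in particular the weight is constant along characteristics and $w(v)\,|g_0(X(0),V(0))|\le\|w_0g_0\|_\infty$ because $\vartheta<\vartheta_0$.

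The core of the argument is a pair of weighted bounds for the collision operators. Conservation of kinetic energy $|u'|^2+|v'|^2=|u|^2+|v|^2$ collapses the weight ratio $w(v)/\big(w(u')w(v')\big)$ to $e^{-\vartheta|u|^2}$, which for the hard-sphere kernel gives
\[
 w(v)\,\big|\Gamma(g,h)(v)\big|\ \ls\ (1+|v|)\,\|wg\|_\infty\|wh\|_\infty,\qquad w(v)\,\big|Kg(v)\big|\ \ls\ \|wg\|_\infty ,
\]
together with $|\nu(g)(v)|\ls(1+|v|)\|wg\|_\infty$. The loss of one power of velocity in the first bound is exactly what the Duhamel exponential is designed to absorb: $\nu_0$ is radial and $|V(\tau)|\equiv|v|$, so $e^{-\int_s^t\nu_0(V(\tau))\,\dd\tau}=e^{-\nu_0(|v|)(t-s)}$ and hence $\int_0^t e^{-\nu_0(|v|)(t-s)}(1+|v|)\,\dd s\ls 1$, while the linear term contributes only $\int_0^t e^{-\nu_0(|v|)(t-s)}\,\dd s\le T^*$, and the strict margin $\vartheta<\vartheta_0$ soaks up the residual polynomial factors against the Gaussian gap between $w$ and $w_0$. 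Inserting these into the Duhamel formula along characteristics one obtains a recursion of the form
\[
 \sup_{0\le t\le T^*}\|wg^{n+1}(t)\|_\infty\ \le\ C\|w_0g_0\|_\infty\ +\ C\,T^*\Big(1+\sup_{0\le t\le T^*}\|wg^{n}(t)\|_\infty\Big)^2 ,
\]
so choosing $T^*$ small in terms of $\|w_0g_0\|_\infty$ yields, by induction, the uniform bound $\sup_{[0,T^*]}\|wg^n(t)\|_\infty\le 2C\|w_0g_0\|_\infty$, which transcribes to $\sup_{[0,T^*]}\|wf(t)\|_\infty\ls\|w_0f_0\|_\infty$.

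It remains to produce the solution and prove uniqueness. Subtracting the Duhamel identities for $g^{n+1}$ and $g^n$, and using bilinearity of $\Gamma$, boundedness of $K$, and the Lipschitz dependence of $\nu(g^n)$ and of the exponential factor on $g^n$ in the weighted norm, the same estimate gives a contraction
\[
 \sup_{[0,T^*]}\|w(g^{n+1}-g^n)(t)\|_\infty\ \le\ \tfrac12\,\sup_{[0,T^*]}\|w(g^{n}-g^{n-1})(t)\|_\infty
\]
after a further shrinking of $T^*$ (still depending only on $\|w_0f_0\|_\infty$). Hence $\{g^n\}$ converges in $L^\infty([0,T^*];L^\infty_w)$ to a limit $g$, and $f=\sqrt\mu+g$ is the desired mild solution of \eqref{f_expan} obeying the stated bound; uniqueness follows by applying the same contraction estimate to the difference of two solutions with the same data. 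The step I expect to be the genuine obstacle is the weighted nonlinear estimate: the hard-sphere gain term loses a full power $1+|v|$ of velocity, and reconciling this with the exponential weight is precisely what dictates both the use of the Duhamel exponential damping (equivalently, the presence of the fixed collision frequency $\nu_0$) and the strict inequality $\vartheta<\vartheta_0$. By contrast the non-convex, exterior geometry enters only mildly here, since for the outside of a convex obstacle the trajectory map --- although famously non-differentiable along the grazing set --- is continuous and exactly speed-preserving, which is all the $L^\infty$ iteration requires.
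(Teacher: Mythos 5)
The paper does not prove this lemma itself (it is imported from \cite{GuoKim_boundary}), so I am judging your argument on its own terms, and it has a genuine quantitative gap at the step you yourself flag as the crux. Your iteration bound is claimed in the form $\sup_{[0,T^*]}\|wg^{n+1}\|_\infty \le C\|w_0g_0\|_\infty + CT^*\big(1+\sup_{[0,T^*]}\|wg^n\|_\infty\big)^2$, with the factor $T^*$ justified by absorbing the hard-sphere loss $(1+|v|)$ into the Duhamel damping $e^{-\nu_0(|v|)(t-s)}$. But that absorption only yields $\int_0^t e^{-\nu_0(|v|)(t-s)}(1+|v|)\,ds \simeq 1-e^{-\nu_0(|v|)t}\le 1$, which is \emph{not} small uniformly in $v$ as $T^*\to 0$ (take $|v|\sim 1/T^*$). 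So what your displayed estimates actually give is $C\|w_0g_0\|_\infty + CT^*\sup\|wg^n\|_\infty + C\sup\|wg^n\|_\infty^2$, with an $O(1)$ constant on the quadratic term; this closes only for \emph{small} data, whereas the lemma asserts local existence for arbitrary data with $\|e^{\vartheta_0|v|^2}f_0\|_\infty<\infty$. Your remark that the strict margin $\vartheta<\vartheta_0$ ``soaks up'' the residual polynomial factors does not repair this: the gap between $w$ and $w_0$ is only available on the initial-datum term, while the nonlinear term at intermediate times $s\in(0,t)$ is measured in the same weight $w$ as the norm you are trying to close.

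The missing idea is to run the iteration in a norm with a \emph{time-decreasing} Gaussian weight, e.g.\ $\tilde w(t,v)=e^{(\vartheta_0-\kappa t)|v|^2}$ (equivalently, weights of the type $e^{-\varpi\langle v\rangle^2 t}$ that this paper uses throughout). Then the weight mismatch between times $t$ and $s$ produces an extra factor $e^{-\kappa(t-s)|v|^2}$ inside the Duhamel integral, and $\int_0^t(1+|v|)e^{-\kappa(t-s)|v|^2}\,ds \lesssim \sqrt{T^*}$ uniformly in $v$, so the quadratic term is genuinely small for small $T^*$ and the scheme closes for large data; the solution is then controlled in the weight $e^{\vartheta|v|^2}$ with $\vartheta=\vartheta_0-\kappa T^*<\vartheta_0$, which is exactly why the statement loses from $\vartheta_0$ to $\vartheta$ --- a loss your argument never actually needs, which is a sign the mechanism is absent. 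This is essentially how the cited reference proceeds (there the iteration is set up on $F^{n+1}\ge 0$ with damping $\nu(F^n)\ge 0$ and the gain evaluated at $F^n$, which also sidesteps sign issues with $\nu$). Two further, more minor points you should address: your mild formulation with the fixed frequency $\nu_0$ and the term $Kg$ differs from \eqref{f_expan}, which carries the solution-dependent exponential $e^{-\int\nu(f)}$, so the equivalence of the two mild forms (and the nonnegativity of $\nu(f)$, i.e.\ $F\ge0$, along the iteration) needs a word; and uniqueness should be stated for solutions of \eqref{f_expan} in the class $\sup_{[0,T^*]}\|wf\|_\infty<\infty$, again via the weighted-in-time norm rather than the bare contraction you invoke.
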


	We note that \cite{GuoKim_boundary} considered bounded domains where we are dealing with unbounded domains. However, the proof of above lemma is quite similar as \cite{GuoKim_boundary} since Lemma \ref{lem:loc} states local existence in weighted $L^\infty$ and characteristic under specular reflection boundary condition is well-defined for both cases. Indeed, we construct sequence 
	\[
		\partial_t f^{n+1} +v\cdot\nabla_x f^{n+1} + \nu(F^n)f^{n+1} = \Gamma_{\text{gain}}(f^n, f^n),\quad f^{n+1}(0,x,v) = f_0(x,v)
	\]
	which satisfies
	\[
		f^{n+1}(t,x,v) = f^n(t,x,R_x v)\quad\text{on}\quad x\in \p\O,\quad n(x)\cdot v < 0.
	\]
	We use Duhamel formula along the characteristic $(X(s;t,x,v), V(s;t,x,v))$, multiply the equation by $e^{(\vartheta_0 - t)|v|^2}$, and perform nonlinear $\Gamma_{\text{gain}}$ estimate to obtain uniform estimate
	\[
		\sup_{t\in[0,T]}\|e^{\vartheta|v|^2}f^{n+1}(t)\|_{L^\infty_{x,v}} \leq C\|e^{\vartheta_0|v|^2}f_0\|_{L^\infty_{x,v}},\quad \forall n
	\]
	for some $0 < \vartheta < \vartheta_0$. Here, $T <\vartheta_0$ can be chosen sufficiently small. Using above uniform estimate, deriving cauchy sequence argument and checking that the limit is a unique solution is easy. \\

\subsection{Historical background}
	The Boltzmann equation stands as one of the most fundamental equations in rarefied collisional gas theory, with extensive literature on its well-posedness and regularity. For the spatially homogeneous case, the well-posedness theory and convergence to the global Maxwellian have been thoroughly investigated in \cite{Ark1, Ark2, Ark83, Ark_stab, Weak_homo, MW,  CMP_Mouhot,  nonuniq} and references therein. \\
	
	However, the well-posedness theory for the spatially inhomogeneous Boltzmann equation is considerably more challenging than for the homogeneous case. Notably, DiPerna and Lions \cite{DL} proved the global existence of renormalized solutions, and we also refer to \cite{hamdache} for weak solutions of boundary value problems. Nevertheless, the uniqueness of renormalized solutions remains an open problem. A complete well-posedness theory in near-perturbation regimes was established after Guo \cite{Guo_VPB, Guo_Classic, Guo_VMB, Guo_VPL} within sufficiently smooth function spaces. Furthermore, Desvillettes and Villani \cite{DV} demonstrated that if a global solution $F$ is uniformly bounded in a high-order Sobolev space, it converges to the global Maxwellian at a rate of $t^{-\infty}$ for $\mathbb{T}^{3}_{x}$ and specular reflection boundary condition (BC) cases. For $\mathbb{T}^{3}_{x}$, this $t^{-\infty}$ decay rate was improved to exponential decay by \cite{GMM}. However, proving uniform higher-order regularity is a formidable task. For the non-cutoff Boltzmann equation, regularizing effects are known , and Silvestre and Imbert \cite{CyrilLouis} showed that a solution exhibits uniform $C^{\infty}$ regularity given proper macroscopic assumptions.  \\
	
	For the hard-sphere Boltzmann equation (or other cutoff-type Boltzmann equations), regularizing effects are absent , and studying regularity in the presence of physical boundary conditions in some smooth domains poses a significant challenge. In \cite{GuoKim_boundary}, Guo et al first proved $C^1$ and $W^{1,p}$ type regularity for solutions with specular BC and diffuse BC, respectively, away from the grazing set $\gamma_0$, assuming $\O$ is uniformly convex. The convex geometric property is crucial in obtaining these results, because trajectories generally undergo grazing singularity in non-convex domains. For general non-convex domains, propagation of discontinuity has been established (for inflow, bounce-back, and diffuse BCs) \cite{Kim_disconti}, and optimal BV solutions of the Boltzmann equation were studied in \cite{BV} for diffuse and inflow BCs. We also refer to \cite{ChenKim, KRM, Cao2019, ChenHsiaKawa, IKChen2022, WuWang24} for some related recent works.  \\
	
	Meanwhile, the regularity of the Boltzmann equation solution with specular reflection boundary conditions in a non-convex domain remained outstanding until recently. Recently, in \cite{CD2023}, Kim-Lee succeeded in obtaining $C^{\half-}$regularity by assuming the domain to be the exterior of a convex object and averaging the specular singularity near the grazing singular regime. In the exterior of a convex object, the billiard trajectory possesses optimal $C^{\half}$ regularity, and since the regularity of the Boltzmann equation solution for specular BC depends on the billiard trajectory, it is very natural to expect the equation's solution to exhibit optimal $C^{\half}$ regularity. However, in the singular regime integration used in \cite{CD2023}, obtaining $C^{\half}$ regularity was not possible because the geometric effects of the domain vanish when the position is sufficiently close to the boundary. This paper introduces a new methodology to obtain optimal regularity results. We note that for domains combining convex and non-convex effects, the regularity can worsen \cite{Mixed}.  \\	
	
	Despite the fact that the solution of the Boltzmann equation for the physical boundary condition problem does not possess sufficient regularity, research on existence, uniqueness, and convergence to equilibrium is actively being pursued based on low-regularity function methodologies. In \cite{Guo10}, Guo introduced an $L^2-L^\infty$	bootstrap argument to prove a unique global solution and asymptotic stability near the global Maxwellian in $L^\infty$ for various boundary conditions problems. In the case of specular reflection boundary condition, however, the problem is very tricky because it is closely related to the geometric property of domain $\O$ and chaotic billiard trajectory in $\O$ should be studied. In \cite{CD2018}, Kim and Lee proved the specular reflection boundary problem in general $C^3$ convex domain. Specular reflection boundary condition problem in general non-convex smooth domain is outstanding open problem because of non-convexity of the domain cause grazing singularity of trajectory map. We also refer to \cite{TJM25} for the problem in 3D toroidal domains where rotational symmetry was crucially used to control grazing singularity. For various Boltzmann boundary problems near equilibrium, we also refer to \cite{Gradient, Non-iso}.  \\
	
	Furthermore, research on Boltzmann equation boundary problems in low regularity spaces has also been conducted for complex models with additional self-consistent forces like the Vlasov-Poisson-Boltzmann equation \cite{Cao-Kim-Lee, CKL2020}, and in another direction, this research is diversely developing in combination with studies on the Boltzmann solution with large amplitudes and small $L^p$. We refer to \cite{DHWZ, DKL2020, Adv_Duan, Ext_diffuse}.  \\
	
\subsection{Main Theorem}
	Before we state main theorem of this paper, introduce some basic notations. The $L^{\infty}$ norm is defined as 
    \begin{align*}
    \|f(t)\|_{\infty}:= \esssup_{(x,v)\in \bO \times \mathbb{R}^3} |f(t,x,v)|.
    \end{align*} 
    The indicator function of a subset $S$ within a set $X$ is a function $\mathbf{1}_{S} : X \rightarrow \{0,1\}$, defined as
\begin{align*} 
    \mathbf{1}_{S}(x) :=
    \begin{cases}
        1,\quad x \in S, \\
        0,\quad x \notin S
    \end{cases} 
    \text{\quad for \quad} x \in X.
\end{align*}
We denote by $A\lesssim B$ that there exists a uniform constant $C$ such that $A\le CB$. If the constant $C$ depends on $K$, we write $A \lesssim_{K} B$ or $A \leq C(K)B$. The notations used are as follows:
\begin{align*}
    |v|:=\sqrt{v_1^2+v_2^2+v_3^2}
    \text{\quad for \quad} v=(v_1,v_2,v_3) \in \mathbb{R}^3,
\end{align*}
\begin{align*}
    \langle v \rangle := \sqrt{1+|v|^2}, \quad
    \hat{v} := \begin{cases}
		v/|v|,\text{\; for \;} v\neq 0, \\ 0,\text{\; for \;} v=0,
		\end{cases}
\end{align*}
\begin{align*}
    \fint_{a}^{b} := \frac{1}{b-a} \int_{a}^{b}
     \text{\; for \;} a,b \in \mathbb{R}, \text{\quad and \quad}
     \mathcal{P}_{n}(x):=\sum_{i=0}^{n}x^{i}
     \text{\; for \;} x \in \mathbb{R}, \,\,n \in \mathbb{N}.
\end{align*}

Now, we state our main theorem, which establishes the optimal $C^{\frac{1}{2}}_{x,v}$ regularity for the Boltzmann equation past a convex obstacle.

 \begin{definition} \label{def_initial}
 	We define
 	\begin{align*} 
 		\mathbf{A}_{\b}(f_0) :=\sup_{ \substack{ (x,\bx,v) \in  \bO \times \bO \times \mathbb{R}^3 \\  |x-\bx| \leq 1   } }\langle v \rangle \frac{|f_0(x,v)-f_0(\bx,v)|}{|x-\bx|^{2\b}} +\sup_{ \substack{ (x,v,\bv) \in  \bO \times \mathbb{R}^3 \times \R^3\\  |v-\bv| \leq 1   } }\langle v \rangle^2 \frac{|f_0(x,v)-f_0(x,\bv)|}{|v-\bv|^{2\b}}
 	\end{align*} 
 	to denote weighted $C^{0,2\b}$ H\"older regularity of initial datum $f_0$.
 \end{definition}
 
\begin{theorem} \label{thm:Holder0.5}
	Suppose that the domain is given as in Definition \ref{def:domain}. Assume $f_0$ satisfies the compatibility condition \eqref{specular}, $\|e^{\vartheta_{0}|v|^{2}} f_0 \|_\infty< \infty$ for $0< \vartheta_{0} < \frac{1}{4}$, and 
    \begin{align*}
        \mathbf{A}_{\half}(f_0) := \sup_{ \substack{ (x,\bx,v) \in  \bO \times \bO \times \mathbb{R}^3 \\  |x-\bx| \leq 1   } }\langle v \rangle \frac{|f_0(x,v)-f_0(\bx,v)|}{|x-\bx|} +\sup_{ \substack{ (x,v,\bv) \in  \bO \times \mathbb{R}^3 \times \mathbb{R}^3 \\  |v-\bv| \leq 1   } }\langle v \rangle^2 \frac{|f_0(x,v)-f_0(x,\bv)|}{|v-\bv|} < \infty.
    \end{align*}
    For $0<\delta <1$ and $0< \epsilon<1$, we  define the weight function W as
        \begin{align*}
            W\left((x,v), (\bx,\bv);\epsilon,\delta\right) &:= 
            \frac{1}{|v|^{\delta}}\mathbf{1}_{\{d(x,\partial\O) \leq \epsilon\}}+ \frac{1}{|\bv|^{\delta}}\mathbf{1}_{\{d(\bx,\partial\O) \leq \epsilon\}}+1.
        \end{align*}
       Then, there exist $T > 0$ and a constant $C>0$, depending on $\delta, \epsilon$, and $\vartheta_{0}$, such that $f(t,x,v)$ satisfies the H\"older continuity condition as follows:
		
        \begin{align*}
        \begin{split}
		&\sup_{0\leq t \leq T} 
		\sup_{ \substack{ (x,\bx,v,\bv) \in  \bO \times \bO \times \mathbb{R}^3 \times \R^3 \\ 0<|(x,v)-(\bx, \bv)|\leq 1   } }
         \left(W^{-1}  \langle v \rangle^{-1} 
		 e^{-\varpi\langle v \rangle^{2}t}\frac{ | f(t,x,v) - f(t, \bx, \bv) | }{ |(x,v) - (\bx, \bv)|^{\frac{1}{2}} }\right) 
         \leq C\mathcal{P}_3(\|w_0f_0\|_{\infty}) \left(\mathbf{A}_{\frac{1}{2}}(f_0)+1\right).
        \end{split} 
		\end{align*} 
        Here, $\varpi$ is a large constant depending on $\vartheta_0, f_0, \epsilon$.
\end{theorem}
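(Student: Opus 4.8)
The plan is to work directly with the mild formulation \eqref{f_expan} and run a bootstrap on the weighted H\"older seminorm appearing in the statement. First I would fix two nearby phase points $(x,v)$ and $(\bx,\bv)$ and split the difference $f(t,x,v)-f(t,\bx,\bv)$ into the contribution from the initial data term, $e^{-\int_0^t\nu}f_0(X(0),V(0))$, and the contribution from the Duhamel gain term $\int_0^t e^{-\int_s^t\nu}\Gamma_{\text{gain}}(f,f)(s,X(s),V(s))\,ds$. For the initial-data term, the key input is the $C^{1/2}$ regularity of the backward billiard map $(x,v)\mapsto(X(0;t,x,v),V(0;t,x,v))$ in the exterior of a uniformly convex obstacle — this is exactly the regime where the trajectory map is only H\"older, not Lipschitz, and the exponent $1/2$ is sharp. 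Combining that trajectory estimate with $\mathbf{A}_{1/2}(f_0)$ and tracking the velocity weight $\langle v\rangle$ through a reflection (which does not change $|v|$) yields the required bound, modulo the weight $W$ that must absorb the degeneracy when $x$ or $\bx$ is within $\epsilon$ of $\partial\O$ at small normal velocity. For the gain term, I would differentiate/difference under the time integral, using that $\Gamma_{\text{gain}}(f,f)$ is controlled by $\|wf\|_\infty^2$ (Lemma \ref{lem:loc}) together with a H\"older-in-$(s,X,V)$ estimate on $\Gamma_{\text{gain}}(f,f)$ that itself feeds back on the seminorm of $f$; the factor $e^{-\varpi\langle v\rangle^2 t}$ and choosing $T$ small are what make the resulting self-consistent inequality close with a small constant.

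The genuinely hard part is the estimate on the billiard trajectory difference near the grazing set, and this is where the paper's announced \emph{dynamical singular regime integration} must do the work. Concretely, when the backward trajectory from $(x,v)$ grazes $\partial\O$ — i.e. $v\cdot n(\xb)$ is nearly zero — the exit time $\tb$, exit position $\xb$, and reflected velocity $R_{\xb}v$ all fail to be Lipschitz in $(x,v)$; the uniform convexity \eqref{convex_xi} gives the model bound $|\tb(x,v)-\tb(\bx,\bv)|\lesssim |(x,v)-(\bx,\bv)|^{1/2}$ but with a constant that blows up like (normal velocity)$^{-1}$. The resolution I would pursue is a trichotomy on the pair of trajectories: (i) both stay away from grazing, where everything is Lipschitz and one gains; (ii) both graze, where one uses the convex curvature lower bound $\theta_\O$ to get the clean $C^{1/2}$ bound; (iii) the delicate case where one trajectory grazes and the other does not, or where the normal velocity is small but the spatial separation is even smaller — here, rather than estimating the trajectory difference pointwise, one integrates the singularity along the dynamics. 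That is, one parametrizes by how much "time near the boundary" each trajectory spends and shows that the set of directions/positions producing a large trajectory discrepancy has measure (or contributes) controllably small, so that after the $s$-integration in \eqref{f_expan} the singular contribution is integrable and produces exactly the borderline exponent $1/2$ rather than $1/2-$. The weight $1/|v|^\delta \mathbf{1}_{\{d(x,\partial\O)\le\epsilon\}}$ is the bookkeeping device that localizes and pays for this: near the boundary at small speed, the trajectory is effectively trivial (it barely moves), so the H\"older constant is allowed to degrade there, and $W$ records precisely that.

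In terms of ordering: (1) record the geometric lemmas on $\tb,\xb,R_{\xb}v$ and their $(x,v)$-dependence in the three regimes, with explicit dependence on the normal velocity and on $\theta_\O$; (2) prove the $C^{1/2}$ estimate for $(X(0),V(0))$ weighted by $W$, which controls the initial-data term; (3) set up the iteration for the gain term, expanding $\Gamma_{\text{gain}}$ and splitting into a "regular" part (difference of smooth cutoffs of the collision integral, where the $v'$, $u'$ maps are smooth in $\omega$) and a "singular" part handled by the dynamical integration of step (1); (4) collect the estimates into an inequality of the form $|||f|||_{T}\le C\mathcal{P}_3(\|w_0f_0\|_\infty)(\mathbf{A}_{1/2}(f_0)+1)+C(T)\,\mathcal{P}(\|w_0f_0\|_\infty)\,|||f|||_T$ for the weighted seminorm $|||\cdot|||_T$, and absorb for $T$ small (or equivalently for $\varpi$ large), which gives the theorem. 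The main obstacle I anticipate, beyond the grazing analysis itself, is making the velocity weights $\langle v\rangle,\langle v\rangle^2$ and the Gaussian tail of $\Gamma_{\text{gain}}$ interact cleanly with the $1/|v|^\delta$ singular weight — in particular checking that small $|v|$ is never actually a problem because there the relevant trajectory displacement is small, so the $1/|v|^\delta$ factor is multiplied by something that vanishes fast enough.
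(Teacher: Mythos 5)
Your proposal reproduces the correct outer skeleton of the paper's argument (Duhamel formula, initial-data term handled by the $C^{1/2}$ trajectory estimates together with $\mathbf{A}_{1/2}(f_0)$, gain/loss terms handled by a self-consistent seminorm inequality closed via $e^{-\varpi\langle v\rangle^2 t}$ with $\varpi$ large and $T$ small), but the core of the theorem --- the \emph{dynamical singular regime integration} --- is only stated as an aspiration, not supplied. Your case (iii) says one should show that the bad set of directions/positions ``contributes controllably'' after the $s$-integration, which is a restatement of the goal rather than an argument. The mechanisms the paper actually needs are: (a) the bootstrap is \emph{not} run on the weighted H\"older-$\tfrac12$ quotient of $f$ itself, but on the Lipschitz-type seminorms $\mathfrak{X},\mathfrak{V}$ of Definition \ref{def:iter}, in which the $f$-difference is divided by the trajectory difference $|X(s)-\bX(s)|$; this isolates the grazing factor $|\nabla\xi(\xb)\cdot\hat u|^{-1}$ so it can be integrated in $u$ and $s$. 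If you try to close directly on the H\"older-$\tfrac12$ seminorm, the static velocity integration degenerates exactly as in \cite{CD2023} when $x\in\p\O$, and you recover only $\beta<\tfrac12$; (b) the static angular integration bound of Lemma \ref{lem:int_sing}, obtained by slicing with planes and comparing the convex section with a circumscribed circle of smaller curvature (Lemmas \ref{lem:angle_com}--\ref{lem:len_cir}), which yields $\int du\,|\nabla\xi(\xb(x,u))\cdot\hat u|^{-1}(\cdots)\lesssim \ln(1+1/d(x,\p\O))+1$; (c) the time integration of Lemma \ref{lem:ds}, which rests on the quantitative lower bound $d(X(s),\p\O)\gtrsim |v|^2|s-t_1|^2$ along trajectories outside a convex obstacle and is precisely where the factor $\ln(1+1/|v|)$, hence the weight $G$ and ultimately $W$, originates. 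None of these three steps appears, even in outline, in your plan.

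Two further points. First, to obtain the weight $1/|v|^\delta$ with arbitrary $\delta<1$ on the interval $s\in[t_1(t,x,v),t_1(t,\bx,\bv)]$ between the two bounce times, the paper must invoke the earlier $\beta<\tfrac12$ result, in the form \eqref{pro:H_sub} from Proposition \ref{pro:H}, to trade part of the $|v|^{-1}$ coming from $|\tb(x,v)-\tb(\bx,\bv)|\lesssim |v|^{-1}|(x,v)-(\bx,\bv)|^{1/2}$ for a positive power of $|v|$ (see Lemma \ref{lem:gam_x_0.5}); your outline, which never uses the previous theorem as an input, would at best give the weight $1/|v|$, which is strictly weaker than the statement. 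Second, your closing heuristic --- ``small $|v|$ is never actually a problem because there the relevant trajectory displacement is small'' --- points the wrong way: small $|v|$ with $x$ near $\p\O$ is exactly the singular regime (the bounce-time difference blows up like $|v|^{-1}$, and the dynamical integration only produces $\ln(1+1/|v|)$ because the trajectory lingers in the nearly flat region where the concavity gain is lost); the weight $W$ exists because the estimate genuinely degenerates there, not because the degeneration is harmless.
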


	Before we give a remark for the main theorem, let us recall previous results for $C^{\half-}$ regularity to compare with Theorem \ref{thm:Holder0.5}.  \\
	\textbf{Main Theorem of \cite{CD2023}} (previous result, simplified version) If $f_0$ satisfies 
	\[
	\mathbf{A}_{\b}(f_0) < \infty,\quad \b < \half,
	\]
	then for a local interval $[0,T_1]$, we have 
	\begin{align*}
		\begin{split}
			&\sup_{0\leq t \leq T_1} 
			\sup_{ \substack{ (x,\bx,v,\bv) \in  \bO \times \bO \times \mathbb{R}^3 \times \R^3 \\ 0<|(x,v)-(\bx, \bv)|\leq 1   } }
			\left( \langle v \rangle^{-2\beta} 
			e^{-\varpi_1\langle v \rangle^{2}t}\frac{ | f(t,x,v) - f(t, \bx, \bv) | }{ |(x,v) - (\bx, \bv)|^{\beta} }\right) 
			\leq C(\mathbf{A}_{\b}(f_0), f_0, \b).  \\
		\end{split} 
	\end{align*}

 \begin{remark}

According to Theorem \ref{thm:Holder0.5}, the solution has $C^{\frac{1}{2}}_{x,v}$ regularity away from the set 
\[
\gamma_* := \left\{ (x,v) \in \bO \times \mathbb{R}^3 : x \in \partial \Omega \ \text{and} \ |v| = 0 \right\}.
\]
For weight $W$, let us consider non-singular weight case: both $d(x,\p\O), d(\bar{x},\p\O) \geq \epsilon$. In this case, $W=1$ and our result is exactly same as the Theorem in \cite{CD2023}. For the most singular case, we have $d(x,\p\O)+d(\bar{x},\p\O) \leq \epsilon$, i.e., both $x$ and $\bar{x}$ are very near $\p\O$, and $|v|, |\bar{v}| \ll 1$. In such case,
\[
W\left((x,v), (\bar{x},\bar{v}); \epsilon, \delta\right) \leq \frac{1}{|v|^{\delta}} + \frac{1}{|\bar{v}|^{\delta}} + 1 <  \frac{1}{|v|} + \frac{1}{|\bar{v}|} + 1.
\]
Thus, the weight in our estimate does not impose a significant penalty even when $x$ (or $\bar{x}$) is near the boundary. 
\end{remark}

\begin{remark}
	 Physical meaning of the singular weight $W=W((x,v), (\bar{x}, \bar{v});\epsilon,\delta)$ is worth mentioning. Compared to previous result in \cite{CD2023}, Theorem \ref{thm:Holder0.5} includes the singular weight $W$ in the norm of LHS. Weight $W$ implies that singular weighted norm on LHS could be singular near $v\sim 0$(resp, $\bar{v}\sim 0$), if $x$(resp, $\bar{x}$) is very close to the boundary $\p\O$. This is a geometrically very natural phenomenon, because when $v$ is very small, trajectory $X(s)$ moves very slowly and singular regime integration behaves like
	\[
		\sim \int_{|v|\leq N} \frac{1}{|v\cdot \nabla\xi(\xb(x, v))|^{2\b}} \mathbf{1}_{\{\tb < \infty\}} dv. 
	\]
	Note that this is infinite for $\b = \half$ if we choose $x\in\p\O$, because once the position $x\in\p\O$ is fixed, the characteristic $(X(s), V(s))$ cannot be influenced by the geometric effects of concavity. For this reason, previous result in \cite{CD2023} was limited to a H\"older exponent $\b < \half$.  \\
	\end{remark}
	
	\begin{remark}
	If both $x$ and $\bar{x}$ are uniform away from the boundary $\p\O$ by $\epsilon$ (see definition of $W$), Theorem \ref{thm:Holder0.5} completely cover the theorem of \cite{CD2023} including $\b=\half$. What is even more surprising is that novel \textit{dynamical singluar regime integration} scheme enable us to obtain $C^{\half}_{x,v}$ regularity, \textbf{even if} $x$ and $\bar{x}$ are close to $\p\O$, unless velocities $|v|$ and $|\bar{v}|$ are small.  
	\end{remark}
	
	\begin{remark}
		When backward in time trajectory $(X(s;t,x,v), V(s;t,x,v))$ undergoes grazing collision on the boundary, it enjoys only $C^{\half}_{x,v}$ regularity for $s < t-\tb(t,x)$. Therefore, $C^{\b}_{x,v}$ regularity of $f$ with $\b > \half$ is impossible in general. For example, see \eqref{gain_sk_X_1/2}. In this sense, $C^{\half}_{x,v}$ regularity is optimal. \\
	\end{remark}

\subsection{Shift method and Specular Singularity}
Successful measurement for geometric effect of uniform concavity depends on choosing proper coordinate for billiard trajectory. We introduce shifted position and velocity to decompose fraction of trajectories into singular part and non-singular part.  

\begin{definition} \label{def_tilde}
	(1)(Position shift) For fixed $x, \bx \in \O, \,v \in \mathbb{R}^3$, let us  assume
	\begin{equation} \label{assume_x}
		 \text{ $x-\bx \neq 0$ is neither parallel nor anti-parallel to $v\neq 0$,  i.e., $(x-\bx)\cdot v \neq \pm |x-\bx||v|$. } 
	\end{equation}
	In this case, we define shifted position $\tilde{x}$ as
	\begin{equation} \label{def_tildex} 
		\tilde{x} =  \tilde{x}(x, \bx, v) := \bar{x} + \big( (x-\bar{x})\cdot \hat{v} \big) \hat{v}
	\end{equation} and 
		\begin{equation} \notag
			S_{(x, \bx, v)} := x + \text{span}\{ x- \tilde{x}, v \} = x + \text{span}\{ x- \bx, v \}.
		\end{equation}
		We define the parametrization $\X(\tau)$ as 
		\begin{equation} \notag
			\X(\tau)  = 
			\X(\tau; x, \bar x, v)
			:= (1-\tau)\tilde{x} (x, \bar x, v) + \tau x , \text{\quad so that \quad} \X(0)=\tx, \;\;\X(1)=x.
		\end{equation}
	We further assume that $\p\O\cap S_{(x, \bx , v)}$ is closed curve, and $\X(\tau) \in \O$ is well-defined for $0 \leq \tau \leq 1$. By convexity \eqref{convex_xi} of $\O$, we can define $\tau_{\pm}(x, \bx, v)$ as 
		\begin{equation} \notag
			v\cdot \nabla\xi(\xb(\X(\tau_{\pm}), v)) = 0,\quad \tau_{-} < \tau_{+},\quad \tau_{\pm} = \tau_{\pm}(x, \bx, v),
		\end{equation}
		where $\xb(\X(\tau),v)$ is well defined for $\tau_{-} \leq \tau \leq \tau_{+}$. See Figure \ref{fig1}.
        
         \vspace{3mm}
        \noindent (2)(Velocity shift) For fixed $v, \bv, \zeta \in \mathbb{R}^3$, let us  assume
	\begin{equation} \label{assume_v}
		\text{ $v+\zeta\neq 0$ is neither parallel nor anti-parallel to $\bv+\zeta\neq 0$,  i.e., $(v+\zeta)\cdot(\bv+\zeta)\neq \pm|v+\zeta||\bv+\zeta|$ }.
	\end{equation}
	In this case, we define shifted velocity $\tilde{v}$ as
	\begin{equation} \label{def_tildev} 
	\tilde{v} + \zeta = \tilde{v}(v, \bv, \zeta) + \zeta := |v+\zeta| \widehat{(\bar{v} +\zeta)}
	\end{equation}
    and
    \begin{equation*} 
			S_{(x, v, \bv, \zeta)} := x + \text{span}\{ v+\zeta, \tilde{v}+\zeta \} = x + \text{span}\{ v+\zeta, \bv+\zeta \}.
		\end{equation*}
    	We define the parametrization $\V(\tau)$ as 
        \begin{align*}
            \V(\tau) =
			\V(\tau; x, v , \bar v, \zeta) 
				:=
			 |v+\zeta| R_{(v,\bv,\zeta)}  \begin{bmatrix}
				\cos\T(\tau) \\ \sin\T(\tau) \\ 0
			\end{bmatrix}, 			
            \quad            \T(\tau) := \tau\theta, \quad \dot{\T}(\tau) = \dot{\T} := \theta,
        \end{align*}
        so that
        \begin{align*}
            \T(0) =  0 , \quad 
			\T(1) = \theta 
            \text{\quad and \quad}
			\V(0) = \tilde{v}+\zeta , \ \ \V(1)=v+\zeta,
        \end{align*}
        where $\theta = 
	   \cos^{-1} (\widehat{v+\zeta} \cdot \widehat{\bv+\zeta})
	   \in[0,2\pi)$ is the angle between $v+\zeta$ and $\bv+\zeta$, and 
		\[
			R_{(v,\bv,\zeta)} := 
			\begin{bmatrix}
				 & & \\
				 \widehat{\bv+\zeta} & \widehat{v+\zeta} & \widehat{ (\bv+\zeta)\times (v+\zeta) } \\
				 & & \\
			\end{bmatrix}
			\begin{bmatrix}
				1 & \cos\theta & 0 \\
				0 & \sin\theta & 0 \\
				0 & 0 & 1
			\end{bmatrix}^{-1}. 	\]
        Note that $|\V(\tau)|=|v+\z|$ for all $\tau$. We further assume that $\p\O\cap S_{(x, v, \bv, \z)}$ is closed curve, and $\V(\tau) \in \O$ is well-defined for $0 \leq \tau \leq 1$. By convexity \eqref{convex_xi} of $\O$, we can define $\tau_{\pm}(x, v, \bv,\z)$ as 
        \begin{align*}
            \V(\tau_{\pm})\cdot \nabla\xi(\xb(x, \V(\tau_{\pm}))) = 0,\quad \tau_{-} < \tau_{+},\quad \tau_{\pm} = \tau_{\pm}(x, v, \bv, \zeta),
        \end{align*}
        where $\xb(x, \V(\tau))$ is well defined for $\tau_{-} \leq \tau \leq \tau_{+}$. See Figure \ref{fig2}.
	\end{definition}

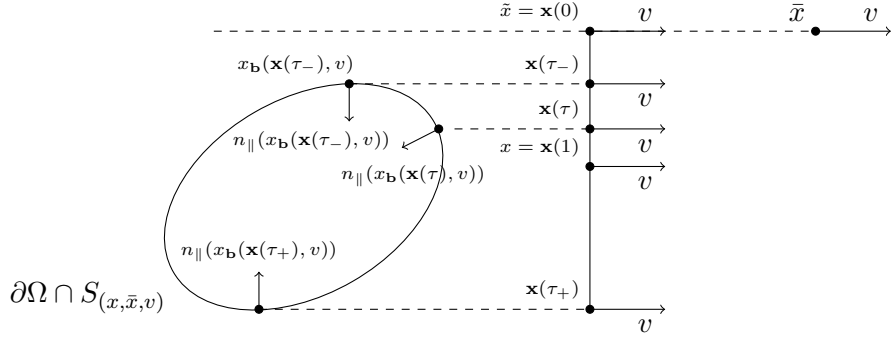
\begin{figure}
	\begin{center}
		\begin{tikzpicture}
			\draw[rotate=30] (-0.7,0.4) ellipse (2cm and 1.3cm);
			\draw (-2.5,-1) node [below left] {$\p\O\cap S_{(x, \bx ,v)}$};
			\filldraw (-0.2,1.5) circle[radius=1.5pt];
			\draw (0,1.5) node [above left] {\tiny{$\xb(\X(\tau_{-}),v)$}};
			\draw[arrows=->] (-0.2,1.5)-- (-0.2,1);
			\draw (-0.7,1) node[below]{\tiny{$n_{\parallel}(\xb(\X(\tau_{-}), v))$}} ;
			\filldraw (-1.4,-1.5) circle[radius=1.5pt];
			\draw[arrows=->] (-1.4,-1.5)-- (-1.4,-1);
			\draw (-1.4,-1) node[above]{\tiny{$n_{\parallel}(\xb(\X(\tau_{+}), v))$}} ;
			\filldraw (0.99, 0.9) circle[radius=1.5pt];
			\draw[arrows=->] (0.99,0.9)-- (0.5,0.65);
			\draw (0.65,0.55) node[below]{\tiny{$n_{\parallel}(\xb(\X(\tau), v))$}} ;
			\draw [dashed](-2,2.2) -- (6,2.2) ;
			\draw [dashed](-0.2,1.5) -- (3,1.5) ;
			\draw [dashed](-1.4,-1.5) -- (3,-1.5) ;
			\draw [dashed](1.2,0.9) -- (3,0.9) ;
			\draw (3,-1.5) -- (3,2.2);
			\draw[arrows=->] (3,0.4)-- node[below right]{$v$} (4,0.4);
			\filldraw (3,0.4) circle[radius=1.5pt];
			\draw (3,0.4) node [above left] {\tiny{$x=\X(1)$}};
			\draw[arrows=->] (3,0.9)-- node[below right]{$v$} (4,0.9);
			\filldraw (3,0.9) circle[radius=1.5pt];
			\draw (3,0.9) node [above left] {\tiny{$\X(\tau)$}};
			\draw[arrows=->] (3,1.5)-- node[below right]{$v$} (4,1.5);
			\filldraw (3,1.5) circle[radius=1.5pt];
			\draw (3,1.5) node [above left] {\tiny{$\X(\tau_{-})$}};
			\draw[arrows=->] (3,-1.5)-- node[below right]{$v$} (4,-1.5);
			\filldraw (3,-1.5) circle[radius=1.5pt];
			\draw (3,-1.5) node [above left] {\tiny{$\X(\tau_{+})$}};
			\draw[arrows=->] (3,2.2)-- node[above right]{$v$} (4,2.2);
			\filldraw (3,2.2) circle[radius=1.5pt];
			\draw (3,2.2) node [above left] {\tiny{$\tilde{x}=\X(0)$}};
			\draw[arrows=->] (6,2.2)-- node[above right]{$v$} (7,2.2);
			\filldraw (6,2.2) circle[radius=1.5pt];
			\draw (6,2.2) node [above left] {$\bar{x}$};
		\end{tikzpicture}
	\end{center}
	\caption{\cite{CD2023} $\tilde{x}$, $\X(\tau)$, and trajectories on projected plane $S_{(x, \bx ;v)} := x+ \text{span}\{ x-\tilde{x}, v\}$} \label{fig1}
\end{figure}

\begin{figure}
	\begin{center}
		\begin{tikzpicture}
			\draw[rotate=30] (-0.6,0.35) ellipse (2cm and 1.3cm);
			\draw[dashed] (3,1.5) circle (1);
			\draw (-2.2,-1) node [below left] {$\p\O\cap S_{(x, v, \bv, \zeta)}$};
			\filldraw (0,1.5) circle[radius=1.5pt];
			\draw (1,1.5) node [above left] {\tiny{$\xb(x, \V(\tau_{-}))$}};
			\draw[arrows=->] (0,1.5)-- (0,1);
			\draw (-.5,1) node[below]{\tiny{$\nabla\xi(\xb(x, \V(\tau_{-})))$}} ;
			\filldraw (1.12,0.84) circle[radius=1.5pt];
			\draw[arrows=->] (1.12,0.84)-- (0.65,0.65);
			\draw (0.75,0.55) node[below]{\tiny{$\nabla\xi(\xb(x, \V(\tau)))$}} ;
			\draw [dashed] (0,1.5) -- (3,1.5) ;
			\draw [dashed] (1.2,0.9)-- (3,1.5);
			\draw (5.3, 1) node{$\bar{v}+\zeta$};
			\draw[arrows=->] (3,1.5)--  (5.1,0.9);
			\draw[arrows=->] (3,1.5)-- (4,1.22);
			\draw (4,1.22) node[below]{\tiny{$\tilde{v}+\zeta = \V(0)$}} ;
			\draw (4,1.5) node[right]{\tiny{$\V(\tau_{-})$}} ;
			\draw[arrows=->] (3,1.5)-- (4,1.5);
			\draw[arrows=->] (3,1.5)-- (3.95,1.8);
			\draw (3.95,1.8) node[right]{\tiny{$\V(\tau)$}} ;
			\draw[arrows=->] (3,1.5)-- (3.8,2.1);
			\draw (3.8,2.1) node [above right] {\tiny{$v+\zeta = \V(1)$}};
			\filldraw (3,1.5) circle[radius=1.5pt];
			\draw (3,1.5) node [above left] {$x$};
		\end{tikzpicture}
	\end{center}
	\caption{\cite{CD2023} $\tilde{v}$, $\V(\tau)$, and  trajectories on projected plane $S_{(x, v, \bv, \zeta)} := x + \text{span}\{ v+\zeta, \bar{v}+\zeta \}$} \label{fig2}
\end{figure}
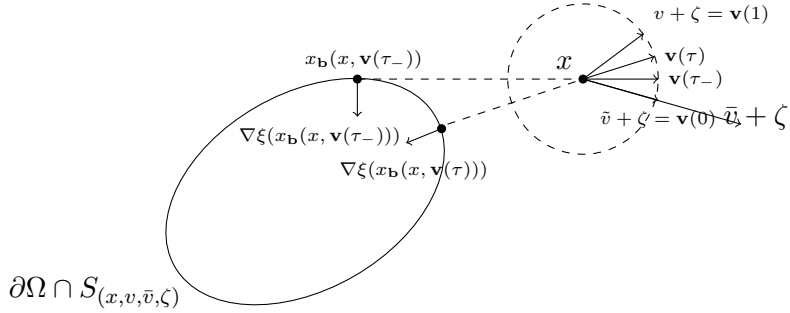

Now, we define the Specular singularity \cite{CD2023} which is crucial one to measure grazing singularity of specular reflection in three-dimensional billiard trajectory.

\begin{definition}[Specular Singularity] \label{def:sing_1}
Let $x, \tilde{x} \in \Omega$, and $v \in \mathbb{R}^3$ be such that $(x - \tilde{x}) \cdot v = 0$. Suppose $\xb(\X(\tau), v)$ is well-defined on $\p\O$. We define 
\begin{align*}
        \mathfrak{S}_{sp}(\tau;x,\tx, v) 
        : = 
		\frac{- \nabla \xi(\xb( \X(\tau), v)) \cdot v  }{
		\left| \frac{\dot{\X}}{|\dot \X|} \cdot \nabla \xi (\xb( \X(\tau),v))\right|}.
\end{align*}

\noindent Let $x\in\O$, $v, \tv, \zeta \in\R^{3}$ be such that $|v+\z|=|\tv+\z|$. Suppose $\xb(x, \V(\tau))$ is well-defined on $\p\O$. We define 
\begin{align*}
\mathfrak{S}_{vel}(\tau;x,  v,  \tilde {v}, \zeta) &: =  
		\frac{- \nabla \xi(\xb(x, \V(\tau))) \cdot \V(\tau)  }{\tb(x, \V(\tau))
		\left| \frac{\dot{\V}(\tau)}{|\dot{\V}(\tau)|}\cdot \nabla \xi (\xb(x , \V(\tau)))\right|
		}.
\end{align*}
\end{definition}

Averaging inverse of the Specular singularities $\mathfrak{S}_{sp}$ and $\mathfrak{S}_{vel}$ (during $\tb < \infty$) is a key step to handle fraction estimates for billiard trajectories. 
\begin{definition} \label{def:sing_2}
         Let $t>0, \,x, \tilde{x} \in \Omega$ and $v \in \mathbb{R}^3$ be such that $(x - \tx) \cdot v = 0$. We define
	\begin{align*}
		\mathcal{T}_{sp}(x, \tx, v;t)  &:=   \fint_{0}^{1} \frac{1}{\mathfrak{S}_{sp}(\tau; x, \tx, v)}   
		\mathbf{1}_{ \Big\{ \substack{
				\tb(\X(\tau), v) < \infty, \ 0\leq \tau \leq 1  
				\\ 
				\min_{0\leq \tau \leq 1  }\tb(\X(\tau), v) \leq t }  \Big\} } d\tau
		\\
		&\quad +  \fint_{\tau_{-}}^{1} \frac{1}{\mathfrak{S}_{sp}(\tau; x, \tx, v)}     
		\mathbf{1}_{ \Big\{ \substack{
				\tb(\X(\tau), v) < \infty, \ {0\leq} \tau_{-}\leq \tau \leq   1
				\\ 
				\min_{ \tau_{-}\leq \tau \leq 1  }\tb(\X(\tau), v) \leq t }  \Big\} } d\tau
		\\
		&\quad +  \fint_{0}^{\tau_{-}} \frac{1}{\mathfrak{S}_{sp}(\tau; x, \tx, v)}  
		\mathbf{1}_{ \Big\{ \substack{
				\tb(\X(\tau), v) < \infty, \ 0\leq \tau \leq \tau_{-}  
				\\ 
				\min_{0\leq \tau \leq \tau_{-}  }\tb(\X(\tau), v) \leq t }  \Big\} } d\tau.
       \end{align*}
	 If $\tb(\X(\tau), v) < \infty$ for $0\leq \tau \leq 1$ or $\tau_{-} \leq \tau \leq 1,\, \tau_{-} \neq 1$ or $0\leq \tau \leq {\tau_{-}},\, \tau_{-} \neq 0$, then $\partial\O \cap S_{(x,\tx,v)}$ is closed curve.(i.e., \( \partial\O \cap S_{(x, \tilde{x}, v)} \) is neither the empty set nor a single point.)
 
    Let $t>0,\,x \in \Omega$ and $v, \tv, \z \in \mathbb{R}^3$ be such that $|v+\z|=|\tv+\z|$.
	We define
	\begin{align*}
		\mathcal{T}_{vel}(x, v, \tv, \zeta;t)  &:= \fint_{0}^{1} \frac{1}{\mathfrak{S}_{vel}(\tau; x, v, \tv, \zeta)} 
		\mathbf{1}_{ \Big\{ \substack{
				\tb(x, \V(\tau)) < \infty, \ 0\leq \tau \leq 1  
				\\ 
				\min_{0\leq \tau \leq 1}\tb(x, \V(\tau)) \leq t }  \Big\} } d\tau  
		\\
		&\quad 
		+  \fint_{\tau_{-}}^{1} \frac{1}{\mathfrak{S}_{vel}(\tau; x, v, \tv, \zeta)}   
		\mathbf{1}_{ \Big\{ \substack{
				\tb(x, \V(\tau)) < \infty, \ 0 \leq \tau_{-}\leq \tau \leq 1  
				\\ 
				\min_{\tau_{-} \leq \tau \leq 1}\tb(x, \V(\tau)) \leq t }  \Big\} } d\tau  
		\\
		&\quad +  \fint_{0}^{\tau_{-}} \frac{1}{\mathfrak{S}_{vel}(\tau; x, v, \tv, \zeta)} 
		\mathbf{1}_{ \Big\{ \substack{
				\tb(x, \V(\tau)) < \infty, \ 0\leq \tau \leq \tau_{-}  
				\\ 
				\min_{0\leq \tau \leq \tau_{-}  }\tb(x, \V(\tau)) \leq t }  \Big\} } d\tau  .
    \end{align*}
     If $\tb(x, \V(\tau)) < \infty$ for $0\leq \tau \leq 1$ or $\tau_{-} \leq \tau \leq 1,\, \tau_{-} \neq 1$ or $0\leq \tau \leq {\tau_{-}},\, \tau_{-} \neq 0$, then {$\partial\O \cap S_{(x,v,\tv,\zeta)}$} is closed curve.(i.e., \( \partial\O \cap S_{(v,\tv,\zeta)} \) is neither the empty set nor a single point.)
    \end{definition}

{The following seminorms represent a central concept in this paper and play a crucial role in the proof of Theorem 1.4. 
To estimate the difference of the solution $f$, we compare the mild formulations at $(x,v)$ and $(\bar{x},\bar{v})$. 
This leads to the differences of $\Gamma_{\mathrm{gain}}(f,f)$ and $\nu(f)$ along the corresponding trajectories, which are estimated using Lemma \ref{lem:est_Gam} and Lemma \ref{lem:diff_nu}. 
In particular, the $u$-integration comes from the Carleman representation used in the estimate of $\Gamma_{\mathrm{gain}}$, while the $s$-integration originates from the Duhamel formula.}
    
    \begin{definition}[Seminorm] \label{def:iter}
    For $x \in \O, \,v \in \mathbb{R}^3$ and $\epsilon>0$, we define
    \begin{align*}
        G(x,v;\epsilon) = \ln\left(1+\frac{1}{|v|}\right)\mathbf{1}_{\{d(x,\partial\O) \leq \epsilon\}}+1
    \end{align*}
    For $t>0, \,\varpi >1$ and $\epsilon>0$, we define
        \begin{align*}
        &\mathfrak{X}(t,\varpi;\epsilon) :=\\
        &\sup_{ \substack{ (x,\bx,v,\bv)\in \O\times \O\times \R^{3}\times \R^{3} \\ 0<|(x,v)-(\bx,\bv)|\leq 1   } }  \left(G(x,v;\epsilon)+G(\bx,\bv;\epsilon)\right)^{-1} e^{-\varpi \langle v \rangle^{2} t} \\
        &\quad\quad\quad\times  \bigintsss_0^{t}  \max\left\{(t-s)^{\frac{1}{2}}, \frac{1}{\langle v \rangle}\right\}\bigintsss_{\mathbb{R}^3}  \left(\frac{e^{-c|u|^2}}{|u|}+|u|e^{-\frac{1}{4}|V(s;t,x,v)+u|^2}\right)\\
        &\quad\quad\quad\times \frac{|f(s,X(s;t,x,v),V(s;t,x,v)+u)-f(s,X(s;t,\bx,\bv),V(s;t,x,v)+u)|}{|X(s;t,x,v)-X(s;t,\bx,\bv)|} du ds 
    \end{align*} 
    and
     \begin{align*}
        &\mathfrak{V}(t,\varpi;\epsilon) :=\\
        &\sup_{ \substack{  (x,\bx,v,\bv)\in \O\times \O\times \R^{3} \times \R^{3} \\ 0<|(x,v)-(\bx,\bv)|\leq 1   } }  G^{-1}(x,v;\epsilon)  e^{-\varpi \langle v \rangle^{2} t} \bigintsss_0^{t}  \bigintsss_{\mathbb{R}^3}  \frac{e^{-c|u|^2}}{|u|}\\
        &\quad\quad\quad\times \frac{|f(s,X(s;t,x,v),V(s;t,x,v)+u)-f(s,X(s;t,x,v),V(s;t,\bx,\bv)+u)|}{|V(s;t,x,v)-V(s;t,\bx,\bv)|} du ds. 
    \end{align*}
   Here, the constant \( c > 0 \) comes from Lemma \ref{lem:est_Gam}. \\
\end{definition}

\subsection{Proof Sketch of Theorem \ref{thm:Holder0.5}} 
The key idea of \cite{CD2023} is to introduce `Specular singularity', which allows one to average the grazing
singularity near the grazing regime. Such a technique enables us to treat velocity integration over fraction of characteristics as grazing singularity integration in velocity side for fixed point \(x\) (and $\bar{x}$), i.e.,
\begin{align} \label{prev idea}
	\int_{\zeta}  {\frac{e^{-c|\zeta|^2}}{|\zeta|}} \frac{|X(s; t,x,v+\zeta)-X(s; t,\bar{x},\bar{v}+\zeta)|^{2\beta}}{|(x,v)-(\bar{x},\bar{v})|^{2\beta}} \lesssim \sup_{x\in\overline{\O}}\int_{\zeta}  {\frac{e^{-c|\zeta|^2}}{|\zeta|}}\frac{1}{| {\widehat{(v+\zeta)}}\cdot \nabla\xi(\xb(x,v+\zeta))|^{2\beta}}.
\end{align}

If the position $x$ is uniformly separated from the boundary, as velocity $v+\zeta$ moves away from the grazing velocity, we quickly escape from the region of grazing singularity. This is a major advantage in singular regime integrability afforded by uniform non-convexity of \(\p\O\). However, as position $x$ approaches the boundary \(\p\O\), the geometric effects of non-convexity gradually disappear, and the boundary exhibits almost flat geometric effects. Therefore, in previous results \cite{CD2023}, it was only possible to obtain \(\beta < \half\) in performing singular regime integration, without utilizing the geometric advantage of this non-convexity. \\

To overcome the previous result \(\beta < \half\) of \cite{CD2023}, we propose a novel idea called `dynamical singular regime integration.'  We will perform singular regime integration by following the characteristic \(X(s;t,x,v)\) for position, instead of fixing location \(x\), and examine whether it is integrable in time. In this case, singular regime integration will likely be possible for most of the time, and the time integration of singularities occurring when the position \(X(s;t,x,v)\) is very close to the boundary will be the main difficulty. The important point is that if velocity \(v\) is close to $0$ and our position is near the boundary, we will be confined to a region where flat geometry is dominant, and in this case, even performing \textit{dynamical singular regime integration} would only yield regularity comparable to that of \cite{CD2023}. Therefore, it is very natural for our new Theorem \ref{thm:Holder0.5} to have a norm structure implying singular behavior near $v=0$. \\

Unfortunately, however, for a general convex object \(\O^{c}\), performing such a dynamical estimate is extremely hard in general.  Instead of direct computation, we will first consider the uniform sphere case to show that explicit dynamical singular regime integration can be performed. Subsequently, we will consider the case of a general uniformly convex object \(\O^c\) and obtain results for general objects through a comparison principle with an outer circumscribing large sphere. The main idea of our paper is explained by the following steps. To convey the main idea more clearly, some formulas are presented in a simplified form.  \\

\textbf{(Step 1: Nonlocal to local estimate)} Fix \( t \in (0, T) \), and let \( x, \bar{x} \in \Omega \), \( v, \bar{v} \in \mathbb{R}^3 \) satisfy
\(|(x, v) - (\bar{x}, \bar{v})| \leq 1\) and \(0<T<1\). Using the mild formulation \eqref{f_expan} along the characteristics and the triangle inequality,
we decompose the difference of the solution evaluated at \((x,v)\) and \((\bx,\bv)\): 
{
\begin{align} 
& |f(t,x,v)-f(t,\bar{x}, \bar{v})|   \label{basic f-f} \\
&\lesssim_{\vartheta_0}   
\left|f(0,X(0 ), V(0 ))-f(0,\overline{X}(0 ), \overline{V}(0 ))\right| 
 \label{basic f-f1} \\
&\quad + \int^t_0 
\left|\Gamma_{\text{gain}}(f,f)(s,X(s ), V(s ))
-\Gamma_{\text{gain}}(f,f)(s,\overline{X}(s ), \overline{V}(s ))\right|  ds 
 \label{basic f-f2} \\
&\quad + (t+1) \mathcal{P}_2(\|w_0f_0\|_{\infty})
\int_{0}^{t } \left| \nu(f) (s, X(s), V(s ))  - \nu(f) (s, \overline{X}(s), \overline{V}(s)) \right| ds, \label{basic f-f4}  
\end{align}
where
\begin{align*}
   (X(s),V(s))= (X(s;t,x,v),V(s;t,x,v))
   \text{\quad and \quad}
   (\bX(s),\bV(s))= (X(s;t,\bx,\bv),V(s;t,\bx,\bv)).
\end{align*}
For notational simplicity, we suppress the dependence on \(\|w_0 f_0\|_\infty\). }

We apply the difference estimates of \( \Gamma_{gain}(f,f) \) and \( \nu(f) \), given in Lemma \ref{lem:est_Gam} and Lemma \ref{lem:diff_nu}, to \eqref{basic f-f2} and \eqref{basic f-f4}. Then, for \(0 \leq s \leq t\), we have
\begin{align} 
        &|f(t,x,v)- f(t,\bx,\bv)| \label{gain_sk}\\
        & {\lesssim_{\vartheta_0} }    \int_0^t \int_{\mathbb{R}^3}  {\frac{e^{-c|u|^2}}{|u|}}
       |f(s,X(s),V(s)+u)-f(s,\bX(s),V(s)+u)| du ds \notag \\
       &\quad+\int_0^t \int_{\mathbb{R}^3}  {\frac{e^{-c|u|^2}}{|u|}}
       |f(s,\bX(s),V(s)+u)-f(s,\bX(s),\bV(s)+u)| du ds + \text{other terms}. \notag 
\end{align}

We observe that the trajectory map \( (x,v) \mapsto X(s;t,x,v) \) is \( C^{0,\frac{1}{2}}_{x,v} \), and so is \( (x,v) \mapsto V(s;t,x,v) \) for \( s \neq t - \tb(x,v) \), outside uniformly convex domains. For example, consider the 2D circle \( x^2 + (y-1)^2 = 1 \) and points near grazing, such as \( x = (1,0) \), \( \bar{x} = (1,\varepsilon) \), and \( v = (1,0) \). Then,
\[
| \xb(x,v) - \xb(\bar{x}, v) | = \sqrt{2\varepsilon - \varepsilon^2} \simeq \sqrt{\varepsilon}, \quad \text{for } \varepsilon \ll 1,
\]
which illustrates the square-root-type sensitivity near grazing. The \( C_{x,v}^{0,\frac{1}{2}} \) estimate is stated in Lemma \ref{lem:tra_0.5}, and we recall it below. Assume \( t_1(t,x,v) \leq t_1(t,\bx,\bv)\). We have
\begin{align} \label{outtb_X}
    |X(s)-\bX(s)|  \lesssim  (1+\langle v \rangle (t-s) )|(x,v)-(\bx,\bv)|^{\frac{1}{2}}.
\end{align}
For \( s \notin [t_1(t,x,v), t_1(t,\bx,\bv)]\), we have
\begin{align} \label{outtb_V}
    |V(s)-\bV(s)|  \lesssim \langle v \rangle|(x,v)-(\bx,\bv)|^{\frac{1}{2}}.
\end{align}
For $s \in [t_1(t,x,v), t_1(t,\bx,\bv)]$, we have 
\begin{align} \label{intb}
    |\tb(x,v)-\tb(\bx,\bv)| \lesssim \min\left\{\frac{1}{|v|}, \frac{1}{|\bv|}\right\}|(x,v)-(\bx,\bv)|^{\frac{1}{2}}.
\end{align} 

We divide \(|(x,v)-( \bx,\bv)|^{\frac{1}{2}}\) and multiply \(e^{-\varpi \langle v \rangle^{2} t}\) to \eqref{gain_sk}:
\begin{align} 
    &e^{-\varpi \langle v \rangle^{2} t} \frac{|f(t,x,v)- f(t,\bx,\bv)|}{|(x,v)-( \bx,\bv)|^{\frac{1}{2}}}\notag\\
    & {\lesssim_{\vartheta_0} }  e^{-\varpi \langle v \rangle^{2} t}\int_0^t \frac{|X(s)-\bX(s)|}{|(x,v)-( \bx,\bv)|^{\frac{1}{2}}}\int_{\mathbb{R}^3}  {\frac{e^{-c|u|^2}}{|u|}}
    \frac{|f(s,X(s),V(s)+u)-f(s,\bX(s),V(s)+u)|}{|X(s)-\bX(s)|} du ds\label{gain_sk_X_1/2}\\
    &\quad +e^{-\varpi \langle v \rangle^{2} t} \int_0^t \int_{\mathbb{R}^3}  {\frac{e^{-c|u|^2}}{|u|}}
    \frac{|f(s,\bX(s),V(s)+u)-f(s,\bX(s),\bV(s)+u)|}{|(x,v)-( \bx,\bv)|^{\frac{1}{2}}} du ds + \text{other terms}.\label{gain_sk_V_1/2}
\end{align}

Next, we estimate \eqref{gain_sk_X_1/2} and \eqref{gain_sk_V_1/2}.
{
In \cite{CD2023}, the seminorms in Definition \ref{def_H} are defined
using only the velocity integration after taking the supremum in the
spatial variable. In contrast, starting from the mild formulation
\eqref{f_expan}, our estimates naturally lead to the seminorms
\(\mathfrak{X}\) and \(\mathfrak{V}\)
in Definition \ref{def:iter}, which involve both the velocity and time
integrations. This structure will later enable the dynamical singular
regime integration along trajectories. However, closing the iteration
requires introducing the weight $G$ and controlling the short
transition interval between the first bounce times, which inevitably
produces a small--velocity singularity. The treatment of this
singularity will be explained below.}

By \eqref{outtb_X} to \eqref{gain_sk_X_1/2}, we derive the seminorm \(\mathfrak{X}(t,\varpi;\epsilon)\), which serves as an upper bound for the difference of the solution: 
\begin{align*}
    (\ref{gain_sk_X_1/2}) 
     {\lesssim_{\vartheta_0} }  \langle v \rangle \mathfrak{X}(t,\varpi;\epsilon)\left(G(x,v;\epsilon)+G(\bx,\bv;\epsilon)\right).
\end{align*}

To estimate \eqref{gain_sk_V_1/2}, we divide the cases \(s \notin [t_1(t,x,v), t_1(t,\bx,\bv)]\) and \(s \in [t_1(t,x,v), t_1(t,\bx,\bv)]\). When \(s \notin [t_1(t,x,v), t_1(t,\bx,\bv)]\), we apply \eqref{outtb_V} to \eqref{gain_sk_V_1/2}. Similarly to the above, we obtain 
\begin{align*} 
    &e^{-\varpi \langle v \rangle^{2} t} \int_0^t \mathbf{1}_{\{s \notin [t_1(t,x,v), t_1(t,\bx,\bv)]\}}\int_{\mathbb{R}^3} {\frac{e^{-c|u|^2}}{|u|}}\frac{
    |f(s,\bX(s),V(s)+u)-f(s,\bX(s),\bV(s)+u)|}{|(x,v)-( \bx,\bv)|^{\frac{1}{2}}} du ds \\
    & {\lesssim_{\vartheta_0} } \langle v \rangle \mathfrak{V}(t,\varpi;\epsilon)G(\bx,\bv;\epsilon).
\end{align*}
When \(s \in [t_1(t,x,v), t_1(t,\bx,\bv)]\), we apply \eqref{intb}. 
To weaken the singularity \( |v|^{-1} \) in \eqref{intb}, we apply \eqref{pro:H_sub} together with the fact $|v| \geq \epsilon$ if \(d(x,\partial\O) \geq \epsilon\); see Lemma \ref{lem:gam_x_0.5}. Then 
\begin{align*}
    &e^{-\varpi \langle v \rangle^{2} t} \int_{t_1(t,x,v)}^{t_1(t,\bx,\bv)} \int_{\mathbb{R}^3}  {\frac{e^{-c|u|^2}}{|u|}} \frac{
    |f(s,\bX(s),V(s)+u)-f(s,\bX(s),\bV(s)+u)|}{|(x,v)-( \bx,\bv)|^{\frac{1}{2}}} du ds \\
    & \lesssim_{ {\vartheta_0,}\delta, \epsilon} \left(\mathbf{A}_{\frac{1}{2}}(f_0)+1 \right)
    \left(\frac{1}{|v|^{\delta}}\mathbf{1}_{\{d(x,\partial\O) \leq \epsilon\}}+\frac{1}{|\bv|^{\delta}}\mathbf{1}_{\{d(\bx,\partial\O) \leq \epsilon\}}+1
    \right)
\end{align*}
for \(0<\delta<1\). The above three inequalities yield
\begin{align} \label{1/2_result_sk}
\begin{split}
    &\langle v \rangle^{-1} e^{-\varpi \langle v \rangle^{2} t}\frac{|f(t,x,v)- f(t,\bx,\bv)|}{|(x,v)-(\bx,\bv)|^{\frac{1}{2}}} \\
    &\lesssim_{ {\vartheta_0}, \delta, \epsilon}  \left( \mathfrak{X}(t,\varpi;\epsilon)+\mathfrak{V}(t,\varpi;\epsilon)+\mathbf{A}_{\frac{1}{2}}(f_0)+1 \right)
    \left(\frac{1}{|v|^{\delta}}\mathbf{1}_{\{d(x,\partial\O) \leq \epsilon\}}+\frac{1}{|\bv|^{\delta}}\mathbf{1}_{\{d(\bx,\partial\O) \leq \epsilon\}}+1
    \right)
\end{split}
\end{align}
since \( G(x,v;\epsilon) \lesssim_{\delta} |v|^{-\delta}\mathbf{1}_{\{d(x,\partial\O) \leq \epsilon\}} \). 
\\

\textbf{(Step 2: Estimate of seminorms \(\mathfrak{X}\) and \(\mathfrak{V}\))} Next, we estimate \(\mathfrak{X}(t,\varpi;\epsilon)\) and \(\mathfrak{V}(t,\varpi;\epsilon)\). If we differentiate the trajectory \( (X(s), V(s)) \) and the backward exit time \( \tb(x,v) \); see \eqref{nabla_x_bv_b}, we find that the quantity \( |\nabla \xi(\xb(x,v)) \cdot v| \) appears, which diverges near grazing. From this, we estimate the difference between the trajectory and the backward time; see Lemma \ref{frac sim S} and Lemma \ref{lem:tra_sin_x}. By analyzing the specular singularity via an ODE, we observe that the entire singular behavior can be controlled by the values at the endpoints; see Section 5.2 of \cite{CD2023}. Thus,
\begin{align} \label{sk_dif_1}
    |(X,V)(s)-(\bX,\bV)(s)|, \quad |\tb(x,v)-\tb(\bx,\bv)| \lesssim \frac{  {|v|^k}}{|\nabla \xi(\xb(x,v))\cdot v|} |(x,v)-(\bx,\bv)|
\end{align}
{for $k=0,1,2.$}

We divide \eqref{gain_sk} by \(|(x,v)-(\bx,\bv)|\), and apply \eqref{sk_dif_1} instead of \eqref{outtb_X}, \eqref{outtb_V} and \eqref{intb}. Then, we obtain
\begin{align}
     &e^{-\varpi \langle v \rangle^{2} t}\frac{|f(t,x,v)- f(t,\bx,\bv)|}{|(x,v)-(\bx,\bv)|} \label{f_diff_t,x,v} \\
     &\lesssim  \left( \mathfrak{X}(t,\varpi;\epsilon)+\mathfrak{V}(t,\varpi;\epsilon)+\mathbf{A}_{\frac{1}{2}}(f_0)+1 \right)\frac{  {|v|^k}}{|\nabla \xi(\xb(x,v))\cdot v|} \notag
\end{align}
instead of \eqref{1/2_result_sk}. This process is detailed in Section 4.  {For convenience of exposition, we restrict ourselves to the case \(k=0\) in what follows.}

{In \cite{CD2023}, the seminorms are obtained by integrating
\eqref{f_diff_t,x,v} with respect to the velocity variable,
so that the singularity appears only in the velocity integration.
In contrast, in our formulation we replace \( (t,x,v) \) by
\( (s,X(s),V(s)+u) \) in \eqref{f_diff_t,x,v} and integrate with respect to
\(u\) and \(s\). As a result, the singularity is expressed through the
dynamical integral \eqref{iter_sk}.}

Applying this substitution and integrating with respect to \(u\) and \(s\),
and using the change of variable \(u \rightarrow w = V(s)+u\), we obtain
\begin{align} \label{iter_sk}
\begin{split}
    &e^{-\varpi \langle v \rangle^{2} t}\int_0^t \int_{\mathbb{R}^3}  {\frac{e^{-c|u|^2}}{|u|}}
    \frac{|f(s,X(s),V(s)+u)-f(s,\bX(s),\bV(s)+u)|}{|(X,V)(s)-(\bX,\bV)(s)|} du ds  \\
    &\lesssim  \int_0^t e^{-\varpi \langle v \rangle^{2} (t-s)}\int_{\mathbb{R}^3}  {\frac{e^{-c|w-V(s)|^2}}{|w-V(s)|}} \frac{ 1}{|\nabla \xi(\xb(X(s),w))\cdot w|} dw ds\\
    &\quad \times \left(\sup_{0 \leq t \leq T}\mathfrak{X}(t,\varpi;\epsilon)+\sup_{0 \leq t \leq T}\mathfrak{V}(t,\varpi;\epsilon)+\mathbf{A}_{\frac{1}{2}}(f_0)+1\right)
\end{split}
\end{align}
; see Lemma \ref{pro:X<1} and Lemma \ref{pro:V<1}. To finish estimating \( \mathfrak{X} \) and \( \mathfrak{V} \), it remains to control the integral of the singular term in \( u \) and \( s \) in the (RHS) of \eqref{iter_sk}.\\

\textbf{(Step 3: Dynamical singular regime integration)}  { Fixing \(x\in\Omega\), we explain how to integrate with respect to \(u\)
the inverse incoming angle
\(|\nabla \xi(\xb(x,u))\cdot \hat{u}|^{-1}\) arising from trajectories
starting at \(x\) and hitting the boundary. 
The difficulty occurs when \(X(s;t,x,u)\) grazes the boundary, since the incoming angle vanishes. 
We denote this quantity by \(\theta\), which measures how close the velocity direction is to the tangential direction of the boundary 
(\(\theta \to 0\) corresponds to grazing trajectories).}

{When the spatial point $x$ lies on the boundary, the singularity behaves
like a non-integrable \(1/\theta\)-type singularity. In our setting,
however, the point \(x\) lies in the interior of the domain. In this case,
the geometry introduces an additional scale proportional to the distance
\(d(x,\partial\Omega)\) from the boundary.}

{To illustrate this effect, we first consider the circle in two
dimensions, where the grazing singularity admits a more explicit form.
Heuristically, the singular integral behaves like
\begin{align*}
    \int \frac{1}{|\nabla \xi(\xb(x,u))\cdot \hat{u}|} du
    \;\sim\;
    \int \frac{1}{\sqrt{\theta(\theta+d(x,\partial\Omega))}} d\theta,
\end{align*}
which is integrable and yields a logarithmic bound; see Lemma \ref{lem:int_cir}. From this, the integral in \eqref{prev idea} becomes integrable at the critical exponent $\beta=\frac{1}{2}$.}

To handle general convex objects, we use a kind of comparison principle. First, we slice the domain with planes passing through the fixed point \( x \), so that each intersection forms a uniformly convex planar curve. On each slice, we identify the grazing point and construct a circumscribed circle with smaller curvature that passes through it. {Once} a trajectory from \((x,v)\) undergoes specular reflection on the circumscribed circle, the trajectory stays near singular regime longer than the convex curve case, which enhances the grazing singularity; see Lemma \ref{lem:angle_com}. By comparing the boundary curve with the circumscribed circle, we bound the original integral by one over the circle. Then, we obtain the following using concavity effect and assuming that \(x\notin \p\O\)
\begin{align*}
         \bigintsss_{\mathbb{R}^{3}}  {\frac{e^{-c|u-v|^2}}{|u-v|}}\frac{ 1}{|\nabla \xi(\xb(x,u))\cdot u|} du  \lesssim \ln\left(1+\frac{1}{d(x,\partial\O)} \right)+1
    \end{align*}
; see Lemma \ref{lem:int_sing}. This means that the singular integral over incoming angles can be controlled by the distance from \( x \) to the boundary.

Now, we perform \textit{dynamical} singular regime integration using above \textit{static} singular regime integration. We perform integration in time \(s\), following the trajectory \( X(s;t,x,v) \) to get  
\begin{align*}
    \int_0^t e^{-\varpi \langle v \rangle^{2} (t-s)} \ln \left( 1+ \frac{1}{d(X(s;t,x,v),  \partial\O)} \right) ds 
    &\lesssim { \int_0^\infty e^{-\varpi \langle v \rangle^2 \tau}\ln \left( 1+ \frac{1}{|v|\tau}\right) d\tau}\\
    &\lesssim \frac{1}{\sqrt{\varpi} {\langle v \rangle}}\left[\ln\left(1+\frac{1}{|v|}\right)+1\right]
\end{align*}
; see Lemma \ref{lem:ds}. This is possible because the distance to the boundary, \( d(X(s),\partial\O) \), is expressed in terms of the time variable \(t, s \) and \(t_1(t,x,v)\), which enables us to estimate the singular integral.

In particular, when $d(x,\partial\O) \geq \epsilon$, we also obtain 
\begin{align*} 
      \bigintsss_0^t e^{-\varpi \langle v \rangle^{2} (t-s)}\ln \left(  1+ \frac{1}{d(X(s),\partial\O)} \right) ds \lesssim_{\epsilon} \frac{1}{\sqrt{\varpi} {\langle v \rangle}}
    \end{align*} 
; see Lemma \ref{lem:ds_2}. This shows that when \( x \) is not close to the boundary, the singular behavior near \( |v| = 0 \) does not appear. 

 {This explains how both the $u$- and $s$-integrations can be controlled
in the dynamical singular regime.}
\\

\textbf{(Step 4: Conclusion)} Applying the integration of singularities from \textbf{(Step 3)} to \eqref{iter_sk}, we obtain
\begin{align}
    &e^{-\varpi \langle v \rangle^{2} t}\int_0^t \int_{\mathbb{R}^3}  {\frac{e^{-c|u|^2}}{|u|}}
    \frac{|f(s,X(s),V(s)+u)-f(s,\bX(s),\bV(s)+u)|}{|(X,V)(s)-(\bX,\bV)(s)|} du ds \notag \\
    &\lesssim_{\epsilon} \frac{1}{\sqrt{\varpi}} \left[\ln\left(1+\frac{1}{|v|}\right)\mathbf{1}_{\{d(x,\partial\O) \leq \epsilon\}}+\ln\left(1+\frac{1}{|\bv|}\right)\mathbf{1}_{\{d(\bx,\partial\O) \leq \epsilon\}}+1 \right] \label{sk_G}\\
    &\quad \times \left(\sup_{0 \leq t \leq T}\mathfrak{X}(t,\varpi;\epsilon)+\sup_{0 \leq t \leq T}\mathfrak{V}(t,\varpi;\epsilon)+\mathbf{A}_{\frac{1}{2}}(f_0)+1\right).\notag 
\end{align}
The methods used to estimate \( \mathfrak{X}(t,\varpi;\epsilon) \) and \( \mathfrak{V}(t,\varpi;\epsilon) \) differ slightly due to the different powers of \( \langle v \rangle \) involved; see Lemma \ref{pro:X<1} and Lemma \ref{pro:V<1}. In particular, when estimating \( \mathfrak{V}(t,\varpi;\epsilon) \), only \( G(\bar{x},\bar{v}) \) appears in \eqref{sk_G}, while for \( \mathfrak{X}(t,\varpi;\epsilon) \), both \( G(x,v) \) and \( G(\bar{x},\bar{v}) \) are present. For convenience of exposition, we present the two cases in a unified form.

By multiplying \( (G(x,v) + G(\bx,\bv))^{-1} \), we deduce the estimate
\begin{align*}
     \mathfrak{X}(t,\varpi;\epsilon)+\mathfrak{V}(t,\varpi;\epsilon) \lesssim_{\epsilon} \mathbf{A}_{\frac{1}{2}}(f_0)+1
\end{align*}
holds for sufficiently large \(\varpi \gg 1\); see Proposition \ref{pro:iter_pro}.

Using the above iterated estimation, we obtain
\begin{align*}
    \langle v \rangle^{-1} e^{-\varpi \langle v \rangle^{2} t}\frac{|f(t,x,v)- f(t,\bx,\bv)|}{|(x,v)-(\bx,\bv)|^{\frac{1}{2}}} 
    \lesssim_{\delta, \epsilon} \left(\mathbf{A}_{\frac{1}{2}}(f_0)+1 \right) \left(\frac{1}{|v|^{\delta}}\mathbf{1}_{\{d(x,\partial\O) \leq \epsilon\}}+\frac{1}{|\bv|^{\delta}}\mathbf{1}_{\{d(\bx,\partial\O) \leq \epsilon\}}+1
        \right)
\end{align*}
from \eqref{1/2_result_sk}. Lastly, we multiply \(W^{-1}((x,v),(\bx,\bv);\epsilon, \delta)\) and take supremum over \(x, \bx \in \O\) and \(v, \bv \in \mathbb{R}^3\). \\

\section{Preliminaries}

We estimate the difference between $\Gamma_{gain}(f,f)$ and $\nu(f)$ evaluated at $(x, v)$ and $(\bar{x}, \bar{v})$, and obtain an upper bound for it. These lemmas are applied in Sections~4, 6, and 7 to estimate the difference between solutions.

\begin{lemma}\label{lem:est_Gam} 
	Let $w(v) = e^{\vartheta|v|^{2}}$ for $0<\vartheta < \frac{1}{4}$. 
    For $t>0, \, x,\bx \in \O$, and $v \in \mathbb{R}^3$, we have
   \begin{align} \label{gamma_x}
    \begin{split}
         &\left|\Gamma_{gain}(f,f)(t,x,v)- \Gamma_{gain}(f,f)(t,\bx,v) \right| \\&\lesssim \|wf(t)\|_{\infty}  \int_{\mathbb{R}^3} \frac{1}{|u|}e^{-c|u|^2}|f (t,x, v+u) - f (t,\bar{x}, v+u)|  du
    \end{split}
    \end{align} 
    for some $c>0$.
    For $t>0, \, x \in \O$, and $v,\bv \in \mathbb{R}^3$, we have
     \begin{align} \label{gamma_v}
    \begin{split}
        &\left|\Gamma_{gain}(f,f)(t,x,v)- \Gamma_{gain}(f,f)(t,x,\bv) \right|\\
        &\lesssim \|wf(t)\|_{\infty}  \int_{\mathbb{R}^3}  
			\frac{1}{|u|}e^{-c|u|^2}| f(t,x, v+u) - f(t,x, \bar{v}+u) |  du \\
        &\quad+\|wf(t)\|_{\infty}^2 \min \left\{ \langle v \rangle^{-1}, \langle \bv \rangle^{-1} \right \} |v-\bv|
    \end{split}
    \end{align} 
     for some $c>0$. For $t>0, \,x \in \O$, and $v \in \mathbb{R}^3$, we have
    \begin{align} \label{gamma_upper}
         \Gamma_{gain}(f,f) (t,x,v)\lesssim_{\vartheta}   \|w f(t) \|^2_{\infty}.
    \end{align}
\end{lemma}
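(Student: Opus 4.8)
The plan is to derive all three estimates from the energy identity $|u'|^{2}+|v'|^{2}=|u|^{2}+|v|^{2}$, which gives $\sqrt{\mu}(u')\sqrt{\mu}(v')=\sqrt{\mu}(u)\sqrt{\mu}(v)$ and hence lets us rewrite $\Gamma_{gain}$ in the symmetric form
\[
\Gamma_{gain}(f,f)(t,x,v)=\int_{\mathbb{R}^{3}}\int_{\mathbb{S}^{2}_{+}}|(v-u)\cdot\omega|\,\sqrt{\mu}(u)\,f(t,x,u')\,f(t,x,v')\,d\omega\,du ,
\]
with $u'=u+((v-u)\cdot\omega)\omega$ and $v'=v-((v-u)\cdot\omega)\omega$. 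I would prove \eqref{gamma_upper} first, to fix the pattern: bound $|f(t,x,u')|\le\|wf(t)\|_{\infty}e^{-\vartheta|u'|^{2}}$ and $|f(t,x,v')|\le\|wf(t)\|_{\infty}e^{-\vartheta|v'|^{2}}$, use the energy identity to rewrite $e^{-\vartheta(|u'|^{2}+|v'|^{2})}=e^{-\vartheta|v|^{2}}e^{-\vartheta|u|^{2}}$, merge with $\sqrt{\mu}(u)=e^{-|u|^{2}/4}$, and integrate the elementary weight $|(v-u)\cdot\omega|\,e^{-\vartheta|v|^{2}}e^{-(\frac14+\vartheta)|u|^{2}}$ over $(u,\omega)$; since this $(u,\omega)$-integral is $\lesssim_{\vartheta}\langle v\rangle$, one gets $\Gamma_{gain}(f,f)(t,x,v)\lesssim\|wf(t)\|_{\infty}^{2}\langle v\rangle e^{-\vartheta|v|^{2}}\lesssim_{\vartheta}\|wf(t)\|_{\infty}^{2}$, which is \eqref{gamma_upper}.

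For \eqref{gamma_x} the collision geometry is frozen (both evaluations share $v$), so the difference equals $\int\!\!\int|(v-u)\cdot\omega|\sqrt{\mu}(u)\big[f(t,x,u')f(t,x,v')-f(t,\bx,u')f(t,\bx,v')\big]\,d\omega\,du$, and I would split the bracket as $[f(t,x,u')-f(t,\bx,u')]f(t,x,v')+f(t,\bx,u')[f(t,x,v')-f(t,\bx,v')]$, bounding the non-difference factor in each summand by $\|wf(t)\|_{\infty}e^{-\vartheta|\,\cdot\,|^{2}}$. In the first summand the free factor is $|f(t,x,u')-f(t,\bx,u')|$, so I apply the Carleman representation adapted to the slot $u'$: for fixed $v$ the admissible pre-collisional velocities lie on a two-plane and $\omega$ is determined by $u'$, so the $(u,\omega)$-integral becomes $\int_{\mathbb{R}^{3}}\frac{1}{|v-u'|}K(v,u')\,|f(t,x,u')-f(t,\bx,u')|\,du'$, where $K$ is the Gaussian obtained by integrating $\sqrt{\mu}(u)e^{-\vartheta|v'|^{2}}$ over the free plane variable; relabelling $u'=v+u$ then yields exactly the claimed kernel $\frac{1}{|u|}K(v,v+u)$, and the second summand is symmetric under $u'\leftrightarrow v'$. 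That $K(v,v+u)\lesssim_{\vartheta}e^{-c|u|^{2}}$ with $c=c(\vartheta)>0$ and with no residual growth in $v$ is where $0<\vartheta<\frac14$ enters: after the plane integration the surviving exponent is (schematically) $-\frac14(v\cdot\hat u+|u|)^{2}-\vartheta(v\cdot\hat u)^{2}$, and a one-variable minimisation over $v\cdot\hat u$ shows this is $\le-c|u|^{2}$ for some $c(\vartheta)>0$ precisely because $\vartheta>0$, while $\vartheta<\frac14$ keeps $w^{2}\sqrt{\mu}$ integrable so the plane integral carries no $v$-dependent prefactor.

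For \eqref{gamma_v} both $|(v-u)\cdot\omega|$ and the post-collisional velocities depend on $v$, so I would first recentre the collision at $v$ via $u\mapsto v+u$,
\[
\Gamma_{gain}(f,f)(t,x,v)=\int_{\mathbb{R}^{3}}\int_{\mathbb{S}^{2}_{+}}|u\cdot\omega|\,\sqrt{\mu}(v+u)\,f\big(t,x,v+u-(u\cdot\omega)\omega\big)\,f\big(t,x,v+(u\cdot\omega)\omega\big)\,d\omega\,du ,
\]
after which only $\sqrt{\mu}(v+u)$ and the two $f$-arguments carry $v$, and $|u\cdot\omega|$ does not. Telescoping the integrand of $\Gamma_{gain}(f,f)(t,x,v)-\Gamma_{gain}(f,f)(t,x,\bv)$ into three pieces --- one replacing only $\sqrt{\mu}$, two replacing a single $f$-factor --- I would treat the two $f$-pieces exactly as in \eqref{gamma_x} (now the substitution $v\mapsto\bv$ is what produces the arguments $v+u$ versus $\bv+u$ after the Carleman step), and for the $\sqrt{\mu}$-piece use $|\sqrt{\mu}(v+u)-\sqrt{\mu}(\bv+u)|\le|v-\bv|\sup_{0\le\theta\le1}|\nabla\sqrt{\mu}((1-\theta)v+\theta\bv+u)|$, bound the two $f$-factors by $\|wf(t)\|_{\infty}e^{-\vartheta|\,\cdot\,|^{2}}$, and perform the Gaussian $(u,\omega)$-integral; the energy bookkeeping again extracts a factor $e^{-\vartheta|v_{\theta}|^{2}}$ with $v_{\theta}=(1-\theta)v+\theta\bv$, and since $|u\cdot\omega|$ costs at most $\langle v_{\theta}\rangle$ one lands on $\|wf(t)\|_{\infty}^{2}|v-\bv|\,\langle v_{\theta}\rangle e^{-\vartheta|v_{\theta}|^{2}}\lesssim_{\vartheta}\|wf(t)\|_{\infty}^{2}\langle v\rangle^{-1}|v-\bv|$; repeating with $v$ and $\bv$ interchanged produces the $\min\{\langle v\rangle^{-1},\langle\bv\rangle^{-1}\}$ in \eqref{gamma_v}.

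The telescopings and Gaussian integrations are routine. The one genuinely delicate step --- and where I expect to spend essentially all the effort --- is the Carleman representation / Grad's lemma: writing its Jacobian correctly (the source of the $1/|u|$ singularity), tracking the $\mathbb{S}^{2}_{+}$ half-sphere convention, and certifying that after absorbing $\sqrt{\mu}(u)$ together with $e^{-\vartheta|v'|^{2}}$ (resp.\ $e^{-\vartheta|u'|^{2}}$) the resulting kernel is genuinely $\lesssim\frac1{|u|}e^{-c|u|^{2}}$ with $c>0$ and \emph{no} leftover polynomial factor in $v$; for \eqref{gamma_v} this must moreover be carried out while the Carleman hyperplane itself rotates as $v$ varies, which makes the clean identification of the arguments $v+u$ versus $\bv+u$ the fiddliest point. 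All of this is classical (Grad's lemma; see e.g.\ \cite{GuoKim_boundary} and references therein), so what the proof requires is careful bookkeeping rather than a new idea.
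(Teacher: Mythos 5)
Your proposal is essentially a from-scratch reconstruction of estimates the paper itself does not reprove: for \eqref{gamma_x} and \eqref{gamma_v} the paper simply cites Lemma 3.2 of \cite{CD2023}, and it only gives a direct argument for \eqref{gamma_upper}, which coincides with yours (bound both factors by $\|wf(t)\|_{\infty}$ times Gaussians, use $|u|^{2}+|v|^{2}=|u'|^{2}+|v'|^{2}$ to rewrite the weight, integrate). Your Grad/Carleman route for the two difference estimates is the standard mechanism behind the cited result, and your accounting of where the hypotheses enter is correct: the one-variable minimisation of $\tfrac14(v\cdot\hat u+|u|)^{2}+\vartheta(v\cdot\hat u)^{2}\ge c(\vartheta)|u|^{2}$ is exactly what removes the $v$-dependence from the kernel, and the telescoping with the mean-value bound on $\sqrt{\mu}(v+u)-\sqrt{\mu}(\bv+u)$, run in both orders, does produce the $\min\{\langle v\rangle^{-1},\langle\bv\rangle^{-1}\}|v-\bv|$ term in \eqref{gamma_v}. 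So the approach is sound and, modulo the classical kernel computation you defer, the plan goes through; what you gain over the paper is self-containedness, while the paper gains brevity by importing the kernel with the stronger exponent $e^{-c|\zeta|^{2}-c(|v|^{2}-|v+\zeta|^{2})^{2}/|\zeta|^{2}}$ from \cite{CD2023} and discarding the cross term.

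One point needs fixing, and it is precisely at the step you flag as delicate. The clean Carleman computation is the one in which the difference sits in the slot whose post-collisional velocity is \emph{parallel} to $\omega$ (the $v'$-slot): there $\omega$ is determined by the kernel argument, the free variable is the two-plane $\zeta\perp\omega$, and integrating $\sqrt{\mu}(u)e^{-\vartheta|u'|^{2}}$ over that plane yields exactly your schematic exponent with no polynomial remainder. If instead you perform the representation ``adapted to the slot $u'$'' literally as written (difference in the $\omega$-perpendicular slot), then $\omega$ is \emph{not} determined by $u'$ (it runs over the great circle orthogonal to $u'-v$), the free variable is the line along $\omega$, and the integral $\int_{\mathbb{R}}|\eta_{1}|e^{-(\frac14+\vartheta)(v\cdot\omega+\eta_{1})^{2}}d\eta_{1}\simeq 1+|v\cdot\omega|$ leaves a factor growing like $1+|v|$ that nothing in the remaining Gaussian (which decays only in components of $v$ orthogonal to $\omega$) can absorb; this would break the claimed $v$-uniform kernel bound. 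The repair is the hard-sphere symmetry you already mention — $|(v-u)\cdot\omega|\,d\omega$ is invariant under the change of $\omega$ that exchanges $u'\leftrightarrow v'$ — but it must be applied so that \emph{both} summands of the telescoping are brought to the $\omega$-parallel slot before the plane integration, rather than treating the $u'$-slot directly and invoking symmetry only for the other term. The same remark applies to the two $f$-difference pieces in your treatment of \eqref{gamma_v} after recentring. With that slot assignment corrected, the rest of the bookkeeping is as routine as you describe.
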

\begin{proof}
Lemma 3.2 in \cite{CD2023} provides \eqref{gamma_x} and \eqref{gamma_v}. Since $|u|^2+|v|^2=|u'|^2+|v'|^2$, it follows that
    \begin{align*}
    \Gamma_{gain}(f,f) (t,x,v) 
    &= \int_{\mathbb{R}^3} \int_{\mathbb{S}^2_{+}} |(v-u)\cdot \omega | \frac{ \sqrt{\mu(u)}}{w(u)w(v)} (w(u')f(t,x,u')) (w(v')f(t,x,v')) d\omega du \\
    &\lesssim_{\vartheta}  \|w f(t) \|^2_{\infty}. 
    \end{align*}
\end{proof}

\begin{lemma} \label{lem:diff_nu}
Let $w(v) = e^{\vartheta|v|^{2}}$ for $0<\vartheta < \frac{1}{4}$. 
    For $t>0, \, x,\bx \in \O$, and $v \in \mathbb{R}^3$, we have
    \begin{align} \label{nu_x}
           |\nu(f)(t,x,v)-\nu(f)(t,\bx,v)| \lesssim \int_{\mathbb{R}^3}  |u| e^{-\frac{1}{4}|u+v|^2} |f(t,x,v+u)-f(t,\bx,v+u)| du.
    \end{align} 
    For $t>0, \, x \in \O$, and $v,\bv \in \mathbb{R}^3$, we have
    \begin{align} \label{nu_v}
        |\nu(f)(t,x,v)-\nu(f)(t,x,\bv)| \lesssim |v-\bv| \|f(t)\|_{\infty}.
    \end{align} 
    For $t>0, \,x \in \O$, and $v \in \mathbb{R}^3$, we have
    \begin{align} \label{nu_upper}
        |\nu(f)(t,x,v)| \lesssim \langle v\rangle \|f(t)\|_{\infty}.
    \end{align}
\end{lemma}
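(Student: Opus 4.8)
\textbf{Plan for the proof of Lemma \ref{lem:diff_nu}.} The collision frequency is the linear functional
\[
\nu(f)(t,x,v) = \int_{\R^3}\int_{\S^2_+} |(v-u)\cdot\omega|\,\sqrt{\mu}(u)\,f(t,x,u)\,d\omega\,du,
\]
so all three estimates reduce to tracking how the kernel $k(v,u):=\int_{\S^2_+}|(v-u)\cdot\omega|\,d\omega$ depends on its arguments and then absorbing Gaussian factors. For \eqref{nu_x} the plan is to change variables $u \mapsto v+u$, so that
\[
\nu(f)(t,x,v)-\nu(f)(t,\bx,v) = \int_{\R^3} k(v,v+u)\,\sqrt{\mu}(v+u)\,\big(f(t,x,v+u)-f(t,\bx,v+u)\big)\,du,
\]
after which I would bound $k(v,v+u) \lesssim |u|$ (indeed $|(v-(v+u))\cdot\omega| = |u\cdot\omega| \le |u|$ and the $\S^2_+$ measure is finite) and keep a portion of the Gaussian: $\sqrt{\mu}(v+u) = e^{-|v+u|^2/4}e^{-|v+u|^2/4} \le e^{-|u+v|^2/4}$. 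This yields exactly the stated right-hand side with the weight $|u|e^{-\frac14|u+v|^2}$. For \eqref{nu_v} I would write the difference as
\[
\nu(f)(t,x,v)-\nu(f)(t,x,\bv) = \int_{\R^3}\int_{\S^2_+}\big(|(v-u)\cdot\omega| - |(\bv-u)\cdot\omega|\big)\sqrt{\mu}(u)\,f(t,x,u)\,d\omega\,du,
\]
use the elementary Lipschitz bound $\big||(v-u)\cdot\omega| - |(\bv-u)\cdot\omega|\big| \le |(v-\bv)\cdot\omega| \le |v-\bv|$, pull $\|f(t)\|_\infty$ out, and note $\int\int \sqrt{\mu}(u)\,d\omega\,du$ is a finite constant. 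For \eqref{nu_upper} the same kernel bound gives $|\nu(f)(t,x,v)| \le \|f(t)\|_\infty \int \int (|v|+|u|)\sqrt{\mu}(u)\,d\omega\,du \lesssim \langle v\rangle \|f(t)\|_\infty$.

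The only subtlety in \eqref{nu_x} is choosing the weight so that the final bound is genuinely Gaussian \emph{in the shifted variable} $u+v$ rather than in $u$ alone; this matters because in the applications (Sections 4, 6, 7) the velocity variable $v$ will itself be a trajectory velocity $V(s)$ that is unbounded, so one needs the decay to be centered correctly. The trick is simply to split $\sqrt{\mu}(u) = e^{-|u|^2/2}$ after translating: once we have set $w = v+u$ as the integration variable, $\sqrt{\mu}(w-v)$ is not what appears — rather we keep $u$ as the displacement and observe $|(v-u)\cdot\omega|$ with $u$ replaced by $v+u$ becomes $|u\cdot\omega|$, while the Gaussian $\sqrt{\mu}(v+u)$ already sits at $v+u$. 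No serious obstacle arises; everything is a one-line kernel estimate plus a change of variables, and \eqref{nu_x} and \eqref{nu_v} are essentially the linear (and hence easier) analogues of the bilinear estimates \eqref{gamma_x} and \eqref{gamma_v} already proved in \cite{CD2023}. I would write the proof in three short displayed computations, one per inequality, cross-referencing Lemma 3.2 of \cite{CD2023} only if a cleaner statement of the translated kernel bound is available there.
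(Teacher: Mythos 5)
Your proposal is correct and follows essentially the same route as the paper's proof: for \eqref{nu_x} a change of variables $u\mapsto v+u$ turning the kernel into $|u\cdot\omega|\le|u|$ with the Gaussian $e^{-\frac14|u+v|^2}$ remaining, for \eqref{nu_v} the pointwise bound $\big||(v-u)\cdot\omega|-|(\bv-u)\cdot\omega|\big|\le|v-\bv|$, and the trivial kernel bound for \eqref{nu_upper}. The only (cosmetic) slip is that since $\mu(w)=e^{-|w|^{2}/2}$ one has $\sqrt{\mu}(v+u)=e^{-\frac14|v+u|^{2}}$ exactly, so no splitting of the Gaussian is needed; your displayed identity $\sqrt{\mu}(v+u)=e^{-|v+u|^{2}/4}e^{-|v+u|^{2}/4}$ is really $\mu(v+u)$, but the inequality you actually use holds (with equality) anyway.
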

\begin{proof}
From the definition of $\nu(f)$, we compute
 \begin{align*}
           |\nu(f)(t,x,v)-\nu(f)(t,\bx,v)|  &=
            \int_{\mathbb{R}^3}  \int_{\mathbb{S}^2_{+}} |(u-v)\cdot \omega |  \sqrt{\mu(u)}  | f(t,x,u)-f(t,\bx,u)| d\omega du \\
            &\lesssim \int_{\mathbb{R}^3}  |u| e^{-\frac{1}{4}|u+v|^2} |f(t,x,v+u)-f(t,\bx,v+u)| du, 
    \end{align*} 
and by using triangle inequality, 
    \begin{align*}
           |\nu(f)(t,x,v)-\nu(f)(t,x,\bv)| \leq
            \int_{\mathbb{R}^3}  \int_{\mathbb{S}^2_{+}} |v-\bv|  \sqrt{\mu(u)} f(t,x,u) d\omega du\lesssim |v-\bv| \|f(t)\|_{\infty}.
    \end{align*}
\end{proof}

We estimate the back exit time \( \tb \) and the difference between the trajectories in a uniformly non-convex domain, as defined in Definition \ref{def:domain}. Assuming \( \tb(x, v) < +\infty \), the following result follows from a direct computation:
		\Be\label{nabla_x_bv_b}
		\begin{split}
			&	\nabla_{x} \tb = \frac{ \nabla \xi(\xb)}{ \nabla \xi (\xb) \cdot v}
			,
			\  \ \
			\nabla_{v} \tb =
			- \tb 	\nabla_{x} \tb,
			\\
			&		\nabla_{x} x_{\mathbf{b}} =  I - \frac{v\otimes n(\xb)}{v\cdot n(\xb)},
			\  \ \ \nabla_{v} x_{\mathbf{b}} = - \tb  \nabla_{x} x_{\mathbf{b}},  \\
			& \nabla_{x} n(\xb) = \frac{1}{|\nabla\xi(\xb)|} \Big( I - n(\xb)\otimes n(\xb) \Big)\nabla^{2}\xi(\xb),  
		\end{split}
		\Ee
and we briefly explain how to derive \eqref{nabla_x_bv_b}. Since $\xb(x,v) = x - v\tb(x,v)$, we compute
\begin{align*}
\nabla_x(\xi(\xb)) = (\nabla_x \xb)^{T}(\nabla_x \xi(\xb))
= \nabla_x \xi(\xb) - \nabla_x \tb( \nabla_x \xi (\xb) \cdot v).
\end{align*}
To obtain $\nabla_x \tb$, we differentiate the identity $\xi(\xb) = 0$ with respect to $x$. Similarly, $\nabla_v \tb$ is derived by differentiating $\xi(\xb)$ with respect to $v$. The expressions for $\nabla_x \xb$ and $\nabla_v \xb$ follow directly from differentiating the formula $\xb(x,v) = x - v\tb(x,v)$. Finally, $\nabla_x n(\xb)$ is obtained by differentiating both sides of the relation $|\nabla \xi(x)| n(x) = \nabla \xi(x)$ with respect to $x$. More details can be found in \cite{Cao-Kim-Lee, GuoKim_boundary}. \\

Now, let us recall some useful results of \cite{CD2023}. First, fraction of characteristics can be controlled by specular singularities defined in Definition \ref{def:sing_1}.

\begin{lemma}[Lemma 5.1 and Lemma 5.3 of \cite{CD2023}] \label{frac sim S}
	Suppose the domain is given as in Definition \ref{def:domain} and \eqref{convex_xi}.  \\
	\noindent	(1) Let $t > 0$, $x, \tilde{x} \in \Omega$, and $v \in \mathbb{R}^3$ be such that $(x - \tilde{x}) \cdot v = 0$. When $\min\{t_1(t,x,v), t_1(t,\tx,v)\} \neq -\infty$ and $\min_{0\leq \tau \leq 1  }\tb(\X(\tau), v) \leq t$, we obtain
        \begin{align} \label{tb-tb_x}
            |\tb(x,v)-\tb(\tx,v)| 
            \leq  \int_{0}^{1} \frac{1}{\mathfrak{S}_{sp}(\tau; x, \tx, v)}   
		\mathbf{1}_{ \Big\{ \substack{
				\tb(\X(\tau), v) < \infty, \ 0\leq \tau \leq 1  
				\\ 
				\min_{0\leq \tau \leq 1  }\tb(\X(\tau), v) \leq t }  \Big\} } d\tau. 
        \end{align}
            For $|x - \tx|\leq 1$, $0 \leq s \leq t$, and either
    \begin{align*}
         s \leq \min\{ t_1(t,x,v ), t_1(t,\tx, v ) \}  \text{\quad or \quad} s > \max\{ t_1(t,x,v ), t_1(t,\tx, v ) \},
    \end{align*}
    we obtain 
	\begin{align} 		\label{est:V/x}
    \begin{split}
		&|V(s;t,x,v) - V(s;t, \tilde x, v)|  \\
		&\leq |x-\tx| \left(|v | + 
		|v |^{2}
		\int_{0}^{1}
		\frac{1}{\mathfrak{S}_{sp}(\tau; x, \tx, v )}\mathbf{1}_{ \Big\{ \substack{
				\tb(\X(\tau), v) < \infty, \ 0\leq \tau \leq 1  
				\\ 
				\min_{0\leq \tau \leq 1  }\tb(\X(\tau), v) \leq t }  \Big\} }
		d\tau \right)
    \end{split}
	\end{align} 
    and
    \begin{align} \label{est:X/x}
    \begin{split}
		& |X(s;t,x,v ) - X(s;t, \tilde x, v )| \\
		&\leq  |x- \tilde x| \left(
		1 + |v |(t-s) + 
		|v |^{2}(t-s)
		\int_{0}^{1}
		\frac{1}{\mathfrak{S}_{sp}(\tau; x, \tx, v )}\mathbf{1}_{ \Big\{ \substack{
				\tb(\X(\tau), v) < \infty, \ 0\leq \tau \leq 1  
				\\ 
				\min_{0\leq \tau \leq 1  }\tb(\X(\tau), v) \leq t }  \Big\} }
		d\tau \right).  
    \end{split}
    \end{align}
	\noindent (2)  Let $t>0,\,x \in \Omega$ and $v, \tv, \z \in \mathbb{R}^3$ be such that $|v+\z|=|\tv+\z|$. When $\min\{t_1(t,x,v+\z), t_1(t,x,\tv+\z)\} \neq -\infty$ and $\min_{0\leq \tau \leq 1  }\tb(x, \V(\tau)) \leq t$, we obtain
    \begin{align} \label{tb-tb_v}
    |\tb(x,v+\z)-\tb(x,\tv+\z)| 
    \leq |v-\tv|\int_{0}^{1} \frac{1}{\mathfrak{S}_{vel}(\tau; x, v, \tv, \zeta)} 
		\mathbf{1}_{ \Big\{ \substack{
				\tb(x, \V(\tau)) < \infty, \ 0\leq \tau \leq 1  
				\\ 
				\min_{0\leq \tau \leq 1}\tb(x, \V(\tau)) \leq t }  \Big\} } d\tau.  
    \end{align}
    For $|v - \tv|\leq 1$, $0 \leq s \leq t$, and either
    \begin{align*}
        s \leq \min\{ t_{1}(t,x,v+\zeta), t_{1}(t,x, \tv+\zeta) \} \text{\quad or \quad}  s > \max\{ t_{1}(t,x,v+\zeta), t_{1}(t,x, \tv+\zeta) \},
    \end{align*}
    we obtain
	\begin{align} \label{est:V/v}
    \begin{split}
		& |V(s;t,x,v+\zeta) - V(s;t, x, \tv+ \zeta)|   \\
		& \lesssim |v-\tv| \left(
		1
		+
		|v+\zeta| (t-s) 
		+
		|v+\zeta|^{2}
		\int_{0}^{1}
		\frac{1}{ \mathfrak{S}_{vel}(\tau; x, v, \tv, \zeta)}	\mathbf{1}_{ \Big\{ \substack{
				\tb(x, \V(\tau)) < \infty, \ 0\leq \tau \leq 1  
				\\ 
				\min_{0\leq \tau \leq 1}\tb(x, \V(\tau)) \leq t }  \Big\} } 
		d\tau \right)
    \end{split}
	\end{align}
    and 
    \begin{align} \label{est:X/v}
    \begin{split}
        	& |X(s;t,x,v+\zeta) - X(s;t, x, \tv+ \zeta)| \\
		& \lesssim |v-\tv| \left(
		(t-s)
		+
		|v+\zeta| (t-s)^{2} 
		+
		|v+\zeta|^{2} (t-s)
		\int_{0}^{1}
		\frac{1}{ \mathfrak{S}_{vel}(\tau; x, v, \tv, \zeta)}	\mathbf{1}_{ \Big\{ \substack{
				\tb(x, \V(\tau)) < \infty, \ 0\leq \tau \leq 1  
				\\ 
				\min_{0\leq \tau \leq 1}\tb(x, \V(\tau)) \leq t }  \Big\} } 
		d\tau\right). 
    \end{split}
    \end{align}
\end{lemma}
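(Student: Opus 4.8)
The plan is to reduce all four families of estimates to the fundamental theorem of calculus along the interpolation paths $\X(\tau)$ and $\V(\tau)$ of Definition \ref{def_tilde}, using the exact differentiation formulas \eqref{nabla_x_bv_b}. For part (1), since $(x-\tx)\cdot v=0$ the segment $\tau\mapsto\X(\tau)$ moves transversally to $v$ with constant velocity $\dot\X=x-\tx$, and wherever the backward exit map is defined the chain rule with \eqref{nabla_x_bv_b} gives $\frac{d}{d\tau}\tb(\X(\tau),v)=\nabla_x\tb(\X(\tau),v)\cdot\dot\X=\frac{\nabla\xi(\xb(\X(\tau),v))\cdot(x-\tx)}{\nabla\xi(\xb(\X(\tau),v))\cdot v}$, whose modulus equals exactly $|x-\tx|\,\mathfrak{S}_{sp}(\tau;x,\tx,v)^{-1}$; this algebraic coincidence is precisely why $\mathfrak{S}_{sp}$ carries $\widehat{\dot\X}$ in its numerator, and since $|x-\tx|\leq|x-\bx|\leq1$, integrating in $\tau$ yields \eqref{tb-tb_x}. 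For \eqref{est:V/x}--\eqref{est:X/x} one differentiates $\tau\mapsto X(s;t,\X(\tau),v)$ and $\tau\mapsto V(s;t,\X(\tau),v)$: for $s>\max\{t_1(t,x,v),t_1(t,\tx,v)\}$, on which both trajectories are the straight segments $\X(\tau)-(t-s)v$ and the constant $v$, these contribute only $|x-\tx|$ and $0$; for $s\leq\min\{t_1(t,x,v),t_1(t,\tx,v)\}$, on which both have undergone their single reflection, the chain rule runs through $\xb(\X(\tau),v)$, $n(\xb)$ and $R_{\xb}v$, and contracting each grazing factor $(\nabla\xi(\xb)\cdot v)^{-1}$ from \eqref{nabla_x_bv_b} against $\widehat{x-\tx}$ again produces $\mathfrak{S}_{sp}^{-1}$, dressed with the powers of $|v|$ and $(t-s)$ carried by the remaining factors and by transport forward to time $s$; integrating $\int_0^1|\dot\X|\,d\tau=|x-\tx|$ then gives the two inequalities.

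The one genuinely delicate point --- the step I expect to be the main obstacle --- is that the exit map $\X(\tau)\mapsto\xb(\X(\tau),v)$ exists only on the shadow interval $[\tau_-,\tau_+]$ cut out by the grazing condition, and that $\tau\mapsto\tb(\X(\tau),v)$, together with the corresponding maps for $X$ and $V$, is merely $C^{0,1/2}$ at $\tau_\pm$, where $\frac{d}{d\tau}\tb$ diverges, so the identities above do not hold verbatim on all of $[0,1]$. I would handle this as in Section 5.2 of \cite{CD2023}: uniform convexity \eqref{convex_xi} forces the tangent to $\p\O\cap S_{(x,\tx,v)}$ at a grazing point to be parallel to $v$, and the resulting parabolic geometry of the ray--boundary intersection shows that $\mathfrak{S}_{sp}^{-1}$ --- hence also $\frac{d}{d\tau}\tb$ and the $\tau$-derivatives of $X$ and $V$ --- carries at worst an integrable $|\tau-\tau_\pm|^{-1/2}$ singularity at $\tau_\pm$; consequently each derivative identity extends to an absolutely continuous one on $[\tau_-,\tau_+]$ and the fundamental theorem of calculus still applies. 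The remainder is bookkeeping: one splits $[0,1]$ according to the position of $[\tau_-,\tau_+]$, and on the portions lying outside the shadow $\tb=+\infty$ while $X$ and $V$ follow a grazing-free, smooth trajectory that contributes nothing --- this is the origin of the three-term form of $\mathcal{T}_{sp}$ in Definition \ref{def:sing_2} and of the indicator constraints $\{\tb(\X(\tau),v)<\infty,\ \min_\tau\tb(\X(\tau),v)\leq t\}$, the latter retaining only bounces that occur within the time horizon $[0,t]$.

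For part (2) the scheme is identical with $\V(\tau)$ in place of $\X(\tau)$: now $\V(\tau)$ traces the arc $\{|w|=|v+\z|\}$ from $\tv+\z$ to $v+\z$, $\dot\V(\tau)$ is tangential of constant length $|\dot\V|=\theta|v+\z|$, and one differentiates $\tb(x,\V(\tau))$, $X(s;t,x,\V(\tau))$ and $V(s;t,x,\V(\tau))$ in $\tau$. The one structural difference is that differentiation in the velocity slot invokes $\nabla_v\tb=-\tb\,\nabla_x\tb$ and $\nabla_v\xb=-\tb\,\nabla_x\xb$ from \eqref{nabla_x_bv_b}, so each grazing factor now comes multiplied by an extra $\tb(x,\V(\tau))$ --- precisely the $\tb(x,\V(\tau))$ sitting in the denominator of $\mathfrak{S}_{vel}$ --- while the lower-order terms become $(t-s)$ and $|v+\z|(t-s)^2$ as in \eqref{est:V/v}--\eqref{est:X/v}, and the prefactor $|v-\tv|$ in \eqref{tb-tb_v}, \eqref{est:V/v}, \eqref{est:X/v} is, up to a harmless constant, the arc length $\int_0^1|\dot\V(\tau)|\,d\tau$. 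The grazing-endpoint analysis is unchanged --- the parabolic geometry now makes $\mathfrak{S}_{vel}^{-1}$ carry at worst an integrable $|\tau-\tau_\pm|^{-1/2}$ singularity across $\tau_\pm$ --- and the same three-way split of $[0,1]$ produces the three terms of $\mathcal{T}_{vel}$ in Definition \ref{def:sing_2}.
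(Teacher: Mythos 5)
Your proposal is correct and follows essentially the same route as the paper, which simply invokes Lemma 5.1 and Lemma 5.3 of \cite{CD2023}: there, too, the estimates are obtained by differentiating $\tb$, $X$, $V$ along the shift paths $\X(\tau)$, $\V(\tau)$ via \eqref{nabla_x_bv_b}, recognizing the resulting grazing factor as $\mathfrak{S}_{sp}^{-1}$ (resp.\ $\mathfrak{S}_{vel}^{-1}$, the extra $\tb$ coming from $\nabla_v\tb=-\tb\,\nabla_x\tb$), and integrating across the grazing parameters using the $|\tau-\tau_\pm|^{-1/2}$ integrability supplied by uniform convexity (the ODE analysis recalled before Lemma \ref{prop_avg S}). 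The only cosmetic deviations are that your FTC argument yields \eqref{tb-tb_x} with the additional (favorable) factor $|x-\tx|\le 1$ and yields \eqref{tb-tb_v} up to an absolute constant (arc length versus chord), which is all that is ever used downstream.
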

\begin{proof}
The estimates of the differences between $\tb$ at perturbed $x$ and $v+\z$, given in \eqref{tb-tb_x} and \eqref{tb-tb_v}, come from Lemma 5.3 in \cite{CD2023}, which are obtained using $\nabla_x \tb$. Next, $\nabla_{x,v} V(s;t,x,v)$ and $\nabla_{x,v} X(s;t,x,v)$ were calculated for $s \leq t_1(t,x,v)$, when $t_1(t,x,v) \neq -\infty$, based on $\nabla_x \tb, \nabla_x \xb$, and $\nabla_x n(\xb)$ in \eqref{nabla_x_bv_b}. Using $\nabla_{x,v} V(s;t,x,v)$ and $\nabla_{x,v} X(s;t,x,v)$, the estimates \eqref{est:V/x}, \eqref{est:X/x}, \eqref{est:V/v}, and \eqref{est:X/v} were obtained in Lemma 5.1 of \cite{CD2023}.
\end{proof}

Let us recall uniform seminorm estimates of \cite{CD2023}. We note that improved form of seminorms in this paper is $\mathfrak{X}$ and $\mathfrak{V}$ in Definition \ref{def:iter}.
    \begin{definition}[Seminorm in \cite{CD2023}]\label{def_H}
	For $s, \varpi_1 > 0$ and $0<\beta<\frac{1}{2}$, we define
	\begin{equation} \notag
		\begin{split}
			\mathfrak{H}^{2\b}_{sp}(s,\varpi_1) &:=  \sup_{\substack{(x,\bx,v)\in \O \times \O \times \R^{3}  \\ 0<|x-\bx|\leq 1}} e^{ - \varpi_1 \langle v  \rangle^{2} s } 
			\int_{\mathbb{R}^3} \frac{1}{|\z|}e^{-c|\z|^2} \frac{ | f(s, x, v+\zeta) - f(s, \bx, v+\zeta) | }{ | x - \bx |^{2\b} } d\zeta,
		\end{split}
	\end{equation} 
	\begin{equation} \notag
		\begin{split}
			\mathfrak{H}^{2\b}_{vel}(s,\varpi_1) &:= \sup_{\substack{(x,v,\bv)\in {\O}\times \mathbb{R}^3 \times \R^3  \\ 0<|v-\bv|\leq 1}} e^{ - \varpi_1 \langle v  \rangle^{2} s } 
			\int_{\mathbb{R}^3} \frac{1}{|\z|}e^{-c|\z|^2}\frac{ | f(s, x, v+\zeta) - f(s, x, \bv+\zeta) | }{ | v - \bv |^{2\b} } d\zeta.
		\end{split}
	\end{equation}
	Here, the constant \( c > 0 \) comes from Lemma \ref{lem:est_Gam}.
\end{definition}

The following uniform estimates for seminorms defined in Definition \ref{def_H}, are crucial ingredients for the Theorem in \cite{CD2023}. Note that $\b < \half$ in the case.
\begin{proposition}[Proposition 6.7 of \cite{CD2023}] \label{pro:H}
	There exists $\varpi_1 \gg \mathcal{P}_2(\|w_0f_0\|_{\infty})$, depending on $\beta$ and $\vartheta_0$, such that
	\begin{align*}
		 \sup_{0\leq s \leq T_1}\mathfrak{H}_{sp}^{2\b}(s, \varpi_1) + \sup_{0\leq s \leq T_1}\mathfrak{H}_{vel}^{2\b}(s, \varpi_1)  \lesssim_{\vartheta_0,\beta} 	\mathbf{A}_{\b}(f_0)
		+ \|w_{0} f_{0}\|_{\infty}
	\end{align*}
	for sufficiently small $T_1>0$ such that $\varpi_1 T_1 \ll 1$ and $0<\beta<\half$.
\end{proposition}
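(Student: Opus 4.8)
\textbf{Proof plan for Proposition \ref{pro:H}.}

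The plan is to run a Gr\"onwall-type fixed point argument for the seminorms $\mathfrak{H}^{2\b}_{sp}$ and $\mathfrak{H}^{2\b}_{vel}$ simultaneously, using the mild formulation \eqref{f_expan}. Fix $(x,\bx,v)$ with $0<|x-\bx|\le 1$ (the velocity case is analogous with $(x,v,\bv)$). First I would write the difference $f(s,x,v+\z)-f(s,\bx,v+\z)$ using \eqref{f_expan} along the two billiard trajectories emanating from $(x,v+\z)$ and $(\bx,v+\z)$, and split into (i) the contribution of the initial data $f_0$, (ii) the $\Gamma_{\text{gain}}$ integral, and (iii) the $\nu(f)f$ (loss) integral, plus the discrepancy in the exponential damping factors $e^{-\int \nu}$. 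The initial-data term is controlled using $\mathbf{A}_\b(f_0)$ together with the trajectory H\"older bounds \eqref{est:X/x}--\eqref{est:V/x} from Lemma \ref{frac sim S}: the argument of $f_0$ at $s=0$ differs by $|X(0)-\bX(0)|$ in position and $|V(0)-\bV(0)|$ in velocity, and one divides by $|x-\bx|^{2\b}$. The exponential discrepancy term is handled by the elementary inequality $|e^{-a}-e^{-b}|\le|a-b|$ combined with the Lipschitz-in-$x$ estimate \eqref{nu_x} for $\nu(f)$ and the $L^\infty$ bound \eqref{nu_upper}, absorbing it into the $\nu$-loss term.

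Second, for the $\Gamma_{\text{gain}}$ and $\nu$ terms I would apply the difference estimates from Lemma \ref{lem:est_Gam} (inequality \eqref{gamma_x}) and Lemma \ref{lem:diff_nu} (inequality \eqref{nu_x}) with the spatial arguments $X(s;t,x,v+\z)$ and $X(s;t,\bx,v+\z)$. This reduces each term to an integral over $s\in[0,t]$ and over an auxiliary velocity variable $u$ of a kernel times $|f(s,X(s),w)-f(s,\bX(s),w)|$ where $w=V(s;t,x,v+\z)+u$. Here is the crucial geometric step: the two trajectories carry, for $s$ in the reflected portion, \emph{different} velocities $V(s)\ne\bV(s)$, so one needs to also reparametrize or interpolate the velocity shift; this is precisely where the shifted-variable machinery (Definition \ref{def_tilde}) and the specular-singularity averages $\mathcal{T}_{sp}$, $\mathcal{T}_{vel}$ enter. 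One writes $|f(s,X(s),V(s)+u)-f(s,\bX(s),\bV(s)+u)|$ via a triangle inequality through the intermediate point $(\bX(s),V(s)+u)$, bounding the position-difference piece by $\mathfrak{H}^{2\b}_{sp}(s,\varpi_1)|X(s)-\bX(s)|^{2\b}$ and the velocity-difference piece by $\mathfrak{H}^{2\b}_{vel}(s,\varpi_1)|V(s)-\bV(s)|^{2\b}$. Then \eqref{est:X/x} and \eqref{est:V/x} bound $|X(s)-\bX(s)|^{2\b}$ and $|V(s)-\bV(s)|^{2\b}$ by $|x-\bx|^{2\b}$ times a power of the specular-singularity average $\mathcal{T}_{sp}$, and the key is that $\int_{\R^3}\mathcal{T}_{sp}(\cdots)\,e^{-c|\z|^2}\,d\z$ (the velocity integral of the inverse specular singularity weighted by the Gaussian) is \emph{finite} for $\b<\tfrac12$ — this is the singular-regime integrability that forces the restriction $\b<\tfrac12$ in \cite{CD2023}. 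After dividing by $|x-\bx|^{2\b}$ and multiplying by $e^{-\varpi_1\langle v\rangle^2 s}$, one takes the supremum over $(x,\bx,v)$ to obtain, schematically,
\begin{align*}
\mathfrak{H}^{2\b}_{sp}(s,\varpi_1)+\mathfrak{H}^{2\b}_{vel}(s,\varpi_1)
&\lesssim_{\vartheta_0,\b} \mathbf{A}_\b(f_0)+\|w_0f_0\|_\infty \\
&\quad + \mathcal{P}_2(\|w_0f_0\|_\infty)\int_0^s e^{-\varpi_1(\langle v\rangle^2)(s-\tau)}\big(\mathfrak{H}^{2\b}_{sp}(\tau,\varpi_1)+\mathfrak{H}^{2\b}_{vel}(\tau,\varpi_1)\big)\,d\tau,
\end{align*}
where the $\mathcal{P}_2(\|w_0f_0\|_\infty)$ factor collects the $\|wf(t)\|_\infty$ and $\|wf(t)\|_\infty^2$ constants from Lemmas \ref{lem:est_Gam}--\ref{lem:diff_nu} via the a priori bound of Lemma \ref{lem:loc}.

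Third, I would close the estimate: choosing $\varpi_1\gg\mathcal{P}_2(\|w_0f_0\|_\infty)$ makes the $s$-integral factor strictly less than, say, $\tfrac12$ on a time interval $[0,T_1]$ with $\varpi_1 T_1\ll1$ (the exponential weight $e^{-\varpi_1\langle v\rangle^2(s-\tau)}$ together with the $\max\{(t-s)^{1/2},\langle v\rangle^{-1}\}$-type gains absorbs the powers of $\langle v\rangle$ coming from $\nu$ and from the trajectory estimates — this is why $\varpi_1$ is taken large relative to a polynomial in $\|w_0f_0\|_\infty$ rather than just $>0$). Taking the supremum over $s\in[0,T_1]$ and absorbing the self-referential term to the left-hand side yields the claimed bound. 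The main obstacle is the second step: ensuring that after passing through the intermediate point $(\bX(s),V(s)+u)$ the resulting bounds $|X(s)-\bX(s)|^{2\b}$ and $|V(s)-\bV(s)|^{2\b}$, when integrated in $\z$ against the Gaussian kernel, produce a \emph{finite} constant — i.e.\ verifying the singular-regime integrability of $\mathcal{T}_{sp}$ and $\mathcal{T}_{vel}$ and checking that all powers of $\langle v\rangle$ generated along the way are defeated by the Gaussian in $u$ and the exponential weight $e^{-\varpi_1\langle v\rangle^2 s}$; this is exactly the step that breaks at $\b=\tfrac12$ and motivates the refined seminorms $\mathfrak{X},\mathfrak{V}$ of the present paper.
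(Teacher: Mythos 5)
You should first note that the paper does not actually prove Proposition \ref{pro:H}: it is imported verbatim from \cite{CD2023} (Proposition 6.7 there), the only internal comment being that replacing the kernel $k_c$ by $|\zeta|^{-1}e^{-c|\zeta|^2}$ does not affect that proof. Your sketch is, in outline, a faithful reconstruction of the cited argument: Duhamel decomposition of $f(s,x,v+\zeta)-f(s,\bx,v+\zeta)$ with $|e^{-a}-e^{-b}|\le|a-b|$ for the damping factors, the difference estimates \eqref{gamma_x} and \eqref{nu_x}, trajectory control through the shifted variables and the averaged specular singularities (Lemma \ref{frac sim S}, Lemma \ref{prop_avg S}), and closing by taking $\varpi_1\gg\mathcal{P}_2(\|w_0f_0\|_\infty)$, $\varpi_1 T_1\ll 1$ to absorb the self-referential term --- the same mechanism this paper reuses in Lemmas \ref{pro:X<1}--\ref{pro:V<1}.

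Two points in your plan need repair. (i) Your stated ``key'', that $\int_{\R^3}\mathcal{T}_{sp}(\cdots)\,e^{-c|\zeta|^2}d\zeta$ is finite for $\beta<\tfrac12$, is wrong as written: that integral does not involve $\beta$ at all, and it is \emph{not} finite uniformly in $x$ --- it degenerates like $\ln\left(1+\tfrac{1}{d(x,\p\O)}\right)$ as $x\to\p\O$ (cf.\ Lemma \ref{lem:int_sing} and the remark following Theorem \ref{thm:Holder0.5}). Since the seminorms take a supremum over all $x\in\O$, what must be integrated is the $2\beta$-th power of the fraction bound, i.e.\ of the inverse grazing quantity as in \eqref{prev idea}; uniform-in-$x$ integrability then holds precisely because $2\beta<1$, and this is exactly what breaks at $\beta=\tfrac12$ and motivates the present paper. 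With the first power, your closing estimate would fail for $x$ near the boundary. (ii) Your triangle inequality through the intermediate point $(\bX(s),V(s)+u)$ only covers the time ranges where both trajectories are pre-bounce or both post-bounce; on the mismatched interval between the two bounce times, $|V(s)-\bV(s)|$ is of order $|v|$ and neither seminorm applies, so that contribution must instead be bounded by the $L^\infty$ bound \eqref{gamma_upper} times the length $|\tb(x,v+\zeta)-\tb(\bx,v+\zeta)|$, which is again controlled by $\mathcal{T}_{sp}$ (this is how both \cite{CD2023} and, e.g., Lemma \ref{lem:ga_X} here treat it). With these two corrections your plan coincides with the cited proof.
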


 {
\begin{proof}
	[Brief sketch of the proof] This is one of the main proposition in \cite{CD2023}. To obtain the result, we first estimate the difference of characteristic $(X,V)(s;t,\bar{x},\bar{v})-(X,V)(s;t,{x},{v})$ in terms of specular singularity $\mathfrak{S}_{sp}$ and $\mathfrak{S}_{vel}$ defined in Definition \ref{def:sing_1}. Using crucial geometric properties of uniform convexity of $\O$, we can perform averaging of specular singularities $\int_{0}^{1} \mathfrak{S}_{sp, vel}^{-1}(\tau)$ to obtain 
	\begin{equation*}
		\begin{split}
					&\frac{|X(s;t,x,v) - X(s;t,\bar{x},v)|}{|x-\bar{x}|} 
				\lesssim \fint \frac{|v|^2(t-s)}{\mathfrak{S}_{sp}(\tau;x,\tilde{x},v)}d\tau  \\
				&\lesssim \Bigg[	\frac{1}{|(v+\zeta)\cdot \nabla\xi(\xb(x,v+\zeta))|} + \frac{1}{|(v+\zeta)\cdot \nabla\xi(\xb(\tilde{x},v+\zeta))|} \Bigg]\times |v|^2(t-s)
		\end{split}
	\end{equation*}
	(We also get similar estimates for other three cases :  $|X(s;t,x,v) - X(s;t,x,\bar{v})|/|v-\bar{v}|$, $|V(s;t,x,v) - V(s;t,x,\bar{v})|/|v-\bar{v}|$, and $|V(s;t,x,v) - V(s;t,\bar{x},{v})|/|x-\bar{x}|$) Meanwhile, let us use apply Duhamel expansion (of the Boltzmann equation along the characteristic $(X(s;t,x,v), V(s;t,x,v))$) to obtain fraction estimates $|f(t,x,v+\zeta) - f(t,\bar{x}, {v}+\zeta)|/|x-\bar{x}|^{2\b}$. Multiplying by $\frac{1}{|\zeta|}e^{-c|\zeta|^2}$ and integrating with respect to $\zeta\in \R^3$, we obtain the following estimates of $\mathfrak{H}^{2\b}_{sp}(s, \varpi_1)$ :
	\begin{equation*}
		\begin{split}
			\mathfrak{H}^{2\b}_{sp}(t, \varpi_1) &\leq \big( \mathbf{A}_{\b}(f_0)
			+ \|w_{0} f_{0}\|_{\infty} \big) \\
			&\quad + \int_{0}^{t}e^{-\varpi_1\langle v \rangle^2(t-s)} \sup_{x,\bar{x}}\int_{\zeta}  \frac{e^{-c|\zeta|^2}}{|\zeta|} \Bigg[ \fint d\tau \frac{|v|^2(t-s)}{\mathfrak{S}_{sp}(\tau;x,\tilde{x},v)} \Bigg]^{2\b} \big(  \mathfrak{H}^{2\b}_{sp}(s, \varpi_1) + \mathfrak{H}^{2\b}_{vel}(s, \varpi_1)\big) ds
		\end{split}
	\end{equation*}
	In \cite{CD2023}, the authors treated singular regime integral $\int_{\zeta}  \Big[ \fint d\tau \frac{|v|^2(t-s)}{\mathfrak{S}_{sp}(\tau;x,\tilde{x},v)} \Big]^{2\b}$ non-dynamically and this is why we put $\sup_{x,\bar{x}}$ in front of the singular regime integral. The worst case happens when $x$ (or $\bar{x}$) locates on the boundary in which case the billiard characteristics does not undergo geometric concave effect. For such case, the best we can hope is
	\begin{equation*}
	\begin{split}
		&\int_{\zeta} \frac{e^{-c|\zeta|^2}}{|\zeta|}  \Bigg[ \fint d\tau \frac{|v|^2(t-s)}{\mathfrak{S}_{sp}(\tau;x,\tilde{x},v)} \Bigg]^{2\b} \\
		&\quad \lesssim |v|^{4\b}(t-s)^{2\b} \int_{\zeta} \frac{e^{-c|\zeta|^2}}{|\zeta|} 
		\Bigg[	\frac{1}{|(v+\zeta)\cdot \nabla\xi(\xb(x,v+\zeta))|^{2\b}} + \frac{1}{|(v+\zeta)\cdot \nabla\xi(\xb(\tilde{x},v+\zeta))|^{2\b}} \Bigg] < \infty
	\end{split}
	\end{equation*}
	when $\b < \frac{1}{2}$ since $\nabla\xi(\xb(x,v+\zeta)) \eqsim n(x)$ when $x\in\p\O$. Similar estimate can be done for $\mathfrak{H}^{2\b}_{vel}(t,\varpi_1)$ and we choose sufficiently large $\varpi_1$ to derive smallness from $\int_{0}^{t}e^{-\varpi_1\langle v \rangle^2(t-s)}ds$. This proves Proposition \ref{pro:H}.
\end{proof}
} 

Actually, in \cite{CD2023}, $\mathfrak{H}^{2\b}_{sp}(s,\varpi_1)$ and $\mathfrak{H}^{2\b}_{vel}(s,\varpi_1)$ are defined using the kernel
\begin{align*}
	k_c(v,v+\z) = \frac{1}{|\z|}e^{ - c|\zeta|^{2} - c \frac{ | |v|^{2}-|v+\zeta|^{2} |^{2} }{|\zeta|^{2}} }
\end{align*}
instead of $|\z|^{-1}e^{-c|\z|^2}$. However, even if we replace $k_c(v,v+\z)$ with $|\z|^{-1}e^{-c|\z|^2}$, as we did $\mathfrak{H}_{sp}^{2\b}(s)$ and $\mathfrak{H}_{vel}^{2\b}(s)$ above, Proposition \ref{pro:H} still holds. This can be verified from the proof of \cite{CD2023}. For $0 \leq s \leq T_1,\,x \in \O,\, v, \bv \in \mathbb{R}^3$ such that $|v-\bv| \leq 1$, and $0 <\beta<\half$, we have
\begin{align} \label{pro:H_sub}
\begin{split}
	&e^{-\varpi_1\langle v \rangle^{2}s}\int_{\mathbb{R}^3} \frac{1}{|\z|}e^{-c|\z|^2}| f(s, x, v+\zeta) - f(s, x, \bv+\zeta)| d\zeta \\
	&\lesssim_{\vartheta_0,\beta} | v - \bv |^{2\b} \left( \mathbf{A}_{\beta}(f_0) +  \|w_{0} f_{0}\|_{\infty} \right) \lesssim | v - \bv |^{2\b} \left( \mathbf{A}_{\frac{1}{2}}(f_0) +  \|w_{0} f_{0}\|_{\infty} \right)
\end{split}
\end{align}
from Proposition \ref{pro:H}. We will later use \eqref{pro:H_sub} in the proofs of Lemma \ref{lem:ga_V} and Lemma \ref{lem:gam_x_0.5}.  \\

\section{Difference estimates of characteristics}

\subsection{Spatial variation}

In Section 3.1, we analyze trajectory estimates under spatial variation based on Lemma \ref{frac sim S}-(1). For \( x, \bar{x} \in \Omega \) and \( v \in \mathbb{R}^3 \), we assume \eqref{assume_x}. Then, there exists \( \tilde{x} \in \Omega \) defined in \eqref{def_tildex} such that \( (x - \tilde{x}) \cdot v = 0 \) and \( \tilde{x} - \bar{x} \) is parallel to \( v \). Since the singularity appears only in the variation between \( x \) and \( \tx \), and not between \( \tx \) and \( \bar{x} \), we accordingly divide the analysis into singular and nonsingular parts. If assumption \eqref{assume_x} does not hold, we may simply set \( x = \tilde{x} \), so that only the nonsingular part needs to be considered. Before we begin, note that the singularity forms \( \mathcal{T}_{sp}(x, \tilde{x}, v; t) \) defined in Definition \ref{def:sing_2} will be used.  \\

  \begin{lemma}[Singular part of spatial variation]
    \label{lem:tra_sin_x} Let $t > 0$, $x, \tilde{x} \in \Omega$, and $v \in \mathbb{R}^3$ be such that $(x - \tilde{x}) \cdot v = 0$.
        
    \noindent (1) For $0 \leq s \leq t$, we obtain
		\begin{align*}
        |X(s;t,x,v ) - X(s;t, \tx, v )| \leq |x- \tx| \left(1 + |v |(t-s) + 
		|v |^{2}(t-s)
		\mathcal{T}_{sp}(x, \tx, v;t) \right).
        \end{align*} 
        
     \noindent   (2) When $\min\{t_1(t,x,v), t_1(t,\tx,v)\} \geq 0$, we obtain
        \begin{align*}        
		|V(s;t,x,v) - V(s;t, \tx, v)| \leq |x- \tx| \left(|v | + 
		|v |^{2}\mathcal{T}_{sp}(x, \tx, v;t)\right)
        \end{align*} 
        for $s \in [0,\min\{t_1(t,x,v), t_1(t,\tx,v)\}] \cup [\max\{t_1(t,x,v), t_1(t,\tx,v)\}, t]$. The above inequality also holds for $0 \leq s \leq t$ when $\min\{t_1(t,x,v), t_1(t,\tx,v)\} =-\infty$.
        \end{lemma}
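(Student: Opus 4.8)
The plan is to deduce Lemma \ref{lem:tra_sin_x} from Lemma \ref{frac sim S}-(1) by replacing, in the estimates \eqref{est:X/x} and \eqref{est:V/x}, the bare integral
\[
\int_0^1 \frac{1}{\mathfrak{S}_{sp}(\tau;x,\tx,v)} \mathbf{1}_{\left\{ \substack{\tb(\X(\tau),v)<\infty,\ 0\le \tau\le 1 \\ \min_{0\le\tau\le 1}\tb(\X(\tau),v)\le t} \right\}} \, d\tau
\]
by the full averaged singularity $\mathcal{T}_{sp}(x,\tx,v;t)$ of Definition \ref{def:sing_2}. Since $\mathcal{T}_{sp}$ is by definition a \emph{sum of three} nonnegative averaged integrals (over $[0,1]$, over $[\tau_-,1]$, and over $[0,\tau_-]$), and the first of those three terms is exactly $\fint_0^1 \frac{1}{\mathfrak{S}_{sp}} \mathbf{1}_{\{\cdots\}}\, d\tau = \int_0^1 \frac{1}{\mathfrak{S}_{sp}}\mathbf{1}_{\{\cdots\}}\,d\tau$ (here $\fint_0^1 = \int_0^1$ because $b-a=1$), the bare integral appearing in Lemma \ref{frac sim S}-(1) is dominated termwise by $\mathcal{T}_{sp}(x,\tx,v;t)$. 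Hence both \eqref{est:X/x} and \eqref{est:V/x} immediately upgrade to the claimed bounds. This handles part (1) for all $0\le s\le t$ directly, and part (2) on the range $s\le \min\{t_1(t,x,v),t_1(t,\tx,v)\}$ or $s>\max\{t_1(t,x,v),t_1(t,\tx,v)\}$ under the hypothesis that $\min\{t_1(t,x,v),t_1(t,\tx,v)\}\neq -\infty$; the sub-case $\min\{t_1,t_1\}=-\infty$ means neither trajectory hits the boundary, the billiard maps are affine in $(x,v)$, and $|V(s)-V(s)|=|v-v|=0\le$ RHS trivially (indeed $V(s;t,x,v)=v$ on the whole interval), so there is nothing to prove there.

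The one genuine gap to fill is the range $s \in \big(\min\{t_1(t,x,v),t_1(t,\tx,v)\},\, \max\{t_1(t,x,v),t_1(t,\tx,v)\}\big)$ in part (2), which is \emph{not} covered by Lemma \ref{frac sim S}-(1): on this intermediate time window one of the two trajectories has already reflected while the other has not, so $V(s;t,x,v)$ and $V(s;t,\tx,v)$ differ by roughly the full reflection jump. The strategy here is to interpose the velocity just before reflection: for $s$ in this window, $|V(s;t,x,v)-V(s;t,\tx,v)|$ is controlled by $|v - R_{\xb(\cdot)}v|$-type quantities, which in turn — after using that $R_x v - v = -2(n(x)\cdot v)n(x)$ and that near the grazing point $n\cdot v$ is small — is bounded by a constant times $|\tb(x,v)-\tb(\tx,v)|\,|v|^2$ together with the change in the normal direction over the arc $\X(\tau)$. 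Both of these are controlled by $|x-\tx|\big(|v| + |v|^2 \mathcal{T}_{sp}(x,\tx,v;t)\big)$: the first via the $\tb$-difference estimate \eqref{tb-tb_x} (whose RHS is precisely the first term of $\mathcal{T}_{sp}$), and the second via $\nabla_x n(\xb)$ from \eqref{nabla_x_bv_b} combined with the $\xb$-difference, which is again of the same form. So the intermediate window gets absorbed into the same right-hand side, possibly after adjusting the implied constant; this is where the split of $\mathcal{T}_{sp}$ into the two partial-range pieces $\fint_{\tau_-}^1$ and $\fint_0^{\tau_-}$ becomes useful, since exactly one of $x,\tx$ (the one whose $\X$-parameter lies on the appropriate side of $\tau_-$) controls the near-reflection behavior on this window.

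The main obstacle I anticipate is precisely this intermediate time window in part (2): one must argue carefully that the difference of velocities there — which is order one in the "jump" sense — is nonetheless quantitatively small, of size $|x-\tx|(|v|+|v|^2\mathcal{T}_{sp})$, because the window $|t_1(t,x,v)-t_1(t,\tx,v)| = |\tb(x,v)-\tb(\tx,v)|$ over which the discrepancy persists is itself small, and during that window the reflected velocity is close to the incident velocity because both are near-grazing (this is the same geometric mechanism that gives only $C^{1/2}$ and not $C^1$ regularity of the trajectory map). Everything else is bookkeeping: matching the three pieces of $\mathcal{T}_{sp}$ against the single integral in Lemma \ref{frac sim S}-(1), noting $\fint_0^1=\int_0^1$, and checking that the degenerate cases ($\min\{t_1,t_1\}=-\infty$, or \eqref{assume_x} failing so $x=\tx$) are trivial.
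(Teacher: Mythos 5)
Your reduction to Lemma \ref{frac sim S}-(1) covers less of the statement than you claim, and the two places where it falls short are exactly where the paper has to work. First, part (1) does not follow ``directly'' from \eqref{est:X/x}: that estimate is only valid for $s\le \min\{t_1(t,x,v),t_1(t,\tx,v)\}$ or $s>\max\{t_1(t,x,v),t_1(t,\tx,v)\}$, whereas part (1) asserts the bound for \emph{all} $0\le s\le t$, including the intermediate window on which one trajectory has already reflected and the other has not. On that window the paper bounds $|X(s;t,x,v)-X(s;t,\tx,v)|$ by a triangle inequality through the boundary points $\xb(x,v)$ and $\xb(\tx,v)$, reducing matters to $|v|\,|\tb(x,v)-\tb(\tx,v)|$ and then to \eqref{tb-tb_x}; your proposal never addresses this window for $X$. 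Second, you misread the degenerate case: $\min\{t_1(t,x,v),t_1(t,\tx,v)\}=-\infty$ means that \emph{at least one} of the two trajectories never meets $\p\O$, not that both do. The nontrivial situation is precisely when, say, $\tb(\tx,v)=\infty$ while $t_1(t,x,v)\ge 0$, so that $V(s;t,x,v)=R_{\xb(x,v)}v\neq v=V(s;t,\tx,v)$ for $s<t_1(t,x,v)$; there Lemma \ref{frac sim S}-(1) does not apply as stated, and the paper handles it by replacing $\tx$ with the grazing point $\X(\tau_-)$ on the segment and invoking the partial-range averages $\fint_{\tau_-}^{1}$ and $\fint_{0}^{\tau_-}$ that constitute the second and third pieces of $\mathcal{T}_{sp}$ --- this, not bookkeeping, is why $\mathcal{T}_{sp}$ is defined as a sum of three pieces. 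Your assertion that in this case the velocities coincide and ``there is nothing to prove'' is simply false.

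Conversely, the ``main obstacle'' to which you devote your second and third paragraphs --- estimating $|V(s;t,x,v)-V(s;t,\tx,v)|$ on the intermediate window $(\min\{t_1\},\max\{t_1\})$ when both trajectories reflect --- is not claimed by the lemma: part (2) explicitly restricts $s$ to $[0,\min\{t_1\}]\cup[\max\{t_1\},t]$. Moreover, the absorption you sketch cannot work in general: for a near head-on reflection the jump $|R_{\xb(x,v)}v-v|=2|n(\xb(x,v))\cdot v|$ is of order $|v|$, while $\mathfrak{S}_{sp}^{-1}\lesssim |v|^{-1}$ along the segment (no grazing occurs there), so the right-hand side $|x-\tx|\,(|v|+|v|^2\mathcal{T}_{sp}(x,\tx,v;t))$ is only $O(|x-\tx|\,|v|)$, which is much smaller than the jump when $|x-\tx|\ll 1$. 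This is precisely why the statement excludes that window, so your proposal both omits the arguments the lemma actually requires and attempts one that fails.
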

        \begin{proof}
      To prove (1), we divide the cases into (a), (b), and (c).

       \textbf{(a)} Assume $0 \leq \min\{t_1(t,x,v), t_1(t,\tx,v)\} \leq  \max\{t_1(t,x,v), t_1(t,\tx,v)\} \leq t$. \\
       For $s \in [0,\min\{t_1(t,x,v), t_1(t,\tx,v)\}]$, we have \eqref{est:X/x}. For $s \in [\max\{t_1(t,x,v), t_1(t,\tx,v),t]$, we have $|X(s;t,x,v)-X(s;t,\tx,v)|=|x-\tx|$. For $s \in [\min\{t_1(t,x,v), t_1(t,\tx,v)\},\max\{t_1(t,x,v), t_1(t,\tx,v)\}]$, we have
       \begin{align} \label{X-X_min_max}
       \begin{split}
           &|X(s;t,x,v)-X(s;t,\tx,v)| \\
           &\leq |X(s;t,x,v)-\xb(x,v)|+|X(s;t,\tx,v)-\xb(\tx,v)|+|\xb(x,v)-\xb(\tx,v)| \\
           &\leq |v||t_1(t,x,v)-s| + |v||t_1(t,\tx,v)-s|+|v||\tb(x,v)-\tb(\tx,v)|+|x-\tx| \\
           &\leq |x-\tx|+3|v||\tb(x,v)-\tb(\tx,v)| \\
           &\leq |x-\tx|+3|v|\fint_{0}^{1} \frac{1}{\mathfrak{S}_{sp}(\tau; x, \tx, v)}   
		\mathbf{1}_{ \Big\{ \substack{
				\tb(\X(\tau), v) < \infty, \ 0\leq \tau \leq 1  
				\\ 
				\min_{0\leq \tau \leq 1  }\tb(\X(\tau), v) \leq t }  \Big\} } d\tau
       \end{split}
       \end{align}
       by using \eqref{tb-tb_x}.

      \textbf{(b)} Assume $\min\{t_1(t,x,v), t_1(t,\tx,v)\} \leq 0 \leq  \max\{t_1(t,x,v), t_1(t,\tx,v)\} \leq t$. \\
      For $s \in [\max\{t_1(t,x,v), t_1(t,\tx,v)\} ,t]$, we have $|X(s;t,x,v)-X(s;t,\tx,v)|=|x-\tx|$. Next, we let $s \in [0, \max\{t_1(t,x,v), t_1(t,\tx,v)\}]$.
      
      When $\min\{t_1(t,x,v), t_1(t,\tx,v)\} \neq -\infty$, we can use the same arguments as \eqref{X-X_min_max}.
      
      When $t_1(t,x,v) \neq -\infty,\, t_1(t,\tx,v) = -\infty$, there exists $\tau_{-} \in (0,1)$ such that
       \begin{align*}
           |X(s;t,x,v)-X(s;t,\tx,v)| =|X(s;t,x,v)-X(s;t,\X(\tau_{-}),v)|.
       \end{align*}
       By replacing $\tx$ by $\X(\tau_{-})$ in \eqref{X-X_min_max}, we obtain
       \begin{align*}
           &|X(s;t,x,v)-X(s;t,\X(\tau_{-}),v)| \\
           &\leq 
           |x-\tx|+3|v|\fint_{\tau_{-}}^{1} \frac{1}{\mathfrak{S}_{sp}(\tau; x, \tx, v)}     
		\mathbf{1}_{ \Big\{ \substack{
				\tb(\X(\tau), v) < \infty, \ 0 \leq \tau_{-}\leq \tau \leq   1
				\\ 
				\min_{\tau_{-}\leq \tau \leq 1  }\tb(\X(\tau), v) \leq t }  \Big\} } d\tau.
       \end{align*}
       We can use the same arguments above, when $t_1(t,x,v) = -\infty,\, t_1(t,\tx,v) \neq -\infty$. 

      When $t_1(t,x,v)=t_1(t,\tx,v)= -\infty$, we have $|X(s;t,x,v)-X(s;t,\tx,v)|=|x-\tx|$ for $s \in [0,t]$.

      \textbf{(c)} Assume $\max\{t_1(t,x,v), t_1(t,\tx,v)\} \leq 0$. Then $|X(s;t,x,v)-X(s;t,\tx,v)|=|x-\tx|$ for $s \in [0,t]$. \\ 
      
       Next, we prove (2), dividing the cases into (d), (e), and (f).
       
       \textbf{(d)} When $\min\{t_1(t,x,v), t_1(t,\tx,v)\} \geq 0$, we have \eqref{est:V/x} for $s \in [0,\min\{t_1(t,x,v), t_1(t,\tx,v)\}]$ and $|V(s;t,x,v)-V(s;t,\tx,v)|=0$ for $s \in [\max\{t_1(t,x,v), t_1(t,\tx,v)\},t]$. 

      \textbf{(e)} Assume $t_1(t,x,v) \neq -\infty,\, t_1(t,\tx,v) = -\infty$.
   
      When $t_1(t,x,v) \leq 0$, we have $|V(s;t,x,v)-V(s;t,\tx,v)|=0$ for $s \in [0,t]$.

       When $t_1(t,x,v) \geq 0$, there exists $\tau_{-} \in (0,1)$ such that
       \begin{align*}
           |V(s;t,x,v)-V(s;t,\tx,v)| =|V(s;t,x,v)-V(s;t,\X(\tau_{-}),v)|
       \end{align*}
       and $t_1(t,\X(\tau_{-}),v) \leq t_1(t,x,v)$. For $s \leq t_1(t,\X(\tau_{-}),v)$, we obtain
        \begin{align*}
           &|V(s;t,x,v)-V(s;t,\X(\tau_{-}),v)| \\
            &\leq |x-\tx|\left(|v|+|v|^2\fint_{\tau_{-}}^{1} \frac{1}{\mathfrak{S}_{sp}(\tau; x, \tx, v)}     
		\mathbf{1}_{ \Big\{ \substack{
				\tb(\X(\tau), v) < \infty, \,0\leq \tau_{-}\leq \tau \leq   1
				\\ 
				\min_{\tau_{-}\leq \tau \leq 1  }\tb(\X(\tau), v) \leq t }  \Big\} } d\tau\right)
       \end{align*}
       by replacing $\tx$ with $\X(\tau_{-})$ in \eqref{est:V/x}.
      For \( t_1(t,\X(\tau_{-}),v) \leq s \leq t_1(t,x,v) \), the above inequality still holds because the quantity $|V(s; t, x, v) - V(s; t, \X(\tau_{-}), v)|$
        remains unchanged throughout this interval. Indeed, both velocities remain constant: 
        \[
        V(s; t, \X(\tau_{-}), v) = v, \quad \text{and} \quad V(s; t, x, v) = R_{\xb(x,v)} v,
        \]
        just as in the case \( s \leq t_1(t, \X(\tau_{-}), v) \).
         For $t_1(t,x,v) \leq s$, $|V(s;t,x,v)-V(s;t,\X(\tau_{-}),v)|=0$.
        We can use the same arguments above, when $t_1(t,x,v) = -\infty,\, t_1(t,\tx,v) \neq -\infty$.

        \textbf{(f)} Assume $t_1(t,x,v) =t_1(t,\tx,v) = -\infty$. Then $|V(s;t,x,v)-V(s;t,\tx,v)|=0$ for $s \in [0,t]$.
        \end{proof}
        
In the nonsingular part, the particles share the same spatial trajectory, differing only in their time parameters. This fact provides intuitive insight into the following lemma.
        
    \begin{lemma}[Nonsingular part of spatial variation] \label{lem:tra_nsin_x} Let \( t > 0 \), \( \tilde{x}, \bar{x} \in \Omega \), and \( v \in \mathbb{R}^3 \) be given such that \( \tilde{x} - \bar{x} \) is either parallel or anti-parallel to \( v \).

    \noindent (1) When $  
			\tb(\tx, v) < \infty$ and $\tb(\bx,v) < \infty$, we obtain
        \begin{align*}
            |\tb(\tx,v)-\tb(\bx,v)| 
        = |\tx-\bx|\frac{1}{|v|}.
        \end{align*}
        
    \noindent (2) For $0 \leq s \leq t$, we obtain
		\begin{align*} 
        |X(s;t,\tx,v ) - X(s;t, \bx, v )| \leq |\tx-\bx|.
        \end{align*}
        
    \noindent (3) When $\min\{t_1(t,x,v), t_1(t,\tx,v)\} \geq 0$, we obtain
        \begin{align*}
       |V(s;t,\tx,v) - V(s;t, \bx, v)|=0.
        \end{align*}
        for $s \in [0,\min\{t_1(t,\tx,v), t_1(t,\bx,v)\}] \cup [\max\{t_1(t,\tx,v), t_1(t,\bx,v)\},t]$. The above inequality also holds for $0 \leq s \leq t$ when $t_1(t,\tx,v)=t_1(t,\bx,v) =-\infty$
        \end{lemma}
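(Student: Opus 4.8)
The plan is to reduce all three parts to one geometric observation: since $\tx-\bx$ is parallel or anti-parallel to $v$, the points $\tx$ and $\bx$ lie on a common line $L$ of direction $\hat v$, so the two backward billiard paths $s\mapsto X(s;t,\tx,v)$ and $s\mapsto X(s;t,\bx,v)$ trace out the same polygonal path in $\R^3$ at the common speed $|v|$, differing only by a constant shift of the time parameter. For part (1) I would first argue that the two backward rays exit $\O$ through the same point. Write $\tx=\bx+\lambda\hat v$ with $|\lambda|=|\tx-\bx|$ (here $v\neq0$ since $\tb(\bx,v)<\infty$). By uniform convexity $L\cap\overline{\mathcal{O}}$ is a single segment, possibly degenerate or empty; if it were empty, or if $\tx$ and $\bx$ lay on opposite sides of it, then one of the backward rays $\{x-\tau v:\tau\geq0\}$ would move away from $\mathcal{O}$ and never return, giving $\tb=\infty$ and contradicting the hypothesis. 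Hence $L\cap\overline{\mathcal{O}}$ is a nonempty segment with $\tx$ and $\bx$ on the same side, and both backward rays leave $\O$ through the endpoint nearer to them; in particular $\xb(\tx,v)=\xb(\bx,v)$. Subtracting $\xb(\tx,v)=\tx-\tb(\tx,v)v$ and $\xb(\bx,v)=\bx-\tb(\bx,v)v$ then gives $\tx-\bx=\big(\tb(\tx,v)-\tb(\bx,v)\big)v$, and taking norms yields $|\tb(\tx,v)-\tb(\bx,v)|=|\tx-\bx|/|v|$, which is (1). I would also record $R_{\xb(\tx,v)}=R_{\xb(\bx,v)}$ for later use.

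For part (2), both trajectories traverse, at speed $|v|$, a common polygonal path consisting of an incoming segment into the shared exit point $\xb(\bx,v)$ followed by the reflected segment, and by part (1) the times at which the two particles visit a given point of this path differ by the constant $|\tb(\tx,v)-\tb(\bx,v)|$. Thus, for each fixed $s\in[0,t]$, the points $X(s;t,\tx,v)$ and $X(s;t,\bx,v)$ lie on this path at arc-length separation exactly $|v|\,|\tb(\tx,v)-\tb(\bx,v)|=|\tx-\bx|$, and since Euclidean distance never exceeds arc-length separation we get $|X(s;t,\tx,v)-X(s;t,\bx,v)|\leq|\tx-\bx|$. (One can also check this directly by splitting into $s>\max\{t_1(t,\tx,v),t_1(t,\bx,v)\}$, $s<\min\{t_1(t,\tx,v),t_1(t,\bx,v)\}$, and $s$ between the two exit times, the corner at $\xb$ producing the inequality in the middle range; the arc-length viewpoint just packages this.)

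For part (3), along the polygonal path the velocity is $v$ on the incoming segment and $R_{\xb(\bx,v)}v$ on the reflected segment, and both values coincide for the two trajectories because the exit point is shared. Hence on $[0,\min\{t_1(t,\tx,v),t_1(t,\bx,v)\}]$ both velocities equal $R_{\xb(\bx,v)}v$ and on $[\max\{t_1(t,\tx,v),t_1(t,\bx,v)\},t]$ both equal $v$, so the difference vanishes on that union; and when $t_1(t,\tx,v)=t_1(t,\bx,v)=-\infty$ neither backward ray meets $\p\O$, so $V(\cdot;t,\tx,v)\equiv v\equiv V(\cdot;t,\bx,v)$ on $[0,t]$. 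I expect the only non-routine step to be the geometric bookkeeping in part (1) — that finiteness of both backward exit times forces $\xb(\tx,v)=\xb(\bx,v)$ — which relies on convexity of $\mathcal{O}$ (a line meets $\overline{\mathcal{O}}$ in a single segment) together with the fact that two exterior points on such a line with finite backward exit times must lie on the same side of that segment; everything else follows immediately from the explicit trajectory formulae and the elementary arc-length inequality.
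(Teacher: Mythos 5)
Your proposal is correct, and its backbone is the same as the paper's: both rest on the observation that the two backward trajectories share the exit point, $\xb(\tx,v)=\xb(\bx,v)$, from which (1) follows by subtracting $\tx-\tb(\tx,v)v=\bx-\tb(\bx,v)v$, and (3) follows because both reflected velocities are $R_{\xb(\bx,v)}v$. Where you differ is in part (2) and in the level of justification. The paper handles the interval between the two exit times by the triangle inequality through $\xb$, which (using (1)) only yields $|X(s;t,\tx,v)-X(s;t,\bx,v)|\leq 4|\tx-\bx|$ there, not the constant $1$ stated in the lemma (harmless for the paper, since the lemma is only ever used up to $\lesssim$); your time-shift/arc-length argument — both particles run along one common polygonal path at speed $|v|$ with constant arc-length offset $|v|\,|\tb(\tx,v)-\tb(\bx,v)|=|\tx-\bx|$, and chord length never exceeds arc length — gives the stated bound $\leq|\tx-\bx|$ exactly, uniformly in $s$, with no case splitting. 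You also supply the convexity bookkeeping showing that finiteness of both backward exit times forces the common exit point (the line meets $\overline{\mathcal{O}}$ in a single segment and both points lie on the same side of it), a fact the paper simply asserts. So your route is a mild sharpening and completion of the paper's argument rather than a different method; it buys the clean constant and a self-contained justification of $\xb(\tx,v)=\xb(\bx,v)$, at the cost of introducing the arc-length parametrization.
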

        \begin{proof}
        Since $\xb(\tx,v)=\xb(\bx,v)$, $\tx-|v|\tb(\tx,v)=\bx-|v|\tb(\bx,v)$ holds, and thus we obtain (1). For $s \in [0,\min\{t_1(t,\bx,v), t_1(t,\bx,v)\}] \cup [\max\{t_1(t,\bx,v), t_1(t,\bx,v)\}, t]$, we obtain
        \begin{align} \label{txbx_X,V}
            |X(s;t,\tx,v)-X(s;t,\bx,v)|=|\tx-\bx|, \quad  |V(s;t,\tx,v)-V(s;t,\bx,v)|=0.
        \end{align}
        For $s \in [\min\{t_1(t,\bx,v), t_1(t,\bx,v)\}, \max\{t_1(t,\bx,v), t_1(t,\bx,v)\}]$, we obtain
        \begin{align*}
            &|X(s;t,\tx,v)-X(s;t,\bx,v)| \\
            &\leq |X(s;t,\tx,v)-\xb(\tx,v)|
            +|X(s;t,\bx,v)-\xb(\bx,v)|+|\xb(\tx,v)-\xb(\bx,v)| \\
            &\leq |\tx-\bx|+3|v||\tb(\tx,v)-\tb(\bx,v)| =4|\tx-\bx|.
        \end{align*}
        The above inequality also holds for $s \in [0,\max\{t_1(t,\bx,v), t_1(t,\bx,v)\}]$
        when $\min\{t_1(t,\bx,v), t_1(t,\bx,v)\} \leq 0$ and $\min\{t_1(t,\bx,v), t_1(t,\bx,v)\} \neq -\infty$. If $\min\{t_1(t,\bx,v), t_1(t,\bx,v)\} = -\infty$, $t_1(t,\bx,v)=t_1(t,\bx,v)=-\infty$ holds, we have \eqref{txbx_X,V} for $s \in [0,t]$. Therefore, we get (2) and (3).
        \end{proof}

 To estimate the difference in solution via Duhamel's formula \eqref{f_expan}, we decompose it into \eqref{basic f-f}-\eqref{basic f-f4}.  For the initial term \eqref{basic f-f1}, we prove Lemma~\ref{lem:s=0_spec_x}, which is used later in Lemmas \ref{pro:X<1} and~\ref{pro:V<1}.

    \begin{lemma}\label{lem:s=0_spec_x}
           Let $t>0,\,x, \bx \in \Omega$ and $v \in \mathbb{R}^3$. Then, we obtain
        \begin{align*} &|f(0,X(0;t,x,v),V(0;t,x,v))-f(0,X(0;t,\bx,v),V(0;t,\bx,v))|\\
         &\lesssim |x-\bx|\left( \mathbf{A}_{\frac{1}{2}}(f_0)+\|w_0 f_0\|_{\infty} \right)(1+t)\left(1+ 
		|v |+(|v|+|v|^2)\mathcal{T}_{sp}(x, \tx, v;t)\right).  
        \end{align*} 
    \end{lemma}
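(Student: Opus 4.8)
The plan is to reduce the claim to the $C^{0,1/2}$-type H\"older regularity of the initial datum composed with the trajectory map, using the decomposition $x\mapsto\tilde x\mapsto\bar x$ together with the singular/nonsingular split that has just been set up. First I would write
\begin{align*}
&|f(0,X(0;t,x,v),V(0;t,x,v))-f(0,X(0;t,\bx,v),V(0;t,\bx,v))|\\
&\le |f(0,X(0;t,x,v),V(0;t,x,v))-f(0,X(0;t,\tx,v),V(0;t,\tx,v))|\\
&\quad+|f(0,X(0;t,\tx,v),V(0;t,\tx,v))-f(0,X(0;t,\bx,v),V(0;t,\bx,v))|,
\end{align*}
where $\tx=\tx(x,\bx,v)$ is the shifted position from Definition \ref{def_tilde}, so that $(x-\tx)\cdot v=0$ and $\tx-\bx$ is parallel to $v$ (if assumption \eqref{assume_x} fails one simply takes $x=\tx$ and only the second term survives). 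Then on each piece I would insert $\mathbf{A}_{1/2}(f_0)$ via Definition \ref{def_initial}: for the spatial displacement the bound is $\langle v\rangle^{-1}|X(0;t,\cdot,v)-X(0;t,\cdot,v)|^{1/2}\mathbf{A}_{1/2}(f_0)$ after checking the arguments are within distance $1$ (which forces splitting into the ``close'' regime where $\mathbf{A}_{1/2}$ applies directly and a ``far'' regime handled crudely by $\|w_0f_0\|_\infty$), and for the velocity displacement the bound is $\langle v\rangle^{-2}|V(0;t,\cdot,v)-V(0;t,\cdot,v)|^{1/2}\mathbf{A}_{1/2}(f_0)$; since $|V(0;t,\cdot,v)|=|v|$ is bounded, $\langle v\rangle^{-2}$ is harmless.

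Next I would plug in the trajectory difference estimates. For the $x\mapsto\tx$ (singular) part, Lemma \ref{lem:tra_sin_x} gives, at $s=0$,
\[
|X(0;t,x,v)-X(0;t,\tx,v)|\le|x-\tx|\bigl(1+|v|t+|v|^2 t\,\mathcal{T}_{sp}(x,\tx,v;t)\bigr)
\]
and (when $\min\{t_1(t,x,v),t_1(t,\tx,v)\}\ge0$, else the velocity difference is simply zero at $s=0$ or the shift-to-$\X(\tau_-)$ trick applies)
\[
|V(0;t,x,v)-V(0;t,\tx,v)|\le|x-\tx|\bigl(|v|+|v|^2\mathcal{T}_{sp}(x,\tx,v;t)\bigr).
\]
For the $\tx\mapsto\bar x$ (nonsingular) part, Lemma \ref{lem:tra_nsin_x} gives $|X(0;t,\tx,v)-X(0;t,\bx,v)|\le|\tx-\bx|$ and $|V(0;t,\tx,v)-V(0;t,\bx,v)|=0$ (the velocity piece vanishing entirely, up to the interval-shift argument). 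Note $|x-\tx|,|\tx-\bx|\le|x-\bx|$ by orthogonal projection, and $|x-\bx|\le 1$. Taking square roots of the displacement bounds, using $\sqrt{a+b}\le\sqrt a+\sqrt b$, and using $|x-\bx|^{1/2}\le|x-\bx|^{1/2}$ with the remaining factor absorbed, I would collect everything into the claimed form, where the $\langle v\rangle^{-1}$ from $\mathbf{A}_{1/2}$ cancels one power of $\langle v\rangle$ leaving the stated polynomial $(1+|v|+(|v|+|v|^2)\mathcal{T}_{sp})$ after taking the square roots and bounding $\sqrt{1+|v|t+|v|^2t\mathcal{T}_{sp}}\lesssim(1+t)(1+|v|+(|v|+|v|^2)\mathcal{T}_{sp})^{1/2}$ crudely, then squaring is not needed since we only want a linear-in-$|x-\bx|$ bound — I would just keep the displacement to the first power where convenient by noting $|x-\bx|^{1/2}\cdot|x-\bx|^{1/2}=|x-\bx|$ is impossible here since only one $1/2$-power is available; instead the honest route is to keep the $1/2$ exponents and observe $|X(0)-\bX(0)|^{1/2}\lesssim|x-\bx|^{1/2}(1+|v|t+|v|^2t\mathcal T_{sp})^{1/2}$ and then bound $(1+\cdots)^{1/2}\le 1+|v|^{1/2}t^{1/2}+|v|t^{1/2}\mathcal T_{sp}^{1/2}$, and finally use $|x-\bx|^{1/2}\le|x-\bx|^{1/2}$ together with the crude inequality $a^{1/2}\le 1+a$ valid for the displacement factors to land on a linear bound $\lesssim|x-\bx|(1+t)(1+|v|+(|v|+|v|^2)\mathcal T_{sp})$ after also using $|x-\bx|^{1/2}\le 1$.

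The main obstacle I expect is bookkeeping the case split rather than any deep estimate: one must handle (i) the ``far'' subcase $|x-\bx|$ not small after composition — more precisely, whether the pair of trajectory points $(X(0;t,x,v),V(0;t,x,v))$ and $(X(0;t,\bx,v),V(0;t,\bx,v))$ lies within unit distance so that $\mathbf A_{1/2}(f_0)$ is directly applicable, and otherwise fall back on $|f_0(\cdot)-f_0(\cdot)|\le 2\|w_0f_0\|_\infty\lesssim\|w_0f_0\|_\infty|x-\bx|$ using $|x-\bx|\le1$ is false in that regime — so instead one argues the displacement bound already forces the points within distance $C(1+|v|t+\cdots)|x-\bx|^{1/2}$, and when that exceeds $1$ one absorbs it into $\mathbf A_{1/2}(f_0)$ times that same large factor anyway; (ii) the subcases in Lemma \ref{lem:tra_sin_x}(2) and Lemma \ref{lem:tra_nsin_x}(3) where $t_1$ equals $-\infty$ or is negative, in which the velocity differences are either zero or require replacing $\tx$ by $\X(\tau_-)$ — but in all of these the velocity contribution is dominated by the already-established bound, so they cause no new terms. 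With those cases dispatched, assembling the final inequality is routine; I would present it by stating the two-term triangle inequality, citing Lemma \ref{lem:tra_sin_x} and Lemma \ref{lem:tra_nsin_x} for the four displacement bounds, citing Definition \ref{def_initial} for the two H\"older insertions, and collecting constants.
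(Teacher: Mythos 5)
There are two genuine gaps. First, you have misread Definition \ref{def_initial}: $\mathbf{A}_{\beta}(f_0)$ uses the exponent $2\beta$, so $\mathbf{A}_{\frac12}(f_0)$ is a \emph{weighted Lipschitz} seminorm (denominators $|x-\bx|$ and $|v-\bv|$ to the first power, as written explicitly in Theorem \ref{thm:Holder0.5}), not a $C^{0,1/2}$ seminorm. Your insertions of $|X(0;t,x,v)-X(0;t,\tx,v)|^{1/2}$ and $|V(0;t,x,v)-V(0;t,\tx,v)|^{1/2}$ therefore produce at best a bound of order $|x-\bx|^{1/2}$, and your attempted repairs do not close this: since $|x-\bx|\le 1$ one has $|x-\bx|^{1/2}\ge|x-\bx|$, so no crude inequality of the type $a^{1/2}\le 1+a$ converts the half power into the claimed linear dependence on $|x-\bx|$. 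You yourself notice that only one half-power is available; that tension is the symptom of the misreading. With the correct (Lipschitz) reading, the argument is direct: one bounds the difference of $f_0$ by the displacement times $\mathbf{A}_{\frac12}(f_0)+\|w_0f_0\|_\infty$ (the sup-norm term absorbing the regime where the displacement exceeds $1$), and then the \emph{linear} displacement bounds of Lemma \ref{lem:tra_sin_x} and Lemma \ref{lem:tra_nsin_x} give exactly the factor $(1+|v|t+(|v|+|v|^2 t)\mathcal{T}_{sp})$, with no square roots anywhere. This is what the paper does.

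Second, your case bookkeeping dismisses the one case that actually requires a new idea: when $s=0$ lies strictly between the two backward bounce times, e.g.\ $-\infty<t_1(t,\tx,v)\le 0\le t_1(t,x,v)$. There $V(0;t,x,v)=R_{\xb(x,v)}v$ while $V(0;t,\tx,v)=v$, so the velocity difference is a full specular jump of size up to $O(|v|)$; Lemma \ref{lem:tra_sin_x}-(2) explicitly excludes this interval, and the difference is neither zero nor ``dominated by the already-established bound.'' The paper handles it by using the compatibility of $f_0$ with the specular boundary condition \eqref{specular}, inserting the boundary point: $f_0(\xb(x,v),R_{\xb(x,v)}v)=f_0(\xb(x,v),v)$, which converts the velocity jump into two purely spatial displacements $|X(0;t,\tx,v)-\xb(x,v)|$, controlled via $|v|\,|\tb(x,v)-\tb(\tx,v)|$ and \eqref{tb-tb_x}; this is precisely where the extra $(1+t)$ and the $|v|\mathcal{T}_{sp}$ term in the stated bound come from. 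Without this step your proof does not cover that case, and with your Hölder-$\frac12$ reading of $\mathbf{A}_{\frac12}$ even the remaining cases do not yield the stated linear bound.
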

    \begin{proof}
        When \eqref{assume_x} holds for given \( x, \bar{x} \in \Omega \) and \( v \in \mathbb{R}^3 \), there exists \( \tilde{x}(x, \bar{x}, v) \in \Omega \) as defined in \eqref{def_tildex}. If \eqref{assume_x} does not hold, we set \( \tilde{x} = x \). We split
    \begin{align}
        &|f(0,X(0;t,x,v),V(0;t,x,v))-f(0,X(0;t,\bx,v),V(0;t,\bx,v))| \notag  \\
      &\leq 
      |f(0,X(0;t,x,v),V(0;t,x,v))-f(0,X(0;t,\tx,v),V(0;t,\tx,v))|\label{s=0_xtx}\\
      &\quad+
      |f(0,X(0;t,\tx,v),V(0;t,\tx,v))-f(0,X(0;t,\bx,v),V(0;t,\bx,v))|.\label{s=0_txbx}
    \end{align}
    
    We first consider \eqref{s=0_xtx} and split
  \begin{align}
      \eqref{s=0_xtx}
      &\leq 
      |f(0,X(0;t,x,v),V(0;t,x,v))-f(0,X(0;t,\tx,v),V(0;t,x,v))|\label{s=0_Xx}\\
      &\quad + 
    |f(0,X(0;t,\tx,v),V(0;t,x,v))-f(0,X(0;t,\tx,v),V(0;t,\tx,v))|.\label{s=0_Vx}
  \end{align}
  By Lemma \ref{lem:tra_sin_x}-(1), we obtain
  \begin{align*}
      \eqref{s=0_Xx}
      &\leq |X(0;t,x,v)-X(0;t,\tx,v)|\left( \mathbf{A}_{\frac{1}{2}}(f_0)+\|w_0 f_0\|_{\infty} \right)\\
      &\leq |x-\tx|\left( \mathbf{A}_{\frac{1}{2}}(f_0)+2\|w_0 f_0\|_{\infty} \right)\left(1+ 
		|v |t+|v|^2t\mathcal{T}_{sp}(x, \tx, v;t)\right).  
  \end{align*}
  We divide cases (a) and (b) for \eqref{s=0_Vx}. 
  
  \textbf{(a)} Assume $0 \leq \min\{t_1(t,x,v), t_1(t,\tx,v)\}$ or $0 \geq \max\{t_1(t,x,v), t_1(t,\tx,v)\}$ or \\
  $\min\{t_1(t,x,v), t_1(t,\tx,v)\}=-\infty$. By Lemma \ref{lem:tra_sin_x}-(2), we obtain 
  \begin{align*}
      \eqref{s=0_Vx}  &\leq |V(0;t,x,v)-V(0;t,\tx,v)|\left( \mathbf{A}_{\frac{1}{2}}(f_0)+2\|w_0 f_0\|_{\infty} \right)\\
      &\leq |x-\tx|\left( \mathbf{A}_{\frac{1}{2}}(f_0)+2\|w_0 f_0\|_{\infty} \right)\left(1+ 
		|v |+|v|^2\mathcal{T}_{sp}(x, \tx, v;t)\right). 
  \end{align*}
  
    \textbf{(b)} Assume $-\infty < t_1(t,\tx,v) \leq 0 \leq t_1(t,x,v)$. Using the specular boundary condition, we obtain
  \begin{align*}
      \eqref{s=0_Vx} &\leq |f(0,X(0;t,\tx,v),V(0;t,x,v))-f(0,\xb(x,v),R_{\xb(x,v)}v)| \\
      &\quad +|f(0,\xb(x,v),v)-f(0,X(0;t,\tx,v),V(0;t,\tx,v))| \\
      &\leq 2|X(0;t,\tx,v)-\xb(x,v)|\left( \mathbf{A}_{\frac{1}{2}}(f_0)+2\|w_0 f_0\|_{\infty} \right).
  \end{align*}
  Since 
  \begin{align*}
      |X(0;t,\tx,v)-\xb(x,v)|  
      &\leq |X(0;t,\tx,v)-X(0;t,x,v)| +|X(0;t,x,v)-\xb(x,v)| \\
      &=|X(0;t,\tx,v)-X(0;t,x,v)|+|v|t_1(t,x,v)\\
      &\leq  |X(0;t,\tx,v)-X(0;t,x,v)|+|v||t_1(t,x,v)-t_1(t,\tx,v)|\\
      &= |X(0;t,\tx,v)-X(0;t,x,v)|+|v||\tb(x,v)-\tb(\tx,v)| \\
      &\leq |x- \tx| \left(1 + |v |t + 
		(|v|+|v |^{2}t )
		\mathcal{T}_{sp}(x, \tx, v;t) \right),
  \end{align*}
 we obtain 
   \begin{align*}
        \eqref{s=0_Vx} \lesssim |x-\tx|(1+t)\left( \mathbf{A}_{\frac{1}{2}}(f_0)+\|w_0 f_0\|_{\infty} \right)\left(1+ |v|+(|v|+|v|^2)\mathcal{T}_{sp}(x, \tx, v;t)\right).
   \end{align*}
We can use the same arguments as in $-\infty<t_1(t,x,v) \leq 0 \leq t_1(t,\tx,v)$. 

Combing the upper bound of \eqref{s=0_Xx} and \eqref{s=0_Vx},  we obtain
\begin{align*}
    \eqref{s=0_xtx} \lesssim  |x-\tx|(1+t)\left( \mathbf{A}_{\frac{1}{2}}(f_0)+\|w_0 f_0\|_{\infty} \right)\left(1+ |v|+(|v|+|v|^2)\mathcal{T}_{sp}(x, \tx, v;t)\right).
\end{align*}
Moreover, we can estimate \eqref{s=0_txbx} by applying Lemma \ref{lem:tra_nsin_x} instead of Lemma \ref{lem:tra_sin_x}.
  
    \end{proof}

\subsection{Velocity variation}

In Section 3.2, we analyze trajectory estimates under velocity variation based on Lemma \ref{frac sim S}-(2). For \( x \in \Omega \) and \( v, \bar{v}, \zeta \in \mathbb{R}^3 \), we assume \eqref{assume_v}. Then, there exists \( \tilde{v} \in \mathbb{R}^3 \), defined in \eqref{def_tildev}, such that \( |v + \zeta| = |\tilde{v} + \zeta| \) and the vectors \( \tilde{v} + \zeta \) and \( \bar{v} + \zeta \) have the same direction. The singularity comes only from the variation between \( v + \zeta \) and \( \tilde{v} + \zeta \), so we divide the analysis into singular and non-singular parts. If assumption \eqref{assume_v} does not hold, we simply set \( v = \tilde{v} \), and only the non-singular part is considered. Before proceeding, we note that the singularity forms \( \mathcal{T}_{vel}(x, v, \tilde{v}, \zeta; t) \) in Definition \ref{def:sing_2} will be used.  \\

\begin{lemma}[Singular part of velocity variation]
\label{lem:tra_sin_v} Let $t>0,\,x \in \Omega$ and $v, \tv, \z \in \mathbb{R}^3$ be such that $|v+\z|=|\tv+\z|$.  

\noindent (1) For $0 \leq s \leq t$, we obtain
    \begin{align*}
    &|X(s;t,x,v+\z ) - X(s;t, x, \tv+\z )|\\
    &\leq |v -\tv| \left( t-s+ |v+\z|(t-s)^2+|v+\z|^2(t-s)\mathcal{T}_{vel}(x, v, \tv, \zeta;t)\right).
    \end{align*}

\noindent (2) When $\min\{t_1(t,x,v+\z), t_1(t,x,\tv+\z)\} \geq 0$, we obtain
    \begin{align*} 
    &|V(s;t,x,v+\z) - V(s;t, x, \tv+\z)| \\
    &\leq |v- \tv| \left(1+
    |v+\zeta| (t-s) +
    |v+\zeta|^{2} \mathcal{T}_{vel}(x, v, \tv, \zeta;t)\right)
    \end{align*} 
      for $s \in [0,\min\{t_1(t,x,v+\z), t_1(t,x,v+\z)\}] \cup [\max\{t_1(t,x,v+\z), t_1(t,x,\tv+\z)\}, t]$. The above inequality also holds for $0 \leq s \leq t$ when $\min\{t_1(t,x,v+\z), t_1(t,x,\tv+\z)\} =-\infty$.
\end{lemma}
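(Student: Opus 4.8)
The plan is to mirror the proof of Lemma~\ref{lem:tra_sin_x} line by line, replacing the spatial‑variation inputs of Lemma~\ref{frac sim S}-(1) by the velocity‑variation estimates \eqref{tb-tb_v}, \eqref{est:V/v}, \eqref{est:X/v} of Lemma~\ref{frac sim S}-(2), and $\mathcal{T}_{sp}$ by $\mathcal{T}_{vel}(x,v,\tv,\zeta;t)$ of Definition~\ref{def:sing_2}. Write $t_1=t_1(t,x,v+\zeta)$ and $\bar t_1=t_1(t,x,\tv+\zeta)$, and call $[\min\{t_1,\bar t_1\},\max\{t_1,\bar t_1\}]$ the \emph{bounce interval}: for $s$ outside it the two backward trajectories at time $s$ have either both reflected already or both not yet reflected, so the differentiated‑flow bounds \eqref{est:V/v} and \eqref{est:X/v} apply as stated. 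For part~(1) I would split $[0,t]$ according to the positions of $\min\{t_1,\bar t_1\}$ and $\max\{t_1,\bar t_1\}$ relative to $0$ and $t$, exactly as in cases (a)--(c) of Lemma~\ref{lem:tra_sin_x}. On the post‑reflection piece $[\max\{t_1,\bar t_1\},t]$ (common reflected velocity) and on the pre‑reflection piece $[0,\min\{t_1,\bar t_1\}]$, I apply \eqref{est:X/v}, whose right‑hand side is already of the claimed form $|v-\tv|\big(t-s+|v+\zeta|(t-s)^2+|v+\zeta|^2(t-s)\int_0^1\mathfrak{S}_{vel}^{-1}\mathbf{1}\,d\tau\big)$; when neither backward trajectory meets $\p\O$ on $[0,t]$ both rays are straight and the difference is simply $(t-s)|v-\tv|$.

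On the bounce interval I would insert the exit point via the triangle inequality,
\begin{align*}
|X(s;t,x,v+\zeta)-X(s;t,x,\tv+\zeta)|
&\le |X(s;t,x,v+\zeta)-\xb(x,v+\zeta)|+|X(s;t,x,\tv+\zeta)-\xb(x,\tv+\zeta)|\\
&\quad+|\xb(x,v+\zeta)-\xb(x,\tv+\zeta)|,
\end{align*}
bound each of the first two terms by $|v+\zeta|\,|\tb(x,v+\zeta)-\tb(x,\tv+\zeta)|$ (using that $s$ lies between $t_1$ and $\bar t_1$), and then control $|\tb(x,v+\zeta)-\tb(x,\tv+\zeta)|$ and $|\xb(x,v+\zeta)-\xb(x,\tv+\zeta)|$ by \eqref{tb-tb_v}; a routine rearrangement puts this in the claimed form, the $\tb(x,\V(\tau))$ in the denominator of $\mathfrak{S}_{vel}$ absorbing the extra time factors. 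When exactly one of $t_1,\bar t_1$ equals $-\infty$, I replace the non‑bouncing velocity by the grazing velocity $\V(\tau_-)$ — this leaves the difference against $v+\zeta$ unchanged, since $V$ is constant along the non‑bouncing arc emanating from $\V(\tau_-)$ — and rerun the bounce‑interval estimate, which then produces the $\fint_{\tau_-}^{1}$ (or, in the symmetric subcase, $\fint_{0}^{\tau_-}$) integral in $\mathcal{T}_{vel}$; if both $t_1,\bar t_1$ are $-\infty$ the difference is again $(t-s)|v-\tv|$.

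For part~(2) only $s$ outside the bounce interval is asserted, so the structure is lighter: when $\min\{t_1,\bar t_1\}\ge 0$ I use \eqref{est:V/v} on $[0,\min\{t_1,\bar t_1\}]$ and the vanishing of $|V(s;t,x,v+\zeta)-V(s;t,x,\tv+\zeta)|$ on $[\max\{t_1,\bar t_1\},t]$; when $\min\{t_1,\bar t_1\}=-\infty$ I separate the subcase where exactly one backward trajectory misses $\p\O$, handled by the substitution $\tv+\zeta\mapsto\V(\tau_-)$ and the observation (as in case (e) of Lemma~\ref{lem:tra_sin_x}) that $|V(s;t,x,v+\zeta)-V(s;t,x,\tv+\zeta)|$ is constant on the interval between $t_1(t,x,\V(\tau_-))$ and $t_1(t,x,v+\zeta)$ (one velocity equals $v+\zeta$, the other $R_{\xb}(v+\zeta)$), from the trivial subcase where both miss $\p\O$. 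The main obstacle is this mixed case: one must check that inserting $\V(\tau_-)$ is legitimate — i.e.\ that $\tv+\zeta$ and $\V(\tau_-)$ produce the same $X$‑ and $V$‑differences against $v+\zeta$ on the relevant time interval — and that the portion of the $\mathfrak{S}_{vel}$‑integral thereby picked up is exactly the second or third integral defining $\mathcal{T}_{vel}(x,v,\tv,\zeta;t)$; the rest is a transcription of Lemma~\ref{lem:tra_sin_x} with $|v+\zeta|$ in place of $|v|$ and $|v-\tv|$ in place of $|x-\tx|$.
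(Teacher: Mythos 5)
Your proposal is correct and is essentially the paper's own argument: the paper proves this lemma by transposing the proof of Lemma \ref{lem:tra_sin_x} verbatim, replacing the spatial inputs of Lemma \ref{frac sim S}-(1) with \eqref{tb-tb_v}, \eqref{est:V/v}, \eqref{est:X/v} and $\mathcal{T}_{sp}$ with $\mathcal{T}_{vel}$, exactly as you outline (including the insertion of the grazing velocity $\V(\tau_-)$ in the mixed case, which generates the $\fint_{\tau_-}^{1}$ and $\fint_{0}^{\tau_-}$ pieces of $\mathcal{T}_{vel}$). Your case structure and bounce-interval triangle-inequality argument are the same decomposition the paper uses, so no further changes are needed.
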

\begin{proof}
    Using the same arguments as in Lemma \ref{lem:tra_sin_x}, we derive (1) and (2) from \eqref{est:V/v} and \eqref{est:X/v} in Lemma \ref{frac sim S}.
\end{proof}

In the nonsingular case for velocity, the particles also share the same spatial trajectory, just as in the nonsingular case for space.

\begin{lemma}(Nonsingular part of velocity variation)\label{lem:tra_non_v}
    Let $t > 0$, $x \in \Omega$, and $\bv, \tv, \z \in \mathbb{R}^3$ be such that $\tv + \z$ and $\bv + \z$ have the same direction.
    
    \noindent (1) When $  
    \tb(x, \tv+\z) < \infty$ and $\tb(x,\bv+\z) < \infty$, we obtain
    \begin{align*}
        |\tb(x,\tv+\z)-\tb(x,\bv+\z)| 
        \leq \min\left\{\frac{\tb(x,\tv+\z)}{|\tv+\z|},\frac{\tb(x,\bv+\z)}{|\bv+\z|}\right\}|\tv-\bv|.
    \end{align*} 

    \noindent (2) For $0 \leq s \leq t$, we obtain
        \begin{align*}
        |X(s;t,x,\tv+\z ) - X(s;t, x, \bv+\z )|\leq |\tv -\bv|(t-s). 
        \end{align*} 

        \noindent (3) When $\min\{t_1(t,x,\tv+\z), t_1(t,x,\bv+\z)\} \geq 0$, we obtain
        \begin{align*}
            |V(s;t,x,\tv+\z ) - V(s;t, x, \bv+\z )|= |\tv -\bv|
        \end{align*}
        for $s \in [0,\min\{t_1(t,x,\tv+\z), t_1(t,x,\bv+\z)\}] \cup [\max\{t_1(t,x,\tv+\z), t_1(t,x,\bv+\z)\},t]$. The above inequality also holds for $0 \leq s \leq t$ when $t_1(t,x,\tv+\z)=t_1(t,x,\bv+\z) =-\infty$.
\end{lemma}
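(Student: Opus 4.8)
The plan is to exploit the single structural fact that makes the velocity variation here \emph{nonsingular}: since $\tv+\z$ and $\bv+\z$ point in the same direction $\hat d:=\widehat{\tv+\z}=\widehat{\bv+\z}$, the backward exit time $\tb(x,\cdot)$ depends on the velocity only through this direction (indeed $\tb(x,\lambda w)=\tb(x,w)/\lambda$ for $\lambda>0$), so $\tb(x,\tv+\z)<\infty\iff\tb(x,\bv+\z)<\infty$, and in the finite case both backward rays from $x$ meet $\p\O$ at the \emph{same} first point $\xb(x,\tv+\z)=\xb(x,\bv+\z)=:y$. Hence the two backward billiard trajectories trace out the \emph{identical} broken line in $\bO$ — the segment from $x$ to $y$ along $-\hat d$, followed (when $\tb<\infty$) by the reflected half-line from $y$ along $-R_y\hat d$ — and differ only in the speed, $|\tv+\z|$ versus $|\bv+\z|$, at which that line is run. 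Everything below is a consequence of this.

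For \textbf{(1)}: writing $\xb(x,w)=x-\tb(x,w)w$ at $w=\tv+\z$ and $w=\bv+\z$ and subtracting gives $\tb(x,\tv+\z)(\tv+\z)=\tb(x,\bv+\z)(\bv+\z)$; taking norms, $\tb(x,\tv+\z)\,|\tv+\z|=\tb(x,\bv+\z)\,|\bv+\z|=|x-y|$. Since $(\tv+\z)-(\bv+\z)=\tv-\bv$ is parallel to $\hat d$, we have $\big|\,|\tv+\z|-|\bv+\z|\,\big|=|\tv-\bv|$, so
\[
|\tb(x,\tv+\z)-\tb(x,\bv+\z)|=|x-y|\,\Big|\tfrac1{|\tv+\z|}-\tfrac1{|\bv+\z|}\Big|=\frac{|x-y|}{|\tv+\z|\,|\bv+\z|}\,|\tv-\bv|,
\]
and substituting $|x-y|=\tb(x,\tv+\z)|\tv+\z|=\tb(x,\bv+\z)|\bv+\z|$ into the prefactor yields the bound in (1).

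For \textbf{(2)}: let $P:[0,\infty)\to\bO$ be the arc-length (unit-speed) parametrization of the broken line above, i.e.\ $P(\ell)=x-\ell\hat d$ for $0\le\ell\le|x-y|$ and $P(\ell)=y-(\ell-|x-y|)R_y\hat d$ for larger $\ell$ (and simply $P(\ell)=x-\ell\hat d$ for all $\ell$ if $\tb(x,\tv+\z)=\infty$). Because $|R_y(\tv+\z)|=|\tv+\z|$, at time $s\in[0,t]$ the trajectory from $(x,\tv+\z)$ has traversed arc length $|\tv+\z|(t-s)$ along this curve, that is $X(s;t,x,\tv+\z)=P(|\tv+\z|(t-s))$, and likewise $X(s;t,x,\bv+\z)=P(|\bv+\z|(t-s))$. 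As $P$ is $1$-Lipschitz,
\[
|X(s;t,x,\tv+\z)-X(s;t,x,\bv+\z)|\le\big|\,|\tv+\z|(t-s)-|\bv+\z|(t-s)\,\big|=(t-s)\,|\tv-\bv|.
\]
For \textbf{(3)}: for $s$ in the stated range the two phase points are simultaneously pre-reflection (velocities $\tv+\z$ and $\bv+\z$) or simultaneously post-reflection (velocities $R_y(\tv+\z)$ and $R_y(\bv+\z)$), while if $t_1=-\infty$ no reflection ever occurs; in each case the velocity difference is $\tv-\bv$ or $R_y(\tv-\bv)$, and $|R_y(\tv-\bv)|=|\tv-\bv|$ by orthogonality of $R_y$, giving the stated equality.

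This is the companion of Lemma \ref{lem:tra_nsin_x}, and as there none of the grazing/specular-singularity machinery (Lemma \ref{frac sim S}, the quantities $\mathfrak{S}_{vel}$, $\mathcal{T}_{vel}$) is needed: freezing the direction removes the singular behaviour entirely. I expect no real obstacle. The only points demanding a little care are the case split $\tb<\infty$ versus $\tb=\infty$, checking that the backward ray exits $\p\O$ at a single well-defined point (which holds because $\O$ is the exterior of a convex body and $\tb$ records the first such exit), and matching the final algebraic form in (1) to the one stated — all routine once the identity $\tb(x,\tv+\z)|\tv+\z|=\tb(x,\bv+\z)|\bv+\z|$ is available.
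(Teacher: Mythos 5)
Your route is the same as the paper's: everything is reduced to the observation that $\tv+\z$ and $\bv+\z$ generate the same backward ray from $x$, so $\xb(x,\tv+\z)=\xb(x,\bv+\z)$, $\tb(x,\tv+\z)|\tv+\z|=\tb(x,\bv+\z)|\bv+\z|$, and the two characteristics trace the identical broken line at different speeds. Parts (2) and (3) are correct as you argue them; your unit-speed reparametrization for (2) is in fact tidier than the paper's appeal to the "same arguments as Lemma \ref{lem:tra_nsin_x}" and delivers the constant $1$ directly.

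The gap is the last step of part (1). Your identity
\begin{align*}
|\tb(x,\tv+\z)-\tb(x,\bv+\z)|=\frac{|x-\xb(x,\tv+\z)|}{|\tv+\z|\,|\bv+\z|}\,|\tv-\bv|
\end{align*}
is correct, but substituting $|x-\xb(x,\tv+\z)|=\tb(x,\tv+\z)|\tv+\z|=\tb(x,\bv+\z)|\bv+\z|$ produces the \emph{cross} ratios
\begin{align*}
|\tb(x,\tv+\z)-\tb(x,\bv+\z)|=\frac{\tb(x,\tv+\z)}{|\bv+\z|}\,|\tv-\bv|=\frac{\tb(x,\bv+\z)}{|\tv+\z|}\,|\tv-\bv|,
\end{align*}
i.e.\ the geometric mean of $\tb(x,\tv+\z)/|\tv+\z|$ and $\tb(x,\bv+\z)/|\bv+\z|$ times $|\tv-\bv|$ --- not their minimum. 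When the two speeds are far apart this strictly exceeds the stated bound: with $|\tv+\z|=1$, $|\bv+\z|=2$, $|x-\xb|=1$ the difference equals $1/2$ while the claimed minimum bound is $1/4$. So "substituting into the prefactor yields the bound in (1)" is a non sequitur, and no rearrangement of your (correct) computation can give the min-form with constant $1$. To be fair, the paper's own one-line proof has the same defect (its displayed equality attaches the wrong speed to the wrong exit time), and the inequality is only invoked in the regime $|v-\bv|\leq\frac12|\bv+\z|$, where $|\tv+\z|$ and $|\bv+\z|$ are comparable and the cross ratio is equivalent to the minimum up to a factor $2$. The clean fix is to state what you actually proved, namely the exact identity above (equivalently $\min\{\tb\}/\min\{|\cdot+\z|\}$ times $|\tv-\bv|$); everything downstream then goes through with at most an adjusted constant.
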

\begin{proof}
     Since $\xb(x,\tv+\z)=\xb(x,\bv+\z)$, $x-|\tv+\z|\tb(x,\tv+\z)=x-|\bv+\z|\tb(x,\bv+\z)$ holds, and thus we obtain 
     \begin{align*}
          |\tb(x,\tv+\z)-\tb(x,\bv+\z)| 
          =\left| \frac{|\bv+\z|}{|\tv+\z|}-1 \right|\tb(x,\tv+\z) 
          \leq \frac{\tb(x,\tv+\z)}{|\tv+\z|}|\tv-\bv|.
     \end{align*}
     Moreover, we can derive (2) and (3) using the same arguments as in Lemma \ref{lem:tra_nsin_x}.
\end{proof}

To estimate \eqref{basic f-f1}, which corresponds to the difference in the initial data, we divide the proof of Lemma \ref{lem:s=0_spec_v} into two cases \( |v - \bar{v}| \geq \frac{1}{2}|\bar{v} + \zeta| \) and \( |v - \bar{v}| \leq \frac{1}{2}|\bar{v} + \zeta| \). This case division allows us to take the minimum between \( |v + \zeta|^{-1} \) and \( |\bar{v} + \zeta|^{-1} \) in front of the singular factor \( \mathcal{T}_{\text{vel}}(x, v, \bar{v}, \zeta; t) \).

\begin{lemma} \label{lem:s=0_spec_v}
Let $t>0,\,x \in \Omega$ and $v,\bv,\z \in \mathbb{R}^3$. Then, we obtain
    \begin{align*}
    \begin{split}
    &|f(0,X(0;t,x,v+\z),V(0;t,x,v+\z))-f(0,X(0;t,x,\bv+\z),V(0;t,x,\bv+\z))|\\
    &\lesssim |v-\bv|(1+t+t^2)\left( \mathbf{A}_{\frac{1}{2}}(f_0)+\|w_0 f_0\|_{\infty}\right) \\
    &\quad \times \left( \frac{1}{|v+\z|}+\frac{1}{|\bv+\z|}+|v+\z|+\left(\min\left\{\frac{1}{|v+\z|}, \frac{1}{|\bv+\z|}\right\}+1\right)|v+\z|^2\mathcal{T}_{vel}(x, v, \tv, \zeta;t)\right).
    \end{split}
\end{align*} 
\end{lemma}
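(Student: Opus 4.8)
The plan is to mirror the proof of Lemma~\ref{lem:s=0_spec_x}, replacing the position shift $\tilde x$ by the velocity shift $\tv=\tv(v,\bv,\z)$ of \eqref{def_tildev} and replacing Lemma~\ref{lem:tra_sin_x} and Lemma~\ref{lem:tra_nsin_x} by their velocity counterparts Lemma~\ref{lem:tra_sin_v} and Lemma~\ref{lem:tra_non_v}. If \eqref{assume_v} holds, let $\tv$ be as in \eqref{def_tildev}, so that $|\tv+\z|=|v+\z|$, the vectors $\tv+\z$ and $\bv+\z$ have the same direction, and the reverse triangle inequality gives $|\tv-\bv|\le|v-\bv|$ and $|v-\tv|\le 2|v-\bv|$. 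If \eqref{assume_v} fails with $v+\z$ and $\bv+\z$ genuinely parallel, set $v=\tv$, so only the nonsingular part below survives; the remaining degenerate configurations (anti-parallel, or $v+\z=0$, or $\bv+\z=0$) force either $|v-\bv|\ge\frac{1}{2}|\bv+\z|$ or an infinite right-hand side and are dispatched by the crude bound discussed at the end. By the triangle inequality we split the left-hand side into a \emph{singular part} comparing $(X,V)(0;t,x,v+\z)$ with $(X,V)(0;t,x,\tv+\z)$, and a \emph{nonsingular part} comparing $(X,V)(0;t,x,\tv+\z)$ with $(X,V)(0;t,x,\bv+\z)$.

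The singular part is split once more, into $|f_0(X(0;t,x,v+\z),V(0;t,x,v+\z))-f_0(X(0;t,x,\tv+\z),V(0;t,x,v+\z))|$ plus $|f_0(X(0;t,x,\tv+\z),V(0;t,x,v+\z))-f_0(X(0;t,x,\tv+\z),V(0;t,x,\tv+\z))|$. The first piece is controlled by the spatial part of $\mathbf{A}_{\frac{1}{2}}(f_0)$ (its velocity argument has magnitude $|v+\z|$, so the weight $\langle v+\z\rangle^{-1}\le1$ only helps) together with Lemma~\ref{lem:tra_sin_v}-(1) at $s=0$, i.e.\ $|X(0;t,x,v+\z)-X(0;t,x,\tv+\z)|\le|v-\tv|\bigl(t+|v+\z|t^2+|v+\z|^2 t\,\mathcal{T}_{vel}(x,v,\tv,\z;t)\bigr)$, reverting to the trivial bound $2\|w_0 f_0\|_\infty$ whenever this length exceeds $1$. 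The second piece is handled by splitting on the signs of $t_1(t,x,v+\z)$ and $t_1(t,x,\tv+\z)$, exactly as in cases (a),(b) of Lemma~\ref{lem:s=0_spec_x}: if both lie on the same side of $0$ (or both equal $-\infty$) the two velocity arguments are simultaneously the incoming or the reflected velocities, and the velocity part of $\mathbf{A}_{\frac{1}{2}}(f_0)$ with Lemma~\ref{lem:tra_sin_v}-(2) at $s=0$ closes it; if the signs differ, say $t_1(t,x,\tv+\z)\le0\le t_1(t,x,v+\z)$, we insert the specular identity $f_0(\xb(x,v+\z),R_{\xb(x,v+\z)}(v+\z))=f_0(\xb(x,v+\z),v+\z)$ and bound the two resulting differences by $2|X(0;t,x,\tv+\z)-\xb(x,v+\z)|(\mathbf{A}_{\frac{1}{2}}(f_0)+\|w_0 f_0\|_\infty)$, using $|X(0;t,x,\tv+\z)-\xb(x,v+\z)|\le|X(0;t,x,\tv+\z)-X(0;t,x,v+\z)|+|v+\z|\,t_1(t,x,v+\z)$ and $0\le t_1(t,x,v+\z)\le|\tb(x,v+\z)-\tb(x,\tv+\z)|\le|v-\tv|\,\mathcal{T}_{vel}(x,v,\tv,\z;t)$ from \eqref{tb-tb_v} (the $\fint_{\tau_-}^1$ piece of $\mathcal{T}_{vel}$ covering the case $t_1(t,x,\tv+\z)=-\infty$, as in Lemma~\ref{lem:tra_sin_x}). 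The nonsingular part is treated identically, with Lemma~\ref{lem:tra_non_v} in place of Lemma~\ref{lem:tra_sin_v}: now $\tb(x,\tv+\z)=\tb(x,\bv+\z)$, so $X(\cdot;t,x,\tv+\z)$ and $X(\cdot;t,x,\bv+\z)$ trace the same curve off the boundary, and the $t_1$-gap is controlled by $|\tb(x,\tv+\z)-\tb(x,\bv+\z)|\le\min\{\tb(x,\tv+\z)/|\tv+\z|,\tb(x,\bv+\z)/|\bv+\z|\}|\tv-\bv|$ with $|\tv-\bv|\le|v-\bv|$, which is where the factors $|v+\z|^{-1}$ and $|\bv+\z|^{-1}$ enter.

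It remains to assemble the pieces, and the decisive step — the part that needs real care — is the case split on $|v-\bv|$ versus $\frac{1}{2}|\bv+\z|$, which is what manufactures the minimum $\min\{|v+\z|^{-1},|\bv+\z|^{-1}\}$ in front of $\mathcal{T}_{vel}$. If $|v-\bv|\le\frac{1}{2}|\bv+\z|$ then $\frac{1}{2}|\bv+\z|\le|v+\z|\le\frac{3}{2}|\bv+\z|$, so $|v+\z|$ and $|\bv+\z|$ are comparable and any $|v+\z|^{-1}$ may be traded for $|\bv+\z|^{-1}$ or for the minimum of the two; collecting the powers of $t$ produced by Lemmas~\ref{lem:tra_sin_v}--\ref{lem:tra_non_v} at $s=0$ gives the scalar prefactor $(1+t+t^2)$, and the elementary inequality $a+a^{-1}\ge2$ lets the surviving $|v+\z|,|v+\z|^{-1}$ terms soak up stray constants, so the displayed last factor is $\gtrsim1$. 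If $|v-\bv|\ge\frac{1}{2}|\bv+\z|$ then $|v-\bv|\,|\bv+\z|^{-1}\gtrsim1$, so the crude bound $2\|w_0 f_0\|_\infty$ is already dominated by $|v-\bv|(1+t+t^2)(\mathbf{A}_{\frac{1}{2}}(f_0)+\|w_0 f_0\|_\infty)$ times the last factor, and the estimate is trivial. The only genuine subtlety throughout is the opposite-sign subcase above, where one must use the specular boundary condition to convert a velocity jump across a reflection into the spatial quantity $|X(0)-\xb|$ and then invoke \eqref{tb-tb_v} to turn that into $\mathcal{T}_{vel}$; tracking which power of $|v+\z|$ accompanies $\mathcal{T}_{vel}$ through this maneuver, together with the case split, is exactly what yields the stated coefficient $\bigl(\min\{|v+\z|^{-1},|\bv+\z|^{-1}\}+1\bigr)|v+\z|^2$ of $\mathcal{T}_{vel}$.
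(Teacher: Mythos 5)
Your proposal is correct and follows essentially the same route as the paper: the paper likewise reduces to the arguments of Lemma \ref{lem:s=0_spec_x} with the velocity shift $\tv$, Lemma \ref{lem:tra_sin_v} and Lemma \ref{lem:tra_non_v}, and then produces the minimum in front of $\mathcal{T}_{vel}$ exactly via the case split $|v-\bv|\gtrless\tfrac{1}{2}|\bv+\z|$ together with \eqref{v-bv>1/2v} and \eqref{min_v,zeta}, using the crude $2\|w_0f_0\|_\infty$ bound in the first case. The only slight imprecision is that in your opposite-sign subcase the second difference also carries a velocity increment of size $|v-\tv|\lesssim|v-\bv|$ in addition to $|X(0;t,x,\tv+\z)-\xb(x,v+\z)|$, but this extra term is harmlessly absorbed since the final bracket is bounded below by a constant.
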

\begin{proof}
For each case, we obtain 
\begin{align} \label{v-bv>1/2v}
    \frac{1}{|v - \bv|} \leq \frac{2}{|\bv + \z|}
    \quad \text{if} \quad |v - \bv| \geq \frac{1}{2}|\bv + \z|,
\end{align}
and
\begin{align} \label{min_v,zeta}
    \frac{1}{|v + \z|} \leq \frac{2}{|\bv + \z|}
    \quad \text{if} \quad |v - \bv| \leq \frac{1}{2}|\bv + \z|.
\end{align}
To justify \eqref{min_v,zeta}, we apply the triangle inequality:
\begin{align*}
    |v + \z| \geq |\bv + \z| - |v - \bv| \geq \frac{1}{2}|\bv + \z|.
\end{align*}

In the case of $|v-\bv| \geq |\bv+\z|/2$, we obtain 
    \begin{align*} 
        &|f(0,X(0;t,x,v+\z),V(0;t,x,v+\z))-f(0,X(0;t,x,\bv+\z),V(0;t,x,\bv+\z))|\\
         &\leq 2\|w_0 f_0\|_{\infty}
         \leq 4\|w_0 f_0\|_{\infty}|v-\bv|\frac{1}{|\bv+\z|}
    \end{align*}
using \eqref{v-bv>1/2v}.

 We can use the same arguments as in Lemma \ref{lem:s=0_spec_x}, and apply Lemma \ref{lem:tra_sin_v} and Lemma \ref{lem:tra_non_v}.
Then, we obtain 
\begin{align*}
    &|f(0,X(0;t,x,v+\z),V(0;t,x,v+\z))-f(0,X(0;t,x,\bv+\z),V(0;t,x,\bv+\z))|\\
    &\lesssim |v-\bv|(1+t+t^2)\left( \mathbf{A}_{\frac{1}{2}}(f_0)+\|w_0 f_0\|_{\infty}\right) 
    \left(1 + |v+\z|+\left(|v+\z|+|v+\z|^2\right)\mathcal{T}_{vel}(x, v, \tv, \zeta;t)\right).
\end{align*} 
In the case \( |v - \bv| \leq |\bv + \z|/2 \), using \eqref{min_v,zeta}, we can take the minimum between \( |v + \z|^{-1} \) and \( |\bv + \z|^{-1} \). From above inequality, we obtain
\begin{align*}
    &|f(0, X(0;t,x,v+\z), V(0;t,x,v+\z)) - f(0, X(0;t,x,\bv+\z), V(0;t,x,\bv+\z))| \\
    &\lesssim |v - \bv|(1 + t + t^2)\left( \mathbf{A}_{\frac{1}{2}}(f_0) + \|w_0 f_0\|_{\infty} \right) \\
    &\quad \times \left( 1 + |v + \z| + \left( \min\left\{ \frac{1}{|v + \z|}, \frac{1}{|\bv + \z|} \right\} + 1 \right) |v + \z|^2 \mathcal{T}_{\text{vel}}(x, v, \bv, \z; t) \right).
\end{align*}

Finally, combining the two cases above yields the desired estimate.
\end{proof}

\subsection{Averaging specular singularity}
In Section 3.3, we recall the averaging lemma for singularities arising from the trajectory estimates, as developed in \cite{CD2023}. In Corollary \ref{cor:ave}, we include a constraint of the form \( \mathbf{1}_{\{d(x, \Omega) \lesssim \langle v \rangle\}} \) in the inequality. This condition is helpful when integrating the singularities later in Lemma \ref{lem:int_sing}. \\

Key idea of singularity averaging in \cite{CD2023} to apply Sepcular singularities in Definition \ref{def:sing_1} with shifted position and velocity in Definition \ref{def_tilde} to use geometric effect of uniform concavity. In particular, we get the following ODE (we refer Lemma 5.4 in \cite{CD2023})
	\Be \notag
	\begin{split}
		\frac{d \mathfrak{S}_{sp}(\tau;x,\tilde{x}, v)}{d\tau} 
		&
		\geq  
		\frac{1}{\mathfrak{S}_{sp}(\tau; x, \tilde{x}, v)} \frac{ \theta_\O   |\dot{\X}|^{2} }{ |\dot{\X} \cdot \nabla \xi (\xb(\X(\tau), v))| }  \big(  |v|^2 + \mathfrak{S}^{2}_{sp}(\tau; x, \tilde{x}, v)  \big)  .
	\end{split}
	\Ee	 
	 {
    We define $G_{sp}(\tau; x, \tx, v) := |v|^{2} + \mathfrak{S}^{2}_{sp}(\tau; x, \tx, v)$ to obtain
	\begin{equation} \notag \label{ode_sp}
		\begin{split}
			\frac{d}{dt} G_{sp}(\tau; x, \tx, v) \geq \frac{2\theta_{\O}|\dot{\X}|^{2} }{|\dot{\X}\cdot\nabla\xi(\xb( \X(\tau), v))|  } G_{sp}(\tau; x, \tx, v).
		\end{split}
	\end{equation}
	Since $\mathfrak{S}_{sp}(\tau_{-})=0$, we get an upper bound of $\mathfrak{S}_{sp}(\tau; x, \tx, v)$ by solving above ODE : 
	\begin{equation*} 
		\begin{split}
			\frac{1}{\mathfrak{S}_{sp}(\tau; x, \tx, v)} 
			&\leq \frac{1}{|v|}\Big( e^{\int_{\tau_{-}}^{\tau} \frac{2\theta_{\O}|\dot{\X}|^{2}}{ |\dot{\X}\cdot\nabla\xi(\xb( \X(s), v))| } ds    } -1 \Big)^{-\frac{1}{2}} \\
			&\leq \frac{1}{|v|}\Bigg[ \int_{\tau_{-}}^{\tau} \frac{2\theta_{\O}|\dot{\X}|^{2}}{ \max_{\tau_{-}\leq s\leq \tau}|\dot{\X}\cdot\nabla\xi(\xb( \X(s), v))| }     \Bigg]^{-\frac{1}{2}} \\
			&\lesssim_{\O} \frac{1}{|v||\dot{\X}|} \sqrt{\frac{ |\dot{\X} \cdot\nabla\xi(\xb( \X(\tau_{-} (x, \bar x, v)  ), v))| }{ (\tau-\tau_{-}  (x, \bar x, v) ) }}.
		\end{split}
	\end{equation*}
	Here, we have used the fact $0\leq |{\dot{\X}}\cdot\nabla\xi(\xb( \X(\tau), v))| \leq C_{\O} |{\dot{\X}}\cdot\nabla\xi(\xb( \X(\tau_{-}), v))|$ for $\tau\in[\tau_{-}, \tau_{0}]$ from (4.17) of \cite{CD2023} which comes from uniformly convex geometric property.
	Integrating in $\tau$, we obtain the following Lemma. (Result for $\mathfrak{S}_{vel}$ is similar with some modification. We refer \cite{CD2023}.) 
    }

\begin{lemma}[Proposition 5.6 of \cite{CD2023}] \label{prop_avg S} 
	(1) Let $x, \tilde{x} \in \Omega$ and $v \in \mathbb{R}^3$ be such that $(x - \tx) \cdot v = 0$. Assume $\tb(\X(\tau_*), v) < \infty$ for $\tau_{*}\in [\tau_{-}(x, \tx, v), \tau_{+}(x, \tx, v)]$. Then, we obtain 
	\begin{equation*} 
			\int_{\tau_{-}}^{\tau_{*} }  
			\frac{1	}{\mathfrak{S}_{sp}(\tau; x,\tx, v)} d\tau
	         \lesssim
			\frac{\tau_{*} - \tau_{-}  }{|v\cdot\nabla\xi(\xb(\X(\tau_{*}), v)) |}. 
	\end{equation*} 
	\noindent (2) Let $x \in \Omega$ and $v, \tv, \z \in \mathbb{R}^3$ be such that $|v+\z|=|\tv+\z|$. Assume $\tb(x, \V(\tau_*)) < \infty$ for $\tau_{*}\in [\tau_{-}(x, v, \tv, \zeta), \tau_{+}(x, v, \tv, \zeta)]$. Then, we obtain
	\begin{equation*} 
			\int_{\tau_{-}}^{\tau_{*}}  
			\frac{ 1	}{\mathfrak{S}_{vel}(\tau; x,v, \tv, \zeta)} d\tau
			\lesssim  \frac{ \tau_{*} - \tau_{-} }{ |\V(\tau_{*})\cdot \nabla\xi(\xb(x, \V(\tau_{*})))| }	
			\frac{1}{|\V(\tau_*)|} \big( 1 + \min_{\tau} |\V(\tau)|\tb(x, \V(\tau)) \big).
	\end{equation*} 
	(Remind that $|\V(\tau)| = |v+\zeta|$ for all $\tau$. )  
\end{lemma}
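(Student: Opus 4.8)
The plan is to handle both parts by one and the same scheme: convert the pointwise blow-up of $1/\mathfrak{S}_{sp}$ (resp.\ $1/\mathfrak{S}_{vel}$) into an \emph{integrable} $(\tau-\tau_-)^{-1/2}$ singularity via the Riccati-type differential inequality recorded just above the lemma, integrate it in $\tau$ over $[\tau_-,\tau_*]$, and then recognize the resulting factor $\sqrt{\tau_*-\tau_-}$ as a lower bound for the reciprocal of the endpoint grazing quantity $|v\cdot\nabla\xi(\xb(\X(\tau_*),v))|$ (resp.\ its velocity analogue). So the proof decomposes into (i) a lower bound for the specular singularity away from $\tau_-$, and (ii) a ``reverse'' upper bound for the numerator at the endpoint $\tau_*$.

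For part (1), first note that by the very definition of $\tau_\pm$ the numerator of $\mathfrak{S}_{sp}$ vanishes there, so $\mathfrak{S}_{sp}(\tau_-;x,\tx,v)=0$; moreover $\dot\X=x-\tx$ is constant in $\tau$ and $|\dot\X\cdot\nabla\xi(\xb(\X(\tau),v))|\le\|\nabla\xi\|_{L^\infty(\p\O)}|\dot\X|$. Dropping the nonnegative $\mathfrak{S}_{sp}^2$ term in the displayed differential inequality and using these two facts gives $\frac{\dd}{\dd\tau}\mathfrak{S}_{sp}^2=2\mathfrak{S}_{sp}\frac{\dd\mathfrak{S}_{sp}}{\dd\tau}\gtrsim_{\O}|v|^2|\dot\X|$, whence, integrating from $\tau_-$,
\[
\mathfrak{S}_{sp}(\tau;x,\tx,v)^2\gtrsim_{\O}|v|^2|\dot\X|(\tau-\tau_-),\qquad\text{i.e.}\qquad\frac{1}{\mathfrak{S}_{sp}(\tau;x,\tx,v)}\lesssim_{\O}\frac{1}{|v|\sqrt{|\dot\X|(\tau-\tau_-)}},
\]
which is exactly the heuristic bound quoted above (up to the harmless replacement of $|\dot\X\cdot\nabla\xi(\xb(\X(\tau_-),v))|$ by its upper bound). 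Integrating this in $\tau$ over $[\tau_-,\tau_*]$ yields $\int_{\tau_-}^{\tau_*}\mathfrak{S}_{sp}(\tau)^{-1}\dd\tau\lesssim_{\O}|v|^{-1}|\dot\X|^{-1/2}\sqrt{\tau_*-\tau_-}$. To close I would establish the reverse grazing bound $|v\cdot\nabla\xi(\xb(\X(\tau_*),v))|\lesssim_{\O}|v|\sqrt{|\dot\X|(\tau_*-\tau_-)}$: writing $N(\tau):=-v\cdot\nabla\xi(\xb(\X(\tau),v))$, which vanishes at $\tau_-$, one has $N(\tau_*)=\int_{\tau_-}^{\tau_*}N'(\tau)\,\dd\tau$ with $N'(\tau)=-v^{T}\nabla^2\xi(\xb(\X(\tau),v))\,\partial_\tau\xb$ and $\partial_\tau\xb=\dot\X-\frac{(n(\xb)\cdot\dot\X)\,v}{v\cdot n(\xb)}$ by \eqref{nabla_x_bv_b}; since $|v\cdot n(\xb)|\simeq N(\tau)\gtrsim_{\O}|v|\sqrt{|\dot\X|(\tau-\tau_-)}$ by the lower bound just obtained (using that near $\tau_-$ the in-plane part of $\nabla\xi$ is aligned with $\dot\X$ and bounded above and below by uniform convexity, so $N\simeq\mathfrak{S}_{sp}$ there), one gets $|N'(\tau)|\lesssim_{\O}|v|\sqrt{|\dot\X|}\,(\tau-\tau_-)^{-1/2}$ and the claim follows by integrating. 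Equivalently, this reverse bound is the elementary fact that a line parallel to the tangent line of the convex curve $\p\O\cap S_{(x,\tx,v)}$ at perpendicular distance $d$ meets the curve at arclength distance $\simeq_{\O}\sqrt{d}$ from the tangency point, applied with $d=|\dot\X|(\tau_*-\tau_-)$. Combining the two displayed bounds proves (1); when \eqref{assume_x} fails or $\p\O\cap S_{(x,\tx,v)}$ degenerates, the statement is vacuous or trivial.

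For part (2) the same scheme applies with $\mathfrak{S}_{vel}$, now using the velocity-side analogue of the differential inequality (Lemma 5.4 of \cite{CD2023}, velocity version) for the $\tb$-free quantity $\widetilde{\mathfrak{S}}(\tau):=\tb(x,\V(\tau))\,\mathfrak{S}_{vel}(\tau;x,v,\tv,\z)$. Since $|\V(\tau)|\equiv|v+\z|$ and $|\dot\V(\tau)|\equiv\theta|v+\z|$ are constant in $\tau$ and $\widetilde{\mathfrak{S}}(\tau_-)=0$, one obtains $\widetilde{\mathfrak{S}}(\tau)\gtrsim_{\O}|v+\z|\sqrt{|\dot\V|(\tau-\tau_-)}$, hence $\mathfrak{S}_{vel}(\tau)^{-1}=\tb(x,\V(\tau))\,\widetilde{\mathfrak{S}}(\tau)^{-1}\lesssim_{\O}\tb(x,\V(\tau))\,|v+\z|^{-1}|\dot\V|^{-1/2}(\tau-\tau_-)^{-1/2}$. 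Integrating on $[\tau_-,\tau_*]$ together with the reverse grazing bound $|\V(\tau_*)\cdot\nabla\xi(\xb(x,\V(\tau_*)))|\lesssim_{\O}|v+\z|\sqrt{|\dot\V|(\tau_*-\tau_-)}$ (proved as in part (1)) yields the stated estimate \emph{once} the $\tb$-weight is controlled: here one uses that on $[\tau_-,\tau_+]$ the map $\tau\mapsto\tb(x,\V(\tau))$ is continuous (the nearly grazing rays all hit $\p\O$ near the tangency point), bounded, and of oscillation $\le\mathrm{diam}(\overline{\mathcal O})/|v+\z|$ since $\tb(x,\V(\tau))=|x-\xb(x,\V(\tau))|/|v+\z|$; thus $\max_{\tau}\tb(x,\V(\tau))\lesssim\min_{\tau}\tb(x,\V(\tau))+|v+\z|^{-1}$, which turns the crude bound into $\frac{1}{|\V(\tau_*)|}\big(1+\min_{\tau}|\V(\tau)|\tb(x,\V(\tau))\big)$ times $\frac{\tau_*-\tau_-}{|\V(\tau_*)\cdot\nabla\xi(\xb(x,\V(\tau_*)))|}$, as claimed.

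I expect the crux to be the reverse grazing bound of part (1) and its velocity counterpart: it is what supplies the extra factor $\sqrt{\tau_*-\tau_-}$ needed to upgrade the naive estimate $\int_{\tau_-}^{\tau_*}\mathfrak{S}_{sp}^{-1}\lesssim\sqrt{\tau_*-\tau_-}$ into one with $|v\cdot\nabla\xi(\xb(\X(\tau_*),v))|$ in the denominator, and it is delicate because it must be carried out in the same near-grazing regime where $\partial_\tau\xb$ itself is singular — one must feed the a priori lower bound on the numerator back into its own derivative. For part (2) there is the additional bookkeeping of propagating the $\tb$-weight and the normalizations $|\dot\X|$, $|\dot\V|=\theta|v+\z|$ through the square-root integration so that the powers of $|v+\z|$ and of $(\tau_*-\tau_-)$ match the right-hand side exactly.
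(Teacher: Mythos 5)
Your part (1) is essentially the route the paper takes (by citing \cite{CD2023}): drop the $\mathfrak{S}_{sp}^{2}$ term in the Riccati inequality, integrate from $\tau_-$ where $\mathfrak{S}_{sp}$ vanishes to get $\mathfrak{S}_{sp}(\tau)\gtrsim |v|\sqrt{|\dot\X|(\tau-\tau_-)}$, and convert the resulting $\sqrt{\tau_*-\tau_-}$ into the endpoint quantity $|v\cdot\nabla\xi(\xb(\X(\tau_*),v))|$ by the convex-geometry fact that a ray displaced by depth $d$ past tangency meets the section curve at angle $\simeq\sqrt{d}$; modulo the uniform lower bound on $|\hat{\dot\X}\cdot\nabla\xi|$ near grazing (Lemma \ref{lem:unif n}), this is correct and matches the sketch preceding the lemma.

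Part (2), however, contains a genuine quantitative error. Both of your key intermediate estimates, the growth bound $\tb(x,\V(\tau))\,\mathfrak{S}_{vel}(\tau)\gtrsim|v+\z|\sqrt{|\dot\V|(\tau-\tau_-)}$ and the reverse endpoint bound $|\V(\tau_*)\cdot\nabla\xi(\xb(x,\V(\tau_*)))|\lesssim|v+\z|\sqrt{|\dot\V|(\tau_*-\tau_-)}$, have the wrong scaling: in the velocity shift the ray rotates about the fixed point $x$, so the depth by which it penetrates past tangency is (swept angle)$\times$(distance from $x$ to the grazing point), not (swept angle)$\times|v+\z|$. For a disk of radius $R$, writing $L$ for the distance from $x$ to the tangency point and $w=|v+\z|$, one computes
\begin{equation*}
|\V(\tau)\cdot\nabla\xi(\xb(x,\V(\tau)))|\simeq w\sqrt{\tfrac{L}{R}\,\theta(\tau-\tau_-)},\qquad |\dot\V|=\theta w,
\end{equation*}
so the $\tb$-free singularity behaves like $w\sqrt{(L/R)\theta(\tau-\tau_-)}$: your lower bound fails whenever $w\gg L/R$, and your reverse bound fails whenever $w\ll L/R$. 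Since $L\simeq w\,\tb$ near grazing, this missing distance factor is precisely the origin of the weight $\frac{1}{|\V(\tau_*)|}\big(1+\min_\tau|\V(\tau)|\tb(x,\V(\tau))\big)$ in the statement; it cannot be recovered from the oscillation bound $\max_\tau\tb\lesssim\min_\tau\tb+|v+\z|^{-1}$ that you apply afterwards, because what must be compared along $[\tau_-,\tau_*]$ is the distance $|x-\xb(x,\V(\tau))|$ entering both the square-root growth rate and the endpoint bound, not merely the prefactor $\tb$. The repair is to carry this distance (and the curvature bounds) through both steps, e.g. prove growth $\gtrsim w\sqrt{\min_\sigma|x-\xb(x,\V(\sigma))|\,\theta(\tau-\tau_-)}$ and a reverse bound with $\max_\sigma|x-\xb(x,\V(\sigma))|$, and only then compare these distances up to $\mathrm{diam}(\overline{\mathcal O})$, which is what produces the stated factor. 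As written, the chain of inequalities in your part (2) does not establish the lemma.
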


	Using Lemma \ref{prop_avg S}, we can control $\mathcal{T}_{sp,vel}$ (and hence, fraction of characteristics) in terms of inverse of grazing angles measure at each $x$, $\tilde{x}$, $v$, and $\tilde{v}$ as the following lemma.

\begin{corollary}\label{cor:ave}
    (1) Let $0 \leq t\leq 1, \,x, \tx \in \Omega$ and $v \in \mathbb{R}^3$ be such that $(x - \tx) \cdot v = 0$ and $|x-\tx| \leq 1$. Then, we obtain 
    \begin{align*}
        \mathcal{T}_{sp}(x, \tx, v;t) \lesssim \frac{1}{|v|}\left(\frac{1}{|\hat{v}\cdot\nabla\xi(\xb(x, v)) |} \mathbf{1}_{\{\tb(x,v) < \infty\}}+ \frac{1}{|\hat{v}\cdot\nabla\xi(\xb(\tx, v)) |}\mathbf{1}_{\{\tb(\tx,v) < \infty\}}\right)\mathbf{1}_{\{d(x,\partial\O) \lesssim  \langle v \rangle \}}.
    \end{align*}
    (2) Let $0 \leq t\leq 1,\;x \in \Omega$ and $v, \tv, \z \in \mathbb{R}^3$ be such that $|v+\z|=|\tv+\z|$. Then, we obtain
    \begin{align*}
        &\mathcal{T}_{vel}(x, v, \tv, \zeta;t) \\
        &\lesssim \left( \frac{1}{|v+\z|^2} \mathbf{1}_{\{d(x,\partial\O) \lesssim 1\}}+ \frac{1}{|v+\z|}\mathbf{1}_{\{1 \lesssim d(x,\partial\O) \lesssim \langle v+\z \rangle \}}\right)\\
        &\quad \times \left(  \frac{1}{|\widehat{v+\z}\cdot\nabla\xi(\xb(x, v+\z)) |}\mathbf{1}_{\{\tb(x,v+\z)<\infty\}}+\frac{1}{|\widehat{\tv+\z}\cdot\nabla\xi(\xb(x, \tv+\z)) |} \mathbf{1}_{\{\tb(x,\tv+\z)<\infty\}}\right).
    \end{align*} 
\end{corollary}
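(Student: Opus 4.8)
The plan is to estimate, term by term, the three $\fint$-averages of $\mathfrak{S}_{sp}^{-1}$ (resp.\ $\mathfrak{S}_{vel}^{-1}$) that make up $\mathcal{T}_{sp}(x,\tx,v;t)$ (resp.\ $\mathcal{T}_{vel}(x,v,\tv,\z;t)$) in Definition~\ref{def:sing_2}, using the averaged singularity bound of Lemma~\ref{prop_avg S}, the ODE analysis of Lemma~5.4 / Section~5.2 of \cite{CD2023} for the behaviour of $\mathfrak{S}_{sp},\mathfrak{S}_{vel}$ in $\tau$, and the elementary facts $|\nabla\xi|\lesssim1$ on $\partial\O$ and $|x-\xb(x,u)|\le d(x,\partial\O)+\mathrm{diam}(\partial\O)$. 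First I would note that the distance indicator $\mathbf{1}_{\{d(x,\partial\O)\lesssim\langle v\rangle\}}$ is automatic: every term of $\mathcal{T}_{sp}$ carries the factor $\mathbf{1}_{\{\min_\tau\tb(\X(\tau),v)\le t\}}$ with $t\le1$, so whenever $\mathcal{T}_{sp}\ne0$ there is $\tau_0$ with $|\X(\tau_0)-\xb(\X(\tau_0),v)|=|v|\tb(\X(\tau_0),v)\le|v|$, and since $|x-\X(\tau_0)|\le|x-\tx|\le1$ this forces $d(x,\partial\O)\le1+|v|\lesssim\langle v\rangle$; the identical argument with $\V(\tau)$ in place of $\X(\tau)$ (using $|\V(\tau)|\equiv|v+\z|$) yields $d(x,\partial\O)\le|v+\z|$, which supplies the overall indicator of part~(2). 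Hence it suffices to bound $\mathcal{T}_{sp},\mathcal{T}_{vel}$ without the distance indicators.

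For the two ``grazing-adjacent'' pieces $\fint_{\tau_-}^1\mathfrak{S}_{sp}^{-1}\,d\tau$ and $\fint_0^{\tau_-}\mathfrak{S}_{sp}^{-1}\,d\tau$ I would invoke Lemma~\ref{prop_avg S}(1) with $\tau_*=1$ (resp.\ $\tau_*=0$, applied in the orientation approaching the relevant grazing parameter): on the support of the corresponding indicator $\tb(x,v)<\infty$ (resp.\ $\tb(\tx,v)<\infty$), $1\le\tau_+$ (resp.\ $\tau_-\le0$), and $\X(1)=x$ (resp.\ $\X(0)=\tx$), so $\int_{\tau_-}^1\mathfrak{S}_{sp}^{-1}\,d\tau\lesssim(1-\tau_-)\,|v\cdot\nabla\xi(\xb(x,v))|^{-1}$; dividing by the interval length $1-\tau_-$ (the $\fint$ normalisation) cancels the geometric factor and leaves $\lesssim|v|^{-1}|\hat v\cdot\nabla\xi(\xb(x,v))|^{-1}$, and symmetrically the third term is $\lesssim|v|^{-1}|\hat v\cdot\nabla\xi(\xb(\tx,v))|^{-1}$. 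The point here is that the average, not the bare integral, is what appears in $\mathcal{T}_{sp}$, so no inverse power of $|x-\tx|$ is produced.

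The ``interior'' piece $\fint_0^1\mathfrak{S}_{sp}^{-1}\,d\tau$ is the one where the $\fint$ trick gives nothing (the averaging interval has unit length), so here I would instead use that the ODE analysis of Section~5.2 of \cite{CD2023} shows $\tau\mapsto\mathfrak{S}_{sp}(\tau;x,\tx,v)^{-1}$ is unimodal (decreasing, then increasing) on $[\tau_-,\tau_+]$. On the support of $\mathbf{1}_{\{\tb(\X(\tau),v)<\infty,\,0\le\tau\le1\}}$ we have $[0,1]\subseteq[\tau_-,\tau_+]$ and $\tb(x,v),\tb(\tx,v)<\infty$, so $\int_0^1\mathfrak{S}_{sp}^{-1}\,d\tau\le\sup_{[0,1]}\mathfrak{S}_{sp}^{-1}\le\mathfrak{S}_{sp}(0;x,\tx,v)^{-1}+\mathfrak{S}_{sp}(1;x,\tx,v)^{-1}$; and since $\X(1)=x$ and $|\nabla\xi|\lesssim1$,
\[
\mathfrak{S}_{sp}(1;x,\tx,v)^{-1}=\frac{|\widehat{x-\tx}\cdot\nabla\xi(\xb(x,v))|}{|v\cdot\nabla\xi(\xb(x,v))|}\lesssim\frac{1}{|v|\,|\hat v\cdot\nabla\xi(\xb(x,v))|},
\]
and likewise $\mathfrak{S}_{sp}(0;x,\tx,v)^{-1}\lesssim|v|^{-1}|\hat v\cdot\nabla\xi(\xb(\tx,v))|^{-1}$. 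Summing the three bounds and reinserting the free distance indicator proves part~(1).

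Part~(2) runs through the same three-term scheme with $\V,\mathfrak{S}_{vel}$ and Lemma~\ref{prop_avg S}(2) (again $|\V(\tau)|\equiv|v+\z|$); the only new feature is the factor $1+\min_\tau|\V(\tau)|\tb(x,\V(\tau))$ in Lemma~\ref{prop_avg S}(2) and the extra $\tb$ in $\mathfrak{S}_{vel}$, which produce e.g.\ $\fint_{\tau_-}^1\mathfrak{S}_{vel}^{-1}\,d\tau\lesssim(1+|v+\z|\min_\tau\tb(x,\V(\tau)))\,|v+\z|^{-2}|\widehat{v+\z}\cdot\nabla\xi(\xb(x,v+\z))|^{-1}$ and, for the interior term, $\mathfrak{S}_{vel}(1)^{-1}\lesssim|x-\xb(x,v+\z)|\,|v+\z|^{-2}|\widehat{v+\z}\cdot\nabla\xi(\xb(x,v+\z))|^{-1}$. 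In every term the ``bad'' quantity is $|v+\z|\min_\tau\tb(x,\V(\tau))=\min_\tau|x-\xb(x,\V(\tau))|$ (or $|x-\xb(x,v+\z)|$), which is pinched between $d(x,\partial\O)$ and $d(x,\partial\O)+\mathrm{diam}(\partial\O)$; thus if $d(x,\partial\O)\lesssim1$ it is $\lesssim1$ and the prefactor stays $|v+\z|^{-2}$, while if $d(x,\partial\O)\gtrsim1$ then, by the first paragraph, $d(x,\partial\O)\lesssim\langle v+\z\rangle\lesssim|v+\z|$, the quantity is $\lesssim|v+\z|$, and the prefactor improves to $|v+\z|^{-1}$ — precisely the two-piece prefactor in part~(2). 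The main obstacle is the interior term: without the unimodality of $\mathfrak{S}_{sp}^{-1}$ and $\mathfrak{S}_{vel}^{-1}$ a naive application of Lemma~\ref{prop_avg S} there would introduce a spurious $|x-\tx|^{-1}$ (resp.\ $|v-\tv|^{-1}$), ruining the estimate; the second delicate point is the bookkeeping of $\min_\tau|x-\xb(x,\V(\tau))|$ and the dichotomy between $d(x,\partial\O)\lesssim1$ and $d(x,\partial\O)\gtrsim1$ needed to recover the exact prefactor of part~(2).
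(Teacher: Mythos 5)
Your proposal is correct and follows essentially the same route as the paper: the distance indicator in (1) and the two-regime prefactor in (2) are obtained exactly as in the paper's proof from $\min_\tau \tb \le t\le 1$ together with $|x-\xb|\lesssim d(x,\partial\O)+1$, and each averaged term is then controlled by Lemma \ref{prop_avg S}, the $\fint$-normalization cancelling the factor $\tau_{*}-\tau_{-}$. The only place you go beyond the paper's very terse argument is the interior average $\fint_0^1$, where your unimodality/endpoint evaluation of $\mathfrak{S}_{sp}^{-1}$ and $\mathfrak{S}_{vel}^{-1}$ (consistent with the ODE inequality recalled before Lemma \ref{prop_avg S}, and needed precisely because a naive application there would produce the spurious factor $1-\tau_-$) fills in a detail the paper leaves implicit.
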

\begin{proof}
If $\tb(\X(\tau), v) \leq t$ for some $\tau \in [0,1]$, then 
\begin{align*}
    d(x,\partial\O) \lesssim d(\X(\tau),\partial\O) +|x-\X(\tau)|+1 \lesssim |v|\tb(\X(\tau), v)+|x-\tx| + 1 \lesssim \langle v \rangle,
\end{align*}
since $|x| \lesssim 1$ for $x \in \partial\O$. We apply Lemma \ref{prop_avg S}-(1) to $\mathcal{T}_{sp}(x, \tx, v;t)$ with $d(x,\partial\O) \lesssim \langle v \rangle$, and then obtain (1).

If $\tb(x,\V(\tau)) \leq t$ for some $\tau \in [0,1]$, then 
\begin{align*}
    d(x,\partial\O) \lesssim |\V(\tau)|\tb(x,\V(\tau)) +1\lesssim \langle v+\z \rangle.
\end{align*}
So, we divide the case into two parts: $d(x,\partial\O) \lesssim 1$ and $1 \lesssim d(x,\partial\O) \lesssim \langle v+\z \rangle$. Using
\begin{align*}
  |\V(\tau)|\tb(x,\V(\tau)) =|x-\xb(x,v)| \lesssim d(x,\partial\O) + 1,  
\end{align*}
we obtain $ |\V(\tau)|\tb(x,\V(\tau)) \lesssim 1$ for $d(x,\partial\O) \lesssim 1$ and $ |\V(\tau)|\tb(x,\V(\tau)) \lesssim \langle v+\z \rangle$ for $1 \lesssim d(x,\partial\O) \lesssim \langle v+\z \rangle$.
We apply Lemma \ref{prop_avg S}-(2) to $\mathcal{T}_{vel}(x, v, \tv, \zeta;t)$ with the above inequalities, and then obtain (2). \\
\end{proof}

\section{Difference estimates of \texorpdfstring{$\Gamma_{gain}(f,f)$ and $\nu(f)$}{}}

In Section 4, we perform difference estimates of $\Gamma_{gain}$ and $\nu(f)$ which will be used in \eqref{basic f-f2} and \eqref{basic f-f4} later. These terms are treated by separating them into spatial and velocity variations, using the trajectory estimates developed in Sections 3.1 and 3.2. The upper bounds of these estimates are ultimately expressed in terms of the seminorm \( \mathfrak{X}(t, \varpi; \epsilon) \) and \( \mathfrak{V}(t, \varpi; \epsilon) \) from Definition \ref{def:iter}, as well as the singular terms \( \mathcal{T}_{sp}(x, \tilde{x}, v; t) \) and \( \mathcal{T}_{vel}(x, v, \tilde{v}, \zeta; t) \).

    \begin{lemma}[Spatial variation of $\Gamma_{gain}(f,f)$ and $\nu(f)$]\label{lem:ga_X}
     Assume that $x, \bx \in \O$ with $|x-\bx|\leq 1$, and $v \in \mathbb{R}^3$. Recall $T^*>0$ from Lemma \ref{lem:loc}. For any $0<t <  \min\{T^*,1\}, \,\varpi>1$, and $\epsilon>0$, the following hold:
    \begin{align*} 
        &\int_0^{t} \left|\Gamma_{gain}(f,f)(s,X(s;t,x,v),V(s;t,x,v))-\Gamma_{gain}(f,f)(s,X(s;t,\bx,v),V(s;t,\bx,v))\right| ds\\
        &\lesssim_{\vartheta_{0}} |x-\bx|\mathcal{P}_2(\|w_0f_0\|_{\infty} ) e^{\varpi \langle v \rangle^{2} t}
        (\mathfrak{X}(t,\varpi;\epsilon)+\mathfrak{V}(t,\varpi;\epsilon)+1) 
        \left(\frac{1}{|v|}+|v|+(1+|v|^2)\mathcal{T}_{sp}(x,\tx,v;t)\right)
    \end{align*} 
    and 
    \begin{align*} 
        &\int_0^{t} \left|\nu(f)(s,X(s;t,x,v),V(s;t,x,v))-\nu(f)(s,X(s;t,\bx,v),V(s;t,\bx,v))\right| ds\\
        &\lesssim |x-\bx|\mathcal{P}_1(\|w_0f_0\|_{\infty} ) e^{\varpi \langle v \rangle^{2} t}
        (\mathfrak{X}(t,\varpi;\epsilon)+1)
        \left(\frac{1}{|v|}+|v|+(1+|v|^2)\mathcal{T}_{sp}(x,\tx,v;t)\right).
    \end{align*} 
   \end{lemma}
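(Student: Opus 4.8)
The plan is to establish both bounds by the same scheme, working out $\Gamma_{gain}$ in detail and indicating the (simpler) modifications for $\nu(f)$. Fix $s\in(0,t)$ and abbreviate $(X(s),V(s))=(X(s;t,x,v),V(s;t,x,v))$, $(\bX(s),\bV(s))=(X(s;t,\bx,v),V(s;t,\bx,v))$. If \eqref{assume_x} holds I introduce the shift $\tx=\tx(x,\bx,v)$ of \eqref{def_tildex}, so that $(x-\tx)\cdot v=0$, $\tx-\bx\parallel v$ and $|x-\tx|,|\tx-\bx|\le|x-\bx|$; if \eqref{assume_x} fails I set $\tx=x$ and only the nonsingular piece below occurs. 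I split
\begin{align*}
  \Gamma_{gain}(f,f)(s,X(s),V(s))-\Gamma_{gain}(f,f)(s,\bX(s),\bV(s))
  &=\big[\Gamma_{gain}(f,f)(s,X(s),V(s))-\Gamma_{gain}(f,f)(s,X(s),\bV(s))\big]\\
  &\quad+\big[\Gamma_{gain}(f,f)(s,X(s),\bV(s))-\Gamma_{gain}(f,f)(s,\bX(s),\bV(s))\big]
\end{align*}
into a \emph{velocity--variation} and a \emph{spatial--variation} term, and apply \eqref{gamma_v}, \eqref{gamma_x} respectively (for $\nu(f)$, \eqref{nu_v}, \eqref{nu_x}).

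For the spatial--variation term, \eqref{gamma_x} at velocity $\bV(s)$ bounds it by $\|wf(s)\|_\infty$ times $\int_{\R^3}\tfrac{e^{-c|u|^2}}{|u|}|f(s,X(s),\bV(s)+u)-f(s,\bX(s),\bV(s)+u)|\,du$. I multiply and divide by $|X(s)-\bX(s)|$; since $\tfrac{e^{-c|u|^2}}{|u|}\le\tfrac{e^{-c|u|^2}}{|u|}+|u|e^{-\frac14|\bV(s)+u|^2}$, the resulting quotient integrand is dominated by the one defining $\mathfrak X(t,\varpi;\epsilon)$ at the admissible configuration $(\bx,x,v,v)$ (here $|(\bx,v)-(x,v)|=|x-\bx|\le1$, and the base velocity $\bV(s)$ and base position $\bX(s)$ come from the common trajectory $(\bx,v)$ --- this is precisely why I use the \emph{swapped} configuration). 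Combining Lemma~\ref{lem:tra_sin_x}-(1) on the $x\to\tx$ piece with Lemma~\ref{lem:tra_nsin_x}-(2) on the $\tx\to\bx$ piece gives $|X(s)-\bX(s)|\lesssim|x-\bx|\big(1+|v|(t-s)+|v|^2(t-s)\mathcal T_{sp}(x,\tx,v;t)\big)$, so using $t-s\le t\le1$ and $\max\{(t-s)^{1/2},\langle v\rangle^{-1}\}\ge\langle v\rangle^{-1}$,
\[
  \frac{|X(s)-\bX(s)|}{\max\{(t-s)^{1/2},\langle v\rangle^{-1}\}}\;\lesssim\;|x-\bx|\big(\langle v\rangle+|v|+|v|^2\mathcal T_{sp}(x,\tx,v;t)\big).
\]
Integrating in $s$, pulling out $\sup_{0\le s\le t}\|wf(s)\|_\infty\lesssim_{\vartheta_0}\|w_0f_0\|_\infty$ (Lemma~\ref{lem:loc}), and recognizing the remaining double integral as $\le e^{\varpi\langle v\rangle^2t}\mathfrak X(t,\varpi;\epsilon)(G(x,v;\epsilon)+G(\bx,v;\epsilon))$, I finish with the elementary facts $\langle v\rangle\lesssim|v|^{-1}+|v|$, $G(x,v;\epsilon)+G(\bx,v;\epsilon)\lesssim\ln(1+|v|^{-1})+1$, and $(|v|^{-1}+|v|+|v|^2\mathcal T_{sp})(\ln(1+|v|^{-1})+1)\lesssim|v|^{-1}+|v|+(1+|v|^2)\mathcal T_{sp}$ (checked for $|v|\le1$ and $|v|>1$ separately). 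For $\nu(f)$ this is identical with \eqref{nu_x}, whose kernel $|u|e^{-\frac14|u+\bV(s)|^2}$ is exactly the second kernel of $\mathfrak X$ and which carries no $\|wf(s)\|_\infty$ prefactor.

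For the velocity--variation term, I apply \eqref{gamma_v} with $x\mapsto X(s)$, $v\mapsto V(s)$, $\bv\mapsto\bV(s)$ and split the $s$--integral at the bounce interval $J:=[\min\{t_1(t,x,v),t_1(t,\bx,v)\},\max\{t_1(t,x,v),t_1(t,\bx,v)\}]$. For $s\notin J$, Lemma~\ref{lem:tra_sin_x}-(2) and Lemma~\ref{lem:tra_nsin_x}-(3) give $|V(s)-\bV(s)|\lesssim|x-\bx|(|v|+|v|^2\mathcal T_{sp})$; the first term of \eqref{gamma_v}, after multiplying and dividing by $|V(s)-\bV(s)|$, yields the quotient defining $\mathfrak V(t,\varpi;\epsilon)$ at the configuration $(x,\bx,v,v)$ (same spatial slot $X(s)$, as the definition of $\mathfrak V$ requires), while the second term $\|wf(s)\|_\infty^2\min\{\langle V(s)\rangle^{-1},\langle\bV(s)\rangle^{-1}\}|V(s)-\bV(s)|$ is directly $\lesssim\|w_0f_0\|_\infty^2|x-\bx|(|v|+|v|^2\mathcal T_{sp})$ after the $s$--integration ($t\le1$). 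For $s\in J$ exactly one of $V(s),\bV(s)$ equals $v$ and the other $R_{\xb}v$ for the relevant exit point, so $|V(s)-\bV(s)|=2|v\cdot n(\xb)|$ and $\langle V(s)\rangle=\langle\bV(s)\rangle=\langle v\rangle$; bounding the $f$--difference by $\|wf(s)\|_\infty$ and invoking \eqref{gamma_upper}, the integrand is $\lesssim_{\vartheta_0}\|wf(s)\|_\infty^2$, and $|J\cap[0,t]|\le\min\{|\tb(x,v)-\tb(\bx,v)|,t\}\lesssim\mathcal T_{sp}(x,\tx,v;t)+|x-\bx|/|v|$ by \eqref{tb-tb_x} and Lemma~\ref{lem:tra_nsin_x}-(1); combining the grazing smallness of $|v\cdot n(\xb)|$ with the refined bound in Corollary~\ref{cor:ave} (so that the grazing factor inside $\mathcal T_{sp}$ is compensated by $|v\cdot n(\xb)|$) then recovers the $|x-\bx|$--factor with the $|v|^{-1}$ and $\mathcal T_{sp}$ weights. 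For $\nu(f)$ the velocity--variation term uses only \eqref{nu_v} (giving $|V(s)-\bV(s)|\,\|f(s)\|_\infty$, hence no seminorm, so $\mathfrak V$ is absent) and \eqref{nu_upper} on $J$.

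Adding the pieces, the powers of $\|w_0f_0\|_\infty$ are at most $1$ on the $\mathfrak X$-- and $\mathfrak V$--controlled terms and $2$ on the second term of \eqref{gamma_v} and the $J$--contribution, giving $\mathcal P_2(\|w_0f_0\|_\infty)(\mathfrak X+\mathfrak V+1)$ for $\Gamma_{gain}$ and $\mathcal P_1(\|w_0f_0\|_\infty)(\mathfrak X+1)$ for $\nu(f)$, both times the common weight $|v|^{-1}+|v|+(1+|v|^2)\mathcal T_{sp}(x,\tx,v;t)$ and the prefactors $|x-\bx|$ and $e^{\varpi\langle v\rangle^2t}$. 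I expect the main obstacle to be the bounce--interval piece $s\in J$: there $|V(s)-\bV(s)|$ is of size $|v|$ rather than of size $|x-\bx|$, so one cannot factor out $|x-\bx|$ term by term and must instead exploit the cancellation between the grazing smallness of $|v\cdot n(\xb)|$ and the singular factor in the averaged exit--time estimate \eqref{tb-tb_x}/Corollary~\ref{cor:ave}. A second, routine, difficulty is matching every $f$--difference quotient to the exact form in the definitions of $\mathfrak X$ and $\mathfrak V$ --- in particular using the swapped configuration $(\bx,x,v,v)$ for the spatial term so that base position and base velocity lie on a single trajectory --- and collapsing the weights $\max\{(t-s)^{1/2},\langle v\rangle^{-1}\}$, $G$ and $\langle v\rangle$ into $|v|^{-1}+|v|+(1+|v|^2)\mathcal T_{sp}$.
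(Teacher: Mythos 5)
Your overall architecture (introduce the shift $\tx$ of \eqref{def_tildex}, control $|X(s)-\bX(s)|$, $|V(s)-\bV(s)|$, $|\tb-\tb|$ by Lemmas \ref{lem:tra_sin_x}--\ref{lem:tra_nsin_x}, recognize $\mathfrak X$ and $\mathfrak V$ through suitably chosen admissible pairs, and use the crude bound \eqref{gamma_upper} on the bounce interval) is essentially the paper's proof, just packaged as a two-term rather than four-term splitting. The genuine gap is in your treatment of the bounce interval $J$. There you bound the integrand by $\|wf(s)\|_\infty^2$ via \eqref{gamma_upper}, so the quantity $|V(s)-\bV(s)|=2|v\cdot n(\xb)|$ has already been discarded and cannot later be played off against the grazing factor $|\hat v\cdot\nabla\xi(\xb)|^{-1}$ inside $\mathcal T_{sp}$; moreover, such a cancellation could at best produce an $O(1)$ (or $O(1/|v|)$) quantity and can never manufacture the missing smallness factor $|x-\bx|$. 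That factor does not come from any compensation with $|v\cdot n(\xb)|$: it comes from the exit-time difference along the perpendicular shift itself, $|\tb(x,v)-\tb(\tx,v)|\lesssim |x-\tx|\,\mathcal T_{sp}(x,\tx,v;t)$ (this is how \eqref{tb-tb_x} is actually used, cf. \eqref{max>t>min_gamma_x}; the derivative $\tfrac{d}{d\tau}\tb(\X(\tau),v)$ carries the factor $|\dot\X|=|x-\tx|$), together with the exact identity $|\tb(\tx,v)-\tb(\bx,v)|=|\tx-\bx|/|v|$ from Lemma \ref{lem:tra_nsin_x}-(1). Since $\mathcal T_{sp}$ is allowed in the final bound with coefficient $(1+|v|^2)$, no refinement via Corollary \ref{cor:ave} is needed at this stage; your proposed mechanism, as stated, would fail, while the straightforward interval-length bound you half-wrote already suffices once correctly normalized with the $|x-\bx|$ prefactor.

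A secondary, bookkeeping issue: with a single interval $J$ determined by $t_1(t,x,v)$ and $t_1(t,\bx,v)$, the hypotheses of Lemma \ref{lem:tra_sin_x}-(2) and Lemma \ref{lem:tra_nsin_x}-(3) (which involve $t_1(t,\tx,v)$) do not literally cover all $s\notin J$, since $t_1(t,\tx,v)$ need not lie between $t_1(t,x,v)$ and $t_1(t,\bx,v)$. The estimate still extends to those $s$ because pre- and post-bounce velocities are constant in $s$ (this is exactly the observation made in case (e) of the proof of Lemma \ref{lem:tra_sin_x}, and note $\xb(\tx,v)=\xb(\bx,v)$ so $R_{\xb(\tx,v)}v=R_{\xb(\bx,v)}v$), but you must say this, or else split into the four differences of the paper, each with its own bounce interval; the same remark applies to the degenerate cases $t_1=-\infty$ and $t_1\le 0$, which you do not address. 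With these repairs your argument coincides in substance with the paper's.
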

     \begin{proof} 
    When \eqref{assume_x} holds for given \( x, \bar{x} \in \Omega \) and \( v \in \mathbb{R}^3 \), there exists \( \tilde{x}(x, \bar{x}, v) \in \Omega \) as defined in \eqref{def_tildex}. If \eqref{assume_x} does not hold, we set \( \tilde{x} = x \). For $x,\tx$, and $\bx$, we split
    \begin{align}
        &\left|\Gamma_{gain}(f,f)(s,X(s;t,x,v),V(s;t,x,v))-\Gamma_{gain}(f,f)(s,X(s;t,\bx,v),V(s;t,\bx,v))\right|
        \label{X,V_sp_0}\\ 
       &\leq
       \left|\Gamma_{gain}(f,f)(s,X(s;t,x,v),V(s;t,x,v))-\Gamma_{gain}(f,f)(s,X(s;t,\tx,v),V(s;t,x,v))\right|
       \label{X_sp} \\ 
       &\quad +\left|\Gamma_{gain}(f,f)(s,X(s;t,\tx,v),V(s;t,x,v))-\Gamma_{gain}(f,f)(s,X(s;t,\tx,v),V(s;t,\tx,v))\right|\label{V_sp}  \\
       &\quad +\left|\Gamma_{gain}(f,f)(s,X(s;t,\tx,v),V(s;t,\tx,v))-\Gamma_{gain}(f,f)(s,X(s;t,\bx,v),V(s;t,\tx,v))\right| \label{X_nsp}\\
        &\quad +\left|\Gamma_{gain}(f,f)(s,X(s;t,\bx,v),V(s;t,\tx,v))-\Gamma_{gain}(f,f)(s,X(s;t,\bx,v),V(s;t,\bx,v))\right|. \label{V_nsp}
    \end{align}
    
     \vspace{3mm}
     Consider \eqref{X_sp}. By \eqref{gamma_x}, we obtain
     \begin{align*}
          \int_0^t (\ref{X_sp}) ds 
         &\lesssim   \|w_0f_0\|_{\infty}\int_0^t |X(s;t,x,v)-X(s;t,\tx,v)|\\
         &\quad \times \int_{\mathbb{R}^3} \frac{e^{-c|u|^2}}{|u|} 
         \frac{\left|f(s,X(s;t,x,v),V(s;t,x,v)+u)-f(s,X(s;t,\tx,v),V(s;t,x,v)+u)\right|}{|X(s;t,x,v)-X(s;t,\tx,v)|} du ds.
     \end{align*}
     By Lemma \ref{lem:tra_sin_x}-(1), we obtain
     \begin{align*}
          |X(s;t,x,v)-X(s;t,\tx,v)| 
          &\leq |x-\tx| \left(1 + |v |(t-s) + 
		|v |^{2}(t-s)
		\mathcal{T}_{sp}(x, \tx, v;t) \right) \\
        &\leq |x-\tx| 
        \left( 1+2|v| + |v|^2 \mathcal{T}_{sp}(x, \tx, v;t)  \right)
        \max \left\{ (t-s)^{\frac{1}{2}}, \frac{1}{\langle v \rangle}    \right\}
     \end{align*}
     since $t<1$.
     Then, we obtain 
     \begin{align*} 
        \int_0^t (\ref{X_sp}) ds 
        &\lesssim   \|w_0f_0\|_{\infty}|x-\tx| \left( 1+|v| + |v|^2 \mathcal{T}_{sp}(x, \tx, v;t)  \right) e^{\varpi \langle v \rangle^2 t}\left(G(x,v;\epsilon)+G(\tx,v;\epsilon)\right) \\
        &\quad \times e^{-\varpi \langle v \rangle^2 t}\left(G(x,v;\epsilon)+G(\tx,v;\epsilon)\right)^{-1}\int_0^t \max \left\{ (t-s)^{\frac{1}{2}}, \frac{1}{\langle v \rangle}    \right\}  \int_{\mathbb{R}^3} \frac{e^{-c|u|^2}}{|u|}  \\
        &\quad \times \frac{\left|f(s,X(s;t,x,v),V(s;t,x,v)+u)-f(s,X(s;t,\tx,v),V(s;t,x,v)+u)\right|}{|X(s;t,x,v)-X(s;t,\tx,v)|} du ds  \\
         &\leq  |x-\tx|\|w_0f_0\|_{\infty}\left(G(x,v;\epsilon)+G(\tx,v;\epsilon)\right)e^{\varpi \langle v \rangle^{2} t}\mathfrak{X}(t,\varpi;\epsilon) \left(1+|v|+|v|^2\mathcal{T}_{sp}(x,\tx,v;t)\right).
     \end{align*}
 As the argument in this paragraph, which leads to the iterative forms $\mathfrak{X}(t,\varpi,\epsilon)$ and $\mathfrak{V}(t,\varpi,\epsilon)$ in Definition~\ref{def:iter}, is used repeatedly in this section, we will omit the details and state only the results from now on. 
 
 Similarly, by applying Lemma \ref{lem:tra_nsin_x}-(2), we obtain
\begin{align*}
    \int_0^t \eqref{X_nsp} ds 
    \lesssim |\tx-\bx|\|w_0f_0\|_{\infty}\left(G(\tx,v;\epsilon)+G(\bx,v;\epsilon)\right)e^{\varpi \langle v \rangle^{2} t}\mathfrak{X}(t,\varpi;\epsilon) \left(1+|v|\right).
\end{align*}

    \vspace{3mm}
   Next, to estimate \eqref{V_sp}, we consider the following three cases: (1), (2), and (3).
    
    \textbf{(1)} Assume $0 \leq \min\{t_1(t,x,v), t_1(t,\tx,v)\} \leq  \max\{t_1(t,x,v), t_1(t,\tx,v)\} \leq t$. \\
    For $\max\{t_1(t,x,v), t_1(t,\tx,v)\} \leq s \leq t$, we obtain 
    \begin{align}  \label{t>max_gamma_x}
        \int_{\max\{t_1(t,x,v), t_1(t,\tx,v)\}}^t (\ref{V_sp}) ds = 0
    \end{align}
    since $V(s;t,x,v)=V(s;t,\tx,v)$. For $0 \leq s \leq \min\{t_1(t,x,v), t_1(t,\tx,v)\}$, we apply \eqref{gamma_v} and Lemma \ref{lem:tra_sin_x}-(2). Then
   \begin{align} \label{dbx>dx}
   \begin{split}
       &\int_0^{\min\{t_1(t,x,v), t_1(t,\tx,v)\}} (\ref{V_sp}) ds \\
        &\lesssim   |x-\bx|\left( \|w_0f_0\|_{\infty} G(\tx,v;\epsilon)e^{\varpi \langle v \rangle^{2} t}\mathfrak{V}(t,\varpi;\epsilon)+\|w_0f_0\|^2_{\infty} \frac{1}{\langle v \rangle} \right) \left(|v|+|v|^2\mathcal{T}_{sp}(x,\tx,v;t)\right).
   \end{split}
   \end{align}
   For $ \min\{t_1(t,x,v), t_1(t,\tx,v)\} \leq s \leq \max\{t_1(t,x,v), t_1(t,\tx,v)\}$, we apply \eqref{gamma_upper} and \eqref{tb-tb_x}. Then
   \begin{align} \label{max>t>min_gamma_x}
   \begin{split}
       &\int_{\min\{t_1(t,x,v), t_1(t,\tx,v)\}}^{\max\{t_1(t,x,v), t_1(t,\tx,v)\}} (\ref{V_sp}) ds \\
       &\lesssim_{\vartheta_{0}} \|w_0f_0\|^2_{\infty} |t_1(t,x,v)-t_1(t,\tx,v)| =\|w_0f_0\|^2_{\infty}|\tb(x,v)-\tb(\tx,v)|\\
       &\lesssim  |x-\bx| \|w_0f_0\|^2_{\infty} \mathcal{T}_{sp}(x,\tx,v;t).
  \end{split}
   \end{align}

 \textbf{(2)} Assume $\min\{t_1(t,x,v), t_1(t,\tx,v)\} \leq 0 \leq \max\{t_1(t,x,v), t_1(t,\tx,v)\} \leq t$. \\
 For $s \in [\max\{t_1(t,x,v), t_1(t,\tx,v),t]$, we have \eqref{t>max_gamma_x}. Next, we divide the cases to estimate \eqref{V_sp} for $s \in [0,\max\{t_1(t,x,v), t_1(t,\tx,v)\}]$. When $t_1(t,x,v) \neq -\infty$ and $t_1(t,\tx,v) \neq -\infty$, we obtain
\begin{align*}
     \int_{0}^{\max\{t_1(t,x,v), t_1(t,\tx,v)\}} \eqref{V_sp} ds \leq
    \int_{^{\min\{t_1(t,x,v), t_1(t,\tx,v)\}}}^{^{\max\{t_1(t,x,v), t_1(t,\tx,v)\}}} \eqref{V_sp} ds
\end{align*}
and use same arguments in \eqref{max>t>min_gamma_x}. When $t_1(t,x,v) \neq -\infty,\,t_1(t,\tx,v) = -\infty$ or $t_1(t,x,v) = -\infty,\,t_1(t,\tx,v) \neq -\infty$, we apply the same arguments \eqref{dbx>dx} since Lemma \ref{lem:tra_sin_x}-(2) holds for  $s \in [0,\max\{t_1(t,x,v), t_1(t,\tx,v)\}]$.

 \textbf{(3)} Assume $ \max\{t_1(t,x,v), t_1(t,\tx,v)\} \leq 0$. Then $ \int_{0}^t \eqref{V_sp} ds = 0$.

Therefore, 
\begin{align*}
    \int_0^t \eqref{V_sp} ds
     &\lesssim_{\vartheta_0} 
       |x-\tx|\|w_0f_0\|_{\infty} 
       G(\tx,v;\epsilon)e^{\varpi \langle v \rangle^{2} t}\mathfrak{V}(t,\varpi;\epsilon) \left(|v|+|v|^2\mathcal{T}_{sp}(x,\tx,v;t)\right) \\
       & \quad + |x-\tx| \|w_0f_0\|^2_{\infty} \left(1+(1+|v|) \mathcal{T}_{sp}(x,\tx,v;t)\right).
\end{align*}

Similarly, Lemma \ref{lem:tra_nsin_x}-(1) and (3) yield 
\begin{align*}
    \int_0^t \eqref{V_nsp} ds\lesssim_{\vartheta_0} |\tx-\bx| \|w_0f_0\|^2_{\infty} \frac{1}{|v|}.
\end{align*}

\vspace{3mm}
By combining the upper bound of $\int_0^t \eqref{X_sp}-\eqref{V_nsp}$, we conclude
   \begin{align*} 
       \int_0^t (\ref{X,V_sp_0}) ds \notag
       &\lesssim_{\vartheta_0}
       |x-\bx|\|w_0f_0\|_{\infty}
       (G(x,v;\epsilon)+G(\tx,v;\epsilon)+G(\bx,v;\epsilon)) 
       e^{\varpi \langle v \rangle^{2} t} \\
       &\quad \times \left(\mathfrak{X}(t,\varpi;\epsilon) +\mathfrak{V}(t,\varpi;\epsilon) \right)\left(1+|v|+|v|^2\mathcal{T}_{sp}(x,\tx,v;t)\right) \\
       & \quad + |x-\bx| \|w_0f_0\|^2_{\infty} \left( \frac{1}{|v|}+1+(1+|v|) \mathcal{T}_{sp}(x,\tx,v;t)\right).
   \end{align*}
  Finally, we apply
 \begin{align} \label{G(x,bx,v,v)_up}
     G(z,v;\epsilon)=\ln\left(1+\frac{1}{|v|}\right)\mathbf{1}_{\{d(z,\partial\O) \leq \epsilon\}} +1 &\leq  \frac{1}{|v|}+1
 \end{align}  
 for $z=x, \tx$ or $\bx$ to the above inequality.

 \vspace{3mm}
 Similarly, we estimate
 \begin{align*}
     \int_0^{t} \left|\nu(f)(s,X(s;t,x,v),V(s;t,x,v))-\nu(f)(s,X(s;t,\bx,v),V(s;t,\bx,v))\right| ds.
 \end{align*}
 However, we apply Lemma \ref{lem:diff_nu} instead of Lemma \ref{lem:est_Gam}. In doing so, estimating the difference of $\nu(f)$ becomes easier than that of $\Gamma_{gain}(f,f)$, and the term $\mathfrak{V}(t,\varpi;\epsilon)$ does not appear.
\end{proof}

Now, we prove the velocity variation of \( \Gamma_{\text{gain}}(f,f) \) and \( \nu(f) \). This case differs from Lemma \ref{lem:ga_X} in that the upper bound for \( \Gamma_{\text{gain}}(f,f) \) involves the term \( \mathbf{A}_{\frac{1}{2}}(f_0) \) in \eqref{dif_gam_A_0}. This term arises from the use of \eqref{pro:H_sub} in the proof, which is applied to reduce the order of the singularity \( |v + \zeta|^{-1} \); see \eqref{est_V_vel_s}. The reason for this reduction is to ensure the integrability of the singularity, as shown in Lemma \ref{lem:int_sing}.

\begin{lemma}[Velocity variation of $\Gamma_{gain}(f,f)$ and $\nu(f)$]\label{lem:ga_V}
 Assume that $x \in \O$ and $v, \bv \in \mathbb{R}^3$ with $|v-\bv|\leq 1$. Recall $T_1>0$ and $\varpi_1>1$ from Proposition \ref{pro:H}. For any $0<t <  \min\{T_1,1\}, \,\varpi>\varpi_1$, and $\epsilon>0$, the following hold:
   \small \begin{align} 
         &\int_0^{t} \left|\Gamma_{gain}(f,f)(s,X(s;t,x,v+\z),V(s;t,x,v+\z))-\Gamma_{gain}(f,f)(s,X(s;t,x,\bv+\z),V(s;t,x,\bv+\z))\right| ds  \notag \\
         &\lesssim_{\vartheta_{0}}
       |v-\bv|\mathcal{P}_2(\|w_0f_0\|_{\infty} )e^{\varpi \langle v+\z \rangle^{2} t} \left(\mathfrak{X}(t,\varpi;\epsilon)+\mathfrak{V}(t,\varpi;\epsilon)+\mathbf{A}_{\frac{1}{2}}(f_0)+1\right)  \label{dif_gam_A_0}\\
       &\quad \times  \left[ \frac{1}{|v+\z|}+\frac{1}{|\bv+\z|}+|v+\z|+\left(\min \left\{  \frac{1}{|v+\z|^{\frac{3}{2}}}, \frac{1}{|\bv+\z|^\frac{3}{2}}\right\}+1\right)|v+\z|^2\mathcal{T}_{vel}(x, v, \tv, \zeta;t)\right] \notag
    \end{align} \normalsize
    and
     \begin{align*} 
         &\int_0^{t} \left|\nu(f)(s,X(s;t,x,v+\z),V(s;t,x,v+\z))-\nu(f)(s,X(s;t,x,\bv+\z),V(s;t,x,\bv+\z))\right| ds\\
         &\lesssim
       |v-\bv|\mathcal{P}_1(\|w_0f_0\|_{\infty} )e^{\varpi \langle v+\z \rangle^{2} t} \left(\mathfrak{X}(t,\varpi;\epsilon)+1\right)  \\
       &\quad \times  \left[ \frac{1}{|v+\z|}+\frac{1}{|\bv+\z|}+|v+\z|+\left(\min \left\{  \frac{1}{|v+\z|}, \frac{1}{|\bv+\z|}\right\}+1\right)|v+\z|^2\mathcal{T}_{vel}(x, v, \tv, \zeta;t)\right].
    \end{align*} 
   \end{lemma}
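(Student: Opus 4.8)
The plan is to follow the proof of Lemma~\ref{lem:ga_X} in structure, replacing the spatial shift by the velocity shift $\tv=\tv(v,\bv,\z)$ of \eqref{def_tildev} (if \eqref{assume_v} fails, set $v=\tv$), so that $|v+\z|=|\tv+\z|$ and $\tv+\z$ is a positive multiple of $\bv+\z$. Inserting the intermediate points $(X(s;t,x,\tv+\z),V(s;t,x,v+\z))$ and $(X(s;t,x,\tv+\z),V(s;t,x,\tv+\z))$, I split the integrand of the left side of \eqref{dif_gam_A_0} into four pieces: a spatial singular piece ($X(s;t,x,v+\z)$ vs.\ $X(s;t,x,\tv+\z)$ at velocity $V(s;t,x,v+\z)$), a velocity singular piece ($V(s;t,x,v+\z)$ vs.\ $V(s;t,x,\tv+\z)$ at position $X(s;t,x,\tv+\z)$), a spatial nonsingular piece ($X(s;t,x,\tv+\z)$ vs.\ $X(s;t,x,\bv+\z)$) and a velocity nonsingular piece ($V(s;t,x,\tv+\z)$ vs.\ $V(s;t,x,\bv+\z)$). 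For $\nu(f)$ the identical split is used with \eqref{nu_x}, \eqref{nu_v}, \eqref{nu_upper} in place of \eqref{gamma_x}, \eqref{gamma_v}, \eqref{gamma_upper}.

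For the spatial pieces I apply \eqref{gamma_x} (resp.\ \eqref{nu_x}), bound $|X(s;t,x,v+\z)-X(s;t,x,\tv+\z)|$ by Lemma~\ref{lem:tra_sin_v}-(1) and $|X(s;t,x,\tv+\z)-X(s;t,x,\bv+\z)|$ by Lemma~\ref{lem:tra_non_v}-(2), use $t<1$ to replace $(t-s)$ and $(t-s)^{2}$ by $\max\{(t-s)^{1/2},\langle v+\z\rangle^{-1}\}$, factor out $|v-\tv|\bigl(1+|v+\z|+|v+\z|^{2}\mathcal{T}_{vel}(x,v,\tv,\z;t)\bigr)$, and recognize the remaining $s$- and $u$-integral as $e^{\varpi\langle v+\z\rangle^{2}t}\mathfrak{X}(t,\varpi;\epsilon)$ times the relevant $G$-factors, exactly as in the displayed computation producing $\mathfrak{X}$ in Lemma~\ref{lem:ga_X} (for $\nu$ the weight $|u|e^{-\frac14|u+V|^{2}}$ in \eqref{nu_x} is the second summand of the kernel in Definition~\ref{def:iter}). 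The velocity nonsingular piece is treated with \eqref{gamma_v} (resp.\ \eqref{nu_v}) and Lemma~\ref{lem:tra_non_v}-(1),(3): outside the exit-time interval the velocity difference is the constant $\tv-\bv$, so the main term feeds into $\mathfrak{V}(t,\varpi;\epsilon)$ (for $\nu$, \eqref{nu_v} is already $|v-\bv|\|f\|_{\infty}$ and no seminorm arises), while on the exit-time interval \eqref{gamma_upper} (resp.\ \eqref{nu_upper}) times the interval length $\min\{\tb(x,\tv+\z)/|\tv+\z|,\tb(x,\bv+\z)/|\bv+\z|\}|\tv-\bv|$, combined with $\tb|v+\z|=|x-\xb|\lesssim d(x,\partial\O)+1\lesssim\langle v+\z\rangle$, produces a term absorbed into the $\min\{|v+\z|^{-1},|\bv+\z|^{-1}\}$ weight.

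The velocity singular piece is where the new input enters. Off the exit-time interval, \eqref{gamma_v} and Lemma~\ref{lem:tra_sin_v}-(2) give the $s$-independent bound $|V(s;t,x,v+\z)-V(s;t,x,\tv+\z)|\le|v-\tv|\bigl(1+|v+\z|+|v+\z|^{2}\mathcal{T}_{vel}\bigr)$, so the main term yields $\mathfrak{V}(t,\varpi;\epsilon)$ and the quadratic correction $\|wf\|_{\infty}^{2}\langle\cdot\rangle^{-1}|V-V|$ is lower order. On the exit-time interval $[\min t_1,\max t_1]$ one velocity has bounced and the other has not, so the two arguments differ by $(v+\z)-R_{\xb(x,\tv+\z)}(\tv+\z)$, of size $\lesssim|v-\tv|+|(\tv+\z)\cdot n(\xb(x,\tv+\z))|\lesssim|v+\z|$; naively bounding $\Gamma_{gain}\lesssim\|wf\|_{\infty}^{2}$ over the interval of length $|\tb(x,v+\z)-\tb(x,\tv+\z)|\le|v-\tv|\,\mathcal{T}_{vel}$ (from \eqref{tb-tb_v}) would only give $\|wf\|_{\infty}^{2}\mathcal{T}_{vel}$ after dividing out $|v-\bv|$, which is not dominated by $(\cdots+1)|v+\z|^{2}\mathcal{T}_{vel}$ as $v+\z\to0$. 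This is the residual $|v+\z|^{-1}$-order singularity that must be weakened: instead I keep the $\Gamma_{gain}$ difference as an integral in $u$ and invoke \eqref{pro:H_sub} (legitimate since $\varpi>\varpi_1$ and $t<T_1$) with a fixed $\beta\in[\tfrac14,\tfrac12)$, which bounds it by $e^{\varpi\langle v+\z\rangle^{2}t}|v+\z-R_{\xb}(\tv+\z)|^{2\beta}\bigl(\mathbf{A}_{\frac{1}{2}}(f_0)+\|w_0f_0\|_{\infty}\bigr)\lesssim e^{\varpi\langle v+\z\rangle^{2}t}|v+\z|^{2\beta}\bigl(\mathbf{A}_{\frac{1}{2}}(f_0)+\|w_0f_0\|_{\infty}\bigr)$; integrating over the interval and dividing by $|v-\bv|$ leaves $|v+\z|^{2\beta}\mathcal{T}_{vel}\lesssim|v+\z|^{-3/2}|v+\z|^{2}\mathcal{T}_{vel}$ since $2\beta\ge\tfrac12$, which is exactly the $\min\{|v+\z|^{-3/2},|\bv+\z|^{-3/2}\}$ contribution to \eqref{dif_gam_A_0} and the source of the $\mathbf{A}_{\frac{1}{2}}(f_0)$ summand. (The remaining quadratic correction on this interval is controlled using $|(\tv+\z)\cdot n(\xb(x,\tv+\z))|\lesssim|v+\z||\widehat{\tv+\z}\cdot\nabla\xi(\xb(x,\tv+\z))|$, so that one grazing factor cancels against $\mathcal{T}_{vel}$ and only a pure $|v+\z|^{-1}$-type singularity survives.)

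To finish I collect the four pieces, apply Corollary~\ref{cor:ave}-(2) to bound $\mathcal{T}_{vel}$ where explicit negative powers of $|v+\z|$ are needed, absorb $G$-factors via $G(x,z;\epsilon)\lesssim|z|^{-1}+1$ as in \eqref{G(x,bx,v,v)_up}, absorb $\langle v+\z\rangle$-powers into $\mathcal{P}_2(\|w_0f_0\|_{\infty})$ and $e^{\varpi\langle v+\z\rangle^{2}t}$, and symmetrise the weights $|v+\z|^{-1},|\bv+\z|^{-1}$ (resp.\ $|v+\z|^{-3/2},|\bv+\z|^{-3/2}$) into the $\min\{\cdot,\cdot\}$ form by the case split $|v-\bv|\ge\tfrac12|\bv+\z|$ versus $|v-\bv|\le\tfrac12|\bv+\z|$ exactly as in Lemma~\ref{lem:s=0_spec_v} (the first case disposed of by $\int_0^t\Gamma_{gain}\lesssim\|wf\|_{\infty}^{2}\le 2\|wf\|_{\infty}^{2}|v-\bv|/|\bv+\z|$, the second giving $|v+\z|^{-1}\le2|\bv+\z|^{-1}$ and $|v-\tv|\lesssim|v-\bv|$). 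The $\nu(f)$ bound follows identically but is strictly easier: \eqref{nu_x}, \eqref{nu_v} carry no quadratic term and \eqref{nu_v} no $u$-integral, so neither $\mathfrak{V}$ nor $\mathbf{A}_{\frac{1}{2}}(f_0)$ appears and the weight is only $\min\{|v+\z|^{-1},|\bv+\z|^{-1}\}$. I expect the main obstacle to be precisely the exit-time-interval analysis of the velocity-singular $\Gamma_{gain}$ piece: seeing that routing the local difference through \eqref{pro:H_sub} rather than the crude $\|wf\|_{\infty}^{2}$ bound lowers the order of the surviving $|v+\z|^{-1}$ singularity to $3/2$, which is what makes the angular integration in Lemma~\ref{lem:int_sing} converge.
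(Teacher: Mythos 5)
Your proposal is correct and follows essentially the same route as the paper: the same four-way split through $(X(s;t,x,\tv+\z),V(s;t,x,v+\z))$ and $(X(s;t,x,\tv+\z),V(s;t,x,\tv+\z))$, the same use of \eqref{gamma_x}, \eqref{gamma_v}, \eqref{gamma_upper} with Lemmas \ref{lem:tra_sin_v} and \ref{lem:tra_non_v}, the case split $|v-\bv|\gtrless\tfrac12|\bv+\z|$, and crucially the same key step of routing the velocity-singular piece on the exit-time interval through \eqref{pro:H_sub} (the paper fixes $\beta=\tfrac14$, giving exactly $|v+\z|^{1/2}\mathcal{T}_{vel}$ and the $\mathbf{A}_{\frac12}(f_0)$ term). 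Two cosmetic remarks: the quadratic correction there is handled in the paper simply by $\langle v+\z\rangle^{-1}|V-\tV|\lesssim|v+\z|^{1/2}$ rather than your grazing-cancellation aside, and Corollary \ref{cor:ave}-(2) is not invoked in this lemma (it is deferred to the estimate of $\mathfrak{V}$), since the stated bound keeps $\mathcal{T}_{vel}$ explicit.
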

   \begin{proof}
   When \eqref{assume_v} holds for given \( x \in \Omega \) and \( v, \bv, \z \in \mathbb{R}^3 \), there exists \( \tilde{v}(v, \bar{v}, \z) \in \R^3 \) as defined in \eqref{def_tildev}. If \eqref{assume_v} does not hold, we set \( \tilde{v} = v \). For $v, \tv$, and $\bv$, we split
   \small 
    \begin{align}  
        &\left|\Gamma_{gain}(f,f)(s,X(s;t,x,v+\z),V(s;t,x,v+\z))-\Gamma_{gain}(f,f)(s,X(s;t,x,\bv+\z),V(s;t,x,\bv+\z))\right|\label{X,V_0} \\ 
       &\leq
       \left|\Gamma_{gain}(f,f)(s,X(s;t,x,v+\z),V(s;t,x,v+\z))-\Gamma_{gain}(f,f)(s,X(s;t,x,\tv+\z),V(s;t,x,v+\z))\right|
       \label{X_vel_s} \\
       &\;+\left|\Gamma_{gain}(f,f)(s,X(s;t,x,\tv+\z),V(s;t,x,v+\z))-\Gamma_{gain}(f,f)(s,X(s;t,x,\tv+\z),V(s;t,x,\tv+\z))\right| \label{V_vel_s}  \\
       &\; +\left|\Gamma_{gain}(f,f)(s,X(s;t,x,\tv+\z),V(s;t,x,\tv+\z))-\Gamma_{gain}(f,f)(s,X(s;t,x,\bv+\z),V(s;t,x,\tv+\z))\right| \label{X_vel_ns} \\
       &\;+\left|\Gamma_{gain}(f,f)(s,X(s;t,x,\bv+\z),V(s;t,x,\tv+\z))-\Gamma_{gain}(f,f)(s,X(s;t,x,\bv+\z),V(s;t,x,\bv+\z))\right|.
       \label{V_vel_ns} 
    \end{align} \normalsize
    
   If we assume $|v-\bv| \geq \frac{1}{2}|\bv+\z|$, then
   \begin{align*} 
       \int_0^t (\ref{X,V_0}) ds
       &\lesssim_{\vartheta_{0}} \|w_0f_0\|_{\infty}^2 t \leq \|w_0f_0\|_{\infty}^2 t \frac{2}{|\bv+\z|}|v-\bv|
   \end{align*}
   using \eqref{gamma_upper}. From now on, we assume $|v-\bv| \leq \frac{1}{2}|\bv+\z|$. \\
   
   Next, we first estimate \eqref{X_vel_s} and \eqref{X_vel_ns}, respectively. By \eqref{gamma_x} and Lemma \ref{lem:tra_sin_v}-(1), we obtain
     \begin{align*}
         \int_0^t (\ref{X_vel_s}) ds 
         &\lesssim   |v-\tv|\|w_0f_0\|_{\infty}(G(x,v+\z;\epsilon)+G(x,\tv+\z;\epsilon))e^{\varpi \langle v+\z \rangle^{2} t}\mathfrak{X}(t,\varpi;\epsilon) \\
         &\quad \times \left(1+|v+\z|+|v+\z|^2\mathcal{T}_{vel}(x, v, \tv, \zeta;t)\right).
     \end{align*}
     By \eqref{gamma_x} and Lemma \ref{lem:tra_non_v}-(2), we obtain
     \begin{align*}
          \int_0^t (\ref{X_vel_ns}) ds 
         &\lesssim   |\tv-\bv|\|w_0f_0\|_{\infty}(G(x,\tv+\z;\epsilon)+G(x,\bv+\z;\epsilon))e^{\varpi \langle v+\z \rangle^{2} t}\mathfrak{X}(t,\varpi;\epsilon).\\
     \end{align*}

    To estimate \eqref{V_vel_s}, we consider the following three cases: (1), (2), and (3).
    
    \textbf{(1)} Assume $0 \leq \min\{t_1(t,x,v+\z), t_1(t,x,\tv+\z)\} \leq  \max\{t_1(t,x,v+\z), t_1(t,x,\tv+\z)\} \leq t$. 
    
     For $s \in [0,\min\{t_1(t,x,v+\z), t_1(t,x,\tv+\z)\}]\cup [\max\{ t_1(t,x,v+\z), t_1(t,x,\tv+\z)\}, t]$, we obtain
   \begin{align} \label{V_s_maxt}
   \begin{split}
       &\left(\int_0^{\min\{t_1(t,x,v+\z), t_1(t,x,\tv+\z)\}} +\int_{\max\{t_1(t,x,v+\z), t_1(t,x,\tv+\z)\}}^t \right)(\ref{V_vel_s}) ds \\
        &\lesssim   |v-\tv|\left(\|w_0f_0\|_{\infty} G(x,\tv+\z;\epsilon)e^{\varpi \langle v+\z \rangle^{2} t}\mathfrak{V}(t,\varpi;\epsilon)+\|w_0f_0\|^2_{\infty} \frac{1}{\langle v+\z \rangle} \right) \\
        &\quad \times \left(1+|v+\z|+|v+\z|^2\mathcal{T}_{vel}(x, v, \tv, \zeta;t)\right)
   \end{split}
   \end{align}
   by applying \eqref{gamma_v} and Lemma \ref{lem:tra_sin_v}-(2).

   Next, we consider $s \in [\min\{t_1(t,x,v+\z), t_1(t,x,\tv+\z)\},\max\{t_1(t,x,v+\z), t_1(t,x,\tv+\z)\}]$.
   By \eqref{gamma_v}, we obtain
   \begin{align*}  
       (\ref{V_vel_s})  
       &\lesssim \|w_0f_0\|_{\infty} \int_{\mathbb{R}^3} \frac{1}{|u|}e^{-c|u|^2}\\
       &\quad \times   \left|f(s,X(s;t,x,\tv+\z),V(s;t,x,v+\z) {+u})-f(s,X(s;t,x,\tv+\z),V(s;t,x,\tv+\z) {+u})\right|du\\
       &\quad + \|w_0f_0\|_{\infty}^2 \frac{1}{\langle  v+\z\rangle}|V(s;t,x,v+\z)-V(s;t,x,\tv+\z)|.
   \end{align*}
   By applying \eqref{pro:H_sub}, which is derived from Proposition \ref{pro:H}, with $\beta=\frac{1}{4}$ to the first term on the right-hand side of the above inequality,
   \begin{align}  \label{est_V_vel_s}
   \begin{split}
       (\ref{V_vel_s})&\lesssim_{\vartheta_{0}}\|w_0f_0\|_{\infty}\left(\mathbf{A}_{\frac{1}{2}}(f_0)+\|w_0f_0\|_{\infty}\right)e^{\varpi_1 \langle v +\z \rangle^2 t} |V(s;t,x,v+\z)-V(s;t,x,\tv+\z)|^{\frac{1}{2}}\\
       &\quad + \|w_0f_0\|_{\infty}^2 \frac{1}{\langle v +\z\rangle}|V(s;t,x,v+\z)-V(s;t,x,\tv+\z)|\\
       &\lesssim \mathcal{P}_2(\|w_0f_0\|_{\infty} )e^{\varpi \langle v+\z \rangle^{2} t} \left( \mathbf{A}_{\frac{1}{2}}(f_0)+1\right)|v+\z|^{\frac{1}{2}}.
   \end{split}
   \end{align}
   Then, we obtain
   \begin{align} \label{V_s_minmax}
   \begin{split}
       &\int_{\min\{t_1(t,x,v+\z), t_1(t,x,\tv+\z)\} }^{\max\{t_1(t,x,v+\z), t_1(t,x,\tv+\z)\}} (\ref{V_vel_s}) ds \\
       &\lesssim  \mathcal{P}_2(\|w_0f_0\|_{\infty} )e^{\varpi \langle v+\z \rangle^{2} t} \left( \mathbf{A}_{\frac{1}{2}}(f_0)+1 \right)|v+\z|^{\frac{1}{2}}|\tb(x,v+\z)-\tb(x,\tv+\z)| \\
       &\lesssim |v-\tv|\mathcal{P}_2(\|w_0f_0\|_{\infty} )e^{\varpi \langle v+\z \rangle^{2} t} \left( \mathbf{A}_{\frac{1}{2}}(f_0)+1\right)|v+\z|^{\frac{1}{2}}\mathcal{T}_{vel}(x, v, \tv, \zeta;t). 
  \end{split}
   \end{align}

    \vspace{3mm}
    \textbf{(2)}
    Assume $\min\{t_1(t,x,v+\z), t_1(t,x,\tv+\z)\} \leq 0 \leq \max\{t_1(t,x,v+\z), t_1(t,x,\tv+\z)\} \leq t$.\\
    For $s \in [\max\{t_1(t,x,v+\z), t_1(t,x,\tv+\z)\},t]$, we have \eqref{V_s_maxt}. Next, we divide the cases to estimate \eqref{V_vel_s} for $s \in [0,\max\{t_1(t,x,v+\z), t_1(t,x,\tv+\z)\}]$.

   When $\min\{t_1(t,x,v+\z), t_1(t,x,\tv+\z)\} \neq -\infty$, we have
    \begin{align*}
         \int_0^{\max\{t_1(t,x,v+\z), t_1(t,x,\tv+\z)\}} \eqref{V_vel_s} ds
         \leq \int_{\min\{t_1(t,x,v+\z), t_1(t,x,\tv+\z)\}}^{\max\{t_1(t,x,v+\z), t_1(t,x,\tv+\z)\}} \eqref{V_vel_s} ds,
    \end{align*} 
   and use same arguments in \eqref{V_s_minmax}.

   When $t_1(t,x,v+\z)=-\infty,\,t_1(t,x,\tv+\z)\neq-\infty$ or $t_1(t,x,v+\z)\neq -\infty,\,t_1(t,x,\tv+\z)=-\infty$. we apply the same arguments in \eqref{V_s_maxt} since Lemma \ref{lem:tra_sin_v}-(2) holds.

 \textbf{(3)} When $ \max\{t_1(t,x,v+\z), t_1(t,x,\tv+\z)\} \leq 0$, we use the same arguments as in \eqref{V_s_maxt} for $s \in [0,t]$. Here, we use \(|V(s;t,x,v+\z)-V(s;t,x,\tv+\z)| = |v-\tv|\) instead of Lemma \ref{lem:tra_sin_v}-(2).
  
From (1), (2), and (3), we obtain
\begin{align*} 
       \int_0^t (\ref{V_vel_s}) ds 
       &\lesssim_{\vartheta_{0}} |v-\bv|\mathcal{P}_2(\|w_0f_0\|_{\infty} )
       e^{\varpi \langle v+\z \rangle^{2} t} 
       \left(    G(x,\tv+\z;\epsilon)\mathfrak{V}(t,\varpi;\epsilon)+\mathbf{A}_{\frac{1}{2}}(f_0)+1\right)\\
        &\quad \times \left(1 + (|v+\z|^{\frac{1}{2}}+|v+\z|^2)\mathcal{T}_{vel}(x, v, \tv, \zeta;t)\right).\\
   \end{align*}

    To estimate \eqref{V_vel_ns}, we consider the following three cases: (4), (5), and (6).
   
   \textbf{(4)} Assume $0 \leq \min\{t_1(t,x,\tv+\z), t_1(t,x,\bv+\z)\} \leq  \max\{t_1(t,x,\tv+\z), t_1(t,x,\bv+\z)\} \leq t$. \\
   For $s \in [0,\min\{t_1(t,x,\tv+\z), t_1(t,x,\bv+\z)\}]$ and $s\in[\max\{t_1(t,x,\tv+\z), t_1(t,x,\bv+\z)\},t]$, we use the same arguments as in \eqref{V_s_maxt}. Using \eqref{gamma_upper} and Lemma \ref{lem:tra_non_v}-(1),
   \begin{align}  \label{min_max_vbv}
   \begin{split}
       \int_{\min\{t_1(t,x,\tv+\z), t_1(t,x,\bv+\z)\} }^{\max\{t_1(t,x,\tv+\z), t_1(t,x,\bv+\z)\}} (\ref{V_vel_ns}) ds &\lesssim_{\vartheta_{0}} \|w_0f_0\|_{\infty}^2|\tb(x,\tv+\z)-\tb(x,\bv+\z)| \\
       &\lesssim |v-\bv| \|w_0f_0\|_{\infty}^2\frac{1}{|v+\z|}.
   \end{split}
   \end{align}
   
\textbf{(5)} Assume $\min\{t_1(t,x,\tv+\z), t_1(t,x,\bv+\z)\} \leq 0 \leq \max\{t_1(t,x,\tv+\z), t_1(t,x,\bv+\z)\} \leq t$. For $s\in [\max\{t_1(t,x,\tv+\z), t_1(t,x,\bv+\z)\},t]$, we use the same arguments as in \eqref{V_s_maxt}. For $s \in [0,\max\{t_1(t,x,\tv+\z), t_1(t,x,\bv+\z)\}]$, we use \eqref{min_max_vbv}.

 \textbf{(6)} When $ \max\{t_1(t,x,\tv+\z), t_1(t,x,\bv+\z)\} \leq 0$, we use the same arguments as in \eqref{V_s_maxt} for $s \in [0,t]$.
   
   From (4), (5), and (6), we obtain
   \begin{align*} 
       \int_0^t (\ref{V_vel_ns}) ds &\lesssim_{\vartheta_{0}}   |\tv-\bv|\|w_0f_0\|_{\infty} G(x,\bv;\epsilon)e^{\varpi \langle v+\z \rangle^{2} t}\mathfrak{V}(t,\varpi;\epsilon) + |\tv-\bv| \|w_0f_0\|_{\infty}^2\frac{1}{|v+\z|}.
   \end{align*}

  \vspace{3mm}
   By combining the upper bound of $\int_0^t $\eqref{X_vel_s}-\eqref{V_vel_ns}$ds$ and using \eqref{G(x,bx,v,v)_up}, we obtain
   \begin{align*}
       \int_0^t \eqref{X,V_0} 
       &\lesssim
       |v-\bv|\mathcal{P}_2(\|w_0f_0\|_{\infty} )e^{\varpi \langle v+\z \rangle^{2} t} \left(\mathfrak{X}(t,\varpi;\epsilon)+\mathfrak{V}(t,\varpi;\epsilon)+\mathbf{A}_{\frac{1}{2}}(f_0)+1\right) \\
       &\quad \times  \left( \frac{1}{|v+\z|}+\frac{1}{|\bv+\z|}+|v+\z|+(|v+\z|^{\frac{1}{2}}+|v+\z|^2)\mathcal{T}_{vel}(x, v, \tv, \zeta;t)\right).
   \end{align*}
 Finally, applying \eqref{min_v,zeta}, we complete the estimate of $\int_0^t \eqref{X,V_0}$. \\

 Similarly, we estimate 
 \begin{align*}
     \int_0^{t} \left|\nu(f)(s,X(s;t,x,v+\z),V(s;t,x,v+\z))-\nu(f)(s,X(s;t,x,\bv+\z),V(s;t,x,\bv+\z))\right| ds.
 \end{align*}
However, we apply Lemma \ref{lem:diff_nu} instead of Lemma \ref{lem:est_Gam}. Previously, to estimate \eqref{V_vel_s}, we used Proposition \ref{pro:H}, as shown in \eqref{est_V_vel_s}. In the present case, however, we can directly use the inequality from \eqref{nu_v}:
\begin{align*}
    &\left|\nu(f)(s,X(s;t,x,\bv+\z),V(s;t,x,v+\z)) - \nu(f)(s,X(s;t,x,\bv+\z),V(s;t,x,\tv+\z))\right| \\
    &\leq |V(s;t,x,v+\z) - V(s;t,x,\tv+\z)|  \|w_0 f_0\|_{\infty} \leq 2|v+\z|\|w_0 f_0\|_{\infty},
\end{align*}
so Proposition \ref{pro:H} is not needed in this case. \\
\end{proof}

\section{Dynamical singular regime integration}
     The projected normal vector $n_{\parallel}$ is defined as
		\begin{equation*} 
		n_{\parallel}(x) := \text{Proj}_{ S}n(x) = \text{projection of $n(x)$ on $S$} = (I-\hat{q}\otimes \hat{q})n(x) ,
		\end{equation*}
		where $x\in\p\O$ and $\hat{q}$ is a unit vector orthogonal to the plane $S$. Recall the lemma of \cite{CD2023} concerning the uniform comparability of $n_{||}$.

	\begin{lemma}[Lemma 4.2 of \cite{CD2023}] \label{lem:unif n}
     Let $S$ be a plane in $\R^{3}$ and let $\O$ be as in Definition \ref{def:domain}. Assume that $\p\O\cap S$ is a closed curve. Then, $|n_{\parallel}(x) |$ is uniformly comparable for all $x\in \p\O\cap S$,  i.e., there exist uniformly positive constants $c$ and $C$, which only depend on $\O$,  such that
		\begin{align*}
		c < \frac{|n_{\parallel}(x)|}{|n_{\parallel}(y)|} \leq C, \quad \forall x,y \in \p\O\cap S.
		\end{align*}
		(For example,   it is obvious that $|n_{\parallel}(x)| = |n_{\parallel}(y)|$ for all $x,y \in \p\O \cap S$, if $\mathcal{O}$ is a sphere.)
	\end{lemma}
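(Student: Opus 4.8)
Write $|n_{\parallel}(x)|^{2}=|n(x)|^{2}-(n(x)\cdot\hat{q})^{2}=1-(n(x)\cdot\hat{q})^{2}$ for $x\in\p\O$, where $\hat q$ is the unit normal of $S$. Since $\gamma:=\p\O\cap S$ is closed and bounded, hence compact, and $|n_{\parallel}|\le 1$ everywhere, it suffices to produce a constant $C=C(\O)$ with $\sup_{\gamma}|n_{\parallel}|\le C\,\inf_{\gamma}|n_{\parallel}|$; the two-sided bound on the ratio then follows. The first step is the elementary remark that, for $z\in\p\O$, $n_{\parallel}(z)=0$ iff $n(z)=\pm\hat{q}$, which by strict convexity of $\mathcal{O}$ forces $z$ to be one of the two poles $z_{\pm}(\hat q)$ where the linear functional $z\mapsto z\cdot\hat{q}$ attains its maximum $M(\hat q)$, resp.\ minimum $m(\hat q)$, over $\overline{\mathcal O}$ (equivalently, where the Gauss map equals $\pm\hat q$). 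These are the only critical points of $z\cdot\hat q|_{\p\O}$, so as long as $\gamma$ is a genuine closed curve it is a regular level set of $z\cdot\hat q|_{\p\O}$, misses both poles, and $n_{\parallel}\neq 0$ on it.

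Two structural constants depending only on $\O$ enter: the minimal width $w_{0}:=\min_{|\hat q|=1}\big(M(\hat q)-m(\hat q)\big)>0$ (positive because $\overline{\mathcal O}$ contains a ball), and the pinched curvature bounds $0<\kappa_{\min}\le\kappa_{\max}<\infty$ of $\p\O$; here the lower bound comes from the uniform convexity \eqref{convex_xi} (the second fundamental form is $\mathrm{II}(X,X)=|\nabla\xi|^{-1}X^{T}\nabla^{2}\xi\,X$ on tangent $X$, by \eqref{nabla_x_bv_b}), and the upper bound from smoothness and compactness of $\p\O$. Fix a small threshold $\lambda_{0}=\lambda_{0}(\O)>0$ to be chosen and split according to the height $d$ of $S$. \emph{Regime 1 (bulk): $\min\big(M(\hat q)-d,\,d-m(\hat q)\big)\ge\lambda_{0}$.} Then every $z\in\gamma$ lies in the compact set $\mathcal K_{\lambda_0}:=\{(z,\hat q):z\in\p\O,\ |\hat q|=1,\ \min(M(\hat q)-z\cdot\hat q,\ z\cdot\hat q-m(\hat q))\ge\lambda_{0}\}$, on which $|n_{\parallel}|$ is continuous and, by the previous paragraph, strictly positive; hence $|n_{\parallel}|\ge\mu_{0}(\O,\lambda_{0})>0$ on $\gamma$, and since $|n_{\parallel}|\le 1$ the ratio is at most $\mu_{0}^{-1}$.

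\emph{Regime 2 (near a pole): say $M(\hat q)-d<\lambda_{0}$, the case near $m(\hat q)$ being symmetric.} By a compactness argument in the pair $(\hat q,d)$ — using that $z_{+}(\hat q)$ and $M(\hat q)$ depend continuously on $\hat q$ by uniqueness of the maximizer — one shows that once $\lambda_0$ is small enough, uniformly in $\hat q$, the whole of $\gamma$ lies in a fixed-size coordinate chart about $P:=z_{+}(\hat q)$ in which $\p\O$ is a graph $z_{3}=-g(z')$ over its tangent plane (origin at $P$, $\hat q$ the positive $z_3$-axis), with $g(0)=0$, $\nabla g(0)=0$, and the eigenvalues of $\nabla^{2}g$ confined to $[\kappa_{\min}/2,\,2\kappa_{\max}]$ on the chart (such uniform charts exist by compactness and $C^{2}$ regularity of $\p\O$). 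In these coordinates a direct computation gives $|n_{\parallel}|^{2}=|\nabla g(z')|^{2}\big/\big(1+|\nabla g(z')|^{2}\big)$ and $\gamma=\{g(z')=M(\hat q)-d=:\eta\}$. The Hessian bounds yield $\tfrac{\kappa_{\min}}{4}|z'|^{2}\le g(z')\le\kappa_{\max}|z'|^{2}$ and $\tfrac{\kappa_{\min}}{2}|z'|\le|\nabla g(z')|\le 2\kappa_{\max}|z'|$, so on $\gamma$ one has $\sqrt{\eta/\kappa_{\max}}\le|z'|\le 2\sqrt{\eta/\kappa_{\min}}$ and hence $|\nabla g(z')|$ lies between two fixed multiples of $\sqrt{\eta}$; shrinking $\lambda_0$ further makes $|\nabla g|\le 1$ on $\gamma$, so $|n_{\parallel}|$ is comparable to $|\nabla g|$ there and $\sup_{\gamma}|n_{\parallel}|\big/\inf_{\gamma}|n_{\parallel}|\lesssim (\kappa_{\max}/\kappa_{\min})^{3/2}$, a constant depending only on $\O$. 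Combining the two regimes proves the lemma.

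The genuine difficulty is Regime 2 together with the uniformity requirement: a naive compactness argument over \emph{all} planes $S$ breaks down because $\gamma$ can be an arbitrarily small oval hugging a pole, and there $|n_{\parallel}|$ tends to $0$ \emph{along the whole curve} — one is comparing two small quantities, so the estimate one needs is scale-invariant and must be extracted from the osculating–paraboloid (equivalently, blow-up) picture near the pole. The two features that save it are (i) strict convexity of $\mathcal O$, which localizes the degeneration to the two isolated poles $z_{\pm}(\hat q)$, and (ii) the curvature pinching $\kappa_{\max}/\kappa_{\min}<\infty$, which controls the eccentricity of those small ovals and hence the ratio; when $\mathcal O$ is a ball, $\kappa_{\min}=\kappa_{\max}$ and moreover $n(x)\cdot\hat q=\pm d/\mathrm{radius}$ is literally constant on $\gamma$, recovering the stated equality $|n_{\parallel}(x)|=|n_{\parallel}(y)|$.
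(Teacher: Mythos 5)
There is nothing in this paper to compare your argument against: Lemma \ref{lem:unif n} is simply quoted as Lemma 4.2 of \cite{CD2023} and no proof is reproduced here, so your proposal can only be judged on its own merits. On those merits it is correct and self-contained. You identify the only genuinely delicate point — uniformity in the plane $S$ when the section $\p\O\cap S$ degenerates to a small oval around a pole $z_{\pm}(\hat q)$, where $|n_{\parallel}|$ is small along the entire curve — and your two-regime argument handles it properly: in the bulk regime a compactness argument on the set $\mathcal K_{\lambda_0}\subset \p\O\times\mathbb{S}^2$ gives a uniform positive lower bound for $|n_{\parallel}|$, and in the near-pole regime the graph representation $z_3=-g(z')$ over the tangent plane at the pole, with Hessian eigenvalues pinched in $[\kappa_{\min}/2,\,2\kappa_{\max}]$ on a uniform chart, yields $|\nabla g|\simeq\sqrt{\eta}$ on the level set $\{g=\eta\}$ and hence a ratio bound depending only on $\kappa_{\max}/\kappa_{\min}$, i.e.\ only on $\O$. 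The computations ($|n_{\parallel}|^2=|\nabla g|^2/(1+|\nabla g|^2)$, the Taylor bounds on $g$ and $\nabla g$, the consequent bounds on $|z'|$ on the level set) are all accurate, and uniform convexity \eqref{convex_xi} together with smoothness and compactness of $\p\O$ does supply the curvature pinching and the uniform chart size you invoke.

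Two small points would tighten the write-up. First, say explicitly that $\lambda_0<w_0/2$, so that the two near-pole alternatives cannot hold simultaneously and your case split is exhaustive and unambiguous; you introduce the minimal width $w_0$ but never actually use it. Second, the localization of the whole curve $\gamma$ into the fixed chart around $P=z_+(\hat q)$ can be made quantitative rather than by soft compactness: uniform convexity gives $M(\hat q)-z\cdot\hat q\geq c\,|z-z_+(\hat q)|^2$ for $z\in\p\O$, so $M(\hat q)-d<\lambda_0$ forces $\gamma$ into a ball of radius $\sqrt{\lambda_0/c}$ about the pole. (Also, with the paper's convention $\mathcal{O}=\{\xi>0\}$ the second fundamental form carries a minus sign relative to your formula; this is harmless since only the magnitudes of the principal curvatures enter.)
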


    In Section 5.1, we establish a geometric comparison between convex curves and a lower bound on approach angles. These results will be used in the proof of Lemma \ref{lem:int_sing}, where we compare a uniformly convex domain with a sphere.

\subsection{Geometric estimates for uniformly convex domains}

We consider two convex curves and a point \( x_1 \) outside both. Suppose a particle at \( x_1 \) grazes both curves at the same point with the same direction. If one curve has uniformly smaller curvature than the minimal curvature of the other, we observe that the corresponding specular reflection angle becomes larger, and thus its cosine becomes smaller.

\begin{lemma} \label{lem:angle_com}
We consider a smooth convex function $f : \mathbb{R}^{+}\cup \{0\} \rightarrow \mathbb{R}^{+}\cup \{0\}$, with $f(0)=0$ and $f'(0)=0$, and let $k(s)>0$ denote its curvature at $s\in \mathbb{R}^{+}\cup \{0\} $. We take another convex function $f_m : [0,k_m^{-1}] \rightarrow \mathbb{R}^{+}\cup \{0\}$ with constant curvature $k_m = \min_{s \geq 0} k(s)/2$, where $f_m(0)=0$ and $f_m'(0)=0$.$($i.e, $f_m$ represents one-fourth of a circle with radius of $k_m^{-1}$, and center at $(0,k_m^{-1}).)$ Let $g_{\delta} : \mathbb{R}^{+}\cup \{0\} \rightarrow \mathbb{R}^{+}\cup \{0\}$ be given as
\begin{align} \label{def_g}
    g_{\delta}(s) := ( x_1-s)\tan \delta 
\end{align} 
for $\delta \in [0, \pi/2]$ and $x_1 >0$. Let us define
\begin{align*}
    \mathbf{A}(s; F, \delta):=\left|\widehat{(-F'(s),1)} \cdot (-\cos \delta, \sin \delta) \right|=\frac{F'(s)\cos\delta+\sin \delta}{\sqrt{1+\left(F'(s)\right)^2}},
\end{align*} 
where $F=f \text{\;or\;} f_m$. Assume that there exist interaction points $p$ and $q>0$ for $f, f_m$ and $g_\delta$, respectively, such that
\begin{align}\label{def_p123}
   f(p) = g_{\delta}(p)
    \text{\quad and \quad}  f_m(q) = g_{\delta}(q).
\end{align} 
Then, there exists $\epsilon>0$ such that
\begin{align*} 
    \mathbf{A}(q;f_m, \delta) \leq \mathbf{A}(p;f, \delta) 
\end{align*} 
whenever $0<x_1\tan \delta < \epsilon$. (See Figure \ref{fig_angle_com}.) 
    
\end{lemma}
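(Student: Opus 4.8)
The plan is to work in a neighborhood of the grazing contact and reduce everything to a comparison between the two curves $f$ and $f_m$ via their graphs, exploiting the hypothesis $k_m = \tfrac12\min_s k(s)$, so that $f$ is \emph{more} curved than $f_m$ everywhere. First I would record the geometry: the line $y=g_\delta(s)$ passes through $(x_1,0)$ with slope $-\tan\delta$, and it is tangent to each curve at the respective contact point (this is what "$x_1$ grazes both curves with the same direction" encodes — the grazing condition forces the line $g_\delta$ to meet $f$ at $p$ and $f_m$ at $q$ tangentially, i.e. $f'(p)=f_m'(q)=-g_\delta'=\tan\delta$ up to the appropriate sign convention coming from the picture). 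Actually, reading \eqref{def_p123}, only $f(p)=g_\delta(p)$ and $f_m(q)=g_\delta(q)$ are assumed, so I would first argue that in the regime $0<x_1\tan\delta<\epsilon$ the relevant intersection is the one near the origin and is essentially tangential to leading order; the small-parameter hypothesis is precisely what makes $p,q$ small (of order $\sqrt{x_1\tan\delta}$, since $f(s)\approx\tfrac12 k(0)s^2$ near $0$) and makes the slopes $f'(p),f_m'(q)$ small.

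The key computation is to understand $\mathbf A(s;F,\delta)=\dfrac{F'(s)\cos\delta+\sin\delta}{\sqrt{1+(F'(s))^2}}$ as a function of the slope $m=F'(s)$. Writing $\Phi(m):=\dfrac{m\cos\delta+\sin\delta}{\sqrt{1+m^2}}$, one checks $\Phi'(m)=\dfrac{\cos\delta - m\sin\delta}{(1+m^2)^{3/2}}$, which is positive for $m<\cot\delta$; since the slopes involved are small (order $\sqrt{x_1\tan\delta}$) and $\delta$ is bounded away from $\pi/2$ in the regime of interest, $\Phi$ is increasing in $m$ there. Hence the inequality $\mathbf A(q;f_m,\delta)\le \mathbf A(p;f,\delta)$ reduces to the slope comparison $f_m'(q)\le f'(p)$. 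So the heart of the matter is: \emph{the grazing contact of the less-curved circle has smaller slope than the grazing contact of the more-curved curve.}

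For that slope comparison I would use convexity and the curvature hypothesis directly. Parametrize by arclength or by the slope itself: since $f$ is convex with $f(0)=f'(0)=0$ and curvature $k(s)\ge 2k_m$, one has $f(s)\ge f_m^{(2k_m)}(s)$ where $f_m^{(2k_m)}$ is the circle of curvature $2k_m$ — i.e. $f$ lies above the circle of radius $(2k_m)^{-1}$, which in turn lies above $f_m$ (radius $k_m^{-1}$) near $0$; more precisely $f(s)\ge f_m(s)$ for $s$ in the common domain, and $f'(s)\ge f_m'(s)$ as well, by integrating the curvature bound (a standard comparison: if two convex curves start tangent at the origin and one has pointwise larger curvature, it stays above and steeper). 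Now both $p$ and $q$ are determined by intersection with the \emph{same} decreasing line $g_\delta$. Since $f\ge f_m$ and $f$ grows faster, the graph of $f$ meets the descending line $g_\delta$ at a \emph{smaller} abscissa than $f_m$ does, i.e. $p\le q$; and then $f'(p)\ge f_m'(p)\ge \ldots$ — here I must be slightly careful about whether to compare slopes at $p$ versus at $q$. The clean way is to use the tangency: at the grazing point the slope of $F$ equals (minus) the slope of the tangent line from $x_1$, and the tangent-line-from-an-exterior-point slope is a monotone function of the curve; a more-curved curve subtends a \emph{larger} angle, hence \emph{larger} $|F'|$ at its grazing point. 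I would make this rigorous by the envelope/support-function description: the tangent line from $(x_1,0)$ to the curve $F$ touches at the point where $F'(s)=\frac{F(s)}{s-x_1}$, and differentiating this relation in the curvature parameter shows the grazing slope increases with curvature.

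The main obstacle I anticipate is the bookkeeping around \eqref{def_p123}: the hypothesis as literally stated only gives an \emph{intersection}, not tangency, so I would need to show that under $0<x_1\tan\delta<\epsilon$ the geometrically meaningful contact point is the near-origin one and behaves tangentially to the order needed — equivalently, that the chord-from-$x_1$ and tangent-from-$x_1$ differ by higher order in the small parameter. Once the tangency picture is secured, the two remaining ingredients (monotonicity of $\Phi(m)$ in $m$ for small $m$, and monotonicity of the grazing slope in the curvature) are short convexity arguments, and choosing $\epsilon$ small enough to keep all slopes $\ll \cot\delta$ and all contact points inside the domains $[0,k_m^{-1}]$ closes the proof.
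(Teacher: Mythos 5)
Your reduction of the inequality to a slope comparison is the right first move and matches the paper: with $\delta$ fixed, $\mathbf{A}(s;F,\delta)=\Phi(F'(s))$ with $\Phi(m)=\frac{m\cos\delta+\sin\delta}{\sqrt{1+m^2}}$, and $\Phi$ is increasing for $m<\cot\delta$, which holds in the regime $0<x_1\tan\delta<\epsilon$ since the relevant slopes are $O(\sqrt{x_1\tan\delta})$. The gap is in the step you yourself flag: proving $f_m'(q)\leq f'(p)$. The pointwise curvature comparison only gives $f'(p)\geq f_m'(p)$, while $p\leq q$ and the monotonicity of $f_m'$ give $f_m'(q)\geq f_m'(p)$, so the two facts pull in opposite directions and are inconclusive. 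Your fallback via "tangency" rests on a misreading of the geometry: $p$ and $q$ are \emph{transversal} intersection points of the fixed tilted ray $g_\delta$ (the grazing ray corresponds to $\delta=0$, tangent to both curves at the origin), not points of tangency from $(x_1,0)$; the tangent-line-from-an-exterior-point slope is a different quantity attached to a different $\delta$, and saying the intersection is "tangential to leading order" destroys exactly the information you need, because both $\mathbf{A}(q;f_m,\delta)$ and $\mathbf{A}(p;f,\delta)$ vanish to leading order as $x_1\tan\delta\to0$ — the lemma is precisely a comparison of their subleading sizes. "Differentiating the relation in the curvature parameter" is not set up and would not address the fixed-$\delta$ statement.

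The paper closes this gap by a different mechanism you are missing: it converts the slope comparison at \emph{different} abscissas into a height comparison along each curve. Integrating $k(z)f'(z)$ with $f(0)=f'(0)=0$ gives the slope--height relation
$f'(s)\geq\sqrt{\big(1-f(s)\min_z k(z)\big)^{-2}-1}$, with equality (and $k_m$ in place of $\min k$) for the circle $f_m$. Then a Taylor expansion at the intersection points, using the built-in margin $2k_m\leq k(0)$ to beat the $O(p^3),O(q^3)$ errors, yields $p\leq q$ for $x_1\tan\delta$ small; since $g_\delta$ is decreasing this gives $f_m(q)=g_\delta(q)\leq g_\delta(p)=f(p)$, and feeding this height inequality into the slope--height relation produces $f_m'(q)\leq f'(p)$ directly, after which the monotonicity of $\Phi$ finishes the proof as you intended. (Your pointwise comparison $f\geq f_m$ could serve as an alternative route to $p\leq q$, but without the slope--height relation it does not yield the slope inequality, which is the heart of the lemma.)
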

\begin{figure} [t] 
\centering
\includegraphics[width=7cm]{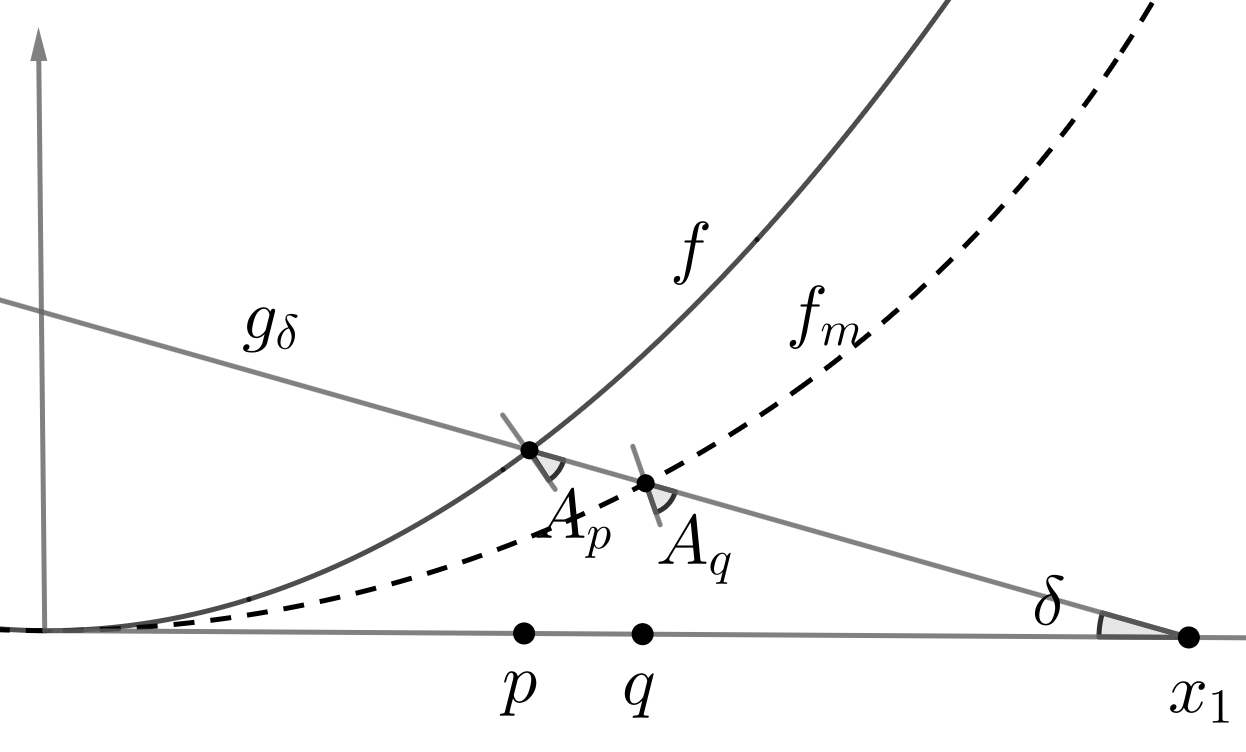}
\caption{When $\cos A_p:=\mathbf{A}(p;f, \delta) $ and $\cos A_q:=\mathbf{A}(q;f_m, \delta)$ as in the above figure, it holds that $\mathbf{A}(q;f_m, \delta) \leq \mathbf{A}(p;f, \delta)$.} \label{fig_angle_com}
\end{figure}

\begin{proof}
From the definition of curvature, the curvature $k(s)$ of the function $f$ satisfies
\begin{align} \label{curvature_by_f}
    k(s) = \frac{f''(s)}{(1+\left(f'(s)\right)^{2})^{3/2}} > 0
\end{align} 
at $s\in \mathbb{R}^{+}\cup \{0\}$. By using \eqref{curvature_by_f} and $f'(0)=0$, we have
\begin{align*}
\begin{split}
   \int_0^{s} k(z)f'(z)\;dz
   &=  \int_0^{s}  \frac{f'(z) f''(z)}{(1+\left(f'(z)\right)^{2})^{3/2}} dz = 1 -\frac{1}{\sqrt{1+(f'(s))^2}}
\end{split}
\end{align*} 
and since $f(0)=0$,
\begin{align*}
    \int_0^{s} k(z)f'(z)\;dz \geq  f(s)\inf_{s \geq 0} k(s).
\end{align*}
Thus, we have
\begin{align} \label{f'(s)_esti}
    f'(s) \geq \sqrt{\frac{1}{(1-f(s)\inf_{s \geq 0} k(s) )^2} -1 }
\end{align} 
If we replace $f, k(s)$ by $f_m, k_m$, respectively, then
\begin{align}\label{f1_f1'}
    f_m'(s)= \sqrt{\frac{1}{(1- f_m(s)  k_m )^2} -1 }.
\end{align}

By Taylor mean value theorem and \eqref{curvature_by_f}, we have
\begin{align} \label{TM_f}
    f(s) = f(0) + f'(0)s + \frac{1}{2}f''(c)s^2 \geq \frac{1}{2}k(c)s^2 \geq \frac{1}{2}k_ms^2
\end{align} at $s\in \mathbb{R}^{+}\cup \{0\}$, where $c \in [0,s]$ is some constant. Using the fact that $g$ is the decreasing function, and \eqref{TM_f}, we have
\begin{align*}
    x_1\tan\delta \geq g_{\delta}(p)=f(p)\geq \frac{1}{2}k_m p^2.
\end{align*} 
This implies $p = O((x_1 \tan \delta)^{1/2})$, and similarly, $q=O((x_1 \tan \delta)^{1/2})$.

Using Taylor expansion of the function $f$ and \eqref{curvature_by_f}, we also express $f$ as
\begin{align} \label{f(s)_esti}
    f(s) &= f(0) + f'(0)s  + \frac{f''(0)}{2}s^2+O(s^3) 
    = \frac{1}{2}k(0)s^2 ++O(s^3),
\end{align} and \eqref{f(s)_esti} also holds for $f_m(s)$ in place of $f$. By \eqref{def_p123} and \eqref{f(s)_esti}, it holds that 
\begin{align*}
    q\tan \delta +\frac{1}{2}k_m q^2 +O(q^3)=p\tan \delta +\frac{1}{2}k(0)p^2 +O(p^3)=x_1\tan \delta
\end{align*} at the interaction points $q$ and $p$.  From the first and second equations above, we have
\begin{align*}
    (q-p)\left(  \tan \delta + \frac{1}{2}k_m(p+q) \right) 
    &= \frac{1}{2}p^2 (k(0)-k_m) + O(p^3)-O(q^3) \\
    &= \frac{1}{2}(k(0)-k_m)  O(x_1 \tan \delta) + O((x_1 \tan \delta)^{\frac{3}{2}}).
\end{align*} 
Since $2k_m \leq k(0)$,
we find that there exists $0<\epsilon \ll 1$ such that $0<p \leq q$ whenever $0<x_1 \tan \delta<\epsilon$. It also holds that
\begin{align}\label{f12_ineq}
    0< f_m(q) = g_{\delta}(q) \leq f(p) =g_{\delta}(p)  \ll 1.
\end{align} for $0<x_1\tan \delta<\epsilon$. 

By \eqref{f'(s)_esti}, \eqref{f1_f1'} and \eqref{f12_ineq}, we have
\begin{align*}
   0< f_m'(q) =\sqrt{\frac{1}{(1-f_m(q) k_m)^2}-1} \leq  \sqrt{\frac{1}{1- (f(p)\inf_{s \geq 0} k(s))^2 }  -1 }\leq f'(p) \ll 1
\end{align*} 
for $0<x_1\tan \delta<\epsilon$. Finally, using $0<f_m'(q)\leq f'(p) \ll 1$ and $0<f_m(q) \leq f(p) \ll 1$ for $0<x_1\tan \delta<\epsilon$,  we obtain $  \mathbf{A}^2(q;f_m, \delta) \leq \mathbf{A}^2(p;f, \delta)$. 
\end{proof}

We consider a particle starting from a point on the \( x \)-axis and approaching a smooth convex curve from below. In this setting, the origin becomes the grazing point, but the point on the curve that is actually closest lies slightly to the right. The following lemma provides a lower bound on the slope between these two points, ensuring that the particle does not approach the curve too flatly.

\begin{lemma} \label{lem:the_g}
    We consider a smooth convex function $f : \mathbb{R}^{+}\cup \{0\} \rightarrow \mathbb{R}^{+}\cup \{0\}$, with $f(0)=0$ and $f'(0)=0$. Fix $x_1>0$. Let $(p^*, f(p^*))$ be the point on the function $f$ that is closest to $(x_1,0)$. (See Figure \ref{fig_the_g}.) Then $x_1 \geq p^*$ holds and there exists a constant $\epsilon>0$, independent of $x_1$, such that 
    \begin{align*}
        \frac{f(p^*)}{x_1-p^*} \geq \frac{\epsilon}{x_1}.
    \end{align*} 
\end{lemma}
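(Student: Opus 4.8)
**Proof plan for Lemma (the slope lower bound for \(f(p^*)/(x_1 - p^*)\))**

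The plan is to exploit the fact that \((p^*, f(p^*))\) is the closest point to \((x_1, 0)\), which gives a geometric orthogonality (first-order) condition, and to combine this with the uniform-convexity–type bounds on \(f\) already extracted in the proof of Lemma \ref{lem:angle_com}. First I would record the trivial inequality \(x_1 \geq p^*\): since \((x_1, 0)\) lies on the \(x\)-axis and \(f \geq 0\) is convex with \(f(0) = f'(0) = 0\), the point of \(f\) nearest to \((x_1,0)\) cannot have first coordinate exceeding \(x_1\) (if \(p^* > x_1\), then moving left along the graph strictly decreases the distance, because both the horizontal gap and the height decrease for \(f\) increasing — use \(f' \geq 0\)). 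So \(x_1 - p^* \geq 0\), and the quotient is well-defined (with the convention that it is \(+\infty\) when \(p^* = x_1\), which forces \(f(p^*) = 0\), i.e. \(p^* = 0 = x_1\), a degenerate case).

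Next I would split into two regimes according to the size of \(p^*\) relative to \(x_1\). \textbf{Case 1: \(p^* \leq x_1/2\).} Then \(x_1 - p^* \geq x_1/2\), so it suffices to bound \(f(p^*)\) from below by a constant multiple of \((p^*)^2\) and then argue; but this is not yet enough on its own since \(p^*\) could be tiny. Instead, in this regime I would use the minimality of the distance directly: the squared distance \(D(s) := (x_1 - s)^2 + f(s)^2\) is minimized at \(s = p^*\), so \(D'(p^*) = -2(x_1 - p^*) + 2 f(p^*) f'(p^*) = 0\), giving the identity
\begin{align} \label{eq:orth}
    x_1 - p^* = f(p^*) f'(p^*).
\end{align}
Hence \(\dfrac{f(p^*)}{x_1 - p^*} = \dfrac{1}{f'(p^*)}\). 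Now by the estimate \(f(s) = \tfrac12 k(0) s^2 + O(s^3)\) (analogue of \eqref{f(s)_esti}) and \(f'(s) = k(0) s + O(s^2)\), and since \eqref{eq:orth} forces \(p^* \to 0\) as \(x_1 \to 0\), we get \(f'(p^*) \leq C p^*\) for small \(x_1\). Also from \eqref{eq:orth}, \(x_1 - p^* = f(p^*) f'(p^*) \leq C (p^*)^3\), so \(p^* \geq x_1 - C(p^*)^3 \geq x_1/2\) once \(x_1\) is small, i.e. \(p^*\) and \(x_1\) are comparable. Therefore \(f'(p^*) \leq C p^* \leq C x_1\), which yields \(\dfrac{f(p^*)}{x_1 - p^*} = \dfrac{1}{f'(p^*)} \geq \dfrac{1}{C x_1} = \dfrac{\epsilon}{x_1}\) with \(\epsilon = 1/C\). \textbf{Case 2: \(p^* > x_1/2\)} is then actually the generic situation and is covered by the same computation — in fact the comparability \(p^* \sim x_1\) was derived above without assuming Case 1, so the two cases merge and the estimate \(f(p^*)/(x_1-p^*) = 1/f'(p^*) \geq \epsilon/x_1\) holds uniformly.

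The main obstacle I anticipate is making the ``\(p^* \to 0\) as \(x_1 \to 0\), with \(p^* \sim x_1\)'' argument fully rigorous and uniform in the class of admissible \(f\): one needs that the implied constants in \(f(s) = \tfrac12 k(0)s^2 + O(s^3)\) and \(f'(s) = k(0)s + O(s^2)\) can be taken uniform, which is where the hypotheses on \(f\) (smoothness, \(f(0)=f'(0)=0\), and a uniform positive lower bound \(\inf_s k(s) > 0\) together with a uniform upper curvature bound) must be invoked, exactly as in Lemma \ref{lem:angle_com}. A secondary subtlety is verifying that \(p^*\) is genuinely an interior critical point (so that \eqref{eq:orth} applies) rather than the endpoint \(s = 0\); but if \(p^* = 0\) then the nearest point is the origin, forcing \(x_1 - 0 = f(0)f'(0) = 0\) by the same first-order condition viewed as a one-sided inequality \(D'(0^+) \geq 0 \Rightarrow -2x_1 \geq 0\), impossible for \(x_1 > 0\). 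Hence \(p^* > 0\) and \eqref{eq:orth} is valid, closing the argument. Finally I would note that once \(x_1 \tan\delta\) (equivalently \(x_1\), in the regime where the lemma is applied) is below the threshold \(\epsilon\) inherited from these uniform expansions, all estimates hold with a constant depending only on \(\mathcal O\).
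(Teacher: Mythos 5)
Your route is genuinely different from the paper's: instead of comparing $f$ with the small circle $h(\cdot;\epsilon)$ of curvature $\epsilon^{-1}$ and using the monotonicity of the angle functional $\mathbf{B}$, you exploit the first-order condition at the minimizer, $x_1-p^*=f(p^*)f'(p^*)$, which gives the clean identity $\frac{f(p^*)}{x_1-p^*}=\frac{1}{f'(p^*)}$ and reduces the lemma to the bound $f'(p^*)\leq C x_1$. That reduction, the verification $x_1\geq p^*$, and the exclusion of $p^*=0$ are all correct, and this is a more elementary path than the paper's.

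However, as written there is a genuine gap: everything after the identity is proved only for $x_1$ below a smallness threshold. You use the Taylor expansions $f(s)=\tfrac12 k(0)s^2+O(s^3)$, $f'(s)=k(0)s+O(s^2)$ at $s=p^*$ and the comparability $p^*\sim x_1$, both of which require $x_1$ (hence $p^*$) small, and your closing remark transfers the smallness restriction $0<x_1\tan\delta<\epsilon$ of Lemma \ref{lem:angle_com} to this lemma — but Lemma \ref{lem:the_g} carries no such restriction and is applied in \eqref{kappa} with $x_1=|x-G_\phi|$, which in the exterior domain can be of order one or large. For such $x_1$ your intermediate inequality $f'(p^*)\leq Cp^*$ can simply fail: for the unit circle $f(s)=1-\sqrt{1-s^2}$ one has $p^*=x_1/\sqrt{x_1^2+1}$ bounded while $f'(p^*)=x_1\to\infty$, so no constant $C$ works, and the claim that ``the two cases merge and the estimate holds uniformly'' is unjustified. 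The gap is easy to close with the same identity: if $f(p^*)\geq c_0$ for a fixed $c_0>0$, then $f'(p^*)=(x_1-p^*)/f(p^*)\leq x_1/c_0$ directly; if $f(p^*)\leq c_0$, then the uniform convexity bound $f(s)\geq\tfrac12 k_m s^2$ (as in \eqref{TM_f}) confines $p^*$ to the fixed compact interval $[0,\sqrt{2c_0/k_m}]$, on which $f''$ is bounded, so $f'(p^*)\leq Mp^*\leq Mx_1$. With this case split (in place of the small-$x_1$ Taylor argument) your proof becomes complete and uniform in $x_1$, with $\epsilon=\min\{c_0,1/M\}$ depending only on the curve.
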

\begin{figure} [t] 
\centering
\includegraphics[width=6cm]{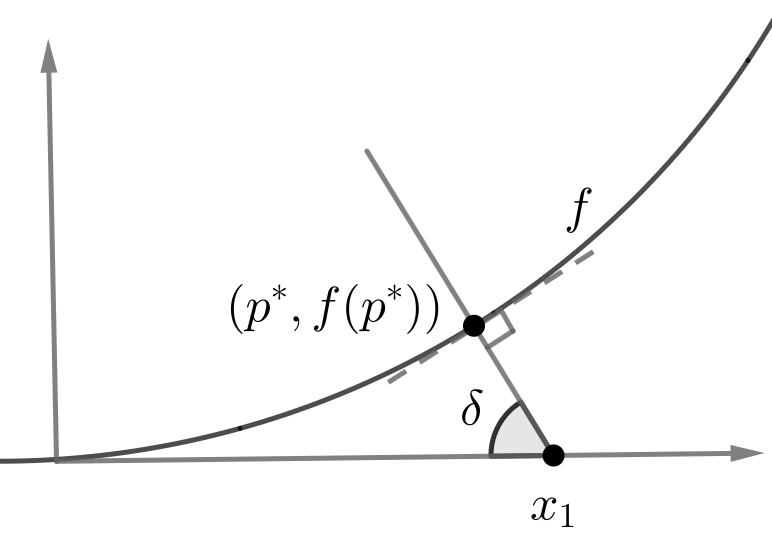}
\caption{When $\tan \delta=f(p^{*})/(x_1-p^{*})$, it holds that $\tan \delta \geq \epsilon/x_1$ for $\epsilon>0$.} \label{fig_the_g}
\end{figure}
\begin{proof}
    Let $h(\cdot; \epsilon): [0,\epsilon] \rightarrow  [0, \epsilon]$ be a function defined for $\epsilon > 0$ with constant curvature $\epsilon^{-1}$, given by
    \begin{align*}
        h(s;\epsilon) = \epsilon - \sqrt{\epsilon^2-s^2}.
    \end{align*} Additionally, let $g(\cdot;{\epsilon}) : [0,x_1] \rightarrow \mathbb{R}^{+}\cup \{0\} $ be a linear function defined as
    \begin{align*}
        g(s;{\epsilon}) = \frac{\epsilon}{x_1}(x_1-s).
    \end{align*}
    Let $\alpha(\epsilon)>0$ be the intersection point of the functions $f$ and $g(\cdot;\epsilon)$, and $\beta(\epsilon)>0$ be the intersection point of $h(\cdot;\epsilon)$ and $g(\cdot;\epsilon)$. Then $x_1 \geq \alpha(\epsilon),\beta(\epsilon) \geq 0$. 
    
    Using the fact that $g(\cdot;\epsilon)$ is the decreasing function, and \eqref{TM_f}, we obtain
    \begin{align*}
        \epsilon \geq g(\alpha(\epsilon);\epsilon)=f(\alpha(\epsilon))\geq \frac{1}{2}k_m \alpha(\epsilon)^2
    \end{align*} 
    for $k_m = \min_{s \geq 0} k(s)$ and
    \begin{align*}
        \epsilon \geq g(\beta(\epsilon);\epsilon)=h(\beta(\epsilon);\epsilon)\geq \frac{1}{2\epsilon} \beta(\epsilon)^2.
    \end{align*} 
    Thus,
    \begin{align} \label{big_O}
    \alpha(\epsilon)=O(\epsilon^{\frac{1}{2}}) \text{\quad and \quad }\beta(\epsilon)=O(\epsilon).
    \end{align}
    
    Using Taylor expansion of the function and \eqref{curvature_by_f}, we obtain \eqref{f(s)_esti} and
    \begin{align} \label{taylor_f_eps}
          h(s;\epsilon) 
    = \frac{1}{2\epsilon}s^2 ++O(s^3). 
    \end{align} 
    Then, since $g(\alpha(\epsilon);\epsilon)=f(\alpha(\epsilon))$ and $g(\beta(\epsilon);\epsilon)=h(\beta(\epsilon);\epsilon)$, 
    \begin{align} \label{at alpha,beta}
        \frac{1}{2}k(0)\alpha(\epsilon)^2+\frac{\epsilon}{x_1}\alpha(\epsilon) + O(\alpha(\epsilon)^3)
        =  \frac{1}{2\epsilon}\beta(\epsilon)^2+\frac{\epsilon}{x_1}\beta(\epsilon) + O(\beta(\epsilon)^3) = \epsilon.
    \end{align} 
    From the first and second equations above,
    \begin{align*}
        (\alpha(\epsilon)-\beta(\epsilon)) \left( \frac{1}{2}k(0)(\alpha(\epsilon)+\beta(\epsilon))+\frac{\epsilon}{x_1}\right) = O(\beta(\epsilon)^3)-O(\alpha(\epsilon)^3)+\left( \frac{1}{2\epsilon}-\frac{1}{2}k(0) \right)\beta(\epsilon)^2.
    \end{align*} 
    On the other hand, if $\epsilon$ goes to $0$, then
   \begin{align*}
       O(\beta(\epsilon)^3)-O(\alpha(\epsilon)^3)+\left( \frac{1}{2\epsilon}-\frac{1}{2}k(0) \right)\beta(\epsilon)^2 =
       O(\epsilon^{3})-O(\epsilon^\frac{3}{2})+ O(\epsilon)-O(\epsilon^2) \geq 0.
   \end{align*} 
   Hence, there exists $\epsilon_1>0$, which does not depend on $x_1$, such that if $\epsilon<\epsilon_1$, then $\alpha(\epsilon) \geq \beta(\epsilon)$.
    
    From \eqref{f(s)_esti} and \eqref{taylor_f_eps}, 
    \begin{align*}
        f'(s) = k(0)s + O(s^2)
        \text{\quad and \quad} h'(s;\epsilon)=\frac{1}{\epsilon}s + O(s^2).
    \end{align*} 
    Then, \eqref{at alpha,beta} can be rewritten as 
    \begin{align*}
        \frac{1}{2}f'(\alpha(\epsilon))\alpha(\epsilon) + \frac{\epsilon}{x_1}\alpha(\epsilon)+O(\alpha(\epsilon)^3) =
        \frac{1}{2}h'(\beta(\epsilon);\epsilon) \beta(\epsilon) + 
        \frac{\epsilon}{x_1}\beta(\epsilon)+O(\beta(\epsilon)^3) = \epsilon.
    \end{align*}
    From the first and second equations above, 
    \begin{align*}
       \frac{1}{2}\big(h'(\beta(\epsilon);\epsilon)-f'(\alpha(\epsilon))\big)\beta(\epsilon)
        =O(\alpha(\epsilon)^3)-O(\beta(\epsilon)^3)+(\alpha(\epsilon)-\beta(\epsilon))\left(\frac{1}{2}f'(\alpha(\epsilon))+\frac{\epsilon}{x_1}  \right) \geq 0.
    \end{align*} 
    Note that RHS is positive as $\epsilon \rightarrow 0$ in the above equation. There exists a constant $\epsilon_2 \in (0,\epsilon_1)$, independent of $x_1$, such that
    $h'(\beta(\epsilon);\epsilon) \geq f'(\alpha(\epsilon))$
    for all $0<\epsilon<\epsilon_2$.

    Next, we define the function $\Theta(\cdot;F): [0,x_1] \rightarrow  [0, \pi/2]$ as
\begin{align*}
    \tan\Theta(s;F) := \frac{F(s)}{x_1 - s},
\end{align*} 
and 
    \begin{align*}
    \mathbf{B}(s,F;\Theta)&:= \widehat{(1,F'(s))} \cdot \widehat{(1,-\tan\Theta(s;F))} \\
    &= \frac{1}{\sqrt{1+(F'(s))^2}}\frac{1}{\sqrt{1+\tan^2\Theta(s;F)}}
    \left( 1-F'(s) \tan\Theta(s;F) \right)
\end{align*} 
for $F= f$ or $h(\cdot;\epsilon)$. 

Fix $\epsilon \in (0, \epsilon_2)$. Since both $(\alpha(\epsilon), f(\alpha(\epsilon)))$ and $(\beta(\epsilon),h(\beta(\epsilon);\epsilon))$ are on the graph of $g(;\epsilon)$, 
\begin{align*}
    \tan\Theta(\alpha(\epsilon);f)
    = \tan\Theta(\beta(\epsilon);h(\cdot;\epsilon))
    = \frac{\epsilon}{x_1}.
\end{align*} 
$\mathbf{B}(\beta(\epsilon),h(\cdot,\epsilon);\Theta)=0$ equals zero since $h(\cdot,\epsilon)$ is a circle.
Thus, by $h'(\beta(\epsilon);\epsilon) \geq f'(\alpha(\epsilon))$ and the above inequality, 
\begin{align*} 
    \mathbf{B}(\alpha(\epsilon),f;\Theta ) \geq \mathbf{B}(\beta(\epsilon),h(\cdot,\epsilon);\Theta) = 0.
\end{align*}
Furthermore, since $f(s),f'(s)$ and $\tan\Theta(s;f)$ are increasing functions of the variable $s \in [0,x_1]$, it follows that $\mathbf{B}(s,f; \Theta)$ is a decreasing function of the variable $s \in [0,x_1]$.

Let $(p^*, f(p^*))$ be the point on the function $f$ that is closest to $(x_1,0)$, such that
\begin{align*} 
    \min_{s\geq 0} \left|(s,f(s)) - (x_1,0) \right| = \left|(p^*,f(p^*)) - (x_1,0) \right|.
\end{align*} 
Then $(1,f'(p^{*})) \bot (p^*-x_1,f(p^*))$ and $\mathbf{B}(p^*,f;\Theta)=0$ hold.
We also get $ x_1 \geq p^* \geq 0$ since the function $f$ is positioned above its tangent line at $(p^*, f(p^{*}))$ of the function $f$, which is perpendicular to $\overrightarrow{(x_1-p^*, f(p^*))}$. Because $\mathbf{B}(p^*,f;\Theta)=0,\; \mathbf{B}(\alpha(\epsilon),f;\Theta ) \geq 0$ and $\mathbf{B}(s,f; \Theta)$ is the decreasing function, we obtain $0 \leq \alpha(\epsilon) \leq p^* \leq x_1$. Therefore, we have
\begin{align*}
     \tan\Theta(p^*;f) = \frac{f(p^*)}{x_1-p^*}\geq  \tan\Theta(\alpha(\epsilon);f) = \frac{\epsilon}{x_1},
\end{align*} and the proof is complete.

\end{proof}

\subsection{Integrability for velocity}

In Lemma \ref{lem:int_cir}, we estimate an integral over all directions of incoming velocities that leads to specular reflection at a fixed point \(x\), where the boundary is a circle. The integrand is the reciprocal cosine of the incidence angle, which becomes large when the angle is close to grazing. However, the total integral is bounded by a logarithmic term depending on the distance from \( x \) to the boundary.

\begin{lemma} \label{lem:int_cir}
We define a domain $B_{R}(0)$ as
    \begin{align} \label{B_R_def}
    B_{R}(0) := \left\{(x,y) \in \mathbb{R}^2 : x^2+y^2 = R^2, \quad x \geq 0 \right\}
\end{align} for positive constant $R>0$. Fix  $x = (|x|,0)$ for $|x|>R$. We also define 
\begin{align} \label{alpha_g_def}
    \alpha_g(x,R) := \sin^{-1}\left(\frac{R}{|x|}\right) \in \left(0, \frac{\pi}{2}\right)
\end{align} (See Figure \ref{fig_int_cir}.)  and 
\begin{align} \label{tbxb_R_def}
\begin{split}
    &t_{\mathbf{b}}(x,u;R):=\sup \left\{s \geq 0 : x-\tau u \in  B_{R}(0) \text{\;for\;all\;} \tau \in (0,s)\right\}, \\
    &x_{\mathbf{b}}(x,u;R) := x-t_{\mathbf{b}}(x,u;R)u
\end{split}
\end{align} for $u \in \mathbb{R}^2$. Note that $\xb(x, u; R)$ depends only on the direction of $u$, not on its magnitude $|u|$. Then, we obtain
\begin{align*}
    \int_{\frac{1}{2}\alpha_g(x,R)}^{\alpha_g(x,R)}  \frac{1}{\left|\xb(x,u;R) \cdot \hat{u}\right|} du_{\theta} 
    \leq \frac{1}{R}\ln \left(  1+ \frac{2R}{\min_{y \in B_{R}(0)} |x - y|}  \right)
\end{align*} for $\hat{u} = (\cos u_{\theta}, \sin u_{\theta})$.
\end{lemma}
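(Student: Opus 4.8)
The plan is to set up explicit coordinates on the circle and reduce the integral to an elementary one in a single angle variable. Parametrize the boundary point by $x_{\mathbf b}(x,u;R) = R(\cos\psi,\sin\psi)$ for $\psi$ in a suitable range; since $x = (|x|,0)$ lies outside $B_R(0)$ and we only integrate over incoming directions $u_\theta \in [\tfrac12\alpha_g,\alpha_g]$, the map $u_\theta \mapsto \psi$ is a diffeomorphism. The key geometric fact is that for a circle, the cosine of the incidence angle at the boundary is
\[
\left| \widehat{x_{\mathbf b}(x,u;R)} \cdot \hat u \right| = \sin\big(\text{angle between the chord } x-x_{\mathbf b} \text{ and the tangent at } x_{\mathbf b}\big),
\]
and by elementary circle geometry this quantity is comparable to (in fact, exactly expressible in terms of) how far $\psi$ is from the grazing value. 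First I would write down, using the right triangle formed by $0$, $x_{\mathbf b}$, and the foot of the perpendicular from $0$ to the line through $x$ and $x_{\mathbf b}$, the identity relating $|x_{\mathbf b}\cdot\hat u|/R$, the angle $\alpha_g$, and the integration variable; the natural substitution is $u_\theta = \alpha_g - \phi$ (or a linear reparametrization thereof), under which $|x_{\mathbf b}\cdot\hat u|$ behaves like $R$ times something vanishing linearly as $\phi \to 0$ at the grazing endpoint.

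Next I would carry out the change of variables $u_\theta \to \psi$ (or directly to the "depth" variable), compute the Jacobian, and check that after cancellation the integrand becomes of the form $\frac{1}{R}\,\frac{d\psi}{\psi + a}$ or $\frac{1}{R}\,\frac{d(\text{something})}{\text{something}}$ on the relevant interval, where the shift $a$ is controlled below by $\min_{y\in B_R(0)}|x-y|/R$ — this is exactly the point where the distance from $x$ to the boundary enters: near-grazing directions $u_\theta$ close to $\alpha_g$ hit the circle at points near the "equator" $\psi \approx \pi/2$, but the genuinely dangerous small-denominator contribution is cut off because $x$ is a positive distance $\min_y|x-y| = |x|-R$ from the circle. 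Integrating $\frac{1}{R}\int \frac{d\xi}{\xi}$ over the appropriate range then yields $\frac1R \ln(\cdots)$, and I would bound the argument of the logarithm by $1 + \frac{2R}{|x|-R}$ using $\min_{y\in B_R(0)}|x-y| = |x|-R$ together with crude estimates $\sin\alpha_g = R/|x| \le 1$ and $\tfrac12\alpha_g \le u_\theta \le \alpha_g$, which keep all trigonometric factors in $[c,1]$ for a uniform $c$.

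The main obstacle I anticipate is \emph{bookkeeping the geometry cleanly}: correctly identifying which endpoint of the integration interval $[\tfrac12\alpha_g,\alpha_g]$ is the grazing (singular) one, writing $|x_{\mathbf b}(x,u;R)\cdot\hat u|$ in closed form in terms of $u_\theta$ and $\alpha_g$ via the law of sines in the triangle $0\,x\,x_{\mathbf b}$, and verifying that the restriction to $[\tfrac12\alpha_g,\alpha_g]$ (rather than $[0,\alpha_g]$) is precisely what makes the non-grazing endpoint harmless so that only the single logarithmic singularity at $u_\theta = \alpha_g$ survives. Once the correct closed-form expression $|x_{\mathbf b}\cdot\hat u| = R\cdot(\text{explicit trig function of } u_\theta,\alpha_g)$ is in hand, the remaining estimate is a one-line integration. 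A secondary technical point is confirming that $x_{\mathbf b}(x,u;R)$ is well-defined (the ray does meet $B_R(0)$) for every $u_\theta$ in the stated range, which follows since $u_\theta < \alpha_g$ strictly on $[\tfrac12\alpha_g,\alpha_g)$ and the endpoint $u_\theta=\alpha_g$ is the tangent ray, handled as a limit.
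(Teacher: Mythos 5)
Your overall strategy coincides with the paper's proof: obtain a closed form for $|\xb(x,u;R)\cdot\hat u|$ in the angle $u_\theta$, change variables to an elementary integral, and use the restriction to $[\tfrac12\alpha_g,\alpha_g]$ together with $\min_{y\in B_R(0)}|x-y|=|x|-R$. However, as written the proposal defers the one computation on which everything rests, and the structure you predict for that computation is wrong in two places, so the plan as stated would not go through without correction.

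First, the closed form follows directly from $|x-\tb u|=R$: solving the quadratic for $\tb$ gives $\xb(x,u;R)\cdot\hat u=\sqrt{(x\cdot\hat u)^2-|x|^2+R^2}=\sqrt{R^2-|x|^2\sin^2 u_\theta}$. Since the radicand has a simple zero at $u_\theta=\alpha_g$ (its derivative there is $-2R|x|\cos\alpha_g\neq0$), this quantity vanishes like $(\alpha_g-u_\theta)^{1/2}$ with a coefficient proportional to $\sqrt{\cos\alpha_g}$ --- not linearly, as you claim. Consequently the integrand behaves like $(\alpha_g-u_\theta)^{-1/2}$, which is integrable for every fixed $x$, and the logarithm in the stated bound does not arise from a $d\psi/(\psi+a)$-type endpoint singularity; it arises because $\cos^2\alpha_g=1-R^2/|x|^2\sim(|x|-R)/R$ is small when $x$ is near the circle. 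The computation your substitution must reproduce is: on $[\tfrac12\alpha_g,\alpha_g]$ one has $\sin u_\theta\ge\frac{R}{2|x|}$, so one may insert the factor $\frac{2|x|}{R}\sin u_\theta$ and substitute $t=\frac{R^2}{|x|^2}-\sin^2 u_\theta$, which reduces the integral to $\frac{1}{R}\int_0^{R^2/|x|^2}\frac{dt}{\sqrt{t\,(t+1-R^2/|x|^2)}}$, computed exactly as $\frac{1}{R}\ln\bigl(1+\frac{2R}{|x|-R}\bigr)$. Note also that the role of restricting to $[\tfrac12\alpha_g,\alpha_g]$ is not to render the non-grazing endpoint ``harmless'' (the integrand is bounded there even on $[0,\alpha_g]$); its purpose is exactly the lower bound on $\sin u_\theta$ that permits this insertion and delivers the $1/R$ prefactor. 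So the route is viable and is the paper's route, but you must actually derive the closed form and carry out the $t$-substitution; the heuristics you give (linear vanishing, a $\int d\xi/\xi$ reduction) do not match the true square-root structure and would derail the estimate if followed literally.
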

\begin{figure} [t] 
\centering
\includegraphics[width=7cm]{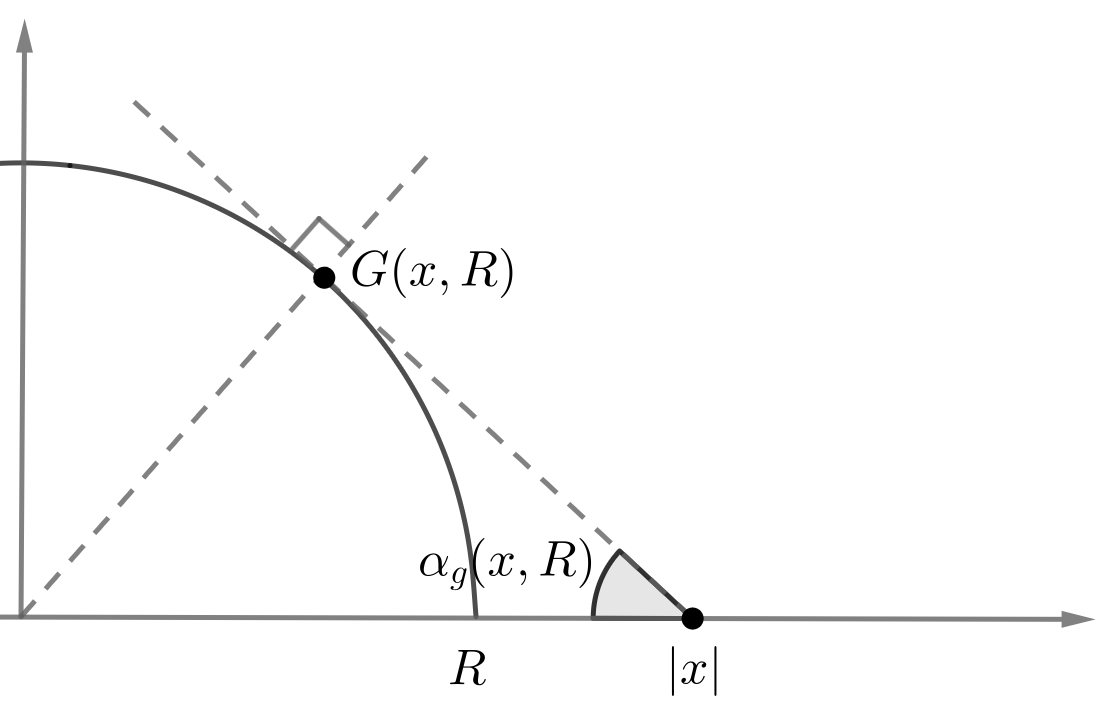}
\caption{$\alpha_g(x,R)$ and $G(x,R)$ on the ball $B_{R}(0)$.}\label{fig_int_cir}
\end{figure}
\begin{proof}
From $|\xb(x,u;R)|^2=|x-\tb(x,u;R)u|^2=R^2$, we obtain
 \begin{align*}
    |u|^2\tb(x,u;R) = (x\cdot u)-\sqrt{(x\cdot u)^2-|u|^2(|x|^2-R^2)}.
\end{align*} 
By plugging in $\tb(x,u;R)$, 
\begin{align*}
\xb(x,u;R) \cdot \hat{u} =  (x-\tb(x,u;R)u) \cdot \hat{u}=\sqrt{(x \cdot \hat{u})^2-|x|^2+R^2}.
\end{align*} 
For $ x= (|x|, 0)$ and $\hat{u} = (\cos u_{\theta}, \sin u_{\theta})$,
\begin{align*}
    \xb(x,u;R) \cdot \hat{u} =\sqrt{R^2-|x|^2\sin^2u_{\theta}}. 
\end{align*} 
For $u_{\theta} \in [\alpha_g(x,R)/2, \alpha_g(x,R)]$, 
\begin{align*}
    \sin u_{\theta} \geq \sin \frac{1}{2}\alpha_g(x,R) \geq \frac{1}{2}\sin \alpha_g(x,R) \geq \frac{R}{2|x|}.
\end{align*} 

By plugging in $\xb(x,u;R)\cdot \hat{u}$ and using the above inequality, 
\begin{align*}
      &\int_{\frac{1}{2}\alpha_g(x,R)}^{\alpha_g(x,R)}  \frac{1}{\left|\xb(x,u;R) \cdot \hat{u}\right|} du_{\theta} 
      = \int_{\frac{1}{2}\alpha_g(x,R)}^{\alpha_g(x,R)} \frac{1}{\sqrt{R^2-|x|^2\sin^2u_{\theta}}} du_{\theta}\\
      &\leq \frac{2}{R} \int_{\frac{1}{2}\alpha_g(x,R)}^{\alpha_g(x,R)} \frac{\sin u_{\theta}}{\sqrt{\frac{R^2}{|x|^2}-\sin^2u_{\theta}}} du_{\theta} \leq \frac{2}{R} \int_{0}^{\alpha_g(x,R)} \frac{\sin u_{\theta}}{\sqrt{\frac{R^2}{|x|^2}-\sin^2u_{\theta}}} du_{\theta}.
\end{align*}
By changing the variable $u_{\theta} \rightarrow t=\frac{R^2}{|x|^2}-\sin^2 u_{\theta}$, 
\begin{align*}
   &\int_{0}^{\alpha_g(x,R)} \frac{\sin u_{\theta}}{\sqrt{\frac{R^2}{|x|^2}-\sin^2u_{\theta}}} du_{\theta} = \frac{1}{2}\int_0^{\frac{R^2}{|x|^2}} \frac{1}{\sqrt{t(t+1-\frac{R^2}{|x|^2})}} dt \\
   &=  \ln \left(\sqrt{t+1-\frac{R^2}{|x|^2}}+\sqrt{t} \right) \bigg|_{t=0}^{t=\frac{R^2}
   {|x|^2}}   = \frac{1}{2}\ln \left( 1+ \frac{2R}{|x|-R}  \right).
\end{align*} 
Therefore, 
\begin{align*}
      &\int_{\frac{1}{2}\alpha_g(x,R)}^{\alpha_g(x,R)}  \frac{1}{\left|\xb(x,u;R) \cdot \hat{u}\right|} du_{\theta}  \leq \frac{1}{R} \ln \left( 1+ \frac{2R}{|x|-R}  \right)=\frac{1}{R} \ln \left( 1+ \frac{2R}{\min_{y \in B_{R}(0)} |x - y|}  \right).
\end{align*} 
\end{proof}

Next, we provide a comparison between the minimal distance to the boundary of the circle and the distance to the grazing point.

\begin{lemma} \label{lem:len_cir}
Fix $x = (|x|,0)$ for $|x|>R>0$. Recall the domain $B_{R}(0)$ and the grazing angle $\alpha_g(x,R)$ in \eqref{B_R_def} and \eqref{alpha_g_def}, respectively.
We define
\begin{align}\label{G_x,R_def}
    G(x,R) := \left( R\sin \alpha_g(x,R), R\cos \alpha_g(x,R)\right).
\end{align} 
(Note that $G(x,R)\bot\left(x-G(x,R)\right)$ holds. See Figure \ref{fig_int_cir}.) Then, we obtain
\begin{align*}
    \ln \left(1 + \frac{2R}{\min_{y \in B_{R}(0)} |x - y|}  \right) 
    \leq 2 \ln \left(1 + \frac{2\sqrt{3}R}{|x-G(x,R)|}\right).
\end{align*}
\end{lemma}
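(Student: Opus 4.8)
The plan is to establish the inequality by comparing the two relevant lengths: $\min_{y \in B_R(0)}|x-y| = |x| - R$ and $|x - G(x,R)|$. Since $G(x,R)$ is the point on the circle where $x-G(x,R)$ is tangent (perpendicular to the radius at $G$), we have $|x - G(x,R)| = \sqrt{|x|^2 - R^2} = \sqrt{(|x|-R)(|x|+R)}$. Write $\ell := |x|-R = \min_y |x-y| > 0$. The first step is to express both sides purely in terms of $\ell$ and $R$: the left side is $\ln\!\left(1 + \frac{2R}{\ell}\right)$, and since $|x|+R = \ell + 2R$, the right side is $2\ln\!\left(1 + \frac{2\sqrt{3}R}{\sqrt{\ell(\ell+2R)}}\right)$.

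Next I would reduce to a one-variable inequality. Set $r := R/\ell > 0$; then the claimed inequality becomes
\begin{align*}
    \ln(1 + 2r) \leq 2\ln\!\left(1 + \frac{2\sqrt{3}\,r}{\sqrt{1+2r}}\right),
\end{align*}
i.e. $1 + 2r \leq \left(1 + \frac{2\sqrt{3}\,r}{\sqrt{1+2r}}\right)^2$. Expanding the right side gives $1 + \frac{4\sqrt{3}\,r}{\sqrt{1+2r}} + \frac{12 r^2}{1+2r}$, so it suffices to show $2r \leq \frac{4\sqrt{3}\,r}{\sqrt{1+2r}} + \frac{12 r^2}{1+2r}$. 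Dividing by $r>0$ and multiplying through by $1+2r$, this is equivalent to $2r(1+2r) \leq 4\sqrt{3}\,r\sqrt{1+2r} + 12r^2$, and after cancellation to $2r + 4r^2 - 12 r^2 \leq 4\sqrt{3}\,r\sqrt{1+2r}$, i.e. $2r - 8r^2 \leq 4\sqrt{3}\,r\sqrt{1+2r}$. When $r \geq 1/4$ the left side is $\leq 0$ while the right side is positive, so the inequality holds trivially. When $0 < r < 1/4$, divide by $2r$ to get $1 - 4r \leq 2\sqrt{3}\sqrt{1+2r}$; the right side is at least $2\sqrt{3} > 1 \geq 1-4r$, so it holds. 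This handles all cases.

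The routine algebra above is the bulk of the argument; the only subtlety I anticipate is getting the tangency relation $|x - G(x,R)|^2 = |x|^2 - R^2$ correct, which follows from $G(x,R) \perp (x - G(x,R))$ (stated in the lemma) together with $|G(x,R)| = R$ via the Pythagorean theorem applied to the right triangle with vertices $0$, $G(x,R)$, $x$. Once that identity is in hand, the problem is genuinely one-dimensional in $r = R/\ell$ and the monotonicity/case-split argument closes it. I do not expect any real obstacle; the main thing to be careful about is bookkeeping of the constants $2\sqrt{3}$ and $2$ so that the squared inequality comes out with the correct slack, and noting that the estimate is used with a clean factor of $2$ in the exponent precisely so that this slack is comfortable.

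\begin{proof}
Write $\ell := \min_{y \in B_R(0)}|x-y| = |x|-R > 0$. Since $G(x,R) \perp (x - G(x,R))$ and $|G(x,R)| = R$, the Pythagorean theorem gives $|x-G(x,R)|^2 = |x|^2 - R^2 = (|x|-R)(|x|+R) = \ell(\ell+2R)$. Setting $r := R/\ell > 0$, the desired inequality is equivalent to
\begin{align*}
    \ln(1+2r) \leq 2\ln\!\left(1 + \frac{2\sqrt{3}\,r}{\sqrt{1+2r}}\right),
\end{align*}
i.e., after exponentiating, to
\begin{align*}
    1 + 2r \leq \left(1 + \frac{2\sqrt{3}\,r}{\sqrt{1+2r}}\right)^2 = 1 + \frac{4\sqrt{3}\,r}{\sqrt{1+2r}} + \frac{12 r^2}{1+2r}.
\end{align*}
Subtracting $1$ and multiplying by $(1+2r)/r > 0$, this reduces to
\begin{align*}
    2(1+2r) \leq 4\sqrt{3}\,\sqrt{1+2r} + 12 r,
\end{align*}
that is, $2 - 8r \leq 4\sqrt{3}\,\sqrt{1+2r}$. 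If $r \geq \frac{1}{4}$, the left-hand side is $\leq 0$ while the right-hand side is positive, so the inequality holds. If $0 < r < \frac{1}{4}$, then $4\sqrt{3}\sqrt{1+2r} \geq 4\sqrt{3} > 2 > 2-8r$, so the inequality again holds. This proves the claim.
\end{proof}
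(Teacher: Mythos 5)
Your proof is correct and rests on the same key observation as the paper's: the tangency/Pythagoras identity $|x-G(x,R)|^2=|x|^2-R^2=d(d+2R)$ with $d=|x|-R=\min_{y\in B_R(0)}|x-y|$. The only difference is cosmetic: the paper finishes by splitting into the cases $d\geq R$ (using $|x-G(x,R)|^2\leq 3d^2$) and $d\leq R$ (using $|x-G(x,R)|^2\leq 3Rd$), whereas you exponentiate and verify a single one-variable inequality in $r=R/d$, which handles both cases at once.
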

\begin{proof}
Let us denote
\begin{align*}
    l := |x -G(x,R)| \text{\quad and \quad} d:=\min_{y \in B_{R}(0)} |x - y|=|x|-R.
\end{align*} By the Pythagorean theorem, $ l^2  = (d+R)^2-R^2=d(d+2R)$. 
In the case of $d \geq R$, $l^2 \leq 3d^2$ holds, and we have
\begin{align*}
    \ln \left(1 +\frac{2R}{d} \right) \leq \ln \left(1 +\frac{2\sqrt{3}R}{l} \right).
\end{align*} 
In the case of $d \leq R$, $l^2\leq 3Rd$ holds, and we have
\begin{align*}
    \ln \left(1 +\frac{2R}{d} \right) \leq \ln \left(1 +\frac{6R^2}{l^2} \right) \leq 2 \ln\left(1 +\frac{\sqrt{6}R}{l} \right).
\end{align*} 
Finally, we combine the two above inequalities.
\end{proof}

We are now ready to apply the previous lemmas to prove the main estimate. In Lemma \ref{lem:int_sing}, we integrate over incoming directions that produce specular reflection at a fixed exterior point \( x \in \Omega \) of a uniformly convex domain. Although the integrand becomes singular near grazing angles, the total integral is bounded in terms of the distance from \( x \) to the boundary.

The proof is long and involved, so we begin with a brief outline of the main steps. We consider a uniformly convex domain in \( \mathbb{R}^3 \) and a fixed point \( x \) outside the domain. For each two-dimensional plane passing through \( x \), the intersection with the domain yields a uniformly convex curve. On each such curve, we focus on the grazing point and the grazing velocity.

We then construct a circle that passes through the grazing point, has a smaller curvature (i.e., a larger radius) than the curve, and shares the same grazing velocity. By applying Lemma \ref{lem:angle_com}, we find that the integral of the incidence angle over the curve is bounded by the corresponding integral over the circle. Lemma \ref{lem:int_cir} shows that the integral over the circle is controlled by the distance from \( x \) to the circle, and Lemma \ref{lem:len_cir} further bounds this by the distance from \( x \) to the grazing point.

Since the grazing point lies on both the curve and the circle, the angle integral over the curve is bounded by the distance from \( x \) to the grazing point in each plane. Finally, this distance is always larger than the minimal distance from \( x \) to the boundary of the uniformly convex domain in \( \mathbb{R}^3 \), which gives the desired bound.

\begin{lemma} \label{lem:int_sing}
Let $x \in \O, \,v \in \mathbb{R}^3$ be given with $d(x,\partial\O) > 0$. For $k <2$, we have
    \begin{align*}
           &\bigintsss_{\mathbb{R}^{3}} \left(\frac{e^{-\frac{c}{2}|u-v|^2}}{|u-v|}+|u-v|e^{-\frac{1}{8}|u|^2}\right) \frac{1}{|u|^k}\frac{ 1}{|\nabla \xi(\xb(x,u))\cdot\ \hat{u}|}\mathbf{1}_{\{\tb(x,u) < +\infty \} \cap \{d(x,\partial\O)\lesssim \langle u \rangle\}} du  \\
    &\lesssim C_k \left( \langle v \rangle + \langle v \rangle^{1-k} \right)\left[\ln\left(1+\frac{1}{d(x,\partial\O)} \right)+1\right],
    \end{align*}
    where $C_k = (2-k)^{-1}+1$.
\end{lemma}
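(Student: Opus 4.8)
The strategy is to reduce the three-dimensional integral to a family of two-dimensional slice integrals through $x$, apply the circle comparison machinery of Lemmas \ref{lem:angle_com}, \ref{lem:int_cir}, and \ref{lem:len_cir} on each slice, and then re-integrate. First I would introduce spherical-type coordinates adapted to $x$: write $u = |u|\hat u$, and for each direction $\hat u$ in the set $\{\tb(x,u) < +\infty\}$, the trajectory $x - \tau u$ lies in a two-dimensional plane $S$ through $x$ whose intersection with $\partial\O$ is a uniformly convex closed curve (Lemma \ref{lem:unif n} applies). Parametrize the directions by the plane $S$ (two parameters) and the angle $u_\theta$ within $S$ (one parameter), so the solid-angle integral factors through this slicing. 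The key observation is that $|\nabla\xi(\xb(x,u))\cdot\hat u|^{-1}$ depends only on the direction $\hat u$, and on each fixed slice the bound we need is exactly the content of the planar lemmas.

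\textbf{Slice estimate.} On a fixed slice $S$, set up local coordinates so that the grazing point of the convex curve $\partial\O\cap S$ (as seen from $x$) is at the origin with horizontal tangent, $x$ on the positive $x$-axis, and the curve given by a convex $f$ with $f(0)=f'(0)=0$. The incoming directions that actually hit the curve form an angular interval; the singular contribution comes from directions near the grazing angle. Split this interval into dyadic pieces near grazing and use Lemma \ref{lem:angle_com} to compare the integrand against the circle of curvature $k_m = \tfrac12\min k(s)$ through the grazing point. Then Lemma \ref{lem:int_cir} bounds the circle integral by $\frac1R\ln(1 + 2R/\min_{y}|x-y|_{\text{circle}})$, and Lemma \ref{lem:len_cir} converts the distance-to-circle into the distance-to-grazing-point, which is at least $d(x,\partial\O)$ since the grazing point lies on $\partial\O$. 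This yields, on each slice,
\begin{align*}
    \int_{\text{slice directions}} \frac{1}{|\nabla\xi(\xb(x,u))\cdot\hat u|}\, du_\theta \lesssim_{\O} \ln\Big(1 + \frac{1}{d(x,\partial\O)}\Big) + 1.
\end{align*}
Lemma \ref{lem:the_g} is used to ensure the approach angle $\delta$ satisfies $x_1\tan\delta$ stays in the regime $(0,\epsilon)$ required by Lemma \ref{lem:angle_com}, and to prevent the slope between $x$ and the nearest point from being too flat.

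\textbf{Re-integration and the radial/weight factors.} Having controlled the angular integral uniformly over slices, integrate over the remaining two slice parameters (a bounded solid-angle integration, contributing only a constant) and over $|u| \in (0,\infty)$ against the radial density $\big(\tfrac{e^{-\frac{c}{2}|u-v|^2}}{|u-v|} + |u-v|e^{-\frac18|u|^2}\big)|u|^{-k}|u|^{2}\,d|u|$ (the $|u|^2$ from the Jacobian). The factor $|u|^{-k}$ with $k<2$ is integrable at the origin against $|u|^2\,d|u|$, producing the constant $C_k = (2-k)^{-1} + 1$; shifting $u \mapsto u - v$ in the Gaussian-weighted pieces and using $\int |u-v|^{-1}e^{-\frac c2|u-v|^2}\langle \cdot\rangle\,du \lesssim \langle v\rangle$, $\int |u-v|e^{-\frac18|u|^2}\,du \lesssim \langle v\rangle$, together with crude bounds like $|u|^{-k} \lesssim \langle v\rangle^{-k} + |u-v|^{-k}$ near and away from $v$, gives the claimed $\langle v\rangle + \langle v\rangle^{1-k}$ prefactor. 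The indicator $\mathbf{1}_{\{d(x,\partial\O)\lesssim\langle u\rangle\}}$ is harmless here — it is automatically present once $\tb(x,u)<\infty$ within the relevant time window (cf. Corollary \ref{cor:ave}) and only helps.

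\textbf{Main obstacle.} The hard part is the slice estimate, specifically making the circle-comparison uniform over \emph{all} slices through $x$ simultaneously: the curvature bounds $k_m(S) = \tfrac12\min_{s} k_S(s)$ vary with the slice, but by uniform convexity \eqref{convex_xi} they are bounded below by a constant depending only on $\theta_\O$ and above by $\sup|\nabla^2\xi|$, so the constant $\epsilon$ in Lemma \ref{lem:angle_com} and the implicit constants in Lemmas \ref{lem:int_cir}–\ref{lem:len_cir} can be taken uniform. One must also check that the grazing geometry on a slice is genuinely governed by a convex \emph{function} (locally a graph over the tangent line at the grazing point), which holds because near grazing the relevant arc of $\partial\O\cap S$ is a small uniformly convex arc; and that when $x$ is close to $\partial\O$ the nearest point and the grazing point on the extreme slices coalesce, which is exactly the case handled by Lemma \ref{lem:len_cir}'s $d\le R$ branch. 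Assembling these uniformities is the technical crux; the rest is bookkeeping with Gaussian integrals.
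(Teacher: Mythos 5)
Your proposal follows essentially the same route as the paper's proof: slice by planes through $x$ (the paper uses half-planes about the axis through the nearest boundary point $P$), compare the grazing geometry on each slice with the circumscribed circle of curvature $k_m$ via Lemma \ref{lem:angle_com} (with Lemma \ref{lem:the_g} supplying the approach-angle lower bound), bound the circle integral by Lemma \ref{lem:int_cir}, convert distance-to-circle into distance-to-grazing-point via Lemma \ref{lem:len_cir}, and finish with the Gaussian kernel bookkeeping in $k$ that yields $C_k(\langle v\rangle+\langle v\rangle^{1-k})$. The only notable difference is organizational: the paper integrates $|u|$ and the azimuthal variable first (reducing to planar Gaussian integrals uniformly in the in-slice angle, where the $1/\sin$ Jacobian factor and the non-grazing directions are controlled by $(1+|x|)\lesssim\langle u\rangle$, i.e.\ precisely through the indicator $\mathbf{1}_{\{d(x,\partial\O)\lesssim\langle u\rangle\}}$ that you call harmless) and performs the singular in-slice angular integration last, so your claimed uniform slice bound and the "bounded re-integration" step hide exactly this bookkeeping, but the substance of the argument is the same.
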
 
\begin{proof}
\begin{figure} [t] 
\centering
\includegraphics[width=6cm]{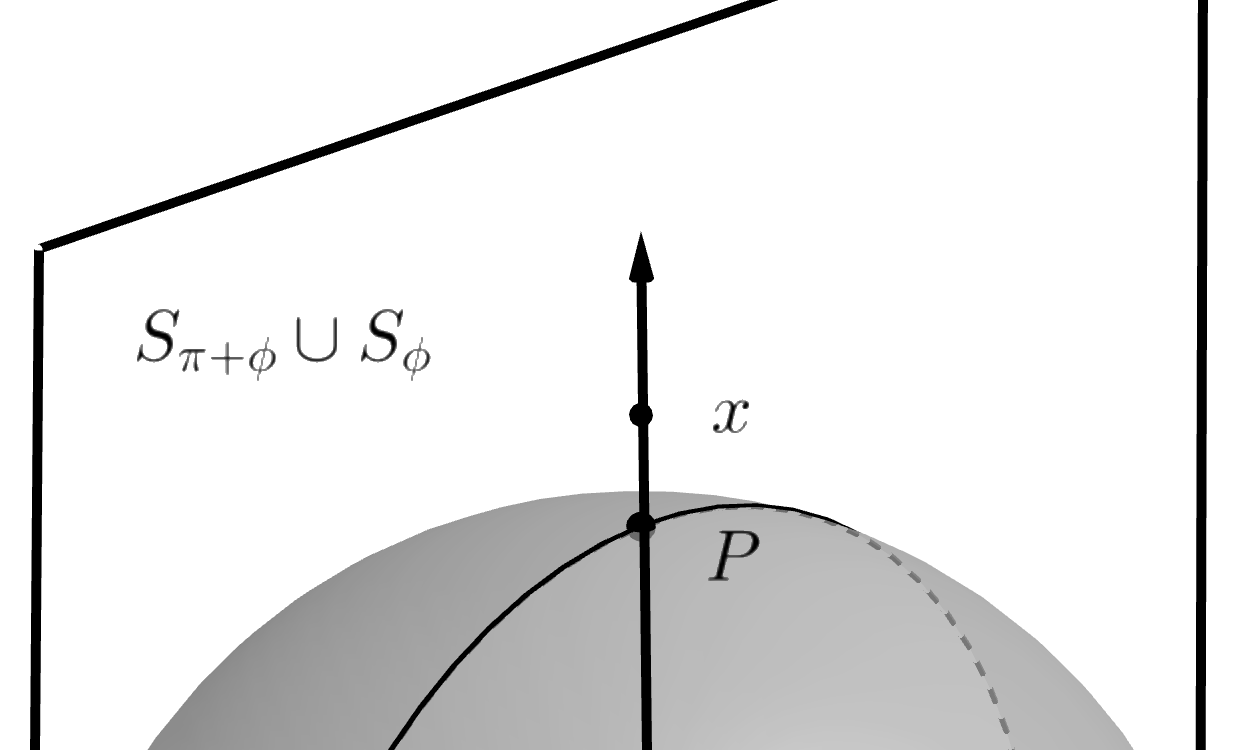}
\caption{Given $x \in \O$, we find the point $P \in \partial\O$ such that $\protect\overrightarrow{x-P} \perp T_P(\partial \O)$} \label{fig_3D}
\end{figure}
\textbf{(Step 1)}. Fix $x \in \O$. There exists a point $P \in \partial \O$ that is closest to $x$. Then 
\begin{align*}
    d(x,\partial\O):=\min_{y\in \partial \O} |x - y|= \left|x-P \right|
    \text{\quad and \quad } \overrightarrow{x-P} \;\bot\; T_{P}\left( \partial \O \right),
\end{align*}
where $ T_{P}\left( \partial \O \right)$ is tangential plane at $P$. Now, we consider spherical coordinates with $x$ as origin and $\overrightarrow{x-P}$ aligned with the $z$-axis.(See Figure \ref{fig_3D}.) Next, we define a half plane $S_{\phi}$ as
\begin{align*}
    S_{\phi} :=\left\{ (r,\theta,\phi):=(r\sin \theta\cos\phi, r \sin \theta \sin\phi, r\cos \theta) :  0 \leq r < +\infty, \quad 0 \leq \theta \leq \pi   \right\}
\end{align*}
for a fixed $\phi$(mod $2\pi$)$\in [0, 2\pi)$. Let the velocity $u$, start at $x=(0,0,0)$, where 
\begin{align} \label{u_sphere}
   u:=(|u|, \theta, \phi) =(|u|\sin \theta\cos\phi, |u| \sin \theta \sin\phi, |u|\cos \theta). 
\end{align} 
In the half-plane $S_{\phi}$, there exists a grazing velocity
\begin{align} \label{theta_g}
    u_{g,\phi}:=(|u|,\theta_{g,\phi}, \phi) \in S_{\phi}
    \text{\quad for \quad } \theta_{g,\phi} \in \left(0, \frac{\pi}{2}\right),
\end{align} 
starting at $x$, such that $ \nabla \xi\left(\xb(x,u_{g,\phi})\right) \cdot u_{g,\phi}= 0$ for $\xb(x,u_{g,\phi}) \in S_{\pi+\phi}$. By using $u_{g,\phi}$, we define a grazing point $G_{\phi}$ as 
\begin{align*}
   G_{\phi} := \xb(x,u_{g,\phi})= x- \tb(x,u_{g,\phi}) u_{g,\phi}\in \partial\O \cap S_{\pi+\phi}.
\end{align*} 
For the normal vector $   N_{\phi} := (-\sin \phi, \cos \phi, 0)$ of the plane $S_{\pi+\phi}$, we define the projection of the normal vector of the domain $\O$ as
\begin{align*} 
   n_{||,\phi}(s) := n(s) - \left( n(s) \cdot N_{\phi}\right)N_{\phi} 
\end{align*} 
at $s \in \partial \O \cap S_{\pi+\phi}$.(See Figure \ref{fig_curve}.) 
\begin{figure} [t] 
\centering
\includegraphics[width=7cm]{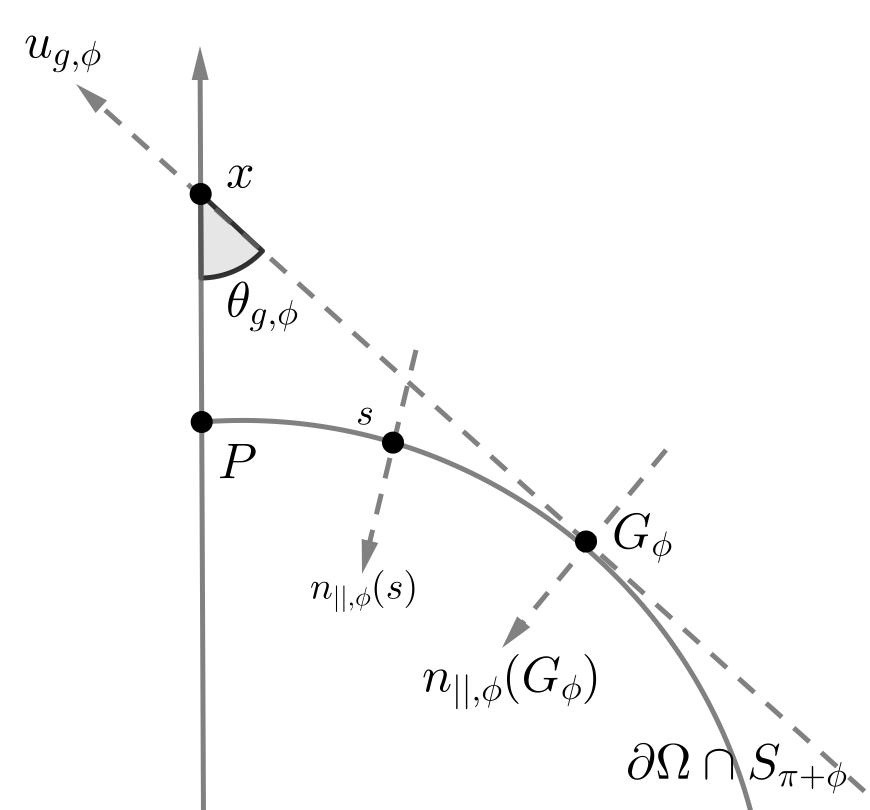}
\caption{The intersection of the boundary $\O$ with the place $S_{\pi+\phi}$, denoted by $\partial\Omega \cap S_{\pi+\phi}$.} \label{fig_curve}
\end{figure}

\vspace{3mm}
\textbf{(Step 2)}. Next, we focus on the curve $\partial\O \cap S_{\pi+\phi}$ that connects $P$ and $G_{\phi}$. 
We define $k_m>0$ as
\begin{align*}
    k_m := \frac{1}{2} \min_{\phi \in [0, 2\pi)}\min_{s \in \partial\O \cap S_{\phi}}k(s;\phi),
\end{align*} 
where $k(s;\phi)$ is the curvature of the curve $s \in \partial\O \cap S_{\phi}$. We define a circle with radius $k^{-1}_m$ that passes through $G_{\phi}$ and has centers $C_{m,\phi}  \in S_{\pi+\phi}\cup S_{\phi}$, such that 
\begin{align*} 
    B_{k^{-1}_m}(C_{m,\phi}) :=
    {\left\{s \in S_{\pi+\phi}: |s-C_{m,\phi}| = k^{-1}_m\right\}}
    \text{\quad for \quad}  C_{m,\phi}:=G_{\phi} + k^{-1}_m n_{||,\phi}( G_{\phi}).
\end{align*} 
Then, normal vectors of $B_{k^{-1}_m}(C_{m,\phi})$ are expressed by $n_{k^{-1}_m,\phi}(s) := \widehat{C_{m,\phi} - s}$ for $s \in B_{k^{-1}_m}(C_{m,\phi})$, (Note that $ \widehat{n_{||,\phi}}(G_{\phi}) = n_{k^{-1}_m,\phi}(G_{\phi}) \;\bot\; u_{g,\phi}$  holds.) and the angle between $\overrightarrow{x-G_{\phi}}$ and $\overrightarrow{x-C_{m,\phi}}$ is 
\begin{align}\label{theta_g_km}
    \theta_{g,k_m,\phi} := \tan^{-1} \left( \frac{1}{k_m|x-G_{\phi}|}\right) \in\left(0, \frac{\pi}{2}\right).
\end{align} 
(See Figure \ref{fig_draw_cir}.) Next, we define a backward exit time and position as
\begin{align*}
\begin{split}
    &t_{\mathbf{b}}(x,v;k_m^{-1},\phi):=\sup \left\{s \geq 0 : x-\tau v \in  B_{k^{-1}_m}(C_{m,\phi}) \text{\;for\;all\;} \tau \in (0,s)\right\},\\
    &x_{\mathbf{b}}(x,v; k_m^{-1},\phi) := x-t_{\mathbf{b}}(x,v;k_m^{-1})v
\end{split}
\end{align*}
for the domain $B_{k^{-1}_m}(C_{m,\phi})$ and $v \in S_{\phi}$. 

To apply Lemma \ref{lem:angle_com} and Lemma \ref{lem:the_g}, we consider a change the coordinate system. Let $S_{\pi+\phi}$ be the $xy$-plane, $G_{\phi}=(0,0)$, $\widehat{x-G_{\phi}}=(1,0)$ corresponding to the $x$-axis, and $\widehat{n_{||,\phi}}(G_{\phi})=(0,1)$ corresponding to the $y$-axis. Compared to Lemma \ref{lem:the_g}, we can match
\begin{align*}
    \partial \O \cap S_{\pi+\phi} = f, \quad |x-G_{\phi}| = x_1 \text{\quad and \quad} P = (p^*, f(p^*)),
\end{align*} 
where $f, x_1$ and $p^*$ are defined in Lemma \ref{lem:the_g}. Then, we can find a constant $\kappa_{\phi}>0$ such that
\begin{align} \label{kappa}
    \tan \theta_{g, \phi} = \frac{f(p^*)}{x_1-p^*}\geq \frac{\kappa_{\phi}}{|x-G_{\phi}|}=\frac{\kappa_{\phi}}{x_1}
\end{align} 
since $\theta_{g,\phi}$ defined in \eqref{theta_g} is the angle between $\overrightarrow{x-P}$ and the grazing velocity $u_{g,\phi}$. On the other hands, we recall $f_m, f, \delta, x$, $p$ and $q$ in Lemma \ref{lem:angle_com}, and we match
\begin{align*}
  B_{k^{-1}_m}(C_{m,\phi}) = f_m , \quad \partial \O \cap S_{\pi+\phi} = f,  \quad \theta_{g,\phi}-\theta = \delta,\\
  |x-G_{\phi}|=x_1, \quad \xb(x,u)=(p,f(p)), \quad \xb(x,u;k_m^{-1},\phi)=(q,f_m(q))
\end{align*} 
for $u = (|u|, \theta, \phi)$ in \eqref{u_sphere}. Then, we get
\begin{align*}
    \mathbf{A}(q;f_m, \delta) = |n_{k^{-1}_{m},\phi}(\xb(x,u;k_m^{-1},\phi)) \cdot \hat{u}|, \quad
    \mathbf{A}(p;f, \delta) = |\widehat{n_{||,\phi}}(\xb(x,u))  \cdot \hat{u}|.
\end{align*} 
By Lemma \ref{lem:angle_com}, there exists a constant $\epsilon_{\phi}>0$ such that
\begin{align} \label{eps_1}
    |\widehat{n_{||,\phi}}(\xb(x,u))  \cdot \hat{u} | \geq |n_{k^{-1}_{m},\phi}(\xb(x,u;k_m^{-1},\phi)) \cdot \hat{u}| 
\end{align} 
holds whenever $x_1\tan \delta = |x-G_{\phi}|\tan (\theta_{g,\phi}-\theta) \leq \epsilon_{\phi}$. 
\begin{figure} [t] 
\centering
\includegraphics[width=6cm]{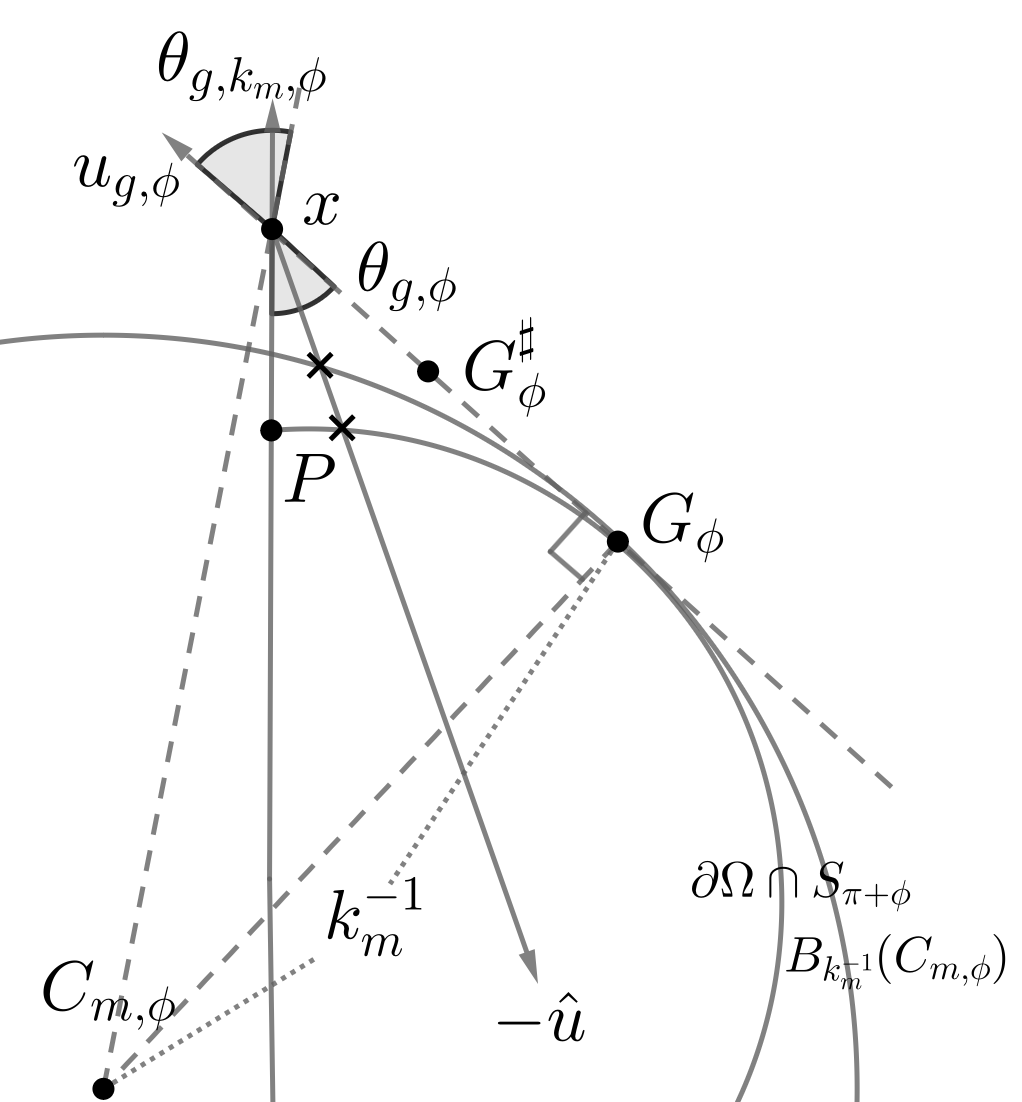}
\caption{A figure of a circle $B_{k_m^{-1}}(C_{m,\phi})$ passing through the point $G_{\phi}$.}\label{fig_draw_cir}
\end{figure}

\vspace{3mm}
\textbf{(Step 3)}. Let us denote $\min_{\phi \in [0,2\pi)}|x-G_{\phi}|=L>0$.
Using $L$, we define $G_{\phi}^{\sharp} \in S_{\pi+\phi}$ as
\begin{align*}
    x-G_{\phi}^{\sharp}=\frac{L}{|x-G_{\phi}|}(x-G_{\phi}).
\end{align*} 
We translate the circle $ B_{k^{-1}_m}(C_{m,\phi})$ in the direction of $\overrightarrow{x-G_{\phi}}$ by a length of $|G_{\phi}-G_{\phi}^{\sharp}|$. Then, the circle is expressed by 
\begin{align*} 
    B_{k^{-1}_m}(C_{m,\phi}^{\sharp}) :=
    {\left\{s \in S_{\pi+\phi}: |s-C_{m,\phi}^{\sharp}| = k^{-1}_m\right\}}
    \text{\quad for \quad}C_{m,\phi}^{\sharp}:=G_{\phi}^{\sharp} + k^{-1}_m n_{||,\phi}( G_{\phi}).
\end{align*}
The normal vectors of $B_{k^{-1}_m} (C_{m,\phi})$ are given by $ n_{k^{-1}_m,\phi}^{\sharp}(s):= \widehat{C_{m,\phi}^{\sharp} - s}$ for $s \in B_{k^{-1}_m}(C_{m,\phi}^{\sharp})$,
and the angle between $\overrightarrow{x-G_{\phi}^{\sharp}}$ and $\overrightarrow{x-C_{m,\phi}^{\sharp}}$ is 
\begin{align} \label{theta_sharp}
     \theta_{g,k_m,\phi}^{\sharp} := \tan^{-1}\left(\frac{1}{k_mL}\right)= \tan^{-1} \left( \frac{1}{k_m|x-G_{\phi}^{\sharp}|}\right) \in \left(0,\frac{\pi}{2}\right).
\end{align} 
For the domain $B_{k^{-1}_m}(C_{m,\phi}^{\sharp})$ and $v \in S_{\phi}$, we define
\begin{align*} 
\begin{split}
    &t_{\mathbf{b}}^{\sharp}(x,v;k_m^{-1}):=\sup \{s \geq 0 : x-\tau v \in  B_{k^{-1}_m}(C_{m,\phi}^{\sharp}) \text{\;for\;all\;} \tau \in (0,s)\}, \\
    &x_{\mathbf{b}}^{\sharp}(x,v;k_m^{-1}) := x-t_{\mathbf{b}}^{\sharp}(x,v;k_m^{-1})v.
\end{split} 
\end{align*} 
 Then, it holds that
\begin{align} \label{sharp_angle}
     |n_{k^{-1}_{m},\phi}(\xb(x,u;k_m^{-1},\phi)) \cdot \hat{u}| \geq  |n_{k^{-1}_{m},\phi}^{\sharp}(x_{\mathbf{b}}^{\sharp}(x,u;k_m^{-1})) \cdot \hat{u}|
\end{align} 
since $L \leq |x-G_{\phi}|$ and
\begin{align} \label{x-C_sharp}
    \left(\widehat{x-C_{m,\phi}^{\sharp}} \right)\cdot \hat{u}
    = \cos \left( \theta - \theta_{g,\phi}   +\theta_{g,k_m,\phi}^{\sharp} \right)
\end{align} for $(\widehat{x-P}) \cdot \hat{u} = \cos \theta$. (See Figure \ref{fig_sharp}.)

Now, we move the singularity $|n_{k^{-1}_{m},\phi}(\xb(x,u;k_m^{-1},\phi)) \cdot \hat{u}|$ from the plane $S_{\pi+\phi}$ to a $xy$-plane. First, we consider a general $xy$-plane. Let
\begin{align*}
    z:= \left(\sqrt{L^2+k_m^{-2}},0\right) = \left( \sqrt{|x-G_{\phi}^{\sharp}|^2+k_m^{-2}},0    \right)
\end{align*}
be fixed on the $xy$-plane. We define
$B_{k_m^{-1}}(0),\,\alpha_g(z,k_m^{-1})$ and $G(z,k_m^{-1})$ by substituting $R=k_m^{-1}$ and $x=z$ into \eqref{B_R_def}, \eqref{alpha_g_def}, and \eqref{G_x,R_def}, respectively, and observe 
\begin{align*}
\alpha_g(z,k_m^{-1})=\theta_{g,k_m,\phi}^{\sharp}, \quad   |z-G(z,k_m^{-1})| = |x-G_{\phi}^{\sharp}|=L.
\end{align*}
For $u \in \mathbb{R}^3$, we define $\tb(z,u;k_m^{-1})$ and $\xb(z,u;k_m^{-1})$ from \eqref{tbxb_R_def}. Let $u_1 \in \mathbb{R}^2$ be defined by $u_1 := ( \cos \omega, \sin\omega)$ for $\omega \in [0, \alpha_g(z;k_m^{-1})]$. Then, we obtain 
\begin{align} \label{sharp_xb}
   |\xb(z,u_1;k_m^{-1})\cdot u_1|= |n_{k^{-1}_{m},\phi}^{\sharp}(\xb(x,u_2;k_m^{-1})) \cdot u_2|,
\end{align}
where
\begin{align*}
    u_2 :=\left(1,\omega+\theta_{g,\phi}-\theta_{g,k_m,\phi}^{\sharp},\phi \right)
    \text{\quad such \;that \quad }
    \left(\widehat{x-C_{m,\phi}^{\sharp}} \right)\cdot u_2 = \cos \omega.
\end{align*} 
Here, $ |n_{k^{-1}_{m},\phi}^{\sharp}(\xb(x,u_2;k_m^{-1})) \cdot u_2|$ is defined in the above paragraph.

\begin{figure} [t] 
\centering
\includegraphics[width=7cm]{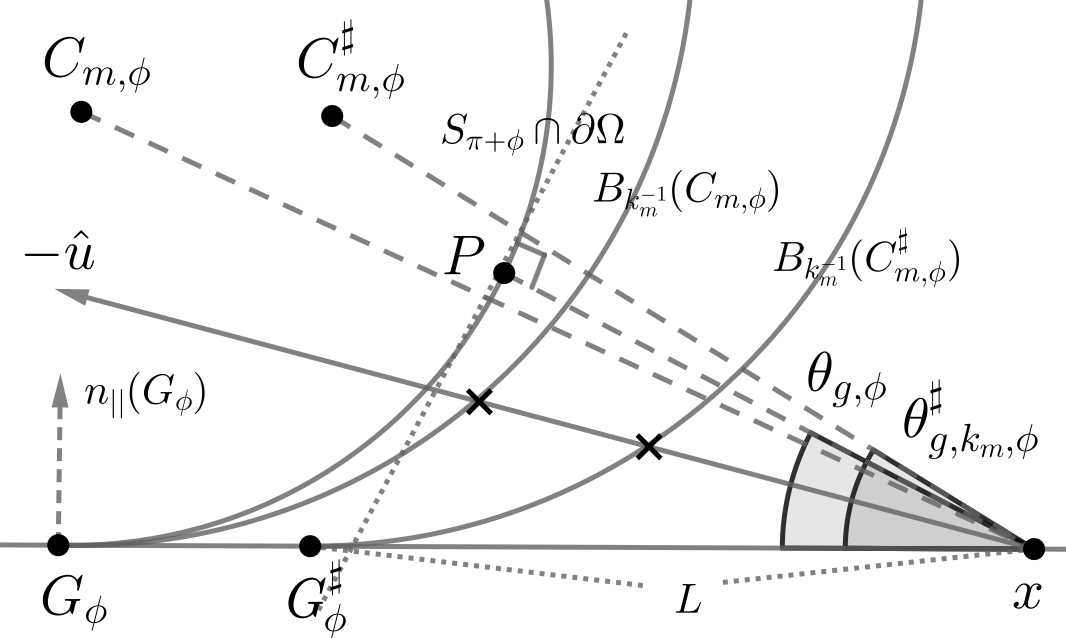}
\caption{A figure of a circle $B_{k_m^{-1}}(C_{m,\phi}^{\sharp})$ passing through the point $G_{\phi}^{\sharp}$.}\label{fig_sharp}
\end{figure}

\vspace{3mm}
\textbf{(Step 4)}.
Let $u$ be given by $u=(|u|,\theta,\phi)$ in \eqref{u_sphere}. Due to $n_{||,\phi}(P) = n(P)= \widehat{P-x}=(0,0,-1)$, $|n_{||,\phi}(P)|=1$ holds. Since both $\xb(x,u)$ and $P$ are on the curve $S_{\pi+\phi} \cap \partial \O$, we obtain  
\begin{align*}
    |n_{||,\phi}(\xb(x,u))| \gtrsim |n_{||,\phi}(P)|=1
\end{align*}
by Lemma \ref{lem:unif n}. Since $|\nabla \xi| \gtrsim_{\O} 1$, 
\begin{align*}
    |\nabla \xi(\xb(x,u))\cdot\ \hat{u}| 
    &=|\nabla \xi(\xb(x,u)| \cdot|n(\xb(x,u))\cdot \hat{u}|\\
    &=|\nabla \xi(\xb(x,u)|\cdot|  n_{||,\phi}(\xb(x,u))|\cdot |\widehat{n_{||,\phi}}(\xb(x,u)) \cdot \hat{u}| \\
    & \gtrsim |\widehat{n_{||,\phi}}(\xb(x,u)) \cdot \hat{u}|.
\end{align*}

By using the above inequality and the sphere coordinate \eqref{u_sphere}, we have
\begin{align*}
    &\int_{\mathbb{R}^{3}}\frac{e^{-\frac{c}{2}|u-v|^2}}{|u-v|} \frac{1}{|u|^k}\frac{ 1}{|\nabla \xi(\xb(x,u))\cdot\ \hat{u}|}\mathbf{1}_{\{\tb(x,u) < +\infty \} \cap \{d(x,\partial\O)\lesssim \langle u \rangle\}} du \notag\\
    &\lesssim
     \int_0^{\infty} \int_0^{2\pi} \int_{0}^{\theta_{g,\phi}}
    \frac{|u|^{2-k}}{|u-v|}e^{-\frac{c}{2}|u-v|^2} \frac{ 1}{|\widehat{n_{||,\phi}}(\xb(x,u)) \cdot \hat{u}|}\sin\theta\mathbf{1}_{\{\{d(x,\partial\O)\lesssim \langle u \rangle\}} \, d\theta d\phi d|u| 
\end{align*} 
for $\theta_{g,\phi} \in (0, \pi/2)$ in \eqref{theta_g}. 

We observe that if $u$ is a grazing velocity such that $\theta = \theta_{g,\phi}$, then
$|\widehat{n_{||,\phi}}(\xb(x,u)) \cdot \hat{u}|=0$. First, we define
\begin{align}\label{def_epsilon}
    \epsilon :=\min \left\{ \min_{\phi \in [0,2\pi)}\tan^{-1}\left(\frac{\epsilon_{\phi}}{|x-G_{\phi}|} \right), \quad \min_{\phi \in [0,2\pi)} \frac{1}{2} \theta_{g,\phi} ,\quad \frac{1}{2}  \alpha_g(z,k_m^{-1})  \right\},
\end{align} 
where $\theta_{g,\phi},\, \epsilon_{\phi}$, and $\alpha_g(z,k_m^{-1})$ are defined in \eqref{theta_g}, \eqref{eps_1}, and \eqref{theta_sharp}. Next, we divide the range of $\theta$ into the grazing part and non-grazing part: (1) $\theta \in  \left[\theta_{g,\phi}-\epsilon, \theta_{g,\phi} \right]$ and (2) $\theta \in  \left[0,\theta_{g,\phi}-\epsilon \right]$.

\vspace{3mm}
\textbf{(Step 4)-(1)} First, we consider the interval $\theta \in  \left[\theta_{g,\phi}-\epsilon, \theta_{g,\phi} \right]$. By \eqref{eps_1} and \eqref{sharp_angle}, 
\begin{align}
     &\int_0^{\infty} \int_0^{2\pi} \int_{\theta_{g,\phi}-\epsilon}^{\theta_{g,\phi}}
    \frac{|u|^{2-k}}{|u-v|}e^{-\frac{c}{2}|u-v|^2} \frac{ 1}{|\widehat{n_{||,\phi}}(\xb(x,u)) \cdot \hat{u}|}\sin\theta\, d\theta d\phi d|u|  \label{gra_temp1} \\
     &\leq \int_0^{\infty} \int_0^{2\pi} \int_{\theta_{g,\phi}-\epsilon}^{\theta_{g,\phi}}
    \frac{|u|^{2-k}}{|u-v|}e^{-\frac{c}{2}|u-v|^2} \frac{ 1}{|n_{k^{-1}_{m},\phi}^{\sharp}(x_{\mathbf{b}}^{\sharp}(x,u;k_m^{-1})) \cdot \hat{u}|}\sin\theta \,d\theta d\phi d|u|. \notag
\end{align} 
Note that $u=(|u|,\theta,\phi)$ in \eqref{gra_temp1} satisfies both $( \widehat{x-P}) \cdot \hat{u} = \cos \theta$ and \eqref{x-C_sharp}. By changing the variable $\theta \rightarrow \omega:=\theta-\theta_{g,\phi}+\theta_{g,k_m,\phi}^{\sharp}$ and using \eqref{sharp_xb}, we obtain
\begin{align}
    &(\ref{gra_temp1})= 
    \int_0^{\infty} \int_0^{2\pi} \int_{\alpha_{g}(z;k_m^{-1})-\epsilon}^{\alpha_{g}(z;k_m^{-1})}
    \frac{|u|^{2-k}}{|u-v|}e^{-\frac{c}{2}|u-v|^2} \frac{ \sin(\omega+\theta_{g,\phi}-\theta_{g,k_m,\phi}^{\sharp})}{|\xb(z,(\cos \omega, \sin \omega);k_m^{-1})\cdot (\cos \omega, \sin \omega)| }\,d\omega d\phi d|u|\notag \\
    &=\int_{\alpha_{g}(z;k_m^{-1})-\epsilon}^{\alpha_{g}(z;k_m^{-1})}\int_0^{2\pi} \int_0^{\infty}  \frac{ \sin(\omega+\theta_{g,\phi}-\theta_{g,k_m,\phi}^{\sharp})}{|\xb(z,(\cos \omega, \sin \omega);k_m^{-1})\cdot (\cos \omega, \sin \omega)| } 
    \frac{|u|^{2-k}}{|u-v|}e^{-\frac{c}{2}|u-v|^2} \,d|u| d\phi d\omega,
    \label{gra_temp2}
\end{align}
where $u=(|u|,\omega+\theta_{g,\phi}-\theta_{g,k_m,\phi}^{\sharp},\phi)$. 

 For fixed $\omega$, we change the variables such that
\begin{align} \label{dudphi_dA}
\begin{split}
    d|u| d\phi &= \frac{1}{\sin(\omega+\theta_{g,\phi}-\theta_{g,k_m,\phi}^{\sharp}) } d|u_p| d\phi \\
    &=\frac{1}{|u_p|\sin(\omega+\theta_{g,\phi}-\theta_{g,k_m,\phi}^{\sharp})} dA 
    =\frac{1}{|u|\sin^2(\omega+\theta_{g,\phi}-\theta_{g,k_m,\phi}^{\sharp}) } d A,
\end{split}
\end{align} 
where $u_p:=|u|\sin(\omega+\theta_{g,\phi}-\theta_{g,k_m,\phi}^{\sharp})\cdot (\cos \phi, \sin \phi) \in \mathbb{R}^2$. Here, $A$ is the $xy$-plane. 

By inequalities
\begin{align*}
    \omega \geq \alpha_{g}(z;k_m^{-1})-\epsilon=\theta_{g,k_m,\phi}^{\sharp}-\epsilon, \quad 0 \leq \epsilon \leq \theta_{g,\phi}/2,
\end{align*}
and \eqref{kappa}, we have
\begin{align*} 
    \sin (\omega+\theta_{g,\phi}-\theta_{g,k_m,\phi}^{\sharp}) \geq \sin (\theta_{g,\phi}-\epsilon)
    \geq \sin \frac{1}{2}\theta_{g,\phi}
    \geq \frac{1}{2}\sin \theta_{g,\phi}
    \geq \frac{1}{2}\frac{\kappa_\phi}{\sqrt{|x-G_{\phi}|^2+\kappa_\phi^2}},
\end{align*} 
and this implies that
\begin{align}\label{sin_lower}
    \frac{1}{\sin (\omega+\theta_{g,\phi}-\theta_{g,k_m,\phi}^{\sharp})} \lesssim \frac{1}{\kappa_{\phi}}(|G_{\phi}|+|x|)+1 \lesssim 1+|x|
\end{align} since $|G_{\phi}|\lesssim 1$ and $k_{\phi}\lesssim 1$.

We first estimate 
\begin{align} \label{dudphi} 
     \int_0^{2\pi} \int_0^{\infty} 
    \frac{|u|^{2-k}}{|u-v|}e^{-\frac{c}{2}|u-v|^2}\sin(\omega+\theta_{g,\phi}-\theta_{g,k_m,\phi}^{\sharp}) \,d|u| d\phi.
\end{align}
by dividing the range of $k$ into cases. Set $v_p:=v \cdot (\sqrt{2}^{-1},\sqrt{2}^{-1},0)$, which belongs to $A$. For $0 \leq k<1$, 
\begin{align*}
    \frac{|u|^{1-k}}{|u-v|} e^{-\frac{c}{2}|u-v|^2}\leq \frac{1}{|u_p-v_p|^k}+ \frac{|v|^{1-k}}{|u_p-v_p|} e^{-\frac{c}{2}|u_p-v_p|^2}.
\end{align*}
Then, for $0 \leq k<1$, by applying \eqref{dudphi_dA} and \eqref{sin_lower}, we obtain
\begin{align*}
    \eqref{dudphi} \lesssim (1+|x|) \int_{\mathbb{R}^2} \left(\frac{1}{|u_p-v_p|^k}+ \frac{|v|^{1-k}}{|u_p-v_p|} \right)e^{-\frac{c}{2}|u_p-v_p|^2} dA \lesssim (1+|x|)\langle v \rangle^{1-k}.
\end{align*}
Similarly, for $k \leq 0$, we obtain
\begin{align*}
     \eqref{dudphi} &\lesssim (1+|x|) 
    \int_{\mathbb{R}^2} \left(e^{-\frac{c}{4}|u_p-v_p|^2} + \frac{|v|^{1-k}}{|u_p-v_p|}e^{-\frac{c}{2}|u_p-v_p|^2} \right) dA
    \lesssim (1+|x|)\langle v \rangle^{1-k}.
\end{align*}
For $1 \leq k <2$,
\begin{align*}
    \frac{1}{|u|^{k-1}|u-v|} e^{-\frac{c}{2}|u-v|^2} &\leq  \frac{1}{|u-v|^{k}}e^{-\frac{c}{2}|u-v|^2} \mathbf{1}_{\{|u-v|\leq |u|\}}+\frac{1}{|u|^k}e^{-\frac{c}{2}|u|^2} \mathbf{1}_{\{|u|\leq |u-v|\}} \\
    &\leq \frac{1}{|u_p-v_p|}e^{-\frac{c}{2}|u_p-v_p|^2}+\frac{1}{|u_p|}e^{-\frac{c}{2}|u_p|^2}.
\end{align*}
Then, for $1 \leq k <2$, by applying \eqref{dudphi_dA} and \eqref{sin_lower}, we obtain
\begin{align*}
    \eqref{dudphi} 
    &\lesssim (1+|x|) 
    \int_{\mathbb{R}^2} \left( \frac{1}{|u_p-v_p|^{k}}e^{-\frac{c}{2}|u_p-v_p|^2} +\frac{1}{|u_p|^k}e^{-\frac{c}{2}|u_p|^2}\right) dA
    \lesssim (2-k)^{-1}(1+|x|).
\end{align*}
Therefore, for all $k < 2$, we apply the above results for \eqref{dudphi} to \eqref{gra_temp2}
\begin{align*}
    \eqref{gra_temp1} \lesssim C_k(1+|x|)(1+\langle v \rangle^{1-k}) \int_{\alpha_{g}(z;k_m^{-1})-\epsilon}^{\alpha_{g}(z;k_m^{-1})} \frac{ 1}{|\xb(z,(\cos \omega, \sin \omega);k_m^{-1})\cdot (\cos \omega, \sin \omega)| } d\omega,
\end{align*}
where $C_k=(2-k)^{-1}+1$.

 By applying Lemma \ref{lem:int_cir} for $R=k_m^{-1}$ and $x=z$ to the above inequality, we obtain
\begin{align*} 
    \eqref{gra_temp1}  &\lesssim C_k(1+|x|)(1+\langle v \rangle^{1-k})\ln \left(  1+ \frac{2k_m^{-1}}{\min_{y \in B_{k_m^{-1}}(0)} |z - y|}  \right) 
\end{align*} 
since $\epsilon \leq \alpha_{g}(z;k_m^{-1})/2$. By applying Lemma \ref{lem:len_cir} for $R=k_m^{-1}$ and $x=z$ and using $ |z-G(z,k_m^{-1})|=L \geq|x-P|$, we have
\begin{align*} 
     \eqref{gra_temp1}
    &\lesssim C_k(1+|x|)(1+\langle v \rangle^{1-k})\ln \left(  1+ \frac{4k_m^{-1}}{|z-G(z,k_m^{-1})|}  \right) \\
     &\lesssim C_k(1+|x|)(1+\langle v \rangle^{1-k})\ln \left(  1+ \frac{4k_m^{-1}}{|x-P|}  \right).
\end{align*} 
Therefore, we obtain
\begin{align*}
    (\ref{gra_temp1})
    &\lesssim C_k (1+\langle v \rangle^{1-k})(1+|x-P|)\ln \left(  1+ \frac{4k_m^{-1}}{|x-P|}  \right) \\
    &\lesssim C_k (1+\langle v \rangle^{1-k}) \left[  \ln \left(  1+ \frac{1}{|x-P|}  \right)+1\right],
\end{align*}
since $|P| \lesssim 1$, $k_m^{-1} \lesssim 1$ and $x \ln (1+x^{-1})  \lesssim 1$ for $x>0$.

\vspace{3mm}
\textbf{(Step 4)-(2)} Next, we consider the non-grazing part, corresponding to the interval $\theta \in  \left[0,\theta_{g,\phi}-\epsilon \right]$. For $u$ in \eqref{u_sphere},
\begin{align*}
   |\widehat{n_{||,\phi}}(\xb(x,u)) \cdot \hat{u}|
   &\geq  |\widehat{n_{||,\phi}}(G_{\phi}) \cdot \hat{u}| =
   \sin (\theta_{g,\phi}- \theta) \geq \sin \epsilon.
\end{align*} 
From the definition of $\epsilon$ in \eqref{def_epsilon}, 
\begin{align*} 
    &\sin \epsilon = \min \left\{  \min_{\phi \in [0,2\pi)}\frac{\epsilon_\phi}{\sqrt{|x-G_{\phi}|^2+\epsilon_\phi^2}}, \quad\min_{\phi \in [0,2\pi)} \sin \frac{1}{2} \theta_{g,\phi} , \quad \sin \frac{1}{2}  \alpha_g(z,k_m^{-1})     \right\}\\
    &\geq  \min \left\{  \min_{\phi \in [0,2\pi)}\frac{\epsilon_\phi}{\sqrt{|x-G_{\phi}|^2+\epsilon_\phi^2}}, \quad \frac{1}{2}\min_{\phi \in [0,2\pi)} \frac{\kappa_\phi}{\sqrt{|x-G_{\phi}|^2+\kappa_\phi^2}} , \quad\frac{1}{2}\frac{k_m^{-1}}{ \sqrt{|x-G_{\phi}^{\sharp}|^2+k_m^{-2}}} \right\}
\end{align*} 
by \eqref{kappa} and \eqref{theta_sharp}, since $\alpha_g(z,k_m^{-1})=\theta_{g,k_m,\phi}^{\sharp}$. Combining the above two inequalities with $d(x,\partial\O) \lesssim \langle u \rangle$, 
\begin{align*}
    \frac{1}{ |\widehat{n_{||,\phi}}(\xb(x,u)) \cdot \hat{u}|}
    \leq \frac{1}{\sin \epsilon} \lesssim |x|+1 \lesssim \langle u \rangle
\end{align*} 
since $1 \lesssim |G_{\phi}|, |G_{\phi}^{\sharp}|, \epsilon_{\phi}, \kappa_{\phi}, k_m^{-1}  \lesssim 1$. Therefore, for $k<2$, we conclude
\begin{align*}
    &\int_0^{\infty} \int_0^{2\pi} \int_{\theta_{g,\phi}-\epsilon}^{\theta_{g,\phi}}
    \frac{|u|^{2-k}}{|u-v|}e^{-\frac{c}{2}|u-v|^2} \frac{ 1}{|\widehat{n_{||,\phi}}(\xb(x,u)) \cdot \hat{u}|} \mathbf{1}_{\{d(x,\partial\O) \lesssim \langle u \rangle\}}\sin\theta\, d\theta d\phi d|u| \\
    &\lesssim  \int_{\mathbb{R}^3} \frac{\langle u \rangle}{|u|^{k}|u-v|}e^{-\frac{c}{2}|u-v|^2} du \lesssim C_k(1+ \langle v \rangle^{1-k}).
\end{align*}

Finally, we combine the results of (1) and (2) to obtain
\begin{align*}
     &\int_{\mathbb{R}^{3}} \frac{e^{-\frac{c}{2}|u-v|^2}}{|u-v|} \frac{1}{|u|^k}\frac{ 1}{|\nabla \xi(\xb(x,u))\cdot\ \hat{u}|}\mathbf{1}_{\{\tb(x,u) < +\infty \} \cap \{d(x,\partial\O)\lesssim \langle u \rangle\}} du \\
    &\lesssim C_k (1+\langle v \rangle^{1-k}) \left[  \ln \left(  1+ \frac{1}{|x-P|}  \right)+1\right].
\end{align*}
Similar arguments to those used in the above inequality are applied, which allow us to easily obtain
\begin{align*}
    &\int_{\mathbb{R}^{3}} |u-v|e^{-\frac{1}{8}|u|^2} \frac{1}{|u|^k}\frac{ 1}{|\nabla \xi(\xb(x,u))\cdot\ \hat{u}|}\mathbf{1}_{\{\tb(x,u) < +\infty \} \cap \{d(x,\partial\O)\lesssim \langle u \rangle\}} du \\
    &\lesssim C_k\langle v \rangle
     \left[\ln \left(  1+ \frac{1}{|x-P|}  \right)+1   \right].
\end{align*} 
\end{proof}

\subsection{Integrability for time}

We estimate a logarithmically singular integral along backward trajectories and show that it can be controlled in terms of a function of \( |v| \).

\begin{lemma}\label{lem:ds}
Let $0<t \leq 1, \; x \in \O,\;v \in \mathbb{R}^3$ be given. For $\varpi>1$, we obtain 
    \begin{align} \label{ds_result}
      \bigintsss_0^t e^{-\varpi \langle v \rangle^{2} (t-s)}\ln \left(  1+ \frac{1}{d(X(s;t,x,v),\partial\O)} \right) ds \lesssim \frac{1}{\varpi \langle v \rangle^2}\left[\ln\left(1+\frac{\varpi\langle v \rangle^2}{|v|}\right)+1 \right].
    \end{align} 
\end{lemma}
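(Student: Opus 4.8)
The plan is to split the time integral at the last boundary collision. For fixed $(t,x,v)$, recall $t_1 = t_1(t,x,v) = t - \tb(x,v)$; on the interval $s \in (t_1, t]$ the trajectory is the free flight $X(s;t,x,v) = x - (t-s)v$, while for $s \in [0,t_1]$ it is the reflected ray emanating from $\xb(x,v) \in \p\O$ with velocity $R_{\xb}v$. (If $\tb(x,v) = \infty$ the whole interval is a single free flight; this case is strictly easier and is subsumed by the argument below.) On each free-flight piece the distance function $s \mapsto d(X(s),\p\O)$ behaves, near its minimum, like a parabola in $s$ because $X(s)$ moves along a straight line at speed $|v|$ past the convex obstacle $\overline{\mathcal O}$; concretely, if $s_*$ is the time at which the free ray is closest to $\p\O$, then $d(X(s),\p\O) \gtrsim d(X(s_*),\p\O) + c\,|v|^2(s-s_*)^2$ for $s$ on that piece, using uniform convexity \eqref{convex_xi} exactly as in the derivation of the $\sqrt{\varepsilon}$-sensitivity near grazing. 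More crudely, and sufficient here, $d(X(s),\p\O) \gtrsim |v|\,|s - s_\flat|$ where $s_\flat$ is the time the free ray (extended) would hit $\p\O$, since the ray leaves the convex set transversally except in the grazing limit; combining with the trivial bound $d \ge d(X(s_*),\p\O)$ we get $d(X(s),\p\O) \gtrsim \max\{|v|\,|s-s_\flat|,\, \rho\}$ for the appropriate hitting/closest time. Either way the key point is: \emph{along the characteristic, $d(X(s),\p\O)^{-1}$ is, up to constants, $\min\{(|v|\,|s-s_\flat|)^{-1},\, \rho^{-1}\}$ for a fixed point $s_\flat$ and a fixed floor $\rho \ge 0$ on each of the (at most two) flight segments.}

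Granting this, the problem reduces to estimating, on each segment $I \subset [0,t]$,
\[
\int_I e^{-\varpi\langle v\rangle^2(t-s)} \ln\!\Big(1 + \min\Big\{\tfrac{1}{|v||s-s_\flat|},\, \tfrac1\rho\Big\}\Big)\,ds
\le \int_I e^{-\varpi\langle v\rangle^2(t-s)} \ln\!\Big(1 + \tfrac{1}{|v|\,|s - s_\flat|}\Big)\,ds .
\]
Bounding $e^{-\varpi\langle v\rangle^2(t-s)} \le e^{-\varpi\langle v\rangle^2 \,\mathrm{dist}(s,\{t\})}$ is too wasteful near $s=s_\flat$; instead I would keep the exponential and split $I$ into $\{|s - s_\flat| \le (\varpi\langle v\rangle^2)^{-1}\}$ and its complement. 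On the short interval around $s_\flat$ one drops the exponential ($\le 1$) and uses $\int_0^{h} \ln(1 + \frac{1}{|v|\sigma})\,d\sigma \lesssim h\big[\ln(1 + \frac{1}{|v|h}) + 1\big]$ with $h = (\varpi\langle v\rangle^2)^{-1}$, which yields exactly $\frac{1}{\varpi\langle v\rangle^2}\big[\ln(1 + \frac{\varpi\langle v\rangle^2}{|v|}) + 1\big]$. On the complement, $|s - s_\flat| \ge (\varpi\langle v\rangle^2)^{-1}$ forces $\ln(1 + \frac{1}{|v||s-s_\flat|}) \le \ln(1 + \frac{\varpi\langle v\rangle^2}{|v|})$, a constant, so the integral is bounded by $\ln(1 + \frac{\varpi\langle v\rangle^2}{|v|}) \int_I e^{-\varpi\langle v\rangle^2(t-s)}\,ds \le \frac{1}{\varpi\langle v\rangle^2}\ln(1 + \frac{\varpi\langle v\rangle^2}{|v|})$. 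Summing the (at most) two segments and the two sub-regions gives the right-hand side of \eqref{ds_result}. The reflection at $\xb(x,v)$ costs nothing since $|R_{\xb}v| = |v|$, so the speed (hence the parabola/cone opening) is the same on both segments, and $\ln(1+\cdot)$ is subadditive-friendly.

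The main obstacle is the geometric lower bound $d(X(s),\p\O) \gtrsim \max\{|v|\,|s-s_\flat|,\rho\}$ along a free ray, and in particular making the constants uniform in how close the ray is to grazing. When the ray is near-grazing, $s_\flat$ (the extended hitting time) is well-defined and the linear-in-time lower bound holds with a uniform constant coming from the transversality of the \emph{extended} line to $\overline{\mathcal O}$ — but one must check that when the ray misses $\overline{\mathcal O}$ entirely the correct statement uses the closest-approach time and the floor $\rho = d(X(s_*),\p\O) > 0$, and that uniform convexity \eqref{convex_xi} (equivalently the curvature bound $\theta_\O$) gives the clean comparison $d(X(s),\p\O) \simeq \rho + \theta_\O' |v|^2(s-s_*)^2$; this is the 3D analogue of the $\sqrt{\varepsilon}$ estimate already used in the paper, and can be quoted or reproved by expanding $\xi$ along the line. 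A secondary bookkeeping point: one should verify $0 < t \le 1$ and $\varpi > 1$ are used only to guarantee $h = (\varpi\langle v\rangle^2)^{-1} \le 1$ (so the short-interval estimate is legitimate) and to absorb the length of $[0,t]$; both are in the hypotheses. Everything else is the elementary bound $\int_0^h \ln(1 + \frac{a}{\sigma})\,d\sigma = h\ln(1 + \frac ah) + a\ln(1 + \frac ha) \lesssim h[\ln(1 + \frac ah) + 1]$.
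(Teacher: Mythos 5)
Your plan is essentially the paper's: split at the last collision time $t_1(t,x,v)$ (or at the closest‑approach time when $\tb(x,v)=\infty$), prove a pointwise lower bound on $d(X(s),\p\O)$ along each straight segment, and finish with an elementary exponential‑times‑logarithm time integral; your splitting of $[0,t]$ at scale $(\varpi\langle v\rangle^{2})^{-1}$ is just a reshuffling of the paper's change of variables $u=\varpi\langle v\rangle^{2}(t_1-s)$ and yields the same right‑hand side of \eqref{ds_result}. Two cautions on the geometric step, which is the entire content of the lemma and which you rightly flag as the main obstacle. First, the ``transversality'' bound $d(X(s),\p\O)\gtrsim |v|\,|s-s_\flat|$ cannot serve as the basis: its implicit constant is the cosine of the incidence angle and degenerates at grazing, so only the uniform quadratic comparison is available; it does suffice, because $\ln\bigl(1+\tfrac{1}{c|v|^{2}\sigma^{2}}\bigr)\le 2\ln\bigl(1+\tfrac{1}{\sqrt{c}\,|v|\sigma}\bigr)$ reproduces exactly the integrand you then estimate. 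Second, the quadratic bound must be asserted only on the set $\{d(X(s),\p\O)\le 1\}$ (where $d\ge 1$ the logarithm is bounded and the integral is trivially $\lesssim (\varpi\langle v\rangle^{2})^{-1}$); your written form $d\simeq\rho+c|v|^{2}(s-s_*)^{2}$ is false for large $|v|\,|s-s_*|$, where $d$ grows only linearly. The paper obtains the quadratic bound by a circumscribed‑sphere comparison: a ball through $\xb(x,v)$, centered on the normal line and containing all nearest boundary points $P(s)$, together with the sign conditions $n(\xb)\cdot v\le 0$ and $n(\xb)\cdot R_{\xb}v\ge 0$, gives $|X(s)-\mathbf{C}|^{2}-L^{2}\ge |v|^{2}(s-t_1)^{2}$ and hence $d(X(s),\p\O)\gtrsim |v|^{2}(s-t_1)^{2}$ on $\{d\le 1\}$ (with the analogous statement at the closest‑approach time when there is no collision). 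Your alternative derivation — Taylor‑expanding $\xi$ along each segment, using \eqref{convex_xi} and the same sign conditions to get $-\xi(X(s))\gtrsim |v|^{2}(s-t_1)^{2}$, then converting this into a distance bound via boundedness of $|\nabla\xi|$ near $\p\O$ — is a legitimate shortcut. With that estimate supplied (and the linear variant discarded), your proof is complete and coincides in substance with the paper's.
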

\begin{proof}
We assume $\tb(x,v) < \infty$. For $s\in [0,t]$, we denote $X(s):=X(s;t,x,v)$, and let $P(s) \in \partial\O$ be the closest point to $X(s)$, that is
\begin{align*}
    d(X(s),\partial\O) = \min_{y \in \partial\O} |X(s)-y | = |X(s)-P(s)|.
\end{align*}
For $s \in [0,t]$, let $U(s)$ denote the plane containing both $P(s)$ and $\xb(x,v)$, with normal vector
\begin{align*}
    \hat{n}(\xb(x,v)) \times (P(s)-\xb(x,v)).
\end{align*}
There exists a point $O(s)\in U(s)$ such that
\begin{align*}
    \frac{O(s)-\xb(x,v)}{|O(s)-\xb(x,v)|} = (0,0,1)=\hat{n}(\xb(x,v)) 
    \text{\quad and \quad} (O(s) - P(s)) \;\bot\; (\xb(x,v) - P(s)).
\end{align*}
Define
\begin{align*}
   C(s):= \frac{1}{2} \left(\xb(x,v)+O(s)\right).
\end{align*}
Then, we observe that $O(s), P(s)$, and $\xb(x,v)$ lie on a common circle with center $C(s)$, and that $|\xb(x,v)-C(s)|=|P(s)-C(s)|$ holds. Among all $C(s)$ for $s \in [0,t]$, we choose a point $\mathbf{C}$ such that
\begin{align*}
    \max_{s \in [0,t]}|\xb(x,v)-C(s)| = |\xb(x,v)-\mathbf{C}|:=L.
\end{align*}
(See Figure \ref{fig_int_s}.) 

We consider two circles: one centered at $C(s)$,
passing through $\xb(x,v)$ and $P(s)$, and the other centered at $\mathbf{C}$, passing through $\xb(x,v)$ with radius $L$. Since the smaller circle is entirely contained within the larger circle, we obtain 
\begin{align*}
    |\xb(x,v)-\mathbf{C}| \geq  |P(s)-\mathbf{C}|.
\end{align*}
From now on, let us assume $\mathbf{C}=(0,0,0)$ and $\hat{n}(\xb(x,v)) = (0,0,1)$ without loss of generality. 
\begin{figure} [t] 
\centering
\includegraphics[width=8cm]{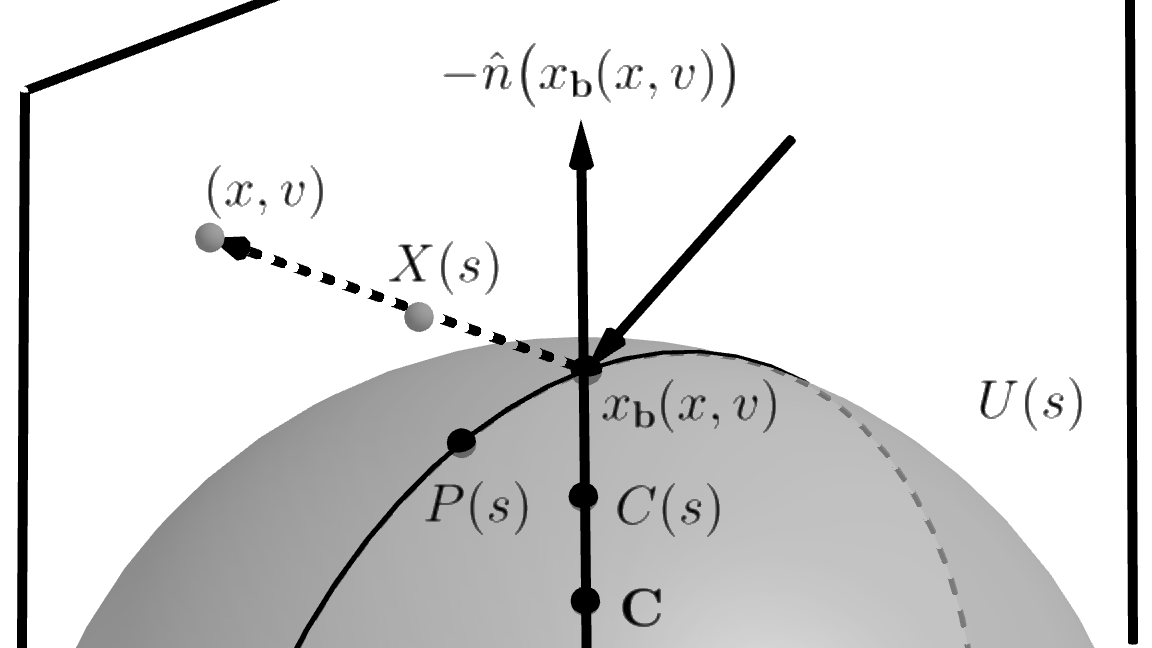}
\caption{Given $x \in \O$ and $v \in\mathbb{R}^3$, we find the point $\mathbf{C}$.} \label{fig_int_s}
\end{figure} 
By the above inequality, we obtain
    \begin{align} \label{x-y>x-xb}
    \min_{y \in \partial \O}|X(s)-y|=|X(s)-P(s)|
    \geq |X(s)|-|P(s)|\geq|X(s)|-|\xb(x,v)|.
    \end{align} 

Because $\hat{n}(\xb(x,v))\cdot v \leq 0$, we obtain 
\begin{align*}
    \xb(x,v) \cdot v = (\xb(x,v)-\mathbf{C})\cdot v
    = - L \hat{n}(\xb(x,v))\cdot v \geq 0
\end{align*}
and
    \begin{align} \label{X2-P2}
       |X(s)|^2-|\xb(x,v)|^2 = |\xb(x,v)+v(s-t_1(t,x,v))|^2-|\xb(x,v)|^2 \geq |v|^2(s-t_1(t,x,v))^2
    \end{align} 
for $s \in [t_1(t,x,v), t]$. Because $\hat{n}(\xb(x,v))\cdot (-R_{\xb(x,v)}v) \leq 0$, we obtain
    \begin{align*} 
     &\xb(x,v) \cdot (-R_{\xb(x,v)}v)
    = L \hat{n}(\xb(x,v))\cdot R_{\xb(x,v)}v \geq 0
    \end{align*} 
and
\begin{align} \label{X2-P2'}
    \begin{split}
       |X(s)|^2-|\xb(x,v)|^2 &= |\xb(x,v)-R_{\xb(x,v)}v(t_1(t,x,v)-s)|^2-|\xb(x,v)|^2 \\
       &\geq |v|^2(t_1(t,x,v)-s)^2
   \end{split}
   \end{align} 
for $s \in [0,t_1(t,x,v)]$. 

When $d(X(s),\partial\O) \geq 1$, we directly obtain
\begin{align*}
\bigintsss_0^t e^{-\varpi \langle v \rangle^{2} (t-s)}\ln \left(  1+ \frac{1}{d(X(s),\partial\O)} \right)\mathbf{1}_{\{d(X(s),\partial\O) \geq 1\}} ds \lesssim \frac{1}{\varpi \langle v \rangle^2}.
\end{align*}
Therefore, we consider $s$ such that $d(X(s),\partial\O) \leq 1$. This implies that $|X(s)|\lesssim 1$. Then, by combining \eqref{x-y>x-xb} with \eqref{X2-P2} and \eqref{X2-P2'}, 
   \begin{align}\label{min x-y}
   \begin{split}
       \min_{y \in \partial \O}|X(s)-y| 
       \geq \frac{|X(s)|^2-|\xb(x,v)|^2}{|X(s)|+|\xb(x,v)|} 
       \gtrsim |v|^2(s-t_1(t,x,v))^2.
   \end{split}
   \end{align}

   Next, we divide the range of $s$ into three cases: (1) $s \in [0, t_1(t,x,v)]$, (2) $s \in [t_1(t,x,v), (t+t_1(t,x,v))/2]$, and (3) $s \in [(t+t_1(t,x,v))/2, t]$.
 
   \textbf{(1)} We consider $s \in [0, t_1(t,x,v)]$. By using \eqref{min x-y} and $t_1(t,x,v)\leq t$, we obtain
   \begin{align}
        &\bigintsss_0^{t_1(t,x,v)} e^{-\varpi \langle v \rangle^{2} (t-s)}\ln \left(  1+ \frac{1}{d(X(s),\partial\O)} \right) \mathbf{1}_{\{d(X(s),\partial\O) \leq 1\}} ds \notag \\
        &\lesssim \bigintsss_0^{t_1(t,x,v)}
        e^{-\varpi \langle v \rangle^{2} (t_1(t,x,v)-s)}\ln \left(  1+ \frac{1}{|v|(t_1(t,x,v)-s)} \right) ds. \label{<1_0t1}
   \end{align} 
   By changing the variable $s \rightarrow u=\varpi \langle v \rangle^2 (t_1(t,x,v)-s)$, we obtain
   \begin{align*}
         \eqref{<1_0t1} &\leq
       \frac{1}{\varpi \langle v \rangle^2}
       \bigintsss_0^{\varpi \langle v \rangle^2 t_1(t,x,v)} e^{-u} \ln
       \left(1 +\frac{\varpi \langle v \rangle^2}{|v|}\frac{1}{u}  \right) du \\
       &\lesssim  \frac{1}{\varpi \langle v \rangle^2} \ln
       \left(1 + \varpi\frac{\langle v \rangle^{2}}{|v|} \right)\int_0^{\infty} e^{-u} du +  \frac{1}{\varpi \langle v \rangle^2} \int_0^{\infty} e^{-u} \ln \left( 1+\frac{1}{u}\right) du \\
      &\lesssim \frac{1}{\varpi \langle v \rangle^2}\left[\ln\left(1+\frac{\varpi\langle v \rangle^2}{|v|}\right)+1 \right].
   \end{align*} 
   
   \textbf{(2)} We consider $t_1(t,x,v) \leq s \leq (t+t_1(t,x,v))/2$. By \eqref{min x-y} and $0 \leq s-t_1(t,x,v) \leq t-s$, we obtain
   \begin{align*}
        &\bigintsss_{t_1(t,x,v)}^{(t+t_1(t,x,v))/2} e^{-\varpi \langle v \rangle^{2} (t-s)}\ln \left(  1+ \frac{1}{d(X(s),\partial\O)} \right) \mathbf{1}_{\{d(X(s),\partial\O) \leq 1\}} ds \notag \\
        &\lesssim \bigintsss_{t_1(t,x,v)}^{(t+t_1(t,x,v))/2}
        e^{-\varpi \langle v \rangle^{2} (s-t_1(t,x,v))}\ln \left(  1+ \frac{1}{|v|(s-t_1(t,x,v))} \right) ds. 
   \end{align*} 
   
   \textbf{(3)} We consider $(t+t_1(t,x,v))/2 \leq s \leq t$. By \eqref{min x-y} and $0 \leq t-s \leq s-t_1(t,x,v)$, we obtain
   \begin{align*}
       &\bigintsss_{(t+t_1(t,x,v))/2}^{t} e^{-\varpi \langle v \rangle^{2} (t-s)}\ln \left(  1+ \frac{1}{d(X(s),\partial\O)} \right) \mathbf{1}_{\{d(X(s),\partial\O) \leq 1\}} ds \notag \\
        &\lesssim \bigintsss_{(t+t_1(t,x,v))/2}^{t}
        e^{-\varpi \langle v \rangle^{2} (t-s)}\ln \left(  1+ \frac{1}{|v|(t-s)} \right) ds.
   \end{align*}
   The last terms of the inequalities in (2) and (3) are similar to \eqref{<1_0t1}, and we can use the same arguments as in (1). In particular, when $t_1(t,x,v) \leq 0$, we have $\min_{y \in \partial \O}|X(s)-y| 
       \gtrsim |v|^2 s^2$ by \eqref{X2-P2}, and we can use this inequality in place of \eqref{min x-y}.
   
   \vspace{3mm}
   Next, we assume $\tb(x,v) = \infty$. There exists $\alpha $ such that
   \begin{align*}
       \min_{s \in [0.t]} d(X(s),\partial\O) = d(X(\alpha) , \partial\O).
   \end{align*}
   Then, $P(\alpha) \in \partial \O$ satisfies
   \begin{align*}
       d(X(\alpha), \partial\O) = |X(\alpha)-P(\alpha)|
       \text{\quad and \quad} \hat{n}(P(\alpha)) =\widehat{ P(\alpha)-X(\alpha) } \,\bot\, v. 
   \end{align*}
Now, we follow the arguments in the first and second paragraphs, replacing $\xb(x,v)$ with $P(\alpha)$. Then, we can find $\mathbf{C}$ such that
  \begin{align*}
      |P(\alpha)-\mathbf{C}| \geq |P(s)-\mathbf{C}|
       \text{\quad and \quad} 
       \hat{n}(P(\alpha)) =\widehat{\mathbf{C}- P(\alpha)}
  \end{align*}
for $0\leq s \leq t$. Moreover, we get $|X(\alpha)- \mathbf{C}| \geq |P(\alpha)-\mathbf{C}|$. Set $\mathbf{C}=(0,0,0)$ and $\hat{n}(P(\alpha))=(0,0,1)$. Because $X(\alpha) \cdot v =0$, we obtain
\begin{align*}
    |X(s)|^2 - |X(\alpha)|^2
    =|X(\alpha)+v(s-\alpha)|^2- |X(\alpha)|^2 =|v|^2(s-\alpha)^2.
\end{align*}
  Then
\begin{align*}
     \min_{y \in \partial \O}|X(s)-y| 
       \geq |X(s)|-|P(\alpha)| 
       \geq |X(s)|-|X(\alpha)|
       \gtrsim |X(s)|^2 -|X(\alpha)|^2
       \geq |v|^2(s-\alpha)^2.
\end{align*}
Finally, when $\alpha \in [0,t]$, we use the above inequality in place of \eqref{min x-y}, and replace $t_1(t,x,v)$ with $\alpha$. We then apply the same arguments as those in (1), (2), and (3) above. In particular, when $\alpha \leq 0$, we have $\min_{y \in \partial \O}|X(s)-y| 
       \gtrsim |v|^2 s^2$, and
when $\alpha \geq t$, we have $\min_{y \in \partial \O}|X(s)-y| 
       \gtrsim |v|^2 (t-s)^2$ for $0 \leq s \leq t$.
\end{proof} 

When \( x \) is away from the boundary, the integral considered in the previous lemma remains bounded as \( v \rightarrow 0 \).

\begin{lemma}\label{lem:ds_2}
Let $0<t \leq 1, \; x \in \O,\;v \in \mathbb{R}^3$ and $\varpi>1$ be given. Assume $d(x,\partial\O) \geq \epsilon$ for $0<\epsilon<1$. Then, we obtain
    \begin{align*} 
      \bigintsss_0^t e^{-\varpi \langle v \rangle^{2} (t-s)}\ln \left(  1+ \frac{1}{d(X(s;t,x,v),\partial\O)} \right) ds \lesssim \frac{1}{\sqrt{\varpi}\langle v \rangle }
      \ln\left(1+\frac{1}{\epsilon}\right).
    \end{align*} 
\end{lemma}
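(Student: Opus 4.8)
The plan is to reduce Lemma \ref{lem:ds_2} to the already-established Lemma \ref{lem:ds} by exploiting the key extra information that $x$ is uniformly bounded away from the boundary. The starting point is the geometric lower bound on $d(X(s;t,x,v),\partial\O)$ in terms of the elapsed time, which was derived inside the proof of Lemma \ref{lem:ds}: writing $X(s) := X(s;t,x,v)$ and letting $\tau_*$ denote $t_1(t,x,v)$ (or the argmin $\alpha$ in the non-intersecting case, or the clipped values $0$ or $t$), one has, whenever $d(X(s),\partial\O)\le 1$,
\begin{align*}
    d(X(s),\partial\O) \gtrsim |v|^2 (s-\tau_*)^2,
\end{align*}
while trivially $d(X(s),\partial\O)\ge \min\{1, d(X(s),\partial\O)\}$ always. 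The new observation to add is that, since $X(\cdot)$ is a unit-speed-scaled straight line (or broken line) emanating from $x$ and $|x-\xb(x,v)|$ or $|x-X(\tau_*)|$ is itself comparable to the part of the trajectory traversed, the distance $d(X(s),\partial\O)$ is \emph{also} bounded below by something proportional to $d(x,\partial\O)$ minus the distance travelled, i.e. $d(X(s),\partial\O) \gtrsim \epsilon - |v||s-\tau_*| \cdot(\text{const})$; more usefully, for the purpose at hand, one can simply note that on the portion of the trajectory close to the boundary we automatically have $|v||s-\tau_*| \gtrsim \epsilon$, because to bring a point at distance $\ge \epsilon$ from $\partial\O$ into the near-boundary region requires travelling at least a distance comparable to $\epsilon$. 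Hence on the set where $d(X(s),\partial\O)\le 1$ we have both $d(X(s),\partial\O)\gtrsim |v|^2(s-\tau_*)^2$ \emph{and} $|v||s-\tau_*|\gtrsim \epsilon$.

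With these two facts in hand, the second step is a direct estimate of the integral. Split $[0,t]$ into the three sub-intervals used in Lemma \ref{lem:ds} ($[0,\tau_*]$, $[\tau_*,(t+\tau_*)/2]$, $[(t+\tau_*)/2,t]$, with the obvious modifications when $\tau_*\notin[0,t]$), so that on each piece $e^{-\varpi\langle v\rangle^2(t-s)}$ dominates $e^{-\varpi\langle v\rangle^2|s-\tau_*|}$ up to a constant. On each piece, change variables $u = \varpi\langle v\rangle^2|s-\tau_*|$, which ranges over a subset of $[\varpi\langle v\rangle^2\cdot c\epsilon/|v|, \infty)$ by the lower bound $|v||s-\tau_*|\gtrsim\epsilon$. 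Using $d(X(s),\partial\O)\gtrsim |v|^2(s-\tau_*)^2 = u^2/(\varpi\langle v\rangle^2)^2$ we bound the integrand by
\begin{align*}
    \frac{1}{\varpi\langle v\rangle^2}\,e^{-u}\,\ln\!\left(1+\frac{(\varpi\langle v\rangle^2)^2}{u^2}\right)
    \lesssim \frac{1}{\varpi\langle v\rangle^2}\,e^{-u}\left[\ln\!\left(1+\frac{\varpi\langle v\rangle^2}{u}\right)+1\right],
\end{align*}
and integrate over $u\in[c\,\varpi\langle v\rangle^2\epsilon/|v|,\infty)$. Because the lower endpoint of the $u$-integral is $\gtrsim \varpi\langle v\rangle^2\epsilon/|v| \gtrsim \varpi\langle v\rangle\epsilon$ (using $\langle v\rangle/|v|\ge 1$), the exponential factor $e^{-u}$ is already tiny there when $\varpi$ is large; more precisely one gets a factor that, after pulling the logarithm's argument down using $u\gtrsim\varpi\langle v\rangle\epsilon$, is bounded by $\frac{1}{\varpi\langle v\rangle^2}\ln(1+1/\epsilon)\cdot e^{-c\varpi\langle v\rangle\epsilon}$ plus a tail $\frac{1}{\varpi\langle v\rangle^2}\int_{c\varpi\langle v\rangle\epsilon}^\infty e^{-u}(\ln(1+1/u)+\ldots)\,du$. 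Both are $\lesssim \frac{1}{\sqrt{\varpi}\langle v\rangle}\ln(1+1/\epsilon)$ once one uses the crude bounds $\frac{1}{\varpi\langle v\rangle^2}\lesssim \frac{1}{\sqrt\varpi\langle v\rangle}$ for $\varpi,\langle v\rangle\ge 1$ and absorbs the exponentially small factor. The contribution of the set $\{d(X(s),\partial\O)\ge 1\}$ is handled directly as in Lemma \ref{lem:ds}: it is $\lesssim \int_0^t e^{-\varpi\langle v\rangle^2(t-s)}\ln 2\,ds \lesssim \frac{1}{\varpi\langle v\rangle^2}\lesssim \frac{1}{\sqrt\varpi\langle v\rangle}\ln(1+1/\epsilon)$.

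The step I expect to be the main obstacle is making the geometric lower bound $|v||s-\tau_*|\gtrsim\epsilon$ rigorous in all configurations of the trajectory, and in particular in the case $\tb(x,v)=\infty$ (no boundary intersection) and when the trajectory bounces once. When $x$ is at distance $\ge\epsilon$ from $\partial\O$, to have $X(s)$ within distance $1$ of $\partial\O$ — and in fact within any distance $<\epsilon$ — the straight segment from $x$ must have covered a length at least $\epsilon - d(X(s),\partial\O)$, and hence $|v||s-\tau|\ge \epsilon/2$ say, once $d(X(s),\partial\O)\le\epsilon/2$; for $d(X(s),\partial\O)\in[\epsilon/2,1]$ one simply bounds $\ln(1+1/d)\le\ln(1+2/\epsilon)$ and integrates the exponential, contributing $\lesssim\frac{1}{\varpi\langle v\rangle^2}\ln(1+1/\epsilon)$. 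The one subtlety is that after a specular bounce at $\xb(x,v)$ the reference time should be taken as $\tau_* = t_1(t,x,v)$ and the relevant "distance travelled since being at distance $\ge\epsilon$" must be measured along the broken trajectory; but since each leg is a straight segment and $|X(\tau_*)|=|\xb(x,v)|$ sits on the boundary, the argument of Lemma \ref{lem:ds} already controls $d(X(s),\partial\O)$ from below by $|v|^2(s-\tau_*)^2$ on both legs, and $x$ being far from $\partial\O$ forces $|v||\tau_*-t|\gtrsim\epsilon$ on the incoming leg and hence the whole analysis localizes to $|s-\tau_*|\gtrsim\epsilon/|v|$ exactly as above. Assembling the pieces gives the claimed bound $\lesssim \frac{1}{\sqrt\varpi\langle v\rangle}\ln(1+1/\epsilon)$.
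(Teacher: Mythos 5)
Your proposal hinges on the localization claim that, on the near\-/boundary portion of the trajectory, $|v|\,|s-\tau_*|\gtrsim\epsilon$, which you then use to restrict the $u$-integration to $u\gtrsim\varpi\langle v\rangle^2\epsilon/|v|$ and gain the exponentially small factor $e^{-c\varpi\langle v\rangle\epsilon}$. This claim is false, and it fails exactly where the logarithmic singularity lives. Take any admissible configuration with $d(x,\partial\O)\geq\epsilon$ in which the backward trajectory actually reaches (or nearly reaches) $\partial\O$ within $[0,t]$, i.e.\ $t_1(t,x,v)\in[0,t]$ (this only requires $|v|(t-t_1)\geq\epsilon$, so it is perfectly compatible with the hypothesis): as $s\to\tau_*=t_1$ one has $d(X(s),\partial\O)\to 0$ while $|s-\tau_*|\to 0$, so $|v|\,|s-\tau_*|$ is arbitrarily small on the set $\{d(X(s),\partial\O)\leq\epsilon/2\}$. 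The geometric fact "to get near the boundary you must travel a distance $\gtrsim\epsilon$" controls the elapsed time from $t$, namely $|v|(t-s)\geq d(x,\partial\O)-d(X(s),\partial\O)\geq\epsilon/2$, not the time $|s-\tau_*|$ measured from the bounce/closest-approach instant; your own closing paragraph derives the correct inequality and then silently converts it into the incorrect one ("the whole analysis localizes to $|s-\tau_*|\gtrsim\epsilon/|v|$"), which is a non sequitur. Consequently the $u$-integral cannot be started at a positive threshold, no exponential smallness is available near $s=\tau_*$, and as written your main estimate does not go through.

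The fix is short and is precisely what the paper does: from $|v|(t-s)\gtrsim\epsilon$ on $\{d(X(s),\partial\O)\leq\epsilon/2\}$ (or, when $t_1\in[0,t]$, directly from $|v|\geq d(x,\partial\O)/(t-t_1)\geq\epsilon$) and $t\leq 1$ one concludes $|v|\gtrsim\epsilon$; then one keeps the full bound of Lemma \ref{lem:ds}, i.e.\ $\frac{1}{\varpi\langle v\rangle^2}\bigl[\ln\bigl(1+\varpi\langle v\rangle^2/|v|\bigr)+1\bigr]$, and absorbs the singular factor via $\ln\bigl(1+\varpi\langle v\rangle^2/|v|\bigr)\leq\ln\bigl(1+\varpi\langle v\rangle^2\bigr)+\ln\bigl(1+1/|v|\bigr)\lesssim\sqrt{\varpi}\,\langle v\rangle+\ln\bigl(1+1/\epsilon\bigr)$, while the region $\{d(X(s),\partial\O)\geq\epsilon/2\}$ is handled trivially by $\ln\bigl(1+2/\epsilon\bigr)\int_0^t e^{-\varpi\langle v\rangle^2(t-s)}ds$, as you also note. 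With that replacement your argument reduces to the paper's proof; without it, the central step is unjustified.
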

\begin{proof}
 Let us denote $X(s):=X(s;t;x,v)$. We divide the cases into $ t_1(t,x,v) \geq 0$ and $t_1(t,x,v) \leq 0$.

\textbf{(1)} For $0 \leq t_1(t,x,v) \leq t$, we have
    \begin{align*} 
        |v| \geq \frac{d(x,\partial\O)}{t-t_1(t,x,v)} \geq \frac{\epsilon}{t} \geq \epsilon.
    \end{align*}
    By Lemma \ref{lem:ds} and the above inequality,
    \begin{align} \label{ln<1/v}
    \begin{split}
        &\bigintsss_0^{t} e^{-\varpi \langle v \rangle^2 (t-s)}\ln \left(  1+ \frac{1}{d(X(s),\partial\O)}  \right) ds 
        \lesssim \frac{1}{\varpi \langle v \rangle^2}\left[\ln\left(1+\frac{\varpi\langle v \rangle^2}{|v|}\right)+1 \right] \\
        &\lesssim 
        \frac{1}{\varpi \langle v \rangle^2}      \left[\ln\left(1+\varpi\langle v \rangle^2\right)+\ln\left(1+\frac{1}{|v|}\right)+1 \right] \\
        &\lesssim
        \frac{1}{\varpi \langle v \rangle^2}      \left[\sqrt{\varpi}\langle v \rangle +\ln\left(1+\frac{1}{\epsilon}\right)+1 \right] 
        \lesssim \frac{1}{\sqrt{\varpi}\langle v \rangle }\ln\left(1+\frac{1}{\epsilon}\right).
   \end{split}
   \end{align}     

   \textbf{(2)} We assume $t_1(t,x,v) \leq 0$. For each $s$, there is $P(s) \in \partial \O$ such that $d(X(s),\partial\O)=|X(s)-P(s)|$. When $d(X(s),\partial\O) \leq \epsilon/2$, we have
   \begin{align*}
       |v|(t-s)=|x-X(s)| \geq |x-P(s)|-|X(s)-P(s)| \geq d(x,\partial\O) -d(X(s),\partial\O) \geq \frac{\epsilon}{2}
   \end{align*}
   for $0 \leq s \leq t$. This implies that
   \begin{align*}
       |v| \geq \frac{\epsilon}{2(t-s)} \geq \frac{\epsilon}{2t} \geq \frac{\epsilon}{2}.
   \end{align*}
  Therefore, we obtain \eqref{ln<1/v}, where the range of $s$ is restricted to ${d(X(s), \partial\O) \leq \epsilon/2}$. On the other hand, we have
  \begin{align*} 
        \bigintsss_0^{t} e^{-\varpi \langle v \rangle^2 (t-s)}\ln \left(  1+ \frac{1}{d(X(s),\partial\O)}  \right) \mathbf{1}_{\{d(X(s),\partial\O) \geq \epsilon/2\}} ds \leq \ln \left( 1 + \frac{2}{\epsilon} \right) \frac{1}{\varpi \langle v \rangle^2}.
   \end{align*}    
  By combining the results of (1) and (2), we obtain the lemma. \\
\end{proof}

\section{Estimates of \texorpdfstring{$\mathfrak{X}$}{} and \texorpdfstring{$\mathfrak{V}$}{}}

In Section 6, we estimate $\mathfrak{X}$ and $\mathfrak{V}$, as defined in \eqref{def:iter}. These estimates play a key role in the proof of Theorem \ref{thm:Holder0.5}. Before proceeding, we decompose $f(t,x,v)$ at the points $(x,v)$ and $(\bx, \bv)$. For $x, \bx \in \O$, $v, \bv \in \mathbb{R}^3$, and $t > 0$, we denote
   \begin{align} \label{nota:bX,bV}
   \begin{split}
       &(X(s),V(s)):=(X(s;t,x,v),V(s;t,x,v)),\\
       &(\overline{X}(s),\overline{V}(s)):=(X(s;t,\bx,\bv),V(s;t,\bx,\bv))\text{\quad for \quad} 0 \leq s \leq t.
   \end{split}
   \end{align} 
Applying the triangle inequality to \eqref{f_expan}, we obtain
\begin{equation}  \notag
\begin{split}
& |f(t,x,v)-f(t,\bar{x}, \bar{v})|  \\
&\leq   
e^{- \int^t_ 0 \nu(f) (\tau, X(\tau ), V(\tau )) d \tau}
\left|f(0,X(0 ), V(0 ))-f(0,\overline{X}(0 ), \overline{V}(0 ))\right| 
\\
&\quad + \int^t_0 
e^{- \int^t_ s \nu(f) (\tau, X(\tau ), V(\tau )) d \tau} 	
\left|\Gamma_{\text{gain}}(f,f)(s,X(s ), V(s ))
-\Gamma_{\text{gain}}(f,f)(s,\overline{X}(s ), \overline{V}(s ))\right|  ds 
\\
&\quad +  
\left| e^{- \int^t_ 0 \nu(f) (\tau, X(\tau ), V(\tau )) d \tau}  -  
e^{- \int^t_0 \nu(f) (\tau, \overline{X}(\tau ), \overline{V}(\tau )) d \tau} \right|	
f(0, \overline{X}(0), \overline{V}(0))
\\
&\quad +\int^t_0
\left| e^{- \int^t_ s \nu(f) (\tau, X(\tau ), V(\tau )) d \tau}  -  
e^{- \int^t_s \nu(f) (\tau, \overline{X}(\tau ), \overline{V}(\tau )) d \tau} \right|
\left|	\Gamma_{\text{gain}}(f,f)(s,\overline{X}(s ), \overline{V}(s ))\right| ds.
\end{split}
\end{equation}
Since $|e^{-a}-e^{-b}| \leq |a-b|$ for $a\geq b \geq 0$ and \eqref{gamma_upper}, we obtain  {\eqref{basic f-f}--\eqref{basic f-f4}}.

Next, we establish a very simple lemma, which will be used in Lemmas \ref{pro:X<1} and \ref{pro:V<1} to control the term $e^{\varpi s \langle u \rangle^2}$; see \eqref{exp:v} and \eqref{exp_est_V}.

\begin{lemma} \label{lem:ws}
Let $u,v \in \mathbb{R}^3$ be given. For $0 \leq \varpi s \leq \min\{c/4, 1/8\}$, we obtain
\begin{align*}
     e^{-\varpi s \left( \langle v \rangle^2-\langle u \rangle^2\right)} \leq e^{
     \frac{c}{2}|u-v|^2} 
     \text{\quad and \quad} 
     e^{-\varpi s \left( \langle v \rangle^2-\langle u \rangle^2\right)} \leq  e^{\frac{1}{8}|u|^2}. 
\end{align*}
\end{lemma}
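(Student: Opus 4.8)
The statement is purely elementary, so the plan is a short case analysis built on the identity $\langle v\rangle^{2}-\langle u\rangle^{2}=(1+|v|^{2})-(1+|u|^{2})=|v|^{2}-|u|^{2}$. This rewrites the left-hand side of both displayed inequalities as $\exp\!\big(\varpi s(|u|^{2}-|v|^{2})\big)$, and the whole proof reduces to estimating the scalar $\varpi s(|u|^{2}-|v|^{2})$ from above by $\tfrac{c}{2}|u-v|^{2}$ and by $\tfrac18|u|^{2}$ respectively, where $c>0$ is the constant from Lemma~\ref{lem:est_Gam}.

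First I would dispose of the case $|u|\le |v|$: then $|u|^{2}-|v|^{2}\le 0$ and, since $\varpi s\ge 0$, the exponent is nonpositive, so the left-hand side is $\le 1$ while both right-hand sides are $\ge 1$, and both inequalities hold trivially. It remains to treat $|u|>|v|$, where $0<|u|^{2}-|v|^{2}\le |u|^{2}$. For the second inequality this already suffices: the hypothesis $\varpi s\le \tfrac18$ gives $\varpi s(|u|^{2}-|v|^{2})\le \tfrac18|u|^{2}$, and exponentiating yields $e^{-\varpi s(\langle v\rangle^{2}-\langle u\rangle^{2})}\le e^{\frac18|u|^{2}}$. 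For the first inequality I would combine the threshold $\varpi s\le \tfrac{c}{4}$ with an elementary comparison of $|u|^{2}-|v|^{2}$ and $|u-v|^{2}$, so that $\varpi s(|u|^{2}-|v|^{2})\le \tfrac{c}{2}|u-v|^{2}$; exponentiating gives the remaining bound.

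There is no genuine obstacle: the lemma is deliberately simple. The only points requiring care are (i) bookkeeping of the two numerical thresholds $c/4$ and $1/8$, which are exactly what is needed to produce the constants $\tfrac c2$ and $\tfrac18$ appearing on the right, and (ii) ensuring that the resulting factors $e^{\frac c2|u-v|^{2}}$ and $e^{\frac18|u|^{2}}$ are of the precise form that can later be absorbed—respectively into the Gaussian $e^{-c|u-v|^{2}}$ created by the change of variables $u\mapsto V(s;t,x,v)+u$ and into the factor $e^{-\frac14|u|^{2}}$ present in the kernels of $\mathfrak{X}$ and $\mathfrak{V}$—which is how Lemmas~\ref{pro:X<1} and \ref{pro:V<1} will invoke it to tame the weight $e^{\varpi s\langle u\rangle^{2}}$.
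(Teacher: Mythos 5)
Your treatment of the second inequality is correct and complete (the case $|u|\le|v|$ is trivial, and for $|u|>|v|$ the bound $|u|^2-|v|^2\le|u|^2$ with $\varpi s\le\frac18$ suffices). The problem is the first inequality: the ``elementary comparison of $|u|^2-|v|^2$ and $|u-v|^2$'' on which your plan hinges does not exist. Writing $|u|^2-|v|^2=(u-v)\cdot(u+v)=|u-v|^2+2v\cdot(u-v)$, the cross term is linear in $|v|$ and is not dominated by any constant multiple of $|u-v|^2$: take $v=(N,0,0)$, $u=(N+1,0,0)$ and $\varpi s=\min\{c/4,1/8\}>0$; then $e^{-\varpi s(\langle v\rangle^2-\langle u\rangle^2)}=e^{\varpi s(2N+1)}\to\infty$ as $N\to\infty$, while $e^{\frac c2|u-v|^2}=e^{c/2}$ stays bounded. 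So no pointwise bound of the form $\varpi s(|u|^2-|v|^2)\le\frac c2|u-v|^2$ can hold for arbitrary $u,v$, and shrinking the threshold on $\varpi s$ does not help.

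You should also know that the paper's own one-line proof asserts exactly the step you are gesturing at (``by the triangle inequality, $\varpi s(|u|^2-|v|^2)\le 2\varpi s|u-v|^2$''), so your proposal reproduces the paper's route --- but that inequality is false, and consequently the first claim of Lemma \ref{lem:ws} is false as stated; it holds only in restricted regimes such as $|u|\le|v|$. The standard way to tame the factor $e^{\varpi s(|u|^2-|v|^2)}$ is to retain the extra Gaussian $e^{-c(|u|^2-|v|^2)^2/|u-v|^2}$ of the kernel $k_c$ from \cite{CD2023} (which this paper explicitly drops after Definition \ref{def_H}) and absorb the cross term via Young's inequality, $\varpi s\,\big||u|^2-|v|^2\big|\le \frac c2\frac{(|u|^2-|v|^2)^2}{|u-v|^2}+\frac{(\varpi s)^2}{2c}|u-v|^2$, at the price of halving the Gaussian exponents; without that kernel factor, or some a priori relation between $|u|$ and $|v|$, the first inequality cannot be proved, by you or by the paper. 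So the missing ``comparison'' is not an omitted detail but the crux, and as formulated it fails; any correct argument must either modify the statement (reinstating the $k_c$-type factor on the right) or restrict the range of $(u,v)$ where it is applied in \eqref{exp:v} and \eqref{exp_est_V}.
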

\begin{proof} 
Using the triangle inequality, we obtain
    \begin{equation*}
        \varpi s \left(|u|^2-|v|^2 \right) \leq 2\varpi s|u-v|^2 \leq \frac{c}{2}|u-v|^2,
    \end{equation*} and thus, we get the first inequality. 
\end{proof}

First, we estimate $\mathfrak{X}$. In its definition, we insert the weight
\[
\max\left\{\frac{1}{\langle v\rangle}, (t-s)^{\frac{1}{2}}\right\}
\]
into the integral. This produces the factor $\langle v \rangle^{-1}$ in \eqref{exp:v}, which is the reason we include this weight in the definition of $\mathfrak{X}$.

\begin{lemma}[Estimate for $ \mathfrak{X}$] \label{pro:X<1}
   Recall $T^*>0$ from Lemma \ref{lem:loc}.  There exist constants $0<T<\min\{T^*,1\}$ and $\varpi > 1$ such that
\begin{align*}
    \sup_{0 \leq t \leq T}\mathfrak{X}(t,\varpi;\epsilon)\lesssim_{\epsilon, \vartheta_0} \frac{1}{\sqrt{\varpi} }\mathcal{P}_3(\|w_0f_0\|_{\infty} ) \left(\sup_{0 \leq t \leq T} \mathfrak{X}(t,\varpi;\epsilon)+\sup_{0 \leq t \leq T}\mathfrak{V}(t,\varpi;\epsilon)+ \mathbf{A}_{\frac{1}{2}}(f_0)+1 \right).
\end{align*} 
and $\varpi T \leq \min\{c/4, 1/8\} $ for any $0<\epsilon<1$.
\end{lemma}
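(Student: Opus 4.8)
The strategy is to take the difference decomposition \eqref{basic f-f}--\eqref{basic f-f4}, substitute $(t,x,v) \mapsto (s, X(s;t,x,v), V(s;t,x,v)+u)$ and $(\bx,\bv) \mapsto (s, \bX(s;t,x,v)\text{ or }X(s;t,\bx,\bv), V(s;t,x,v)+u)$, divide by $|X(s)-\bX(s)|$ and integrate the singular kernel over $u$ and then over $s$ along the trajectory. First I would apply Lemma \ref{lem:ga_X} (spatial variation of $\Gamma_{gain}$ and $\nu$) and Lemma \ref{lem:s=0_spec_x} (for the initial term \eqref{basic f-f1}) to the difference $|f(s, X(s), V(s)+u) - f(s, \bX(s), V(s)+u)|$. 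This bounds it — after dividing by $|X(s) - \bX(s)|$ and using $|X(s)-\bX(s)| \leq |x - \bx|(1 + \langle v\rangle(t-s))$ from Lemma \ref{lem:tra_sin_x} together with the elementary inequality $1 + \langle u \rangle(t-s) \lesssim \langle u\rangle \max\{(t-s)^{1/2}, \langle u\rangle^{-1}\}$ when $t<1$ — by a constant multiple of
\[
\mathcal{P}_3(\|w_0 f_0\|_\infty)\, e^{\varpi\langle u\rangle^2 s}\,\big(\mathfrak{X}(s,\varpi;\epsilon) + \mathfrak{V}(s,\varpi;\epsilon) + \mathbf{A}_{1/2}(f_0) + 1\big)\Big(\tfrac{1}{|u|} + |u| + (1+|u|^2)\mathcal{T}_{sp}(X(s), \tx, u; s)\Big),
\]
where $u$ here plays the role of the velocity variable (i.e. $V(s;t,x,v)+u$ in the original notation, relabelled); the crucial point is that the supremum structure of $\mathfrak{X}$ and $\mathfrak{V}$ lets us pull out $\sup_{0\le t\le T}(\cdots)$.

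Second, I would handle the velocity integration. Using Corollary \ref{cor:ave}-(1) to control $\mathcal{T}_{sp}(X(s), \tx, u; s)$ by $\frac{1}{|u|}\frac{1}{|\widehat u \cdot \nabla\xi(\xb(X(s), u))|}\mathbf{1}_{\{\tb < \infty, d(X(s),\partial\O)\lesssim\langle u\rangle\}}$ plus a symmetric term, the $u$-integral of the product of the simplified kernel $(*) \sim \frac{e^{-c|u|^2}}{|u|} + |u| e^{-\frac14|V(s)+u|^2}$ against $\big(\frac{1}{|u|} + |u| + (1+|u|^2)\mathcal{T}_{sp}\big)$ reduces, after the change of variable $u \mapsto w = V(s) + u$ so that the grazing geometry is measured at $X(s)$, to an integral of the form treated in Lemma \ref{lem:int_sing} (with $k = 1$ for the $\mathcal{T}_{sp}$ term and $k \le 0$ for the remaining terms, and $v$ there replaced by $V(s)$). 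This yields a bound $\lesssim_{\langle V(s)\rangle}\big[\ln(1 + \frac{1}{d(X(s),\partial\O)}) + 1\big]$. Since $|V(s)| \le |v|$ along specular trajectories (speed is conserved), the $\langle V(s)\rangle$ dependence is harmless for the weight $e^{-\varpi\langle v\rangle^2 t}$ — and here Lemma \ref{lem:ws} is used to absorb the residual $e^{\varpi s\langle u\rangle^2}$ factor against the Gaussian kernel, provided $\varpi T \le \min\{c/4, 1/8\}$, which is exactly the constraint asserted in the statement.

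Third, the time integration: I would integrate $\int_0^t e^{-\varpi\langle v\rangle^2(t-s)} \ln(1 + \frac{1}{d(X(s;t,x,v),\partial\O)})\, ds$ along the trajectory and invoke Lemma \ref{lem:ds}, which gives $\lesssim \frac{1}{\varpi\langle v\rangle^2}[\ln(1 + \frac{\varpi\langle v\rangle^2}{|v|}) + 1]$. Splitting $\ln(1+\frac{\varpi\langle v\rangle^2}{|v|}) \le \ln(1+\varpi\langle v\rangle^2) + \ln(1+\frac1{|v|})$ and using $\ln(1+\varpi\langle v\rangle^2) \lesssim \sqrt\varpi\langle v\rangle$, this is $\lesssim \frac{1}{\sqrt\varpi\langle v\rangle}[\ln(1+\frac1{|v|}) + 1] \lesssim \frac{1}{\sqrt\varpi}\langle v\rangle^{-1} G(x,v;\epsilon)\cdot(\text{const})$ — except that the $\mathfrak X$ seminorm carries an extra power of $\langle v\rangle$ relative to $\mathfrak V$ (it has $\langle v\rangle^{-1}$ times the weight $\max\{(t-s)^{1/2}, \langle v\rangle^{-1}\}$), so the bookkeeping of $\langle v\rangle$-powers must be done carefully to land on $\big(G(x,v;\epsilon) + G(\bx,\bv;\epsilon)\big)$ with a clean $\frac{1}{\sqrt\varpi}$ prefactor. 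Dividing by $\big(G(x,v;\epsilon)+G(\bx,\bv;\epsilon)\big)e^{-\varpi\langle v\rangle^2 t}$ and taking the supremum over $(x,\bx,v,\bv)$ and over $0 \le t \le T$ then produces the asserted inequality, and one fixes $T$ small enough that $\varpi T \le \min\{c/4,1/8\}$, $T < \min\{T^*,1\}$, and that the local existence and Proposition \ref{pro:H} thresholds ($T_1$, $\varpi_1$) are respected.

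\textbf{Main obstacle.} The hard part will be the second step: carefully organizing the change of variables $u \mapsto V(s) + u$ and the case analysis ($s \in [t_1(t,x,v), t_1(t,\bx,\bv)]$ versus outside) so that the grazing singularity $\mathcal{T}_{sp}$ is always evaluated with its base point at the \emph{trajectory position} $X(s)$ rather than the fixed point $x$ — this is precisely the ``dynamical'' aspect — and then verifying that Lemma \ref{lem:int_sing} applies with the correct exponent $k$ and the indicator $\mathbf{1}_{\{d(X(s),\partial\O)\lesssim\langle u\rangle\}}$ in place. A secondary subtlety is ensuring that $G(x,v;\epsilon) \lesssim |v|^{-\delta}\mathbf{1}_{\{d(x,\partial\O)\le\epsilon\}} + 1$ and the logarithmic gain from Lemma \ref{lem:ds} combine to reproduce exactly $G$ (not something larger), so that the iteration closes with the factor $\frac{1}{\sqrt\varpi}$ that can be made $< 1$.
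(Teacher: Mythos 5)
Your plan follows the paper's own proof of Lemma \ref{pro:X<1} essentially step for step: iterate the Duhamel decomposition \eqref{basic f-f}--\eqref{basic f-f4} through Lemma \ref{lem:s=0_spec_x} and Lemma \ref{lem:ga_X}, substitute $(s,X(s),\bX(s),u)$ for $(t_1,x_1,\bx_1,v_1)$, average the grazing singularity with Corollary \ref{cor:ave}-(1) so that it is anchored at the trajectory point $X(s)$ (and at $\bX(s)$), integrate in velocity with Lemma \ref{lem:int_sing} (the paper takes $k=-1,1$, matching your $k=1$ and $k\le 0$), absorb $e^{\varpi\langle u\rangle^2 s}$ with Lemma \ref{lem:ws} under $\varpi T\le\min\{c/4,1/8\}$, extract the $\langle v\rangle^{-1}$ from the $\max\{(t-s)^{1/2},\langle v\rangle^{-1}\}$ weight as in \eqref{exp:v}, and finally integrate in time.

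The one step that would fail as written is the last inequality of your third step: $\frac{1}{\sqrt{\varpi}\langle v\rangle}\bigl[\ln\bigl(1+\tfrac{1}{|v|}\bigr)+1\bigr]\lesssim\frac{1}{\sqrt{\varpi}}\langle v\rangle^{-1}G(x,v;\epsilon)$ is false when $d(x,\p\O)>\epsilon$, because there $G(x,v;\epsilon)=1$ while $\ln(1+1/|v|)\to\infty$ as $|v|\to 0$. Since $\mathfrak{X}$ is normalized by $\bigl(G(x,v;\epsilon)+G(\bx,\bv;\epsilon)\bigr)^{-1}$, which equals $\tfrac12$ when both base points are at distance $\ge\epsilon$ from $\p\O$, your bound would leave an uncontrolled factor $\ln(1+1/|v|)$ on the right-hand side and the estimate would not close with a constant of size $1/\sqrt{\varpi}$. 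The missing ingredient is Lemma \ref{lem:ds_2}: when $d(x,\p\O)\ge\epsilon$ and $t\le 1$, the trajectory $X(s;t,x,v)$ can only come close to $\p\O$ if $|v|\gtrsim\epsilon$, so in that regime the time integral of $e^{-\varpi\langle v\rangle^2(t-s)}\ln\bigl(1+1/d(X(s),\p\O)\bigr)$ is $\lesssim_\epsilon 1/\sqrt{\varpi}$ with no singularity in $|v|$. The paper therefore splits into $d(x,\p\O)\le\epsilon$ (apply Lemma \ref{lem:ds}) and $d(x,\p\O)\ge\epsilon$ (apply Lemma \ref{lem:ds_2}), which is exactly what reproduces $\ln(1+\tfrac1\epsilon)\tfrac{1}{\sqrt{\varpi}}G(x,v;\epsilon)$ and lets the indicator $\mathbf{1}_{\{d(x,\p\O)\le\epsilon\}}$ in $G$ appear. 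You flagged this as a ``secondary subtlety,'' but it is the genuinely load-bearing point of the lemma; with that case split inserted (and the symmetric term at $\bX(s)$ handled using $|v-\bv|\le1$, so $\langle\bv\rangle\simeq\langle v\rangle$), your outline coincides with the paper's argument.
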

\begin{proof}
     Let $x_1, \bx_1 \in \O, v_1 \in \mathbb{R}^3$ and $0<t_1<  \min\{T^*,1\}$ be given. 
    In \eqref{basic f-f}-\eqref{basic f-f4}, we replace $x$ with $x_1$, and both $v, \bv$ with $v_1$. We apply Lemma \ref{lem:s=0_spec_x} to \eqref{basic f-f1}, Lemma \ref{lem:ga_X} to \eqref{basic f-f2} and \eqref{basic f-f4}. Then
   \begin{align} \label{iter_x,bx}
   \begin{split}
       &\frac{|f(t_1,x_1,v_1)-f(t_1,\bx_1, v_1)|}{|x_1-\bx_1|}\mathcal{P}_3^{-1}(\|w_0f_0\|_{\infty} )\\
         &\lesssim_{\vartheta_0} \mathbf{1}_{\{|x_1-\bx_1|\leq 1\}
       } e^{\varpi \langle v_1 \rangle^{2} t_1}
        \left(\mathfrak{X}(t_1,\varpi;\epsilon)+\mathfrak{V}(t_1,\varpi;\epsilon)+\mathbf{A}_{\frac{1}{2}}(f_0)+1\right) \\
        &\quad \times 
            \left(\frac{1}{|v_1|}+|v_1|+(1+|v_1|^2)\mathcal{T}_{sp}(x_1,\tx_1,v_1;t_1)\right)+ \mathbf{1}_{\{|x_1-\bx_1|\geq 1\}}.
   \end{split}
   \end{align} 
   
   For $x,\bx \in \O, \,v,\bv \in \mathbb{R}^3$ and $0\leq s \leq t<1$, we define
   \begin{align*}
           (X(s),V(s)):=(X(s;t,x,v),V(s;t,x,v)),
           \quad \bX(s) := X(s;t,\bx,\bv)
   \end{align*}
   for $|(x,v)-(\bx,\bv)|\leq 1$. For $u \in \R^3$, we define
   \begin{align*}
       \widetilde{X}(s) := \tx(X(s), \bX(s), u).
   \end{align*}
To transform the (LHS) of \eqref{iter_x,bx} into the form 
    \begin{align} \label{iter_X,bX_du}
    \begin{split}
       e^{-\varpi \langle v \rangle^2 t }&\bigintsss_0^t \max\left\{\frac{1}{\langle v\rangle},(t-s)^{\frac{1}{2}}\right\} \\
       &\quad\bigintsss_{\mathbb{R}^3}
       \left( \frac{e^{-c|u-V(s)|^2}}{|u-V(s)|}+|u-V(s)|e^{-\frac{1}{4}|u|^2}    \right)\frac{|f(s,X(s),u)-f(s,\overline{X}(s), u)|}{|X(s)-\overline{X}(s)|} du ds,
    \end{split}
    \end{align}
we substitute $t_1,x_1,\bx_1$, and $v_1$ with $s, X(s), \overline{X}(s)$, and $u$ in \eqref{iter_x,bx}, respectively, and multiply both sides of \eqref{iter_x,bx} by suitable factors and integrate with respect to $u$. Then, we obtain
   \begin{align} \label{I123}
        (\ref{iter_X,bX_du})\mathcal{P}_3^{-1}(\|w_0f_0\|_{\infty} ) \lesssim_{\vartheta_0}    
        \left(\sup_{0 \leq t \leq T}\mathfrak{X}(t,\varpi;\epsilon)+\sup_{0 \leq t \leq T}\mathfrak{V}(t,\varpi;\epsilon)+\mathbf{A}_{\frac{1}{2}}(f_0)+1\right) I_1+I_2
   \end{align} 
   for $0 \leq t \leq T <  \min\{T^*,1\}$. Here, $I_1$ and $I_2$ are given by
   \begin{align*} 
       I_1 &= e^{-\varpi \langle v \rangle^2 t}\bigintsss_0^t \max\left\{\frac{1}{\langle v\rangle},(t-s)^{\frac{1}{2}}\right\}\mathbf{1}_{\{|X(s)-\bX(s)|\leq 1\}} \\
       &\times \bigintsss_{\mathbb{R}^3}  e^{\varpi \langle u \rangle^2 s}\left( \frac{e^{-c|u-V(s)|^2}}{|u-V(s)|}+|u-V(s)|e^{-\frac{1}{4}|u|^2}    \right) \left(\frac{1}{|u|}+|u|+(1+|u|^2)\mathcal{T}_{sp}(X(s),\widetilde{X}(s),u;s)\right)  du ds
   \end{align*} 
   and
   \begin{align*}
       I_2 &= e^{-\varpi \langle v \rangle^2 t}\bigintsss_0^t \max\left\{\frac{1}{\langle v\rangle},(t-s)^{\frac{1}{2}}\right\}\bigintsss_{\mathbb{R}^3}   \left( \frac{e^{-c|u-V(s)|^2}}{|u-V(s)|}+|u-V(s)|e^{-\frac{1}{4}|u|^2}    \right)  du ds.
   \end{align*}

\vspace{3mm}
We first estimate $I_1$. Using Lemma \ref{lem:ws} and $|V(s)|=v$, we obtain 
\begin{align} \label{exp:v}
\begin{split}
    &e^{-\varpi \langle v \rangle^2 t}e^{\varpi \langle u \rangle^2 s} \max\left\{\frac{1}{\langle v\rangle},(t-s)^{\frac{1}{2}}\right\} = e^{-\varpi \langle V(s) \rangle^2 t}e^{\varpi \langle u \rangle^2 s} \max\left\{\frac{1}{\langle V(s)\rangle},(t-s)^{\frac{1}{2}}\right\}  \\
    &\leq e^{-\varpi \langle V(s) \rangle^2 (t-s)} \min \left\{e^{
     \frac{c}{2}|u-V(s)|^2}, e^{\frac{1}{8}|u|^2} \right\} \max\left\{\frac{1}{\langle V(s)\rangle},(t-s)^{\frac{1}{2}}\right\} \\
     &\leq e^{-\frac{\varpi}{2} \langle v \rangle^2 (t-s)}\min \left\{e^{
     \frac{c}{2}|u-V(s)|^2}, e^{\frac{1}{8}|u|^2} \right\}\frac{2}{\langle v\rangle}
\end{split}
\end{align}
for $\varpi T \leq \min\{c/4, 1/8\}$. In the last inequality, we use the fact that $x e^{-x} \leq 1$ and $\varpi > 1$.

We apply Corollary \ref{cor:ave}-(1) to $\mathcal{T}_{sp}(X(s),\widetilde{X}(s),u;s)$ and use \eqref{exp:v}. Then 
\begin{align*}
    I_1 &\leq \frac{2}{\langle v\rangle}\bigintsss_0^t e^{-\frac{\varpi}{2} \langle v \rangle^2 (t-s)} \mathbf{1}_{\{|X(s)-\bX(s)|\leq 1\}} \bigintsss_{\mathbb{R}^3} 
 \left( \frac{e^{-\frac{c}{2}|u-V(s)|^2}}{|u-V(s)|}+|u-V(s)|e^{-\frac{1}{8}|u|^2}    \right) \left(\frac{1}{|u|}+|u|\right) \\
 &\quad \times \left[1+ \left(\frac{\mathbf{1}_{\{\tb(X(s),u) < \infty\}}}{|\hat{u}\cdot\nabla\xi(\xb(X(s), u)) |} + \frac{\mathbf{1}_{\{\tb(\bX(s),u) < \infty\}}}{|\hat{u}\cdot\nabla\xi(\xb(\bX(s), u)) |}\right)\mathbf{1}_{\{d(X(s),\partial\O) \lesssim  \langle u \rangle \}}\right] du ds
\end{align*}
since $|\hat{u}\cdot\nabla\xi(\xb(\tX(s), u)) |=|\hat{u}\cdot\nabla\xi(\xb(\bX(s), u))|$.

By applying Lemma \ref{lem:int_sing} with $k = -1, \,1$ and $x = X(s), \,\bX(s)$ to the above inequality, we obtain 
\begin{align*}
    I_1 &\lesssim \langle v\rangle\bigintsss_0^t e^{-\frac{\varpi}{2} \langle v \rangle^2 (t-s)}\mathbf{1}_{\{|X(s)-\bX(s)|\leq 1\}}
    \left[ \ln\left( 1+ \frac{1}{d(X(s),\partial\O)}\right)+\ln\left( 1+ \frac{1}{d(\bX(s),\partial\O)}\right)+1  \right] ds \\
    &\leq \langle v\rangle\bigintsss_0^t e^{-\frac{\varpi}{2} \langle v \rangle^2 (t-s)}
    \left[ \ln\left( 1+ \frac{1}{d(X(s),\partial\O)}\right)+1  \right] ds \\
    &\quad + 4 \langle \bv\rangle\bigintsss_0^t e^{-\frac{\varpi}{32} \langle \bv \rangle^2 (t-s)} 
    \ln\left( 1+ \frac{1}{d(\bX(s),\partial\O)}\right) ds
\end{align*}
since $|v-\bv| \leq 1$. 

We consider the first term on the (RHS) of the above inequality. When $d(x,\partial\O) \leq \epsilon$, we apply Lemma \ref{lem:ds}. When $d(x,\partial\O) \geq \epsilon$, we apply Lemma \ref{lem:ds_2}. For $0<\epsilon<1$, we obtain
\begin{align*}
    &\langle v\rangle\bigintsss_0^t e^{-\frac{\varpi}{2} \langle v \rangle^2 (t-s)}
    \left[ \ln\left( 1+ \frac{1}{d(X(s),\partial\O)}\right)+1  \right] ds\\ &\lesssim \frac{1}{\varpi \langle v \rangle}\left[\ln\left(1+\frac{\varpi\langle v \rangle^2}{|v|}\right)+1 \right]\mathbf{1}_{\{d(x,\partial\O) \leq \epsilon\}}+\ln\left(1+ \frac{1}{\epsilon} \right)\frac{1}{\sqrt{\varpi}}\mathbf{1}_{\{d(x,\partial\O) \geq \epsilon\}}\\
    &\leq    \frac{1}{\varpi \langle v \rangle}\left[ \ln(1+\varpi\langle v \rangle^2)+\ln\left(1+\frac{1}{|v|} \right) +1\right]\mathbf{1}_{\{d(x,\partial\O) \leq \epsilon\}}+\ln\left(1+ \frac{1}{\epsilon} \right)\frac{1}{\sqrt{\varpi}}\mathbf{1}_{\{d(x,\partial\O) \geq \epsilon\}}\\
    &\leq \frac{1}{\varpi \langle v \rangle}\left[ \sqrt{\varpi}\langle v \rangle+\ln\left(1+\frac{1}{|v|} \right) +1\right]\mathbf{1}_{\{d(x,\partial\O) \leq \epsilon\}}+\ln\left(1+ \frac{1}{\epsilon} \right)\frac{1}{\sqrt{\varpi}}\mathbf{1}_{\{d(x,\partial\O) \geq \epsilon\}}\\
    &\lesssim \ln\left(1+ \frac{1}{\epsilon} \right)\frac{1}{\sqrt{\varpi}}
    \left[ \ln \left( 1+\frac{1}{|v|} \right)\mathbf{1}_{\{d(x,\partial\O) \leq \epsilon\}}+1\right] \\
    &= \ln\left(1+ \frac{1}{\epsilon}\right)\frac{1}{\sqrt{\varpi}} G(x,v;\epsilon),
\end{align*}
since $\ln(1+xy) \leq \ln(1+x)+\ln(1+y)$ and $\ln(1+x) \leq \sqrt{x}$ for $x>0$. Therefore, we obtain
\begin{align*}
    I_1 \lesssim \frac{1}{\sqrt{\varpi}}\ln\left(1+ \frac{1}{\epsilon}\right) \left(G(x,v;\epsilon)+G(\bx,\bv;\epsilon)\right).
\end{align*}

On the other hand, we easily compute
\begin{align*}
    I_2 \lesssim \bigintsss_0^t e^{-\varpi\langle v \rangle^2(t-s)}\bigintsss_{\mathbb{R}^3}   \left( \frac{e^{-c|u-V(s)|^2}}{|u-V(s)|}+|u-V(s)|e^{-\frac{1}{4}|u|^2}    \right)  du ds \lesssim \frac{1}{\varpi \langle v \rangle}.
\end{align*}
Hence, the upper bounds of $I_1$ and $I_2$ lead to
\begin{align*}
      (\ref{iter_X,bX_du}) 
      &\lesssim_{\vartheta_0} \frac{1}{\sqrt{\varpi}}\mathcal{P}_3(\|w_0f_0\|_{\infty} ) \ln\left(1+ \frac{1}{\epsilon} \right) 
      \\&\quad \times \Bigg[\left( G(x,v;\epsilon)+G(\bx,\bv;\epsilon) \right) 
         \left(\sup_{0 \leq t \leq T}\mathfrak{X}(t,\varpi;\epsilon)+\sup_{0 \leq t \leq T}\mathfrak{V}(t,\varpi;\epsilon)+\mathbf{A}_{\frac{1}{2}}(f_0)+1\right) +\frac{1}{\langle v \rangle} \Bigg]
\end{align*}
from \eqref{I123}. 

Therefore, we multiply both sides of the above inequality by $(G(x,v;\epsilon)+G(\bx,\bv;\epsilon))^{-1}(\leq 1/2)$, and then
\begin{align*}
      &(G(x,v;\epsilon)+G(\bx,\bv;\epsilon))^{-1}(\ref{iter_X,bX_du}) \\
      &\lesssim_{\vartheta_0, \epsilon}\frac{1}{\sqrt{\varpi}}\mathcal{P}_3(\|w_0f_0\|_{\infty} ) \left(\sup_{0 \leq t \leq T}\mathfrak{X}(t,\varpi;\epsilon)+\sup_{0 \leq t \leq T}\mathfrak{V}(t,\varpi;\epsilon)+\mathbf{A}_{\frac{1}{2}}(f_0)+1\right).
\end{align*}
Finally, we take the supremum over $(x,\bx,v,\bv)\in \O\times \O\times \R^{3}\times \R^{3}$ and $0 \leq t \leq T$, and obtain the lemma.
\end{proof}

Next, we estimate $\mathfrak{V}$. When estimating the difference
\[
|f(t_1,x_1,v_1+u) - f(t_1,x_1,\bar{v}_1+u)|,
\]
we divide the domain into two cases: \( d(x_1, \Omega) \lesssim 1 \) and \( 1 \lesssim d(x_1, \Omega) \lesssim \langle v_1 + u \rangle \), as in \eqref{est_V_txv}.  
This division corresponds to the terms $J_1$ and $J_2$ in the proof. The reason for this splitting is to control the growth order in $\langle v \rangle$.

\begin{lemma}[Estimate for $ \mathfrak{V}$ ]\label{pro:V<1}
Recall $T_1>0$ and $\varpi_1>1$ from Proposition \ref{pro:H}.
   There exist constants $0<T<\min\{T_1,1\}$ and $1<\varpi_1 <\varpi$ such that
\begin{align*}
     \sup_{0 \leq t \leq T}\mathfrak{V}(t,\varpi;\epsilon) \lesssim_{\vartheta_0,\epsilon} \frac{1}{\sqrt{\varpi} }\mathcal{P}_3(\|w_0f_0\|_{\infty} ) \left(\sup_{0 \leq t \leq T}\mathfrak{X}(t,\varpi;\epsilon)+\sup_{0 \leq t \leq T}\mathfrak{V}(t,\varpi;\epsilon)+ \mathbf{A}_{\frac{1}{2}}(f_0)+1\right).
\end{align*} 
and $\varpi T \leq \min\{c/4, 1/8\} $ for any $0<\epsilon<1$.
\end{lemma}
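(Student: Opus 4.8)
\textbf{Proof plan for Lemma \ref{pro:V<1}.}
The plan is to follow the same architecture as the proof of Lemma \ref{pro:X<1} (Lemma \ref{pro:X<1}), but now tracking velocity variation rather than spatial variation, and to pay attention to the slightly different powers of $\langle v \rangle$ and the appearance of $\mathbf{A}_{\frac12}(f_0)$ coming from \eqref{pro:H_sub}. First I would fix $x_1 \in \O$, $v_1, \bv_1 \in \R^3$, $0<t_1 < \min\{T_1,1\}$, substitute $v=v_1$, $\bv=\bv_1$, $x=\bx=x_1$ into \eqref{basic f-f}--\eqref{basic f-f4}, and apply Lemma \ref{lem:s=0_spec_v} to \eqref{basic f-f1} and the velocity-variation estimates of Lemma \ref{lem:ga_V} to \eqref{basic f-f2} and \eqref{basic f-f4}. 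This yields, after dividing by $|v_1-\bv_1|$, an estimate of the form
\begin{align*}
    &\frac{|f(t_1,x_1,v_1)-f(t_1,x_1,\bv_1)|}{|v_1-\bv_1|}\mathcal{P}_3^{-1}(\|w_0f_0\|_{\infty}) \\
    &\lesssim_{\vartheta_0} \mathbf{1}_{\{|v_1-\bv_1|\leq 1\}}e^{\varpi\langle v_1\rangle^2 t_1}\left(\mathfrak{X}(t_1,\varpi;\epsilon)+\mathfrak{V}(t_1,\varpi;\epsilon)+\mathbf{A}_{\frac12}(f_0)+1\right) \\
    &\quad\times\left(\frac{1}{|v_1|}+|v_1|+\left(\min\left\{\frac{1}{|v_1|^{3/2}},1\right\}+1\right)|v_1|^2\mathcal{T}_{vel}(x_1,v_1,\tv_1,0;t_1)\right)+\mathbf{1}_{\{|v_1-\bv_1|\geq 1\}},
\end{align*}
where I have set the shift parameter $\zeta=0$ since in the definition of $\mathfrak{V}$ one compares $V(s;t,x,v)+u$ and $V(s;t,\bx,\bv)+u$ at fixed spatial trajectory.

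The next step is to make the nonlocal-to-local substitution exactly as in \eqref{iter_X,bX_du}: replace $(t_1,x_1,v_1,\bv_1)$ by $(s, X(s), V(s)+u, \overline V(s)+u)$ where $(X(s),V(s))=(X(s;t,x,v),V(s;t,x,v))$, $\overline V(s)=V(s;t,\bx,\bv)$, then multiply by $e^{-c|u|^2}|u|^{-1}$ and integrate in $u$ and $s$ after multiplying by $e^{-\varpi\langle v\rangle^2 t}$. By the change of variable $u \mapsto w = V(s)+u$ this reproduces the integrand in the definition of $\mathfrak{V}(t,\varpi;\epsilon)$ on the left, and bounds it by a term $\left(\sup_t\mathfrak{X}+\sup_t\mathfrak{V}+\mathbf{A}_{\frac12}(f_0)+1\right)J_1 + J_2$, where $J_1$ collects the singular factors $|w|^{-1}+|w|+(\cdots)\mathcal{T}_{vel}(X(s),w,\tilde w,0;s)$ weighted by $e^{\varpi\langle w\rangle^2 s}e^{-c|w-V(s)|^2}|w-V(s)|^{-1}$ and $J_2$ is the elementary remainder. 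Here I use Lemma \ref{lem:ws} to absorb $e^{\varpi\langle w\rangle^2 s - \varpi\langle v\rangle^2 t}$ (noting $|V(s)|=|v|$) into $e^{-\frac{\varpi}{2}\langle v\rangle^2(t-s)}\cdot\min\{e^{\frac c2|w-V(s)|^2},e^{\frac18|w|^2}\}$, and then Corollary \ref{cor:ave}-(2) to replace $\mathcal{T}_{vel}(X(s),w,\tilde w,0;s)$ by the sum of grazing-angle reciprocals times the geometric cutoffs $|w|^{-2}\mathbf{1}_{\{d(X(s),\partial\O)\lesssim 1\}}+|w|^{-1}\mathbf{1}_{\{1\lesssim d(X(s),\partial\O)\lesssim\langle w\rangle\}}$; this is precisely the case split into $J_1$ and $J_2$ mentioned before the statement. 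Then Lemma \ref{lem:int_sing}, applied with the appropriate values of $k$ (here the relevant powers are $k \le 1$, e.g.\ $k=-1,0,1$, since $|w|^2\cdot|w|^{-2}=1$, $|w|^2\cdot|w|^{-3}=|w|^{-1}$, etc., all admissible because $k<2$), bounds the $w$-integral by $C_k(\langle v\rangle + \langle v\rangle^{1-k})\big[\ln(1+d(X(s),\partial\O)^{-1})+1\big]$, and I would check that the power of $\langle v\rangle$ produced stays $\le 1$ after pairing with the $\langle v\rangle^{-1}$ that is NOT present here (unlike $\mathfrak{X}$, whose definition carries the weight $\max\{\langle v\rangle^{-1},(t-s)^{1/2}\}$); this is the one genuine point of difference and the reason the proof needs a separate case split.

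Finally I would do the time integration: for the term with $d(x,\partial\O)\le\epsilon$ apply Lemma \ref{lem:ds}, and for $d(x,\partial\O)\ge\epsilon$ apply Lemma \ref{lem:ds_2}, obtaining $J_1\lesssim \frac{1}{\sqrt{\varpi}}\ln(1+\tfrac1\epsilon)\,G(x,v;\epsilon)$ — note that only $G(x,v;\epsilon)$ appears, not $G(\bx,\bv;\epsilon)$, because the spatial trajectory is common to both arguments in $\mathfrak{V}$ — while $J_2\lesssim \frac{1}{\varpi\langle v\rangle}$. Dividing through by $G(x,v;\epsilon)\ (\ge 1)$, combining, taking $\varpi>\varpi_1$ and $\varpi T\le\min\{c/4,1/8\}$, and taking the supremum over $(x,\bx,v,\bv)$ and $0\le t\le T$ gives the claimed bound. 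The main obstacle I anticipate is bookkeeping the $\langle v\rangle$-powers through Corollary \ref{cor:ave}-(2) and Lemma \ref{lem:int_sing}: because $\mathfrak{V}$ lacks the extra $\langle v\rangle^{-1}$ weight, one must verify that the worst surviving power is exactly $\langle v\rangle^0$ (so that the factor $e^{-\frac{\varpi}{2}\langle v\rangle^2(t-s)}$ still controls the time integral and yields the $\varpi^{-1/2}$ gain), and the role of splitting $d(X(s),\partial\O)\lesssim 1$ versus $1\lesssim d(X(s),\partial\O)\lesssim\langle w\rangle$ in Corollary \ref{cor:ave}-(2) is precisely to prevent a spurious extra power of $\langle w\rangle$ in the non-grazing regime. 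Everything else is a routine repetition of the $\mathfrak{X}$ argument.
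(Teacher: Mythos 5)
Your plan follows essentially the same route as the paper's proof: pointwise velocity-variation estimate from Lemma \ref{lem:s=0_spec_v} and Lemma \ref{lem:ga_V} applied to \eqref{basic f-f1}--\eqref{basic f-f4}, substitution along the trajectory with the kernel $e^{-c|u|^2}/|u|$, absorption of the exponential via Lemma \ref{lem:ws}, Corollary \ref{cor:ave}-(2) with the split $d(X(s),\partial\O)\lesssim 1$ versus $1\lesssim d(X(s),\partial\O)\lesssim\langle V(s)+u\rangle$, then Lemma \ref{lem:int_sing} followed by Lemma \ref{lem:ds} or Lemma \ref{lem:ds_2}, dividing by $G(x,v;\epsilon)$ only, exactly as in the paper (your choice to set $\zeta=0$ and substitute $v_1\mapsto V(s)+u$ is equivalent to the paper's keeping $\zeta=u$). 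The only slips are bookkeeping ones: the singular exponents that actually arise are $k=0,\tfrac32$ near the boundary and $k=-1,\tfrac32$ in the far regime (your ``$|w|^{-3}$'' and ``$k\le 1$'' are off, since the $\min\{|v+\zeta|^{-3/2},|\bar v+\zeta|^{-3/2}\}$ factor from \eqref{pro:H_sub} gives $k=\tfrac32$), but all are $<2$ and the far-regime power $\langle v\rangle^{2}$ is killed by the bounded logarithm there, so the argument closes as you describe.
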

\begin{proof}
  Let \( x_1 \in \Omega \), \( v_1, \bar{v}_1, u \in \mathbb{R}^3 \). Let \( 0 < t_1 < \min\{T_1, 1\} \) and \( 1 < \varpi_1 < \varpi \) be given.
 In \eqref{basic f-f}-\eqref{basic f-f4}, we replace $x$ with $x_1$ and $\bx$ with $\bx_1$ and $\z$ with $u$. We apply Lemma \ref{lem:s=0_spec_v} to \eqref{basic f-f1}, and apply Lemma \ref{lem:ga_V} to \eqref{basic f-f2} and \eqref{basic f-f4}. Then
    \begin{align} \label{iter_v,bv}
   \begin{split}
       &\frac{|f(t_1,x_1,v_1+u)-f(t_1,x_1, \bv_1+u)|}{|v_1-\bv_1|} \mathcal{P}_3^{-1}(\|w_0f_0\|_{\infty} )\\
       &\lesssim_{\vartheta_0}  \mathbf{1}_{\{|v_1-\bv_1|\geq 1\}}+\mathbf{1}_{\{|v_1-\bv_1|\leq 1\}}  e^{\varpi \langle v_1+u \rangle^{2} t_1} \left(\mathfrak{X}(t_1,\varpi;\epsilon)+\mathfrak{V} (t_1,\varpi;\epsilon)+\mathbf{A}_{\frac{1}{2}}(f_0)+1\right)  \\
       &\; \times  \left[ \frac{1}{|v_1+u|}+\frac{1}{|\bv_1+u|}+|v_1+u|+\left(\min \left\{  \frac{1}{|v_1+u|^{\frac{3}{2}}}, \frac{1}{|\bv_1+u|^\frac{3}{2}}\right\}+1\right)|v_1+u|^2\mathcal{T}_{vel}(x_1, v_1, \tv_1, u;t_1)\right].
   \end{split}
   \end{align} 

    Next, we apply Corollary \ref{cor:ave}-(2) to $\mathcal{T}_{vel}(x_1, v_1, \tv_1, u;t_1)$. Then, we obtain
   \begin{align} \label{est_V_txv}
   \begin{split}
       &\frac{|f(t_1,x_1,v_1+u)-f(t_1,x_1, \bv_1+u)|}{|v_1-\bv_1|} \mathcal{P}_3^{-1}(\|w_0f_0\|_{\infty} )\\
       &\lesssim_{\vartheta_0}  \mathbf{1}_{\{|v_1-\bv_1|\leq 1\}}e^{\varpi \langle v_1+u \rangle^{2} t_1}
          \left(\mathfrak{X}(t_1,\varpi;\epsilon)+\mathfrak{V}(t_1,\varpi;\epsilon)+\mathbf{A}_{\frac{1}{2}}(f_0)+1\right) \\
           &\quad \times \left(     \frac{\mathbf{1}_{\{\tb(x_1,v_1+u)<+\infty\}}}{|\widehat{v_1+u}\cdot\nabla\xi(\xb(x_1, v_1+u)) |}+ \frac{\mathbf{1}_{\{\tb(x_1,\bv_1+u)<+\infty\}}}{|\widehat{\bv_1+u}\cdot\nabla\xi(\xb(x_1, \bv_1+u)) |} \right) \\
           &\quad \times 
           \Bigg[  \left(\min \left\{  \frac{1}{|v_1+u|^{\frac{3}{2}}}, \frac{1}{|\bv_1+u|^\frac{3}{2}}\right\}+1\right) \mathbf{1}_{\{ d(x_1,\partial\O) \lesssim 1 \}} \\
           &\quad  +\left(\min \left\{  \frac{1}{|v_1+u|^{\frac{3}{2}}}, \frac{1}{|\bv_1+u|^\frac{3}{2}}\right\}+|v_1+u|\right) \mathbf{1}_{\{ 1 \lesssim d(x_1,\partial\O) \lesssim \langle v_1+u \rangle \}}\Bigg] \\
           &\quad + \mathbf{1}_{\{|v_1-\bv_1|\leq 1\}}e^{\varpi \langle v_1+u \rangle^{2} t_1} \left(\mathfrak{X}(t_1,\varpi;\epsilon)+\mathfrak{V}(t_1,\varpi;\epsilon)+\mathbf{A}_{\frac{1}{2}}(f_0)+1\right) \\
           &\quad \times \left(  \frac{1}{|v_1+u|}+\frac{1}{|\bv_1+u|}+|v_1+u|\right)  +\mathbf{1}_{\{|v_1-\bv_1|\geq 1\}}.
   \end{split}
   \end{align}
   
    For $x,\bx \in \O, \,v,\bv \in \mathbb{R}^3$, and $t>0$, we define
   \begin{align*}
           (X(s),V(s)):=(X(s;t,x,v),V(s;t,x,v)),
           \quad \bV(s) := V(s;t,\bx,\bv)
   \end{align*}
   for $|(x,v)-(\bx,\bv)|\leq 1$. For $u \in \R^3$, we define
   \begin{align*}
       \quad\widetilde{V}(s) := \tv\left( V(s),\bV(s),u \right)
   \end{align*}
   such that $|V(s)+u|=|\widetilde{V}(s)+u|$.  We substitute $t_1,x_1,v_1$ and $\bv_1$ with $s, X(s), V(s)$ and $\bV(s)$. Multiply both sides of the inequality with $e^{-c|u|^2}/|u|$, integrate with respect to $u$ and $s$, and transform the (LHS) into the following form:
     \begin{align}\label{v:int_du}
       e^{-\varpi \langle v \rangle^{2} t}  \bigintsss_0^t\bigintsss_{\mathbb{R}^3}  \frac{e^{-c|u|^2}}{|u|}\frac{|f(s,X(s),V(s)+u)-f(s,X(s),\bV(s)+u)|}{|V(s)-\bV(s)|} du ds. 
   \end{align}  
   Then, we obtain
      \begin{align} \label{J123}
        (\ref{v:int_du})\mathcal{P}_3^{-1}(\|w_0f_0\|_{\infty} ) 
        \lesssim_{\vartheta_0}  \left(\sup_{0 \leq t \leq T}\mathfrak{X}(t,\varpi;\epsilon)+\sup_{0 \leq t \leq T}\mathfrak{V}(t,\varpi;\epsilon)+ \mathbf{A}_{\frac{1}{2}}(f_0)+1\right)(J_1+J_2+J_3) + J_4
    \end{align}
     for $0 < t \leq T <  \min\{T_1,1\}$. Here, $J_1, J_2, J_3$, and $J_4$ are given by
   \begin{align*}
       J_1 &= e^{-\varpi \langle v \rangle^{2} t} \bigintsss_0^t  \mathbf{1}_{\{|V(s)-\bV(s)|\leq 1\}}\mathbf{1}_{\{d(X(s),\partial\O) \lesssim 1\}} \bigintsss_{\mathbb{R}^3}  e^{\varpi \langle V(s)+u \rangle^2 s}\frac{e^{-c|u|^2}}{|u|}\\
          &\quad \times 
        \left(  \frac{1}{|V(s)+u|^{\frac{3}{2}}}+ 1 \right)
        \left(   \frac{\mathbf{1}_{\{\tb(X(s),V(s)+u)<+\infty\}}}{\left|\nabla \xi(\xb(X(s),V(s)+u)\cdot (\widehat{V(s)+u})\right|}+1 \right) duds\\
        &\quad + e^{-\varpi \langle v \rangle^{2} t} \bigintsss_0^t  \mathbf{1}_{\{|V(s)-\bV(s)|\leq 1\}}\mathbf{1}_{\{d(X(s),\partial\O) \lesssim 1\}} \bigintsss_{\mathbb{R}^3}  e^{\varpi \langle V(s)+u \rangle^2 s}\frac{e^{-c|u|^2}}{|u|}\\
          &\quad \times 
        \left(  \frac{1}{|\bV(s)+u|^{\frac{3}{2}}}+ 1 \right)
        \left(   \frac{\mathbf{1}_{\{\tb(X(s),\bV(s)+u)<+\infty\}}}{\left|\nabla \xi(\xb(X(s),\bV(s)+u)\cdot(\widehat{\bV(s)+u})\right|}+1 \right) duds
   \end{align*}
   and
    \begin{align*}
       J_2 &= e^{-\varpi \langle v \rangle^{2} t} \bigintsss_0^t  \mathbf{1}_{\{|V(s)-\bV(s)|\leq 1\}}\mathbf{1}_{\{1 \lesssim d(X(s),\partial\O) \lesssim \langle V(s)+u \rangle\}} \bigintsss_{\mathbb{R}^3}  e^{\varpi \langle V(s)+u \rangle^2 s}\frac{e^{-c|u|^2}}{|u|}\\
        &\quad \times 
        \left(  \frac{1}{|V(s)+u|^{\frac{3}{2}}}+ |V(s)+u| \right)
        \left(   \frac{\mathbf{1}_{\{\tb(X(s),V(s)+u)<+\infty\}}}{\left|\nabla \xi(\xb(X(s),V(s)+u)\cdot \widehat{(V(s)+u)}\right|}+1 \right) duds\\
        &\quad + e^{-\varpi \langle v \rangle^{2} t} \bigintsss_0^t \mathbf{1}_{\{|V(s)-\bV(s)|\leq 1\}}\mathbf{1}_{\{ 1 \lesssim d(X(s),\partial\O) \lesssim \langle \bV(s)+u\rangle\}}\bigintsss_{\mathbb{R}^3}  e^{\varpi \langle V(s)+u \rangle^2 s}\frac{e^{-c|u|^2}}{|u|}\\
          &\quad \times 
        \left(  \frac{1}{|\bV(s)+u|^{\frac{3}{2}}}+ |\bV(s)+u| \right)
        \left(   \frac{\mathbf{1}_{\{\tb(X(s),\bV(s)+u)<+\infty\}}}{\left|\nabla \xi(\xb(X(s),\bV(s)+u)\cdot \widehat{(\bV(s)+u)}\right|}+1 \right) duds
   \end{align*}
   and
    \begin{align*}
       J_3 &= e^{-\varpi \langle v \rangle^{2} t} \bigintsss_0^t  \mathbf{1}_{\{|V(s)-\bV(s)|\leq 1\}} \bigintsss_{\mathbb{R}^3}  e^{\varpi \langle V(s)+u \rangle^2 s}\frac{e^{-c|u|^2}}{|u|} \\
        &\quad \times \left(  \frac{1}{|V(s)+u|}+ \frac{1}{|\bV(s)+u|}+ |V(s)+u| \right)duds
   \end{align*}
   and
   \begin{align*}
       J_4 &= e^{-\varpi \langle v \rangle^{2} t} \bigintsss_0^t  \bigintsss_{\mathbb{R}^3}  \frac{e^{-c|u|^2}}{|u|}duds.
   \end{align*}

  By Lemma \ref{lem:ws} and $|V(s)|=v$, we have
    \begin{align} \label{exp_est_V}
    \begin{split}
        e^{-\varpi \langle v \rangle^{2} t}e^{\varpi \langle V(s)+u \rangle^{2} s}  &=e^{-\varpi \langle V(s) \rangle^{2} (t-s)}
        e^{-\varpi \langle V(s) \rangle^{2} s}e^{\varpi \langle V(s)+u \rangle^{2} s} \\
        &\leq e^{-\varpi \langle V(s) \rangle^{2} (t-s)}e^{\frac{c}{2}|u|^2}=e^{-\varpi \langle v \rangle^{2} (t-s)}e^{\frac{c}{2}|u|^2}
    \end{split}
    \end{align}
    for $\varpi T \leq \min\{c/4, 1/8\}$.

    First, we consider $J_1$. Applying \eqref{exp_est_V} and performing the change of variable $u \rightarrow w = V(s)+u $ for the first term of $J_1$ and $u \rightarrow w = \bV(s)+u $ for the second term, we obtain
    \begin{align*}
        J_1 &\leq  \bigintsss_0^t  e^{-\varpi \langle v \rangle^{2} (t-s)}\mathbf{1}_{\{|V(s)-\bV(s)|\leq 1\}}\mathbf{1}_{\{d(X(s),\partial\O) \lesssim 1\}} \\
          &\quad \times \bigintsss_{\mathbb{R}^3}  \left(\frac{e^{-\frac{c}{2}|w-V(s)|^2}}{|w-V(s)|}+\frac{e^{-\frac{c}{2}|w-\bV(s)|^2}}{|w-\bV(s)|} \right)
        \left(  \frac{1}{|w|^{\frac{3}{2}}}+ 1 \right)
        \left(   \frac{\mathbf{1}_{\{\tb(X(s),w)<+\infty\}}}{\left|\nabla \xi(\xb(X(s),w)\cdot \widehat{w}\right|}+1 \right) duds.
    \end{align*}
    By Lemma \ref{lem:int_sing} with $k=0, \,3/2$ and $\langle \bv \rangle \leq 2\langle v \rangle$, we have
    \begin{align*}
        J_1 \lesssim \langle v \rangle \bigintsss_0^t e^{-\varpi \langle v \rangle^{2} (t-s)} \left[\ln \left(  1+ \frac{1}{d(X(s;t,x,v),\partial\O)} \right)+1\right] ds.
    \end{align*}
    When $d(x,\partial\O) \leq \epsilon$, we apply Lemma \ref{lem:ds}. When $d(x,\partial\O) \geq \epsilon$, we apply Lemma \ref{lem:ds_2}. Then, we obtain 
    \begin{align*}
        J_1 \lesssim \ln\left(1+ \frac{1}{\epsilon} \right)\frac{1}{\sqrt{\varpi}}
    \left[ \ln \left( 1+\frac{1}{|v|} \right)\mathbf{1}_{\{d(x,\partial\O) \leq \epsilon\}}+1\right] \lesssim_{\epsilon} \frac{1}{\sqrt{\varpi}}G(x,v;\epsilon).
    \end{align*} \\

    Next, we consider $J_2$. Applying \eqref{exp_est_V} and Lemma \ref{lem:int_sing} with $k=3/2,\,1$, we have
    \begin{align*}
        J_2 \lesssim \langle v \rangle^2 \bigintsss_0^t e^{-\varpi \langle v \rangle^{2} (t-s)} \left[\ln \left(  1+ \frac{1}{d(X(s;t,x,v),\partial\O)} \right)+1\right]\mathbf{1}_{\{1 \lesssim d(X(s),\partial\O) \}} ds. 
    \end{align*}
    Since $d(X(s),\partial\O) \gtrsim 1$ holds, we obtain
    \begin{align*}
        J_2 \lesssim \langle v \rangle^2
        \int_0^t e^{-\varpi \langle v \rangle^{2} (t-s)} ds 
        \lesssim \frac{1}{\varpi}.
    \end{align*}

     Moreover, we can compute 
    \begin{align*}
        J_3 &\lesssim  \bigintsss_0^t e^{-\varpi \langle v \rangle^{2} (t-s)} \bigintsss_{\mathbb{R}^3}  \frac{e^{-\frac{c}{2}|u|^2}}{|u|}
        \left(  \frac{1}{|V(s)+u|}+ \frac{1}{|\bV(s)+u|}+ |V(s)+u| \right)duds \\
        &\lesssim 
        \bigintsss_0^t e^{-\varpi \langle v \rangle^{2} (t-s)} ds\left(\bigintsss_{\mathbb{R}^3} \frac{e^{-\frac{c}{2}|z|^2}}{|z|^2} dz+ \langle  v \rangle \bigintsss_{\mathbb{R}^3} \left( 1+\frac{1}{|u|}\right)e^{-\frac{c}{2}|u|^2} du\right) \lesssim \frac{1}{\varpi \langle v \rangle}
    \end{align*}
    by using \eqref{exp_est_V} and 
    \begin{align*}
        J_4 \lesssim \frac{1}{\varpi \langle v \rangle^2}.
    \end{align*}

   Therefore, combining the upper bounds of $J_1$ to $J_4$, we obtain
   \begin{align*}
       (\ref{v:int_du}) 
      &\lesssim_{\vartheta_0, \epsilon} \frac{1}{\sqrt{\varpi}}\mathcal{P}_3(\|w_0f_0\|_{\infty} )   G(x,v;\epsilon) 
         \left(\sup_{0 \leq t \leq T}\mathfrak{X}(t,\varpi;\epsilon)+\sup_{0 \leq t \leq T}\mathfrak{V}(t,\varpi;\epsilon)+\mathbf{A}_{\frac{1}{2}}(f_0)+1\right)
   \end{align*}
   from \eqref{J123}. Finally, we multiply both sides of the inequality by $G^{-1}(x,v;\epsilon)$ and take the supremum over $(x,\bx,v,\bv)\in \O\times \O\times \R^{3}\times \R^{3}$ and $0 \leq t \leq T$, and obtain the lemma.
\end{proof}

Finally, Proposition \ref{pro:iter_pro} follows from Lemma \ref{pro:X<1} and Lemma \ref{pro:V<1}.

\begin{proposition}\label{pro:iter_pro}
    There exist constants $0<T \ll 1 $ and $\varpi \gg \mathcal{P}_6(\|w_0f_0\|_{\infty})$, depending on $\epsilon$ and $\vartheta_0$, such that
\begin{align*}
     \sup_{0 \leq t \leq T}\mathfrak{X}(t,\varpi;\epsilon)+ \sup_{0 \leq t \leq T}\mathfrak{V}(t,\varpi;\epsilon)\lesssim_{\vartheta_0, \epsilon} \mathbf{A}_{\frac{1}{2}}(f_0)+1
\end{align*} 
and $\varpi T \leq \min\{c/4, 1/8\} $ for any $0<\epsilon<1$.
\end{proposition}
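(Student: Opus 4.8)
The plan is to combine Lemma~\ref{pro:X<1} and Lemma~\ref{pro:V<1} into a closed absorption argument. First I would fix $\epsilon \in (0,1)$ and $\vartheta_0$, and set
\[
\mathcal{Q}(T,\varpi) := \sup_{0 \leq t \leq T}\mathfrak{X}(t,\varpi;\epsilon) + \sup_{0 \leq t \leq T}\mathfrak{V}(t,\varpi;\epsilon).
\]
Both lemmas give, for $T < \min\{T^*,T_1,1\}$ and $\varpi > \varpi_1$ with $\varpi T \leq \min\{c/4,1/8\}$, an inequality of the form
\[
\mathcal{Q}(T,\varpi) \leq \frac{C_{\vartheta_0,\epsilon}}{\sqrt{\varpi}}\,\mathcal{P}_3(\|w_0 f_0\|_\infty)\big(\mathcal{Q}(T,\varpi) + \mathbf{A}_{\frac12}(f_0) + 1\big),
\]
where $C_{\vartheta_0,\epsilon}$ is the maximum of the two implied constants. (One must check the two lemmas can be applied simultaneously: take $\varpi > \varpi_1$ so that Proposition~\ref{pro:H} — used inside Lemma~\ref{lem:ga_V} via \eqref{pro:H_sub} and hence inside Lemma~\ref{pro:V<1} — is available, and take $T < \min\{T^*,T_1,1\}$ so that both Lemma~\ref{lem:loc} and Proposition~\ref{pro:H} hold on $[0,T]$.)

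The key step is the absorption: choose $\varpi$ large enough, depending on $\vartheta_0,\epsilon$ and on $\|w_0 f_0\|_\infty$, so that
\[
\frac{C_{\vartheta_0,\epsilon}}{\sqrt{\varpi}}\,\mathcal{P}_3(\|w_0 f_0\|_\infty) \leq \frac12.
\]
Since $\mathcal{P}_3(x) = 1 + x + x^2 + x^3$ and $\mathcal{P}_3(x)^2 \lesssim \mathcal{P}_6(x)$, this forces $\varpi \gg \mathcal{P}_6(\|w_0 f_0\|_\infty)$, matching the statement. With this choice the term $\tfrac12\mathcal{Q}(T,\varpi)$ on the right-hand side is absorbed into the left, yielding
\[
\mathcal{Q}(T,\varpi) \leq \frac{C_{\vartheta_0,\epsilon}}{\sqrt{\varpi}}\,\mathcal{P}_3(\|w_0 f_0\|_\infty)\big(\mathbf{A}_{\frac12}(f_0) + 1\big) + \frac12\big(\mathbf{A}_{\frac12}(f_0)+1\big) \lesssim_{\vartheta_0,\epsilon} \mathbf{A}_{\frac12}(f_0) + 1,
\]
where in the last step I absorb the remaining $\varpi$-dependent constant (which is bounded once $\varpi$ is fixed as above). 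Finally, once $\varpi$ is chosen, I shrink $T$ further if necessary so that the compatibility constraint $\varpi T \leq \min\{c/4,1/8\}$ and $T < \min\{T^*,T_1,1\}$ both hold; this is possible because $\varpi$ is now a fixed finite number. This produces the pair $(T,\varpi)$ claimed.

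The one genuine subtlety — and where I would be careful rather than the place where real difficulty lies — is that both $\mathcal{Q}(T,\varpi)$ quantities must be known to be \emph{finite} a priori before the absorption step is legitimate; otherwise the inequality $\mathcal{Q} \leq \tfrac12\mathcal{Q} + (\text{const})$ is vacuous. I would argue finiteness by noting that on the short time interval $[0,T]$ the solution $f$ has the $L^\infty$ bound of Lemma~\ref{lem:loc}, and the integrands defining $\mathfrak{X},\mathfrak{V}$ involve difference quotients that are controlled by the initial-data seminorm $\mathbf{A}_{\frac12}(f_0)$ together with the trajectory estimates of Section~3 and the singular-integral bounds of Section~5 (Lemma~\ref{lem:int_sing} and Lemma~\ref{lem:ds}); these are exactly the estimates carried out in the proofs of Lemma~\ref{pro:X<1} and Lemma~\ref{pro:V<1}, so the same computations that yield the contraction inequality also yield a crude (possibly $\varpi$-independent) finite bound, which suffices to justify the absorption. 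Alternatively one runs the standard continuity/bootstrap argument in $T$. The rest — bookkeeping the constants and checking the constraint chain on $(T,\varpi)$ — is routine.
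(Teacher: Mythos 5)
Your proposal is correct and follows essentially the same route as the paper: the paper's proof simply adds the inequalities of Lemma~\ref{pro:X<1} and Lemma~\ref{pro:V<1} and then chooses $\varpi \gg \mathcal{P}_6(\|w_0f_0\|_{\infty})$ so that the factor $\mathcal{P}_3(\|w_0f_0\|_{\infty})/\sqrt{\varpi}$ is small enough to absorb the seminorms into the left-hand side, with $T$ then taken small so that $\varpi T \leq \min\{c/4,1/8\}$. Your additional remark on a priori finiteness of $\mathfrak{X},\mathfrak{V}$ before absorbing is a reasonable refinement that the paper leaves implicit, but it does not change the argument.
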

\begin{proof}
    By Lemma \ref{pro:X<1} and Lemma \ref{pro:V<1}, we obtain 
    \begin{align*}
        &\sup_{0 \leq t \leq T}\mathfrak{X}(t,\varpi;\epsilon)+\sup_{0 \leq t \leq T}\mathfrak{V}(t,\varpi;\epsilon) \\ &\lesssim_{\vartheta_0, \epsilon} \frac{1}{\sqrt{\varpi} }\mathcal{P}_3(\|w_0f_0\|_{\infty} ) \left(\sup_{0 \leq t \leq T}\mathfrak{X}(t,\varpi;\epsilon)+\sup_{0 \leq t \leq T}\mathfrak{V}(t,\varpi;\epsilon) \right)+ \frac{1}{\sqrt{\varpi} }\mathcal{P}_3(\|w_0f_0\|_{\infty} )\left( \mathbf{A}_{\frac{1}{2}}(f_0)+1 \right).
    \end{align*}
   Lastly, we take a sufficiently large $\varpi$ such that $\varpi \gg \mathcal{P}_6(\|w_0f_0\|_{\infty} )$ for fixed $\vartheta_0$ and $\epsilon$. \\
\end{proof}

\section{Proof of the Theorem}

\subsection{\texorpdfstring{$C_{x,v}^{0,\frac{1}{2}}$}{} estimates}

First, we examine the \( C_{x,v}^{0,\frac{1}{2}} \) estimates along the trajectory.
 Recall Lemma 7.1 in \cite{CD2023}. Using these estimates, we further obtain $C_{x,v}^{0,\frac{1}{2}}$ estimates for \( \Gamma_{gain}(f,f) \) and \( \nu(f) \).

\begin{lemma}[$C_{x,v}^{0,\frac{1}{2}}$ estimation of the trajectory] \label{lem:tra_0.5}
Let $t > 0$, $x, \bx \in \Omega$, and $v, \bv \in \mathbb{R}^3$ be such that $|(x,v)-(\bx,\bv)| \leq 1$. 

\noindent (1)   When $\tb(x, v) < \infty$ and $\tb(\bx,\bv) < \infty$, we obtain
         \begin{align*}
                \max\{|v|,|\bv|\}|\tb(x,v)-\tb(\bx,\bv)| \lesssim \left(
     			|x- \bar x|^{\frac{1}{2}} +  \min\left\{\sqrt{\tb(x,v)},\sqrt{\tb(\bx,\bv)}\right\}|v- \bar v |^{\frac{1}{2}}
     			\right).
        \end{align*} 
        
\noindent (2) For $0 \leq s \leq t$, we obtain
    \begin{align*}
        |X(s;t,x,v ) -X(s;t,\bar x, \bar v)|
        \lesssim
        \left(1+  \langle v\rangle |t-s|  \right)
        \left(
        |x- \bx|^{\frac{1}{2}} +  |t-s|^{\frac{1}{2}}|v- \bar v |^{\frac{1}{2}}
        \right).
\end{align*}

\noindent (3) For $0 \leq s \leq \min\{t_1(t,x,v), t_1(t,\bx,\bv)\} \leq t$ and $ \max\{t_1(t,x,v), t_1(t,\bx,\bv)\} \leq s \leq t$, we obtain
    \begin{align} \label{1/2-3}
        |V(s;t,x,v) -V(s;t,\bar x,\bv)|
        \lesssim
        |v- \bar v |
        +   \langle v\rangle  
        \left(
        |x- \bar x|^{\frac{1}{2}} +  |t-s|^{\frac{1}{2}}|v- \bar v |^{\frac{1}{2}}
        \right). 
    \end{align}
    When $\min\{t_1(t,x,v), t_1(t,\bx,v)\} =-\infty$, \eqref{1/2-3} with $\bv=v$ also holds for $0 \leq s \leq t$. \\
    When $\min\{t_1(t,\bx,v), t_1(t,\bx,\bv)\} =-\infty$, \eqref{1/2-3} with $x=\bx$ also holds for $0 \leq s \leq t$.
     
    \noindent (4) When $s=0$, we obtain
        \begin{align*} 
                &|f(0,X(0;t,x,v),V(0;t,x,v))-f(0,X(0;t,\bx,\bv),V(0;t,\bx,\bv))|\\
                &\lesssim \langle v \rangle \left( 1+t^{\frac{3}{2}}\right) \left( |x- \bx |^{\frac{1}{2}} +|v- \bv|^{\frac{1}{2}} \right) \left( \mathbf{A}_{\frac{1}{2}}(f_0)+\|w_0 f_0\|_{\infty} \right).
        \end{align*}  
    \end{lemma}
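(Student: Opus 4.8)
\textbf{Proof plan for Lemma \ref{lem:tra_0.5}.} The statement collects four trajectory estimates, and the plan is to treat each item by first reducing to a one-sided perturbation (only $x$ moving, or only $v$ moving) via the shift constructions of Definition \ref{def_tilde}, and then invoking the already-established singular/non-singular decompositions. For item (1), I would start from the splitting $|\tb(x,v)-\tb(\bx,\bv)| \leq |\tb(x,v)-\tb(\bx,v)| + |\tb(\bx,v)-\tb(\bx,\bv)|$. The first difference is a pure spatial perturbation: introduce $\tilde x = \tilde x(x,\bx,v)$ with $(x-\tilde x)\cdot v=0$, use Lemma \ref{lem:tra_sin_x} together with the averaging bound of Corollary \ref{cor:ave}-(1) to get $|v|\,|\tb(x,v)-\tb(\tilde x,v)| \lesssim |x-\bx| |\hat v\cdot\nabla\xi(\xb)|^{-1}$, and then observe that the fraction $|x-\bx|/|\hat v\cdot\nabla\xi(\xb)|$ is controlled by $|x-\bx|^{1/2}$ because, near grazing, the geometric sensitivity $|\hat v\cdot\nabla\xi(\xb)|$ behaves like $\sqrt{|x-\bx|}$ (this is exactly the $\sqrt\varepsilon$ phenomenon flagged in the introduction); away from grazing the bound is trivial. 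The non-singular part $|\tb(\bx,v)-\tb(\bx,\bv)|$ is a pure velocity perturbation, handled by Lemma \ref{lem:tra_non_v}-(1) and Corollary \ref{cor:ave}-(2), producing the $\min\{\sqrt{\tb(x,v)},\sqrt{\tb(\bx,\bv)}\}|v-\bv|^{1/2}$ term after the same square-root trade of the grazing fraction.

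For items (2) and (3), I would again decompose $(x,v)\to(\bx,v)\to(\bx,\bv)$. The $x\to\bx$ step uses Lemma \ref{lem:tra_sin_x}-(1),(2) (after the shift $x\mapsto\tilde x$) plus Corollary \ref{cor:ave}-(1), giving $|X(s;t,x,v)-X(s;t,\bx,v)| \lesssim (1+\langle v\rangle(t-s))|x-\bx|^{1/2}$ and similarly for $V$; the $v\to\bv$ step uses Lemma \ref{lem:tra_sin_v}-(1),(2) (after the shift $v\mapsto\tilde v$ with $|v+0|=|\tilde v+0|$, i.e.\ $\zeta=0$) plus Corollary \ref{cor:ave}-(2), giving $|X(s;t,\bx,v)-X(s;t,\bx,\bv)| \lesssim (t-s)|v-\bv| + \langle v\rangle(t-s)|v-\bv|^{1/2}$ and an analogous bound for $V$. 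Adding the two contributions and using $|v-\bv|\le|v-\bv|^{1/2}$ (since $|v-\bv|\le1$) collapses the estimates into the claimed forms. Care is needed with the case analysis on the location of $s$ relative to $t_1(t,x,v)$ and $t_1(t,\bx,\bv)$: in the interval between the two exit times one must bound the $V$-difference by the full velocity magnitude and pay a factor $|\tb(x,v)-\tb(\bx,\bv)|$ — but that is precisely what item (1) controls, so the argument closes. The stated hypotheses $\min\{t_1,\dots\}=-\infty$ cases are where one trajectory never hits the boundary; then the comparison is against a point $\X(\tau_-)$ on the slicing curve, exactly as in the proof of Lemma \ref{lem:tra_sin_x}.

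Item (4) is then essentially a corollary: write the difference of initial data along $(x\to\bx\to\bv)$ and further split each piece into an $X$-variation and a $V$-variation of $f_0$. Applying $\mathbf{A}_{1/2}(f_0)$ (Definition \ref{def_initial}) with exponent $2\beta=1$ turns $|X(0;t,x,v)-X(0;t,\bx,\bv)|$ and $|V(0;t,x,v)-V(0;t,\bx,\bv)|$ into square-root factors; the weight $\langle v\rangle$ in front absorbs the velocity weight in $\mathbf A_{1/2}$, and the polynomial-in-$t$ factor $1+t^{3/2}$ comes from collecting the $(t-s)$-powers (with $s=0$, so $t-s=t$) that appear in items (2) and (3). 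The specular boundary condition \eqref{specular} is used exactly as in Lemma \ref{lem:s=0_spec_x} to handle the case where $0$ lies strictly between the two exit times: one replaces $f_0(\cdot,v)$ by $f_0(\cdot,R_{\xb}v)$ at the reflection point and estimates spatial increments only.

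\textbf{Main obstacle.} The crux is item (1), and within it the passage from the ``linearized'' bound $|x-\bx|/|\hat v\cdot\nabla\xi(\xb(x,v))|$ to the Hölder bound $|x-\bx|^{1/2}$. One must show that the grazing fraction cannot be worse than square-root: when $|\hat v\cdot\nabla\xi(\xb)|$ is small (near grazing) the uniform convexity \eqref{convex_xi} forces $|x-\bx| \gtrsim |\hat v\cdot\nabla\xi(\xb)|^2$ along the segment joining the two points, which is the geometric heart of the optimal $C^{1/2}$ regularity of the billiard map past a convex obstacle. This is stated in \cite{CD2023} (Lemma 7.1) and I would invoke it, but if it had to be reproven the argument is a second-order Taylor expansion of $\xi$ at the grazing point combined with the lower curvature bound $\theta_\Omega$ — conceptually short but where all the geometry sits. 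Everything else is a careful but routine bookkeeping of the interval decompositions in $s$.
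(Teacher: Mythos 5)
Your overall architecture matches the paper's: the paper proves this lemma essentially by citing Lemma 7.1 of \cite{CD2023} for (1)--(3), handling the new cases $\min\{t_1,\cdot\}=-\infty$ exactly as in parts (d),(e),(f) of the proof of Lemma \ref{lem:tra_sin_x} (comparison against $\X(\tau_-)$, as you say), and deducing (4) from (1)--(3) with $\mathbf{A}_{\frac{1}{2}}(f_0)$ and the specular condition as in Lemma \ref{lem:s=0_spec_x}. Since you state you would invoke \cite{CD2023} Lemma 7.1 for the core $C^{1/2}$ estimates, the proof can be closed in the same way the paper closes it.

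However, the standalone mechanism you sketch for (1)--(3) contains a genuine gap: the ``square-root trade'' of the grazing fraction is not a valid step. The quantity $|\hat{v}\cdot\nabla\xi(\xb(x,v))|$ is determined by $(x,v)$ alone and has no relation to $|x-\bx|$, so neither of your two (mutually inconsistent) claims can hold: fixing $(x,v)$ near grazing and letting $\bx\to x$ gives $|x-\bx|\ll|\hat{v}\cdot\nabla\xi(\xb(x,v))|^{2}$, refuting ``$|x-\bx|\gtrsim|\hat{v}\cdot\nabla\xi|^{2}$''; fixing $|x-\bx|\sim 1$ and taking $(x,v)$ deeply grazing gives $|\hat{v}\cdot\nabla\xi(\xb(x,v))|\ll|x-\bx|^{1/2}$, refuting the bound you actually need, namely $|x-\bx|/|\hat{v}\cdot\nabla\xi(\xb(x,v))|\lesssim|x-\bx|^{1/2}$. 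Consequently, Lemma \ref{lem:tra_sin_x}, Lemma \ref{lem:tra_sin_v} and Corollary \ref{cor:ave} only produce the Lipschitz-type bounds carrying the singular weight $|\hat{v}\cdot\nabla\xi(\xb)|^{-1}$ (these feed the seminorms $\mathfrak{X},\mathfrak{V}$, cf.\ \eqref{sk_dif_1}); the H\"older-$\frac12$ bounds (1)--(3) are not a corollary of them via any pointwise trade, and the same objection applies to your claimed $v\to\bv$ step in items (2)--(3). The actual proof in \cite{CD2023} is a direct geometric argument exploiting the square-root structure of the exit time for a uniformly convex boundary (essentially $|\sqrt{a}-\sqrt{b}|\le\sqrt{|a-b|}$ applied to the intersection formula, equivalently the integrated specular-singularity ODE), not a lower bound on the grazing angle in terms of the increment; if you had to reprove the key step, your Taylor-expansion-plus-$\theta_\Omega$ outline as formulated would not go through.
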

    \begin{proof}
        In \cite{CD2023}, (1) and (2) are proved in Lemma 7.1, and \eqref{1/2-3} is also obtained for
        $0 \leq s \leq \min\{t_1(t,x,v), t_1(t,\bx,\bv)\} \leq t$ and $ \max\{t_1(t,x,v), t_1(t,\bx,\bv)\} \leq s \leq t$ when $t_1(t,x,v),\, t_1(t,\bx,\bv) \neq -\infty$. Next, when $\min\{t_1(t,x,v), t_1(t,\bx,v)\} =-\infty$, we estimate $|V(s;t,x,v)-V(s;t,\bx,v)|$ for $0\leq s \leq t$ using the same arguments as in the proof of Lemma \ref{lem:tra_sin_x}, parts (d), (e), and (f). Similarly, when $\min\{t_1(t,\bx,v), t_1(t,\bx,\bv)\} =-\infty$, we estimate $|V(s;t,\bx,v)-V(s;t,\bx,\bv)|$ for $0\leq s \leq t$ in the same way. Moreover, (4) follows from (1), (2), and (3) by applying the same arguments as in the proof of Lemma \ref{lem:s=0_spec_x}.      
    \end{proof}

  We apply similar arguments as in Lemma \ref{lem:ga_X} and Lemma \ref{lem:ga_V}, but instead use Lemma \ref{lem:tra_0.5}. In this case, the singularity associated with the grazing velocities does not appear, and we obtain $|(x,v) - (\bx,\bv)|^{1/2}$. Moreover, we divide the analysis into two cases: $d(\bx,\partial\O) \geq \epsilon$ and $d(\bx,\partial\O) \leq \epsilon$. In the former case, the singularity of $|v|$ can be eliminated. In the latter case, we apply \eqref{pro:H_sub}, which is derived from Proposition \ref{pro:H}, to reduce the singularity in $|v|$.

     \begin{lemma}[$C_{x,v}^{0,\frac{1}{2}}$ estimation of $\Gamma_{gain}(f,f)$ and $\nu(f)$]\label{lem:gam_x_0.5}

     Assume that $x,\bx \in \O$ and $v, \bv \in \mathbb{R}^3$ with $|(x,v)-(\bx,\bv)|\leq 1$. Recall $T_1>0$ and $\varpi_1>1$ from Proposition \ref{pro:H}. For $0<t <  \min\{T_1,1\}, \,\varpi>\varpi_1$, and $0< \epsilon,\,\delta < 1$, the following hold:
    \begin{align*} 
         &\int_0^{t} \left|\Gamma_{gain}(f,f)(s,X(s;t,x,v),V(s;t,x,v))-\Gamma_{gain}(f,f)(s,X(s;t,\bx,\bv),V(s;t,\bx,\bv))\right| ds\\
           &\lesssim_{\delta,\epsilon, \vartheta_0} |(x,v)-(\bx,\bv)|^{\frac{1}{2}} \mathcal{P}_2(\|w_0f_0\|_{\infty} )\left(\mathfrak{X}(t,\varpi;\epsilon)+ \mathfrak{V}(t,\varpi;\epsilon)+\mathbf{A}_{\frac{1}{2}}(f_0)+1\right) \\
           &\quad \times\langle v \rangle e^{4\varpi \langle v \rangle^{2} t}\left( \frac{1}{|v|^{\delta}}\mathbf{1}_{\{d(x,\partial\O)\leq\epsilon\}}+\frac{1}{|\bv|^{\delta}}\mathbf{1}_{\{d(\bx,\partial\O)\leq\epsilon\}}+1\right)
    \end{align*} 
    and
    \begin{align*} 
         &\int_0^{t} \left|\nu(f)(s,X(s;t,x,v),V(s;t,x,v))-\nu(f)(s,X(s;t,\bx,\bv),V(s;t,\bx,\bv))\right| ds\\
           &\lesssim_{\delta,\epsilon} |(x,v)-(\bx,\bv)|^{\frac{1}{2}} \mathcal{P}_1(\|w_0f_0\|_{\infty} )\left(\mathfrak{X}(t,\varpi;\epsilon)+1\right) \\
           &\quad \times \langle v \rangle  e^{\varpi \langle v \rangle^{2} t}\left( \frac{1}{|v|^{\delta}}\mathbf{1}_{\{d(x,\partial\O)\leq\epsilon\}}+\frac{1}{|\bv|^{\delta}}\mathbf{1}_{\{d(\bx,\partial\O)\leq\epsilon\}}+1\right).
    \end{align*} 
   \end{lemma}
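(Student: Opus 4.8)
The plan is to mimic the structure of the proofs of Lemma~\ref{lem:ga_X} and Lemma~\ref{lem:ga_V}, but feeding in the $C^{0,\frac12}_{x,v}$ trajectory bounds of Lemma~\ref{lem:tra_0.5} instead of the ``singular part/nonsingular part'' decomposition based on $\mathcal{T}_{sp}$ and $\mathcal{T}_{vel}$. First I would use \eqref{gamma_x}, \eqref{gamma_v}, \eqref{gamma_upper} from Lemma~\ref{lem:est_Gam} (resp.\ Lemma~\ref{lem:diff_nu}) to reduce $|\Gamma_{gain}(f,f)(s,X(s),V(s))-\Gamma_{gain}(f,f)(s,\bX(s),\bV(s))|$ to the sum of a term controlled by $\int_{\mathbb{R}^3}\frac{e^{-c|u|^2}}{|u|}|f(s,X(s),V(s)+u)-f(s,\bX(s),V(s)+u)|\,du$, a term controlled by $\int_{\mathbb{R}^3}\frac{e^{-c|u|^2}}{|u|}|f(s,\bX(s),V(s)+u)-f(s,\bX(s),\bV(s)+u)|\,du$, plus a lower-order piece of size $\|w_0f_0\|_\infty^2\min\{\langle v\rangle^{-1},\langle\bv\rangle^{-1}\}|V(s)-\bV(s)|$. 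Dividing and multiplying by $|X(s)-\bX(s)|$ (resp.\ $|V(s)-\bV(s)|$) in the first two terms and using Lemma~\ref{lem:tra_0.5}-(2),(3) to bound $|X(s)-\bX(s)|\lesssim(1+\langle v\rangle(t-s))(|x-\bx|^{\frac12}+|t-s|^{\frac12}|v-\bv|^{\frac12})$ and $|V(s)-\bV(s)|$ analogously, I would recognize the $s$-integral against the kernel as exactly the integrand defining $\mathfrak{X}(t,\varpi;\epsilon)$ and $\mathfrak{V}(t,\varpi;\epsilon)$ (after inserting $e^{\pm\varpi\langle v\rangle^2 t}$ and the weight $\max\{(t-s)^{1/2},\langle v\rangle^{-1}\}$, which is consistent with the $(1+\langle v\rangle(t-s))$ and $|t-s|^{1/2}$ factors since $t<1$).

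\textbf{Handling the $V$-difference on $[t_1(t,x,v),t_1(t,\bx,\bv)]$.} The genuinely delicate interval is $s\in[\min\{t_1(t,x,v),t_1(t,\bx,\bv)\},\max\{t_1(t,x,v),t_1(t,\bx,\bv)\}]$, where $V(s)$ and $\bV(s)$ differ by a full reflection and the trajectory bounds of Lemma~\ref{lem:tra_0.5}-(3) no longer apply directly. There I would use $\int$ over that interval of $|\Gamma_{gain}|$ $\lesssim_{\vartheta_0}\|w_0f_0\|_\infty^2|\tb(x,v)-\tb(\bx,\bv)|$ via \eqref{gamma_upper}, and then invoke Lemma~\ref{lem:tra_0.5}-(1): $\max\{|v|,|\bv|\}|\tb(x,v)-\tb(\bx,\bv)|\lesssim|x-\bx|^{\frac12}+\min\{\sqrt{\tb(x,v)},\sqrt{\tb(\bx,\bv)}\}|v-\bv|^{\frac12}\lesssim|(x,v)-(\bx,\bv)|^{\frac12}$ since $t<1$. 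This produces a factor $\min\{|v|^{-1},|\bv|^{-1}\}$ in front. To kill this $|v|^{-1}$ singularity I would branch: if $d(\bx,\partial\O)\geq\epsilon$ then $|\bv|\geq\epsilon$ (because the trajectory must escape in time $t-t_1\le t<1$, or simply because $|x-\bx|\le1$ forces the escape-time/velocity relation), so $\min\{|v|^{-1},|\bv|^{-1}\}\le\epsilon^{-1}$; if $d(\bx,\partial\O)\le\epsilon$ then I would instead bound the first, kernel-type term (\emph{before} dividing by $|X-\bX|$) directly using \eqref{pro:H_sub} with an exponent $\beta=\frac14$, which converts the bound into $\lesssim|V(s)-\bV(s)|^{\frac12}(\mathbf{A}_{\frac12}(f_0)+\|w_0f_0\|_\infty)$ and reduces the order of the $|v|^{-1}$ singularity to $|v|^{-\delta}$ after Young's inequality (i.e.\ splitting $|v|^{-1/2}\cdot|v|^{-1/2}$ and absorbing using $\delta<1$). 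This is exactly the mechanism flagged in the paragraph preceding the lemma statement; the bookkeeping here — in particular propagating the extra $\langle v\rangle$ powers and the $e^{4\varpi\langle v\rangle^2 t}$ (vs.\ $e^{\varpi\langle v\rangle^2 t}$) loss coming from $\varpi_1<\varpi$ and from the $\min$ branch — is the most error-prone part.

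\textbf{Assembly and the easy $\nu$ case.} After estimating each of the pieces (the $X$-difference term giving $\mathfrak{X}$, the $V$-difference term off $[t_1,\bar t_1]$ giving $\mathfrak{V}$, the $V$-difference term on $[t_1,\bar t_1]$ giving the $|v|^{-\delta}$-weighted contribution via \eqref{pro:H_sub}, and the lower-order $\|w_0f_0\|_\infty^2$ pieces), I would collect everything, use $G(x,v;\epsilon)\lesssim_\delta|v|^{-\delta}\mathbf{1}_{\{d(x,\partial\O)\le\epsilon\}}+1$ and $\langle v\rangle$-monotonicity, and absorb all constants into $\mathcal{P}_2(\|w_0f_0\|_\infty)$ and the $\lesssim_{\delta,\epsilon,\vartheta_0}$ notation to reach the stated bound. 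For $\nu(f)$ the argument is strictly simpler: \eqref{nu_v} gives $|\nu(f)(s,\bX(s),V(s))-\nu(f)(s,\bX(s),\bV(s))|\lesssim|V(s)-\bV(s)|\|w_0f_0\|_\infty$ directly, so no appeal to Proposition~\ref{pro:H} is needed on $[t_1,\bar t_1]$; \eqref{nu_x} handles the spatial difference, and the $\mathfrak{V}$ seminorm never enters — hence the cleaner $\mathcal{P}_1$, the single $e^{\varpi\langle v\rangle^2 t}$, and the absence of $\mathfrak{V}$ in the second estimate. I expect the main obstacle to be the careful case split at $d(\bx,\partial\O)\lessgtr\epsilon$ combined with the Young-type splitting needed to trade the $|v|^{-1}$ from $\eqref{intb}$-style bounds down to $|v|^{-\delta}$ while keeping the $\langle v\rangle$-weights and the exponential prefactor under control.
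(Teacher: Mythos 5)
Your overall architecture coincides with the paper's: reduce the $\Gamma_{gain}$ (resp.\ $\nu$) difference via \eqref{gamma_x}, \eqref{gamma_v}, \eqref{gamma_upper} (resp.\ Lemma \ref{lem:diff_nu}) to kernel integrals of $f$-differences in the position slot and in the velocity slot, insert Lemma \ref{lem:tra_0.5}-(2),(3) to recognize $\mathfrak{X}$ and $\mathfrak{V}$ (the factor $\max\{(t-s)^{1/2},\langle v\rangle^{-1}\}$ matches, since $t<1$), treat the reflection interval between the two values of $t_1$ separately with \eqref{gamma_upper}/\eqref{pro:H_sub} and Lemma \ref{lem:tra_0.5}-(1), branch on $d(\bx,\p\O)\lessgtr\epsilon$, and observe that \eqref{nu_v} makes Proposition \ref{pro:H} and $\mathfrak{V}$ unnecessary for $\nu(f)$. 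The paper routes the decomposition through the intermediate point $(\bx,v)$ (a four-term split, treating spatial and velocity variation separately), while you perturb $(x,v)\to(\bx,\bv)$ jointly; this is immaterial because $\mathfrak{X}$, $\mathfrak{V}$ and Lemma \ref{lem:tra_0.5} are all stated for joint perturbations.

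There is, however, one genuine gap, exactly at the step you yourself flagged. On the reflection interval with $d(\bx,\p\O)\le\epsilon$ you apply \eqref{pro:H_sub} with the \emph{fixed} exponent $2\beta=\tfrac12$, getting a gain $|V(s)-\bV(s)|^{1/2}\lesssim |v|^{1/2}$, and then multiply by $|\tb(x,v)-\tb(\bx,\bv)|\lesssim |v|^{-1}\,|(x,v)-(\bx,\bv)|^{1/2}$ from Lemma \ref{lem:tra_0.5}-(1). This yields $|v|^{-1/2}$, and no Young-type splitting or interpolation can lower the \emph{order} of that singularity: near $|v|=0$ one has $|v|^{-1/2}\gg |v|^{-\delta}+1$ whenever $\delta<\tfrac12$. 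So, as written, your argument proves the stated bound only for $\delta\in[\tfrac12,1)$, whereas the lemma (and the weight $W$ in Theorem \ref{thm:Holder0.5}, where small $\delta$ is the interesting regime) requires all $0<\delta<1$. The repair is what the paper does in \eqref{make_vbeta}: do not fix the exponent, but apply \eqref{pro:H_sub} with Hölder exponent $2\beta=1-\delta$ (this is legitimate for every $0<\delta<1$ since Proposition \ref{pro:H} holds for all $0<\beta<\tfrac12$), so that the gain $|V(s)-\bV(s)|^{1-\delta}\lesssim |v|^{1-\delta}$ multiplied by the $|v|^{-1}$ from the $\tb$-difference gives precisely $|v|^{-\delta}$ with the correct indicator, the constant now depending on $\delta$ — consistent with $\lesssim_{\delta,\epsilon,\vartheta_0}$. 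With that single change your proposal matches the paper's proof.
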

   \begin{proof} 
   We assume $d(\bx,\partial\O) \leq d(x,\partial\O)$ without loss of generality. Next, we split
\begin{align}
        &\left|\Gamma_{gain}(f,f)(s,X(s;t,x,v),V(s;t,x,v))-\Gamma_{gain}(f,f)(s,X(s;t,\bx,\bv),V(s;t,\bx,\bv))\right|
        \label{X,V_1/2}\\ 
       &\leq
       \left|\Gamma_{gain}(f,f)(s,X(s;t,x,v),V(s;t,x,v))-\Gamma_{gain}(f,f)(s,X(s;t,\bx,v),V(s;t,x,v))\right|
       \label{G_Xx_1/2} \\ 
       &\quad  {+}
       \left|\Gamma_{gain}(f,f)(s,X(s;t,\bx,v),V(s;t,x,v))-\Gamma_{gain}(f,f)(s,X(s;t,\bx,v),V(s;t,\bx,v))\right|
       \label{G_Vx_1/2} \\ 
       &\quad  {+}
       \left|\Gamma_{gain}(f,f)(s,X(s;t,\bx,v),V(s;t,\bx,v))-\Gamma_{gain}(f,f)(s,X(s;t,\bx,\bv),V(s;t,\bx,v))\right|
       \label{G_Xv_1/2} \\ 
         &\quad  {+}
       \left|\Gamma_{gain}(f,f)(s,X(s;t,\bx,\bv),V(s;t,\bx,v))-\Gamma_{gain}(f,f)(s,X(s;t,\bx,\bv),V(s;t,\bx,\bv))\right|.
        \label{G_Vv_1/2}
    \end{align}
    
     First, we consider \eqref{G_Xx_1/2}. By using \eqref{gamma_x}, we obtain
      \begin{align*}
          \int_0^t (\ref{G_Xx_1/2}) ds 
         &\lesssim   \|w_0f_0\|_{\infty}\int_0^t |X(s;t,x,v)-X(s;t,\bx,v)|\\
         &\quad \times \int_{\mathbb{R}^3} \frac{e^{-c|u|^2}}{|u|} 
         \frac{\left|f(s,X(s;t,x,v),V(s;t,x,v)+u)-f(s,X(s;t,\bx,v),V(s;t,x,v)+u)\right|}{|X(s;t,x,v)-X(s;t,\bx,v)|} du ds.
     \end{align*}
     By Lemma \ref{lem:tra_0.5}-(2), we obtain
     \begin{align*}
          |X(s;t,x,v)-X(s;t,\bx,v)| 
          &\leq |x-\bx|^{\frac{1}{2}} (1+\langle v \rangle(t-s))\\
        &\leq
        |x-\bx|^{\frac{1}{2}} \langle v \rangle
        \max \left\{ (t-s)^{\frac{1}{2}}, \frac{1}{\langle v \rangle}    \right\}.
     \end{align*}
     Therefore,
     \begin{align*} 
        \int_0^t (\ref{G_Xx_1/2}) ds 
        &\lesssim   \|w_0f_0\|_{\infty}|x-\bx|^{\frac{1}{2}} \langle v \rangle e^{\varpi \langle v \rangle^{2} t}
        \left( G(x,v;\epsilon)+G(\bx,v;\epsilon)\right)\mathfrak{X}(t,\varpi;\epsilon).
     \end{align*}
     Similarly, we obtain 
     \begin{align*} 
        \int_0^t (\ref{G_Xv_1/2}) ds 
        &\lesssim   \|w_0f_0\|_{\infty}|v-\bv|^{\frac{1}{2}} \langle v \rangle e^{\varpi \langle v \rangle^{2} t}
        \left( G(\bx,v;\epsilon)+G(\bx,\bv;\epsilon)\right)\mathfrak{X}(t,\varpi;\epsilon).
     \end{align*}

    \vspace{5mm}
   Suppose $0 \leq \min\{t_1(t,x,v), t_1(t,\bx,v)\} \leq  \max\{t_1(t,x,v), t_1(t,\bx,v)\} \leq t$. We estimate the integral
    \begin{align*}
        \int_{ \min\{t_1(t,x,v), t_1(t,\bx,v)\}}^{ \max\{t_1(t,x,v), t_1(t,\bx,v)\}} \eqref{G_Vx_1/2} ds
    \end{align*}
    by considering two cases: $d(\bx,\partial\O) \geq \epsilon$ and $d(\bx,\partial\O) \leq \epsilon$.
    
    We assume $d(\bx,\partial\O) \geq \epsilon$. Since 
    \begin{align} \label{d>epsilon_case}
    \begin{split}
        |v|(t-s) &\geq  |v|\left(t-\max\{t_1(t,x,v), t_1(t,\bx,v)\}\right) \\
        &\geq \min \left\{d(\bx,\partial\O), d(x,\partial\O)\right\}\geq \epsilon,
    \end{split}
    \end{align}
    we have $|v| \geq \epsilon/t \geq \epsilon$.
   Then, by applying \eqref{gamma_upper}, we obtain
    \begin{align*} 
        \begin{split}
         &\int_{\min\{t_1(t,x,v), t_1(t,\bx,v)\}}^{\max\{t_1(t,x,v), t_1(t,\bx,v)\}} (\ref{G_Vx_1/2}) ds 
         \lesssim_{\vartheta_0}\|w_0f_0\|_{\infty}^2|\tb(x,v)-\tb(\bx,v)| \\
         &\leq |x-\bx|^{\frac{1}{2}}\|w_0f_0\|_{\infty}^2 \frac{1}{|v|} \lesssim_{\epsilon} |x-\bx|^{\frac{1}{2}}\|w_0f_0\|_{\infty}^2.
        \end{split}
    \end{align*}
    
      We assume that $d(\bx, \partial\O) \leq \epsilon$. We follow the same arguments as in \eqref{est_V_vel_s}, but instead of substituting $\beta = \frac{1}{4}$ when applying \eqref{pro:H_sub} (which is derived from Proposition~\ref{pro:H}), we retain the general case $\beta \in (0,1)$.
 Then 
   \begin{align}  \label{make_vbeta}
   \begin{split}
       (\ref{G_Vx_1/2})
       &\lesssim_{\beta} 2\mathcal{P}_2(\|w_0f_0\|_{\infty} )e^{\varpi \langle v\rangle^{2} t} \left( \mathbf{A}_{\frac{1}{2}}(f_0)+1 \right)|v|^{\beta}.
   \end{split}
   \end{align}
   From Lemma \ref{lem:tra_0.5}-(1),
   \begin{align} \label{d<epsilon_case}
       |\tb(x,v)-\tb(\bx,v)| 
       &\lesssim 
      \frac{1}{|v|} |x-\bx|^{\frac{1}{2}}. 
   \end{align}
   Using \eqref{make_vbeta} and the above inequality, we obtain
   \begin{align*}
       &\int_{\min\{t_1(t,x,v), t_1(t,\bx,v)\}}^{\max\{t_1(t,x,v), t_1(t,\bx,v)\}} \eqref{G_Vx_1/2} ds \notag\\
       &\lesssim_{\beta, \vartheta_0}  \mathcal{P}_2(\|w_0f_0\|_{\infty} )e^{\varpi \langle v \rangle^{2} t} \left( \mathbf{A}_{\frac{1}{2}}(f_0)+1 \right)|v|^{\beta}|\tb(x,v)-\tb(\bx,v)| \\
       &\lesssim |x-\bx|^{\frac{1}{2}}\mathcal{P}_2(\|w_0f_0\|_{\infty} )e^{\varpi \langle v \rangle^{2} t} \left( \mathbf{A}_{\frac{1}{2}}(f_0)+1 \right)\frac{1}{|v|^{1-\beta}}.
   \end{align*}

   By combining the estimates for the cases $d(\bx,\partial\O) \geq \epsilon$ and $d(\bx,\partial\O) \leq \epsilon$, we obtain
   \begin{align} \label{max_min_1/2}
   \begin{split}
        &\int_{\min\{t_1(t,x,v), t_1(t,\bx,v)\}}^{\max\{t_1(t,x,v), t_1(t,\bx,v)\}} (\ref{G_Vx_1/2}) ds \\
        &\lesssim_{\beta, \vartheta_0} |x-\bx|^{\frac{1}{2}}\mathcal{P}_2(\|w_0f_0\|_{\infty} )e^{\varpi \langle v \rangle^{2} t} \left( \mathbf{A}_{\frac{1}{2}}(f_0)+1 \right)
        \left(\frac{1}{|v|^{1-\beta}}\mathbf{1}_{\{d(\bx,\partial\O) \leq \epsilon\}}+1
        \right).
    \end{split}
   \end{align}
   
   To estimate $\int_0^t\eqref{G_Vx_1/2}$, we divide the cases $0 \leq \min\{t_1(t,x,v), t_1(t,\bx,v)\} \leq  \max\{t_1(t,\bx,v), t_1(t,x,v)\} \leq t$, $\min\{t_1(t,x,v), t_1(t,\bx,v)\} \leq  0 \leq \max\{t_1(t,\bx,v), t_1(t,x,v)\} \leq t$, and $\max\{t_1(t,\bx,v), t_1(t,x,v)\} \leq 0$. Next, we follow the same arguments as in the estimation of \eqref{V_sp} in Lemma \ref{lem:ga_X}. Although we previously applied Lemma \ref{lem:tra_sin_x}-(2), we now apply Lemma \ref{lem:tra_0.5}-(3) instead. Moreover, for the case $s \in [\min\{t_1(t,x,v), t_1(t,\bx,v)\}, \max\{t_1(t,x,v), t_1(t,\bx,v)\} ] $, we now use \eqref{max_min_1/2}, rather than \eqref{max>t>min_gamma_x} used previously. Then we derive
   \begin{align*}
       \int_0^t (\ref{G_Vx_1/2}) ds 
       &\lesssim_{\vartheta_0, \beta}
       |x-\bx|^{\frac{1}{2}} \mathcal{P}_2(\|w_0f_0\|_{\infty} )\mathfrak{V}(t,\varpi;\epsilon)\langle v \rangle e^{\varpi \langle v \rangle^{2} t} G(\bx,v;\epsilon) \\
       &\quad + |x-\bx|^{\frac{1}{2}}\mathcal{P}_2(\|w_0f_0\|_{\infty} )e^{\varpi \langle v \rangle^{2} t} \left( \mathbf{A}_{\frac{1}{2}}(f_0)+1 \right)
        \left(\frac{1}{|v|^{1-\beta}}\mathbf{1}_{\{d(\bx,\partial\O) \leq \epsilon\}}+1
        \right).
   \end{align*}

    Next, we estimate $\int_0^t \eqref{G_Vv_1/2}$ in a similar way. \\
    Suppose $0 \leq \min\{t_1(t,\bx,v), t_1(t,\bx,\bv)\} \leq  \max\{t_1(t,\bx,v), t_1(t,\bx,\bv)\} \leq t$. We estimate the integral
    \begin{align*}
        \int_{ \min\{t_1(t,\bx,v), t_1(t,\bx,\bv)\}}^{ \max\{t_1(t,\bx,v), t_1(t,\bx,\bv)\}} \eqref{G_Vv_1/2} ds.
    \end{align*}
     When $d(\bx,\partial\O) \geq \epsilon$, we apply
     \begin{align*}
         \max\{|\bv|,|v|\}(t-s) &\geq  \max\{|\bv|,|v|\}\left(t-\max\{t_1(t,\bx,v), t_1(t,\bx,\bv)\}\right)\geq d(\bx,\partial\O)\geq \epsilon
     \end{align*}
     instead of \eqref{d>epsilon_case}. When $d(\bx,\partial\O) \leq \epsilon$, we apply
     \begin{align*} 
       (\ref{G_Vv_1/2})
       &\lesssim_{\beta} \mathcal{P}_2(\|w_0f_0\|_{\infty} )e^{\varpi \langle v\rangle^{2} t} \left( \mathbf{A}_{\frac{1}{2}}(f_0)+1 \right)(|v|^{\beta}+|\bv|^{\beta}).
   \end{align*}
   instead of \eqref{make_vbeta} and
   \begin{align*}
       |\tb(\bx,v)-\tb(\bx,\bv)| 
       &\lesssim 
       \min\left\{\frac{1}{|v|},\frac{1}{|\bv|}\right\} |v-\bv|^{\frac{1}{2}} 
   \end{align*}
   instead of \eqref{d<epsilon_case}. Then, similar to $\int_0^t \eqref{G_Vx_1/2}$, we obtain
   \begin{align*}
       \int_0^t (\ref{G_Vv_1/2}) ds 
       &\lesssim_{\vartheta_0, \beta}
       |v-\bv|^{\frac{1}{2}} \mathcal{P}_2(\|w_0f_0\|_{\infty} )\mathfrak{V}(t,\varpi;\epsilon)\langle v \rangle e^{\varpi \langle \bv \rangle^{2} t} G(\bx,\bv;\epsilon) \\
       &\quad + |v-\bv|^{\frac{1}{2}}\mathcal{P}_2(\|w_0f_0\|_{\infty} )e^{\varpi \langle v \rangle^{2} t} \left( \mathbf{A}_{\frac{1}{2}}(f_0)+1 \right)
        \left(\frac{1}{|v|^{1-\beta}}\mathbf{1}_{\{d(\bx,\partial\O) \leq \epsilon\}}+\frac{1}{|\bv|^{1-\beta}}\mathbf{1}_{\{d(\bx,\partial\O) \leq \epsilon\}}+1
        \right).
   \end{align*}
   
    By combining the upper bounds of $\int_0^t \eqref{G_Xx_1/2}-\eqref{G_Vv_1/2} \,ds$, and using the inequality
    \begin{align*}
       G(x,v;\epsilon) = \ln\left(1+\frac{1}{|v|} \right)\mathbf{1}_{\{d(x,\partial\O) \leq \epsilon\}}  + 1 \lesssim_{\beta} \frac{1}{|v|^{1-\beta}}\mathbf{1}_{\{d(x,\partial\O) \leq \epsilon\}} +1,
   \end{align*} 
   together with $\langle \bv \rangle \leq 2\langle v \rangle$, we obtain
   \begin{align*}
       &\int_0^t \eqref{X,V_1/2} ds \\
      &\lesssim_{\beta,\epsilon, \vartheta_0} \langle v \rangle|(x,v)-(\bx,\bv)|^{\frac{1}{2}} \mathcal{P}_2(\|w_0f_0\|_{\infty} )e^{4\varpi \langle v \rangle^{2} t}\left(\mathfrak{X}(t,\varpi;\epsilon)+ \mathfrak{V}(t,\varpi;\epsilon)+\mathbf{A}_{\frac{1}{2}}(f_0)+1\right) \\
           &\quad \times  \left(\frac{1}{|v|^{1-\beta}}\mathbf{1}_{\{d(\bx,\partial\O) \leq \epsilon\}}+\frac{1}{|\bv|^{1-\beta}}\mathbf{1}_{\{d(\bx,\partial\O) \leq \epsilon\}}+G(x,v;\epsilon)+G(\bx,v;\epsilon)+G(\bx,\bv;\epsilon)+1
        \right) \\
         &\lesssim_{\beta,\epsilon, \vartheta_0} \langle v \rangle|(x,v)-(\bx,\bv)|^{\frac{1}{2}} \mathcal{P}_2(\|w_0f_0\|_{\infty} )e^{4\varpi \langle v \rangle^{2} t} \left(\mathfrak{X}(t,\varpi;\epsilon)+ \mathfrak{V}(t,\varpi;\epsilon)+\mathbf{A}_{\frac{1}{2}}(f_0)+1\right) \\
           &\quad \times  \left(\frac{1}{|v|^{1-\beta}}\mathbf{1}_{\{d(\bx,\partial\O) \leq \epsilon\}}+\frac{1}{|\bv|^{1-\beta}}\mathbf{1}_{\{d(\bx,\partial\O) \leq \epsilon\}}+\frac{1}{|v|^{1-\beta}}\mathbf{1}_{\{d(x,\partial\O) \leq \epsilon\}}+1
        \right).
   \end{align*}
   Finally, we use the fact that $d(\bx,\partial\O) \leq d(x,\partial\O)$  and replace $1-\beta$ with $\delta$ in the above inequality.

    Similarly, we estimate
 \begin{align*}
     \int_0^{t} \left|\nu(f)(s,X(s;t,x,v),V(s;t,x,v))-\nu(f)(s,X(s;t,\bx,\bv),V(s;t,\bx,\bv))\right| ds.
 \end{align*}
 by applying Lemma \ref{lem:diff_nu} instead of Lemma \ref{lem:est_Gam}. 
  \end{proof}
   
\subsection{Proof of the Theorem}

We are now ready to prove the main theorem. To this end, we apply Lemma \ref{lem:tra_0.5}-(4), Lemma \ref{lem:gam_x_0.5}, and Proposition \ref{pro:iter_pro}.

\begin{proof}[Proof of Theorem \ref{thm:Holder0.5}]
We recall
\begin{align}  
& |f(t,x,v)-f(t,\bar{x}, \bar{v})|  \notag \\
&\lesssim_{\vartheta_0}   
\left|f(0,X(0;t,x,v), V(0;t,x,v))-f(0,X(0;t,\bx,\bv), V(0;t,\bx,\bv))\right| 
 \label{basic f-f1'} \\
&\; + \int^t_0 
\left|\Gamma_{\text{gain}}(f,f)(s,X(s;t,x,v ), V(s;t,x,v))
-\Gamma_{\text{gain}}(f,f)(s,X(s;t,\bx,\bv ), V(s;t,\bx,\bv))\right|   ds
 \label{basic f-f2'} \\
&\;+ \mathcal{P}_2(\|w_0f_0\|_{\infty})
\int_{0}^{t } \left| \nu(f) (s, X(s;t,x,v), V(s;t,x,v ))  - \nu(f)(s, X(s;t,\bx,\bv), V(s;t,\bx,\bv )) \right| ds. \label{basic f-f4'} 
\end{align}
By Lemma \ref{lem:tra_0.5}-(4), we obtain
\begin{align*}
     \eqref{basic f-f1'}
     &\lesssim   \langle v \rangle |(x,v)-(\bx,\bv)|^{\frac{1}{2}}  \left(\mathbf{A}_{\frac{1}{2}}(f_0)+\|w_0  f_0\|_{\infty} \right).
\end{align*}
Applying Lemma \ref{lem:gam_x_0.5} to \eqref{basic f-f2'} and \eqref{basic f-f4'}, we obtain
\begin{align*}
    &\eqref{basic f-f2'}+\eqref{basic f-f4'} \\
    &\lesssim_{\delta,\epsilon, \vartheta_0} |(x,v)-(\bx,\bv)|^{\frac{1}{2}} \langle v \rangle  e^{4\varpi \langle v \rangle^{2} t}  \left( \frac{1}{|v|^{\delta}}\mathbf{1}_{\{d(x,\partial\O)\leq\epsilon\}}+\frac{1}{|\bv|^{\delta}}\mathbf{1}_{\{d(\bx,\partial\O)\leq\epsilon\}}+1\right)\\
        &\quad \times  \mathcal{P}_3(\|w_0f_0\|_{\infty} )
        \bigg(\sup_{0 \leq t \leq T}\mathfrak{X}(t,\varpi;\epsilon)+\sup_{0 \leq t \leq T}\mathfrak{V}(t,\varpi;\epsilon)+\mathbf{A}_{\frac{1}{2}}(f_0)+1\bigg)
\end{align*}
for $0 < \delta, \,\epsilon <1$. By Proposition \ref{pro:iter_pro}, we find a constant $0<T \ll 1$ such that
\begin{align*}
    &\eqref{basic f-f2'}+\eqref{basic f-f4'} \\
    &\lesssim_{\delta,\epsilon,\vartheta_0} 
    |(x,v)-(\bx,\bv)|^{\frac{1}{2}} \langle v \rangle  e^{4\varpi \langle v \rangle^{2} t} \\
    &\quad \times\left( \frac{1}{|v|^{\delta}}\mathbf{1}_{\{d(x,\partial\O)\leq\epsilon\}}+\frac{1}{|\bv|^{\delta}}\mathbf{1}_{\{d(\bx,\partial\O)\leq\epsilon\}}+1\right)  \mathcal{P}_3(\|w_0f_0\|_{\infty} )\left( \mathbf{A}_{\frac{1}{2}}(f_0)+1 \right).
\end{align*}
for $0<t<T$. 

From the upper bounds of \eqref{basic f-f1'}, \eqref{basic f-f2'}, and \eqref{basic f-f4'}, we obtain
\begin{align*}
    &|f(t,x,v)-f(t,\bar{x}, \bar{v})|    \\
     &\lesssim_{\delta,\epsilon,\vartheta_0} 
    |(x,v)-(\bx,\bv)|^{\frac{1}{2}} \langle v \rangle  e^{4\varpi \langle v \rangle^{2} t} \\
    &\quad \times\left( \frac{1}{|v|^{\delta}}\mathbf{1}_{\{d(x,\partial\O)\leq\epsilon\}}+\frac{1}{|\bv|^{\delta}}\mathbf{1}_{\{d(\bx,\partial\O)\leq\epsilon\}}+1\right)  \mathcal{P}_3(\|w_0f_0\|_{\infty} )\left( \mathbf{A}_{\frac{1}{2}}(f_0)+1 \right).
\end{align*}
Finally, we multiply both sides by $\langle v \rangle^{-1} e^{-4\varpi \langle v \rangle^{2} t}W$. Then, we obtain
\begin{align*}
    W^{-1}e^{-4\varpi \langle v \rangle^{2} t}\langle v \rangle^{-1}\frac{|f(t,x,v)- (t,\bar{x}, \bar{v})|}{ |(x,v)-(\bx,\bv)|^{\frac{1}{2}}}
    \lesssim_{\delta, \epsilon, \vartheta_0} \mathcal{P}_3(\|w_0f_0\|_{\infty}) \left( \mathbf{A}_{\frac{1}{2}}(f_0)+1 \right),
\end{align*}
where 
\begin{align*}
    W\left((x,v), (\bx,\bv);\epsilon,\delta\right) &:= 
    \frac{1}{|v|^{\delta}}\mathbf{1}_{\{d(x,\partial\O) \leq \epsilon\}}+ \frac{1}{|\bv|^{\delta}}\mathbf{1}_{\{d(\bx,\partial\O) \leq \epsilon\}}+1.
\end{align*}
\end{proof}

\noindent{\bf Acknowledgment} 
D. Lee thanks Professor Chanwoo Kim for helpful discussions. The authors also thank the anonymous referee for helpful comments and suggestions that improved the presentation of this paper. \newline

\noindent{\bf Declarations}\\
\noindent{\bf Funding:}
G. An is supported by grant No. RS-2024-00406821. D. Lee is supported by the National Research Foundation of Korea (NRF) grant funded by the Korea government (MSIT) (No. RS-2023-00212304 and No. RS-2023-00219980).  \newline

\noindent{\bf Author Contributions Statement:} We would like to state that Gayoung An and Donghyun Lee contributed equally to this work.
\newline

\noindent{\bf Data availability:} No data was used for the research described in the article.
\newline

\noindent{\bf Conflict of interest:} The authors declare that they have no conflict of interest.

\bibliographystyle{abbrv}
\bibliography{reference.bib}
  
\end{document}